\newtheorem{theorem}{Theorem}[section]
\newtheorem{lemma}[theorem]{Lemma}
\numberwithin{equation}{section}
\def \C {{\mathbb C}}
\def \R {{\mathbb R}}
\def \Z {{\mathbb Z}}
\def \le{{\,\leqslant\,}}
\def \ge{{\,\geqslant\,}}
\begin{document}
\title[Representation by sums of unlike powers]{Representation by sums of unlike powers}
\author{Jianya Liu}
\address{School of Mathematics and Data Science Institute \\ Shandong University \\ Jinan  250100 \\ China}
\email{jyliu@sdu.edu.cn}
\author{Lilu Zhao}
\address{School of Mathematics \\ Shandong University \\ Jinan  250100 \\ China}
\email{zhaolilu@sdu.edu.cn}

\begin{abstract}It is proved that all sufficiently large integers $n$ can be represented as
$$n=x_1^2+x_2^3+\cdots+x_{13}^{14},$$
where $x_1,\ldots,x_{13}$ are positive integers. This improves upon the current record
with $14$ variables in place of $13$.
\end{abstract}

\maketitle


{\let\thefootnote\relax\footnotetext{2020 Mathematics Subject
Classification: 11P55 (11P05)}}

{\let\thefootnote\relax\footnotetext{Keywords:
circle method, large sieve inequality, smooth Weyl sums}}

{\let\thefootnote\relax\footnotetext{This work is supported by the NSFC grants 12031008 and 11922113.}}

\section{Introduction}

This paper is concerned with the representation of natural numbers as sums of successive powers, starting with a square.
This problem was solved by Roth \cite{Roth} who proved that all sufficiently large positive integers $n$ can be expressed as
\begin{align}\label{rep}n=\sum_{i=1}^{s}x_i^{i+1}\end{align}
with $s=50$.  Subsequently, Roth's result was improved by  Thanigasalam \cite{Th1,Th2,Th3},
Vaughan \cite{Vaughan1970,Vaughan1971}, Br\"udern \cite{Br87,Br88} and Ford \cite{Ford1,Ford}. The current record is held by Ford \cite{Ford}, who proved in 1996 that all sufficiently large positive integers $n$ can be expressed in the form \eqref{rep} with $s=14$.

The main result in this paper is the following improvement.

\begin{theorem}\label{theorem1}All sufficiently large integers $n$ can be represented in the form
\begin{eqnarray}\label{13Thm1}
n=\sum_{i=1}^{13}x_i^{i+1},
\end{eqnarray}
where $x_1,\ldots,x_{13}$ are positive integers.
\end{theorem}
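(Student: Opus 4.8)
The plan is to prove Theorem \ref{theorem1} by the Hardy--Littlewood circle method, combining smooth Weyl sums for the higher powers with the sharp form of Vinogradov's mean value theorem and a large sieve treatment of the quadratic term. Fix positive constants $\nu_2,\dots,\nu_{14}$ with $\sum_{k=2}^{14}\nu_k>1$ (say $\sum_k\nu_k=2$; the precise choice is part of the optimisation below), and for a large $n$ set $X_k=(\nu_k n)^{1/k}$. Let $f_2(\alpha)=\sum_{x\le X_2}e(\alpha x^2)$, and for $3\le k\le14$ let $f_k(\alpha)=\sum_{x\le X_k,\ p\mid x\Rightarrow p\le X_k^{\eta}}e(\alpha x^k)$ be a smooth Weyl sum with a small fixed $\eta>0$. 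Then
\[
R(n)=\int_0^1\Bigl(\prod_{k=2}^{14}f_k(\alpha)\Bigr)e(-\alpha n)\,d\alpha
\]
counts representations \eqref{13Thm1} with each $x_k\le X_k$ and $x_3,\dots,x_{14}$ smooth, so $R(n)\ge0$ and $R(n)$ does not exceed the number of representations of $n$; it therefore suffices to show $R(n)>0$, and we shall in fact establish $R(n)\gg n^{\kappa}$ with
\[
\kappa=\sum_{k=2}^{14}\frac1k-1=1.2515\ldots\,>1.
\]
Dissect $[0,1)$ into major arcs $\mathfrak M$ around rationals $a/q$ with $1\le a\le q\le P^{\delta_0}$ (where $P=n^{1/14}$ and $\delta_0>0$ is small) and minor arcs $\mathfrak m=[0,1)\setminus\mathfrak M$.

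On $\mathfrak M$ one uses the standard local approximations $f_k(a/q+\beta)=\rho_k q^{-1}S_k(q,a)v_k(\beta)+(\text{error})$, with $S_k$ a complete exponential sum, $v_k$ the associated elementary integral, and $\rho_k>0$ the relative density of $X_k^{\eta}$-smooth numbers ($\rho_2=1$). After the customary pruning of the arcs down to $q\le(\log n)^{A}$ and completion of the singular series and singular integral, this gives
\[
\int_{\mathfrak M}\Bigl(\prod_{k=2}^{14}f_k(\alpha)\Bigr)e(-\alpha n)\,d\alpha=C(\eta)\,\mathfrak S(n)\,\mathfrak J(n)+O\bigl(n^{\kappa-\delta}\bigr)
\]
for a constant $C(\eta)>0$. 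Here $\mathfrak J(n)\asymp n^{-1}\prod_kX_k\gg n^{\kappa}$, because $\sum_k\nu_k>1$ places $n$ well inside the range of $\sum_kt_k$ over $0\le t_k\le X_k^k$; and the singular series $\mathfrak S(n)=\prod_p\sigma_p$ is positive and bounded below, since for every prime $p$ the congruence $\sum_{k=2}^{14}x_k^k\equiv n\ \pmod{p^h}$ has a nonsingular solution --- for all but finitely many $p$ this follows from Cauchy--Davenport applied to the value sets of $x_2^2$, $x_3^3$, $x_4^4$ modulo $p$, whose densities sum to more than $1$, while the remaining finitely many primes are settled by direct inspection --- so $\sigma_p=1+O(p^{-3/2})$ for large $p$ and $\sigma_p>0$ for all $p$. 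Hence $\int_{\mathfrak M}\gg n^{\kappa}$.

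The decisive step is the minor-arc bound $\int_{\mathfrak m}\prod_{k=2}^{14}\lvert f_k(\alpha)\rvert\,d\alpha=o(n^{\kappa})$, i.e., a saving of a power slightly above $n$ over the trivial bound $\prod_kX_k=n^{\kappa+1}$. I would obtain it in three steps. (i) Pointwise minor-arc estimates: Weyl's inequality in Vaughan's refined form yields a small power saving over $X_3$ for the cubic sum uniformly on $\mathfrak m$, and the Vaughan--Wooley theory of smooth Weyl sums yields $\sup_{\alpha\in\mathfrak m}\lvert f_k(\alpha)\rvert\ll X_k^{1-\sigma_k+\varepsilon}$ for $3\le k\le14$ with explicit $\sigma_k>0$. (ii) Extract several such pointwise factors (typically the top few and possibly the cubic one), and bound the product of the remaining degree-$\ge3$ sums by Hölder's inequality, each appearing in a mean value $\int_0^1\lvert f_k(\alpha)\rvert^{2t_k}\,d\alpha$ with admissible exponents; these are controlled by $\int_0^1\lvert f_k(\alpha)\rvert^{2t}\,d\alpha\ll X_k^{2t-k+\varepsilon}$ for $2t\ge k(k+1)$ --- the sharp Vinogradov mean value theorem --- and by Hua-type bounds for smaller $t$, available for a continuous range of $t$ in the smooth setting; one such sharp mean value provides the factor $n$ in the saving, step (i) the extra $n^{\delta}$. (iii) The square, which admits no useful pointwise bound, is handled by the large sieve: using $\lvert f_2(a/q+\beta)\rvert\ll q^{-1/2}X_2(1+X_2^2\lvert\beta\rvert)^{-1/2}+q^{1/2+\varepsilon}$ on the Farey arc at $a/q$ and summing against $\sum_a$ and $\sum_q$, the large sieve inequality reduces $\int_{\mathfrak m}\lvert f_2(\alpha)\rvert\prod_{k=3}^{14}\lvert f_k(\alpha)\rvert\,d\alpha$ to an average of $\prod_{k=3}^{14}\lvert f_k\rvert^2$ over a sparse set of shifts, which (ii) controls with room to spare. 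Combining (i)--(iii) gives $\int_{\mathfrak m}\prod_k\lvert f_k\rvert\ll n^{\kappa-\delta}$, and with the major-arc bound, $R(n)\gg n^{\kappa}$.

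The main obstacle will be the bookkeeping in the last two steps: with only $13$ variables the budget of mean-value exponents is tight, so no variable may be spent wastefully, and in particular the single square and the cube must be handled together with care --- either by the large sieve as above, or by first establishing a divisor-type bound $\int_0^1\lvert f_2(\alpha)\rvert^2\lvert f_3(\alpha)\rvert^2\,d\alpha\ll n^{1+\varepsilon}$ for the equation $x_1^2-x_2^2=x_3^3-x_4^3$ and feeding it into the Hölder split. The partition of the exponents $4,\dots,14$ into a pointwise part and a mean-value part, together with the Hölder weights and the constants $\nu_k$, has to be optimised, largely by numerical search. It is precisely the removal of Ford's $x^{15}$ term that makes this optimisation run close to the margin and that forces the use of the sharp Vinogradov mean value theorem --- not available in 1996 --- together with a refined large sieve input; with the older mean value bounds the minor-arc count would not beat $n^{\kappa}$.
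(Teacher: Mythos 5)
Your overall architecture (circle method with smooth Weyl sums for the higher powers, pointwise Weyl bounds, a large sieve treatment of the quadratic sum, pruning and a standard major-arc analysis) does match the paper's, and your major-arc discussion is fine. The gap is in the minor-arc step, and it is not mere bookkeeping. The engine you propose for the crucial factor-$n$ saving --- the sharp Vinogradov mean value theorem --- cannot supply it: to invoke $\int_0^1|f_k|^{2s}\,d\alpha\ll X_k^{2s-k+\varepsilon}$ you need $2s\ge k(k+1)$, so in any H\"older split $\int\prod_k|f_k|\le\prod_k\bigl(\int|f_k|^{2s_k}\bigr)^{1/(2s_k)}$ the weight contributed by a VMVT-threshold factor is at most $1/(k(k+1))$, and $\sum_{k=2}^{14}1/(k(k+1))=\tfrac12-\tfrac1{15}=\tfrac{13}{30}<1$. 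Hence the split cannot be closed at VMVT moments; most of the weight must sit at moments near $2s\approx 2k$, where the only usable inputs are the Vaughan--Wooley permissible exponents $\lambda_{k,s}$ for smooth sums, and with those alone the total saving falls short of $n$. (Extracting the top-degree sums pointwise does not help: their Weyl savings are exponentially small in $k$ and effectively waste those variables.)

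What actually closes the argument in the paper are two quantitative devices absent from your sketch. First, a hybrid of the diminishing-range method with smooth Weyl sums --- a reworked Davenport iteration on the cube against $f_4f_{12}f_{13}f_{14}$ taken over shortened ranges $Y_k=n^{\lambda/k}$, $\lambda=0.9155538$ --- giving $\int_0^1|F_3(\alpha)F(\alpha)|^2d\alpha\ll F_3(0)^2F(0)^2n^{-7/9+\rho}$ with $\rho=0.004453$; the standard Vaughan formulation of the iteration only yields $\rho'=0.0109875$, which is too large for the subsequent pruning (one needs roughly $\rho<0.0048$ for the exponent $\delta_2=\tfrac29-5\rho+\lambda\alpha_2-\lambda$ to stay positive). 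Second, on the intermediate arcs $n^{4/9+2\rho}\le Q\le n^{1/2-1/36+\rho}$, where Weyl's inequality for the square is too weak, one needs large-sieve--restricted mean values $\int_{\mathfrak M^*(Q)}|g_k|^{2s}d\alpha\ll n^{-1+2s/k+\varepsilon}Q^{2(k-2s+\lambda_{k,s})/k}$ rather than the full-interval moments, together with Br\"udern's Kloosterman-refined fourth moment of the cubic sum and the Br\"udern--Wooley seventh moment of the quartic smooth sum to get down to $\mathfrak M(n^{2/9})$. Your proposal defers all of this to an unexecuted ``numerical search''; since the passage from $14$ to $13$ variables lives entirely in these margins, the proof as written does not establish the theorem.
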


It is worth pointing out the new ingredients in our proof. The first one is the development of Davenport's iterative method to sums of unlike powers. Although Davenport's iterative method in the formulation of Vaughan (see Lemma 4 in \cite{Vaughan1986}) has been widely used in this topic, the method in this paper is quite different. We make use of unlike powers more effectively, and in particular we combine the diminishing range method with the mean value estimates of smooth Weyl sums. The details will be explained in Section 4.

The second new ingredient is the development of the large sieve inequality with an application to mean value estimates for smooth Weyl sums restricted to excessively large major arcs. Although the large sieve inequality is a well-known  method in number theory, it seems to be the first time that this method is introduced to deal with the sum of successive powers. We briefly explain it here. Let $Q\le \frac{1}{4}n^{1/2}$. Let $\mathfrak{n}(Q)=\mathfrak{M}(2Q)\setminus \mathfrak{M}(Q)$ with $\mathfrak{M}(Q)$ defined in \eqref{definefrakM}.
An important issue of this paper is to estimate
$$J_{k,s}(Q)=\int_{\mathfrak{n}(Q)}|F_2(\alpha)g_k(\alpha)^{2s}|d\alpha,$$
where $F_2(\alpha)$ is given in \eqref{defineFkalpha} and $g_k(\alpha)$ is a smooth Weyl sum defined in \eqref{definegk}. Of course one may conventionally use
$$J_{k,s}(Q)\le \Big(\sup_{\alpha \in \mathfrak{n}(Q)}|F_2(\alpha)|\Big) \int_{0}^1|g_k(\alpha)^{2s}|d\alpha$$
to deduce that
\begin{align}\label{J1}
J_{k,s}(Q)\ll n^{\frac{1}{2}}Q^{-\frac{1}{2}}n^{\frac{1}{k}\lambda_{k,s}}
\end{align}
where $\lambda_{k,s}$ is a permissible exponent, but this is not sufficient for our purpose. Instead,
we introduce the large sieve inequality to prove (up to an arbitrary small power of $n$) that
\begin{align}\label{J2}J_{k,s}(Q)\ll n^{\frac{1}{2}}Q^{-\frac{1}{2}}n^{-1+\frac{2s}{k}}Q^{\frac{2}{k}(k-2s+\lambda_{k,s})}.\end{align}
Since $2s-\lambda_{k,s}<k$ in the proof, the estimate \eqref{J2} improves upon \eqref{J1}, and the saving is crucial.
The method underlying the proof of \eqref{J2} is very flexible and it has other applications. We shall consider these applications elsewhere.

Besides, we make use of some important results in additive number theory. We apply an estimate for the seventh moment of the quartic smooth Weyl sum proved by Br\"udern and Wooley \cite{BW}. We refer the readers
to Wooley \cite{Wooley1995} for the method of breaking the classical convexity. We introduce a smooth function weight to the cubic Weyl sum and apply the mean value estimate for the fourth moment of a cubic Weyl sum proved by Br\"udern \cite{Br} using the method of the Kloosterman refinement. Therefore, our proof benefits from Weil's theory on the Riemann hypothesis over finite fields. We also follow the method of Ford \cite{Ford} to deal with $\int_0^1|f_4(\alpha)^2f_k(\alpha)^{2s}|d\alpha$.

As usual, we write $e(z)$ for $e^{2\pi iz}$. We assume
that $n$ is sufficiently large. We use $\ll$ and
$\gg$ to denote Vinogradov's well-known notations. The letter $\varepsilon$ denotes a sufficiently small positive real number. Any statement in which $\varepsilon$ occurs holds for each fixed $\varepsilon > 0$, and any implied constant in such a statement is allowed to depend on $\varepsilon$.

\vskip3mm

\section{Outline the proof}

Suppose that $2/3<\lambda<1$. We shall choose $\lambda$ in \eqref{definelambda}. Let
$$X_k=n^{\frac{1}{k}},\ \ Y_k=n^{\frac{\lambda}{k}}.$$
We introduce $w:\R\rightarrow \R_{\ge 0}$
\begin{align}\label{definew}w(t)=\exp\big(-\frac{1}{1/16-(t-3/4)^2}\big)\end{align}
and define
\begin{align}\label{defineFkalpha}F_k(\alpha)=\sum_{\substack{X_k/2\le x\le X_k}}w(x/X_k)e(x^k\alpha).\end{align}
We use $\mathcal{A}(P,R)$ to denote the set of $R$-smooth numbers up to $P$, that is
$$\mathcal{A}(P,R)=\{1\le x\le P:\ p|x,\ p \textrm{ prime } \Rightarrow p\le R\}.$$
We define
\begin{align*}f_k(\alpha)=\sum_{\substack{x\in \mathcal{A}(Y_k,R)}}e(x^k\alpha).\end{align*}

For convenience, we may choose $R=n^{\eta}$, where $\eta^{-1}$ is a sufficiently large positive integer (say $\eta^{-1}>10^{200}$) and $11!|\eta^{-1}$. Let
$$
r:= r_k=\frac{\eta^{-1}}{k}.
$$
For $k\le 11$, we have $r\in \Z^{+}$ and define
\begin{align}\label{definegk}g_k(\alpha)=\sum_{\substack{R/2<p_1,\cdots, p_{r}\le R}}e\big((p_1\cdots p_r)^k\alpha\big),\end{align}
where $p_1,\ldots,p_r$ are prime numbers.

Let $K_1=\{5,6,7,8,9,10,11\}$ and $K_2=\{4,12,13,14\}$. We define
\begin{align*}F(\alpha)=\prod_{k\in K_2}f_k(\alpha)\ \ \textrm{ and } \ G(\alpha)=\prod_{k\in K_1}g_k(\alpha).\end{align*}
We write
$$\mathcal{F}(\alpha)=F_2(\alpha)F_3(\alpha)F(\alpha)G(\alpha).$$
Now we introduce
$$\mathcal{N}(n)=\int_0^1\mathcal{F}(\alpha)e(-n\alpha)d\alpha.$$
Note that $\mathcal{N}(n)$ is the (weighted) number of solutions of \eqref{13Thm1},
and we shall finally prove $\mathcal{N}(n)\gg \mathcal{F}(0)n^{-1}$.

We introduce
\begin{align}\label{definefrakM}\mathfrak{M}(Q)=\bigcup_{q\le Q}\bigcup_{\substack{1\le a\le q \\ (a,q)=1}}\mathfrak{M}(q,a;Q),\end{align}
where
\begin{align*}\mathfrak{M}(q,a;Q)=\Big\{\alpha:\ |\alpha-\frac{a}{q}|\le\frac{Q}{qn}\Big\}.\end{align*}
Then for $Q\le \frac{1}{4}n^{1/2}$, we define
\begin{align*}\mathfrak{m}(Q)=\Big[\frac{1}{n^{1/2}},1+\frac{1}{n^{1/2}}\Big]\setminus\mathfrak{M}(Q).\end{align*}
We expect to prove
\begin{align*}\int_{\mathfrak{m}(Q)}|\mathcal{F}(\alpha)|d\alpha= o( \mathcal{F}(0)n^{-1})\end{align*}
with $Q$ as small as possible.

\begin{lemma}\label{lemmaboundG2}Let $\delta_1=0.0008985$. Then one has
\begin{align}\label{boundG2}\int_0^{1}|G(\alpha)|^2d\alpha\ll n^{-\frac{3}{4}-\delta_1+\varepsilon}G(0)^2.\end{align}\end{lemma}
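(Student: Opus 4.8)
The plan is to bound $\int_0^1|G(\alpha)|^2\,d\alpha$ by reducing, through H\"older's inequality, to $2u$-th moments of the individual sums $g_k$, and then feeding in sharp mean value estimates for smooth Weyl sums of degrees $5,6,\ldots,11$. The first, routine, step passes from the sums over products of primes to smooth Weyl sums of the full length $X_k=n^{1/k}$. Writing $g_k(\alpha)=\sum_m\nu_k(m)e(m^k\alpha)$, each coefficient satisfies $0\le\nu_k(m)\le r_k!$ and $\nu_k$ is supported on products of $r_k$ primes from $(R/2,R]$, all of which lie in $\mathcal A(X_k,R)$. Hence, for every $u\in\N$,
\begin{align*}
\int_0^1|g_k(\alpha)|^{2u}\,d\alpha\ll n^{\varepsilon}\,\#\Big\{(m_i,m_i')_{i\le u}\in\mathcal A(X_k,R)^{2u}:\ \sum_{i\le u}m_i^k=\sum_{i\le u}(m_i')^k\Big\}\ll n^{\varepsilon}X_k^{\theta_k(u)},
\end{align*}
where $\theta_k(u)$ denotes the best available exponent in $\int_0^1|\sum_{x\in\mathcal A(X_k,R)}e(x^k\alpha)|^{2u}\,d\alpha\ll X_k^{\theta_k(u)+\varepsilon}$. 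One always has $\theta_k(u)\ge\max(u,2u-k)$, with equality in the favourable ranges of $u$; for the remaining values one inserts the permissible exponents of Vaughan--Wooley and, for the smaller $k$, the moment estimates obtained by breaking classical convexity. It is crucial here that the $g_k$ run over the full interval $[0,X_k]$ rather than only $[0,Y_k]$, which is exactly why they are defined as they are.

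Since $K_1$ has seven elements, I would then pick positive reals $(u_k)_{k\in K_1}$ with $\sum_{k\in K_1}u_k^{-1}=1$ and apply H\"older:
\begin{align*}
\int_0^1|G(\alpha)|^2\,d\alpha=\int_0^1\prod_{k\in K_1}|g_k(\alpha)|^2\,d\alpha\le\prod_{k\in K_1}\Big(\int_0^1|g_k(\alpha)|^{2u_k}\,d\alpha\Big)^{1/u_k}\ll n^{\varepsilon}\prod_{k\in K_1}X_k^{\theta_k(u_k)/u_k}.
\end{align*}
Since $g_k(0)=(\pi(R)-\pi(R/2))^{r_k}$, so that $G(0)^2\asymp n^{2\sum_{k\in K_1}k^{-1}}(\log n)^{-O(1)}$, the right-hand side equals $n^{\varepsilon}G(0)^2 n^{-\rho}$ with $\rho=2\sum_{k\in K_1}k^{-1}-\sum_{k\in K_1}\theta_k(u_k)/(k u_k)$, and the lemma reduces to the finite problem of choosing the $u_k$, subject to $\sum u_k^{-1}=1$, so that $\rho\ge\tfrac34+\delta_1$. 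If all the deficiencies $\theta_k(u)-\max(u,2u-k)$ vanished one could take every $u_k\le k$ and would obtain the much stronger bound $\ll n^{\varepsilon}G(0)$; thus all the slack in the statement is spent on these deficiencies.

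The main obstacle is precisely this optimisation. Because $\sum_{k\in K_1}k^{-1}<1$, one cannot keep all seven exponents at or below their critical values $u_k=k$: at least one of them, in practice one of the smaller powers such as $k=5$, must be pushed into a range where the best known moment estimate has a positive deficiency, while enlarging $u_k$ to shrink that deficiency costs a term $k^{-1}-u_k^{-1}>0$ elsewhere. Forcing the resulting competing losses to sum to less than roughly $\sum_{k\in K_1}k^{-1}-\tfrac34\approx 0.186$ requires the sharpest available mean value estimates --- in particular for the quintic, sextic and septic smooth Weyl sums --- together with a careful numerical choice of the $u_k$; the optimal such choice is what yields the explicit constant $\delta_1=0.0008985$.
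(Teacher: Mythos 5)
Your proposal follows the same route as the paper: bound the weights $\tau_{r_k}(x)\le r_k!$ and observe that the products $p_1\cdots p_{r_k}$ lie in $\mathcal{A}(n^{1/k},R)$, so that $\int_0^1|G|^2\,d\alpha\ll n^{\varepsilon}\int_0^1\prod_{k\in K_1}|f_k(\alpha;n^{1/k},R)|^2\,d\alpha$; then apply H\"older with weights $u_k$ satisfying $\sum_{k\in K_1}u_k^{-1}=1$ and insert permissible exponents. Your $\rho$ is exactly the paper's $\alpha(K_1)=\sum_{k}\alpha_{k,s_k}/s_k$, so the reduction to a finite optimisation is correct. Two caveats. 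First, the lemma \emph{is} the numerical claim, and the proposal stops short of it: one must actually exhibit the choice (the paper takes $s_k=k$ for $k\in\{6,8,9,10,11\}$, $s_5=4$, and $s_7=6.397\ldots$ fixed by $\sum_k s_k^{-1}=1$, interpolating $U_{7,s_7}$ between the integer moments $U_{7,6}$ and $U_{7,7}$) and verify $\alpha(K_1)=0.7508985$ from the explicit Vaughan--Wooley values of $\lambda_{k,s}$; without this the constant $\delta_1$ is unsubstantiated. Second, your description of where the loss comes from is inaccurate: since $\sum_{k\in K_1}k^{-1}<1$, the constraint $\sum_k u_k^{-1}=1$ forces some $u_k<k$ rather than some $u_k>k$, and indeed all seven exponents can be (and in the paper are) kept at or below the critical value $u_k=k$; the deficit of roughly $0.186$ is incurred because the known permissible exponents satisfy $\lambda_{k,s_k}>s_k$ at \emph{every} $k\in K_1$, not because one exponent is pushed past the critical point. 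Relatedly, no fractional-moment estimates obtained by breaking classical convexity are needed here --- the paper uses only even moments for $\mathcal{K}_1$ and reserves the seventh-moment bound for $\mathcal{K}_2$ in Section 7.
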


Lemma \ref{lemmaboundG2} will be proved in Section 3 by applying the mean value estimates for smooth Weyl sums in Vaughan and Wooley \cite{VW,VW2}.

\begin{lemma}\label{lemmaboundF2}Let $\rho=0.004453$. Then one has
$$\int_0^{1}|F_3(\alpha)F(\alpha)|^2d\alpha\ll n^{-\frac{7}{9}+\rho}F_3(0)^2F(0)^2.$$\end{lemma}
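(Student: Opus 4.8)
The target is a mean value bound for $\int_0^1 |F_3(\alpha) F(\alpha)|^2 d\alpha$, where $F_3$ is a smooth cubic Weyl sum over a dyadic range and $F(\alpha) = \prod_{k \in K_2} f_k(\alpha)$ with $K_2 = \{4, 12, 13, 14\}$ and each $f_k$ a smooth Weyl sum over $R$-smooth numbers up to $Y_k = n^{\lambda/k}$. The plan is to interpret the integral combinatorially: $\int_0^1 |F_3(\alpha)F(\alpha)|^2 d\alpha$ counts (with smooth weights) solutions of the Diophantine equation $x_1^3 - x_2^3 + \sum_{k \in K_2}(u_k^k - v_k^k) = 0$ with $x_i \asymp n^{1/3}$ and $u_k, v_k \in \mathcal{A}(Y_k, R)$. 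The guiding heuristic is that the main contribution is diagonal, giving $\asymp n^{-2/3} F_3(0)^2 F(0)^2$ times the number of $R$-smooth tuples, and one must control the off-diagonal terms; the loss from $-2/3$ to $-7/9 + \rho$ reflects the fact that the twelfth, thirteenth and fourteenth powers in $F$ are short (length $n^{\lambda/k}$ with $\lambda < 1$) and so cannot by themselves force enough cancellation — the quartic sum $f_4$ must do extra work.

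The key steps, in order, would be: (i) First isolate the pair $F_3(\alpha) f_4(\alpha)$ and apply the method of Ford \cite{Ford} — as the authors themselves flag in the introduction, "we also follow the method of Ford \cite{Ford} to deal with $\int_0^1 |f_4(\alpha)^2 f_k(\alpha)^{2s}| d\alpha$" — to bound $\int_0^1 |f_4(\alpha)^2 f_k(\alpha)^{2s}| d\alpha$ for a suitable pairing, exploiting the seventh-moment estimate for the quartic smooth Weyl sum of Br\"udern and Wooley \cite{BW}. (ii) Use H\"older's inequality to split the six factors $F_3, F_3, f_4, f_{12}, f_{13}, f_{14}$ (counting multiplicities) so that each smooth Weyl sum appears in a moment for which a sharp (or near-sharp) estimate is available: the fourth moment of the cubic Weyl sum via Br\"udern \cite{Br} (this is where the Kloosterman-refinement input enters and why a smooth weight is attached to the cubic sum), Vaughan--Wooley mean-value estimates for the higher smooth Weyl sums $f_{12}, f_{13}, f_{14}$, and the Br\"udern--Wooley seventh moment for $f_4$. (iii) Bound the higher sums $f_k(\alpha)$ for $k \in \{12,13,14\}$ trivially by their length $f_k(0) \ll Y_k^{1+\varepsilon}$ wherever H\"older does not already assign them a useful moment, since they are so short that trivial estimation is essentially lossless on the scale of $n$. (iv) Assemble the exponents: each ingredient contributes a power of $n$ built from $\lambda$ and the relevant permissible exponents, and optimizing the H\"older weights (and the earlier choice of $\lambda$ in \eqref{definelambda}) produces the numerical exponent $-7/9 + \rho$ with $\rho = 0.004453$.

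The main obstacle is step (i)–(ii): extracting genuine savings from the quartic sum. A purely diagonal treatment of $F_3(\alpha)f_4(\alpha)$ would only give the classical fourth-moment bound, which is too weak; one needs Ford's argument, combining the quartic seventh moment with an auxiliary count for $\int_0^1 |f_4(\alpha)^2 f_k(\alpha)^{2s}|d\alpha$, to beat convexity. Getting the bookkeeping of the H\"older exponents to close with a positive margin — so that $\rho$ comes out as small as $0.004453$ rather than being swallowed — is delicate, because the short lengths $Y_k = n^{\lambda/k}$ for $k = 12, 13, 14$ leave very little room: the budget $-7/9 = -0.7\overline{7}$ is only marginally better than the trivial diagonal value one would hope for, and every factor must be placed optimally. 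I expect the proof to proceed by stating an auxiliary lemma for the quartic-plus-cubic piece (following \cite{Ford}), then a short H\"older computation combining it with \cite{Br}, \cite{VW}, \cite{VW2}, followed by explicit numerical verification that the exponents sum correctly under the choice of $\lambda$.
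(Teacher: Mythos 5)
Your overall framing (count solutions of the underlying Diophantine equation, use Ford's method for $\int_0^1|f_4(\alpha)^2f_k(\alpha)^{2s}|d\alpha$, optimize H\"older weights together with the choice of $\lambda$) captures part of the machinery, but the central idea of the paper's proof is missing, and without it your plan cannot reach $\rho=0.004453$. The paper does not split $|F_3(\alpha)F(\alpha)|^2$ by H\"older into separate moments of $F_3$ and of the $f_k$. Instead it first differences the cubic: since the smooth variables force $|x_1^3-x_2^3|\le 4n^{\lambda}$, one gets $|x_1-x_2|\le H=\frac{16}{3}n^{\lambda-2/3}$, so writing $x_1-x_2=h$ reduces $\mathcal{T}$ to $X_3\mathcal{I}_1$ plus a count $\mathcal{S}$ over short $h$ (Lemma \ref{lemmaboundT1}), and then a refined Cauchy--Schwarz argument (Lemma \ref{lemmaboundS}) yields $\mathcal{S}\ll H\mathcal{I}_2+n^{\varepsilon}(H\mathcal{I}_2)^{1/2}Y_4Y_{12}Y_{13}$, where $\mathcal{I}_2$ carries an extra factor $|f_{14}|^2$; the saving of the factor $Y_{14}$ in the square-root term, relative to the routine Davenport iteration, is exactly what closes the gap. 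The paper notes explicitly that even the best possible outcome of the standard iteration gives only $\rho'=0.0109875$, which is insufficient, and a H\"older decomposition with no differencing at all is weaker still. This differencing-plus-refined-Cauchy--Schwarz step is the ``development of Davenport's iterative method'' advertised in the introduction, and it is the piece your proposal does not contain.

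Two further inputs are misplaced. The fourth-moment estimate for the cubic Weyl sum from \cite{Br} is a major-arc estimate (for $\int_{\mathfrak{M}(Q)}|F_3(\alpha)|^4d\alpha$) and is used only in Section 8 to prove Lemma \ref{lemmaintm29}; over the full unit interval only Hua's bound $n^{2/3+\varepsilon}$ is available, which is far too weak to insert into a H\"older split here. Likewise the Br\"udern--Wooley seventh moment of the quartic smooth sum \cite{BW} enters only in Section 7 (the bound for $\mathcal{K}_2$), again in the service of Lemma \ref{lemmaintm29}, not of Lemma \ref{lemmaboundF2}. Where Ford's method genuinely appears in the present lemma is in bounding $S_{k,s}=\int_0^1|f_4(\alpha)^2f_k(\alpha)^{2s}|d\alpha$ for $(k,s)\in\{(13,4),(14,4),(14,5)\}$, which feeds into $\mathcal{I}_2$ \emph{after} the differencing; so your step (i) identifies the right auxiliary quantity but attaches it to the wrong stage of the argument.
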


We shall prove Lemma \ref{lemmaboundF2} by developing Davenport's iterative method in Section 4. Via a standard application of Lemmas \ref{lemmaboundG2}-\ref{lemmaboundF2} and Weyl's inequality, we can prove the following result.

\begin{lemma}\label{lemmaintmQ1}Let
$$Q_1=n^{\frac{1}{2}-\frac{1}{36}+\rho}.$$
 Then we have
$$\int_{\mathfrak{m}(Q_1)}|\mathcal{F}(\alpha)|d\alpha \ll \mathcal{F}(0)n^{-1-\frac{1}{2}\delta_1+\varepsilon}.$$
\end{lemma}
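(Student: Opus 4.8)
The plan is to split the integrand $\mathcal F(\alpha)=F_2(\alpha)F_3(\alpha)F(\alpha)G(\alpha)$ using a pointwise Weyl-type bound on the square $F_2(\alpha)$ on the minor arcs $\mathfrak m(Q_1)$, retaining $F_3(\alpha)F(\alpha)$ and $G(\alpha)$ for $L^2$ averaging. First I would apply Dirichlet's theorem: for $\alpha\in\mathfrak m(Q_1)$ there is a rational $a/q$ with $(a,q)=1$, $q\le n^{1/2}$ (or a suitable auxiliary parameter), $|\alpha-a/q|\le 1/(qn^{1/2})$, and by the definition of $\mathfrak m(Q_1)$ one cannot have $q\le Q_1$ with $|\alpha-a/q|\le Q_1/(qn)$, so either $q>Q_1$ or $q|q\alpha-a|>Q_1/n$. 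Weyl's inequality for the square then yields $\sup_{\alpha\in\mathfrak m(Q_1)}|F_2(\alpha)|\ll n^{1/2+\varepsilon}(q^{-1}+X_2^{-1}+q X_2^{-2})^{1/2}\ll n^{1/2+\varepsilon}Q_1^{-1/2}$, using $Q_1=n^{1/2-1/36+\rho}$ and $X_2=n^{1/2}$. Since $F_2(0)\asymp n^{1/2}$, this gives the decisive saving $\sup_{\mathfrak m(Q_1)}|F_2(\alpha)|\ll F_2(0)\,n^{-1/72+\rho/2+\varepsilon}$.

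Next I would pull out this supremum and apply Cauchy–Schwarz to the remaining integral:
\begin{align*}
\int_{\mathfrak m(Q_1)}|\mathcal F(\alpha)|d\alpha
&\le \Big(\sup_{\alpha\in\mathfrak m(Q_1)}|F_2(\alpha)|\Big)\int_0^1|F_3(\alpha)F(\alpha)G(\alpha)|d\alpha\\
&\le \Big(\sup_{\alpha\in\mathfrak m(Q_1)}|F_2(\alpha)|\Big)\Big(\int_0^1|F_3(\alpha)F(\alpha)|^2d\alpha\Big)^{1/2}\Big(\int_0^1|G(\alpha)|^2d\alpha\Big)^{1/2}.
\end{align*}
Now I insert Lemma 1.5, which gives $\int_0^1|G(\alpha)|^2d\alpha\ll n^{-3/4-\delta_1+\varepsilon}G(0)^2$, and Lemma 1.4 (with the index $4\in K_2$ absorbed; note $F(\alpha)=\prod_{k\in K_2}f_k(\alpha)$ and Lemma 1.4 is stated exactly for $F_3(\alpha)F(\alpha)$), which gives $\int_0^1|F_3(\alpha)F(\alpha)|^2d\alpha\ll n^{-7/9+\rho}F_3(0)^2F(0)^2$. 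Multiplying out, and writing $\mathcal F(0)=F_2(0)F_3(0)F(0)G(0)$, the bound becomes
$$
\int_{\mathfrak m(Q_1)}|\mathcal F(\alpha)|d\alpha\ll \mathcal F(0)\, n^{-\frac{1}{72}+\frac{\rho}{2}}\cdot n^{-\frac{7}{18}+\frac{\rho}{2}}\cdot n^{-\frac{3}{8}-\frac{\delta_1}{2}}\cdot n^{\varepsilon},
$$
where the exponent is $-\tfrac1{72}-\tfrac7{18}-\tfrac38+\rho-\tfrac{\delta_1}{2}+\varepsilon$.

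Finally I would check the arithmetic of the exponent: $-\tfrac1{72}-\tfrac7{18}-\tfrac38 = -\tfrac1{72}-\tfrac{28}{72}-\tfrac{27}{72}=-\tfrac{56}{72}=-\tfrac{7}{9}$, so the exponent is $-\tfrac79+\rho-\tfrac{\delta_1}{2}+\varepsilon$. Since $\rho=0.004453$ and $\delta_1=0.0008985$, one has $-\tfrac79+\rho = -0.7777\ldots+0.004453 < -1-\tfrac{\delta_1}{2}$? No — here $-\tfrac79\approx -0.7778$, not $-1$; so to reach the claimed bound $\mathcal F(0)n^{-1-\delta_1/2+\varepsilon}$ I must account for the remaining factor. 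The point is that $F(\alpha)$ and $G(\alpha)$ also contain variables beyond those used in the $L^2$ estimates, OR — more likely on reflection — the $L^2$ bounds already incorporate enough power loss; re-examining, $\mathcal F(0)\asymp n^{\sum_{i=2}^{14}1/i -1+o(1)}$ is not the right normalization, and instead the correct reading is that $F_3(0)^2F(0)^2 G(0)^2$ carries its own size so that the quoted exponents $-7/9$ and $-3/4$ are already relative to those squared generating-function values; combining with the Weyl saving $n^{-1/72+\rho/2}$ and noting $-1/72 - 7/18 - 3/8 = -7/9$ is a coincidence to be replaced by the honest computation $\tfrac12(-\tfrac79+\rho) + \tfrac12(-\tfrac34-\delta_1) - \tfrac1{72}+\tfrac{\rho}{2}$, the dominant obstacle is simply verifying that this sum is $\le -1-\tfrac12\delta_1$ with the stated numerical values of $\rho$ and $\delta_1$. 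That numerical verification — confirming the three contributions combine below $-1-\delta_1/2$ with the precise constants chosen — is the one genuinely delicate point; everything else (Dirichlet dissection, Weyl's inequality for squares, Cauchy–Schwarz) is routine.
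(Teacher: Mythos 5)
Your strategy is exactly the paper's: take the supremum of $|F_2|$ over $\mathfrak{m}(Q_1)$ via Weyl's inequality, apply Cauchy--Schwarz to $\int_0^1|F_3(\alpha)F(\alpha)G(\alpha)|\,d\alpha$, and insert Lemma \ref{lemmaboundF2} and Lemma \ref{lemmaboundG2}. The gap is an arithmetic slip in the $F_2$ factor, and it is what derails your final paragraph. From $\sup_{\alpha\in\mathfrak{m}(Q_1)}|F_2(\alpha)|\ll F_2(0)^{1+\varepsilon}Q_1^{-1/2}$ with $Q_1=n^{1/2-1/36+\rho}$ one gets
$$Q_1^{-1/2}=n^{-\frac14+\frac1{72}-\frac{\rho}{2}},$$
not $n^{-1/72+\rho/2}$ as you wrote: you have dropped the $-\tfrac14$ and flipped the signs of the remaining terms. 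With the correct value the total exponent is
$$\Big(-\tfrac14+\tfrac1{72}-\tfrac{\rho}{2}\Big)+\Big(-\tfrac{7}{18}+\tfrac{\rho}{2}\Big)+\Big(-\tfrac38-\tfrac{\delta_1}{2}\Big)
=\tfrac{-18+1-28-27}{72}-\tfrac{\delta_1}{2}=-1-\tfrac{\delta_1}{2},$$
the $\rho$'s cancelling exactly --- this is precisely how $Q_1$ is engineered --- so the lemma follows at once and there is no ``genuinely delicate numerical verification'' left to do. Your closing paragraph, which lands on the exponent $-\tfrac79+\rho-\tfrac{\delta_1}{2}$ and then speculates about normalizations, is chasing a phantom created by this slip: that exponent is indeed bigger than $-1$ and would not prove the lemma, whereas the honest computation gives exactly $-1-\delta_1/2+\varepsilon$. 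Two minor further points: the Dirichlet dissection you invoke is unnecessary (Weyl's inequality on $\mathfrak{m}(Q)$ gives $\sup|F_2(\alpha)|\ll F_2(0)^{1+\varepsilon}Q^{-1/2}$ directly), and since $F_2$ carries the smooth weight $w(x/X_2)$ you need partial summation before applying Weyl's inequality, as the paper notes.
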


Note that $Q_1$ is very large. We shall develop the large sieve inequality to prove the following result.
\begin{lemma}\label{lemmaintmQ2}Let
\begin{align}\label{defineQ2}Q_2=n^{\frac{4}{9}+2\rho}.\end{align}
 Then we have
$$\int_{\mathfrak{m}(Q_2)\cap \mathfrak{M}(Q_1)}|\mathcal{F}(\alpha)|d\alpha \ll \mathcal{F}(0)n^{-1-\frac{4}{9}\delta_1}.$$
\end{lemma}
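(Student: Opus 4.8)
The plan is to dissect $\mathfrak{m}(Q_2)\cap\mathfrak{M}(Q_1)$ into $O(\log n)$ dyadic major-arc shells and, on each of them, to combine the large sieve bound \eqref{J2} with supremum estimates for the remaining factors of $\mathcal{F}$. Since $Q_2\le Q_1$ we have $\mathfrak{M}(Q_2)\subseteq\mathfrak{M}(Q_1)$, and as $\mathfrak{M}(Q_1)\subseteq[n^{-1/2},1+n^{-1/2}]$ the set in question equals $\mathfrak{M}(Q_1)\setminus\mathfrak{M}(Q_2)$; choosing $J\asymp\log n$ with $2^JQ_2\ge Q_1$, it lies in $\bigcup_{0\le j<J}\mathfrak{n}(2^jQ_2)$. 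Hence it suffices to show
$$\int_{\mathfrak{n}(Q)}|\mathcal{F}(\alpha)|\,d\alpha\ll\mathcal{F}(0)\,n^{-1-\frac49\delta_1-\varepsilon_0}$$
for a fixed $\varepsilon_0>0$ and every dyadic $Q$ with $Q_2\le Q\le Q_1$, and then sum the $O(\log n)$ contributions.

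Now fix such a $Q$. On $\mathfrak{n}(Q)=\mathfrak{M}(2Q)\setminus\mathfrak{M}(Q)$ every $\alpha$ has a representation $\alpha=a/q+\beta$, $(a,q)=1$, $q\le 2Q$, with $q(1+n|\beta|)\gg Q$ (if $q\le Q$ then $\alpha\notin\mathfrak{M}(q,a;Q)$ forces $n|\beta|\gg Q/q$, while if $q>Q$ this is immediate). The classical major-arc bound for the cubic Weyl sum then gives $\sup_{\mathfrak{n}(Q)}|F_3(\alpha)|\ll n^{1/3}Q^{-1/3+\varepsilon}$, while for $k\in K_2$ one has $\sup_{\mathfrak{n}(Q)}|f_k(\alpha)|\ll n^{\lambda/k}Q^{-\kappa_k+\varepsilon}$ for some $\kappa_k\ge0$ (for the short ranges $Y_k=n^{\lambda/k}$, where $q$ may exceed $Y_k$, one falls back on a Weyl or van der Corput bound, or at worst the trivial bound $\kappa_k=0$; the longest sum $f_4$ may instead be folded into the Hölder dissection below). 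Pulling $F_3F$ out through its supremum, it remains to bound $\int_{\mathfrak{n}(Q)}|F_2(\alpha)G(\alpha)|\,d\alpha$.

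Here I would apply Hölder's inequality, distributing $|F_2|=\prod_{k\in K_1}|F_2|^{1/(2s_k)}$ for positive rationals $s_k$ with $\sum_{k\in K_1}(2s_k)^{-1}=1$, so that
$$\int_{\mathfrak{n}(Q)}|F_2G|\,d\alpha\le\prod_{k\in K_1}\Big(\int_{\mathfrak{n}(Q)}|F_2(\alpha)g_k(\alpha)^{2s_k}|\,d\alpha\Big)^{\frac1{2s_k}}=\prod_{k\in K_1}J_{k,s_k}(Q)^{\frac1{2s_k}},$$
and then invoke \eqref{J2} for each factor. The key point is that \eqref{J2} beats the trivial bound \eqref{J1} — which is essentially all that a supremum bound on $F_2$ gives — by the factor $(Q^2/n)^{\frac1k(k-2s_k+\lambda_{k,s_k})}<1$, and it is this genuine gain that makes the estimate close. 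Multiplying, and inserting the supremum bounds for $F_3F$, one obtains a bound $\int_{\mathfrak{n}(Q)}|\mathcal{F}|\ll n^\varepsilon\,\mathcal{F}(0)\,n^{-1}\,n^{A}\,Q^{-B}$ with $B>0$, where $A$ and $B$ depend only on $\lambda$, on $\rho$ (through $Q_2$), and on the admissible values of $\lambda_{k,s_k}$ ($k=5,\dots,11$) and $\kappa_k$ ($k\in K_2$). Since $B>0$, the worst case is $Q=Q_2=n^{4/9+2\rho}$, and the lemma comes down to verifying the single numerical inequality $A-(\tfrac49+2\rho)B<-\tfrac49\delta_1$ for a well-chosen weight vector $(s_k)_{k\in K_1}$.

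The substance of the argument — and its main difficulty — lies in this last verification. Because the saving of \eqref{J2} over \eqref{J1} is itself only a small power of $n$ once $Q$ is as large as $Q_2$, one must simultaneously push the permissible exponents $\lambda_{k,s_k}$ to their best available values (from Vaughan--Wooley \cite{VW,VW2} and the associated mean-value estimates), tune the weights $s_k$ — keeping each $2s_k$ small enough that \eqref{J2} genuinely improves \eqref{J1} — and extract all one can from the supremum bounds for $F_3$ and the $f_k$, $k\in K_2$, on the atypically wide arcs $\mathfrak{M}(2Q)$, where $2Q$ can be as large as $\approx n^{1/2-1/36+\rho}$ and the usual major-arc asymptotics break down for the short exponential sums. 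The constants $\delta_1=0.0008985$, $\rho=0.004453$ and the exponent $4/9$ in $Q_2$ are calibrated precisely so that the numerical inequality holds with a small margin once $\lambda$ is fixed as in \eqref{definelambda}.
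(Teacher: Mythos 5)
Your overall architecture --- dyadic shells between $Q_2$ and $Q_1$, with the large sieve estimate \eqref{J2} supplying the gain on each shell --- matches the paper's, but the way you combine the factors of $\mathcal{F}$ on a shell contains a gap that cannot be closed as written. You pull $F_3(\alpha)F(\alpha)$ out by a supremum bound and then apply H\"older to $\int_{\mathfrak{n}(Q)}|F_2G|\,d\alpha$ with weights satisfying $\sum_{k\in K_1}(2s_k)^{-1}=1$, so that each $g_k$ appears through its $2s_k$-th moment. But \eqref{J2} is not available for such weights: its rigorous form (Lemma \ref{lemmaJks}, whose proof rests on Lemma \ref{lemmacalD}) requires $s\in\Z^{+}$ and $2s\ge k+1$, and since
$$\sum_{k=5}^{11}\frac{1}{k+1}=\frac16+\frac17+\cdots+\frac1{12}<0.82<1,$$
no admissible choice of the $s_k$ exists. (Even waiving integrality and the bound $2s\ge k+1$, permissible exponents for moments of order below $k+1$ are governed by diagonal solutions and yield much weaker savings $\alpha_{k,s}$ than the values $\approx 0.75$ you are implicitly relying on.) The constraint you do impose --- keeping $2s_k$ small so that \eqref{J2} beats \eqref{J1} --- pushes in exactly the wrong direction.

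The paper avoids this by never taking a supremum of $F_3F$. After replacing $F_2$ by $F_2^\ast$ (the error $\Delta_2\ll Q_1^{1/2+\varepsilon}$ is absorbed using \eqref{boundF3FG}), it applies Cauchy--Schwarz in the form
$$\int_{\mathfrak{M}(Q_1)\cap\mathfrak{m}(Q_2)}|F_2^\ast F_3FG|\,d\alpha\le\Big(\int_{\mathfrak{m}(Q_2)}|F_2^\ast F_3^2F^2|\,d\alpha\Big)^{1/2}\Upsilon(Q_2)^{1/2},$$
with $\Upsilon$ as in \eqref{defineUpsi}. The first factor is controlled by $\sup_{\mathfrak{m}(Q_2)}|F_2^\ast|\ll F_2(0)Q_2^{-1/2+\varepsilon}$ together with the \emph{mean value} $\int_0^1|F_3F|^2\,d\alpha\ll F_3(0)^2F(0)^2n^{-7/9+\rho}$ of Lemma \ref{lemmaboundF2} --- a far stronger input than any pointwise bound, and the only realistic one, since for $k\in K_2$ the sums $f_k$ have length $Y_k=n^{\lambda/k}$ smaller than $q$ on most of $\mathfrak{M}(Q_1)$, so no pointwise saving exists (as you concede). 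The second factor carries $|G|^2$ rather than $|G|$, so the H\"older decomposition on each dyadic shell only needs $\sum_{k\in K_1}s_k^{-1}=1$, i.e.\ $\sum_{k\in K_1}(2s_k)^{-1}=\tfrac12$, which is comfortably compatible with $2s_k\ge k+1$; the paper reuses the weights of Lemma \ref{lemmaboundK1} and obtains the exponent $Q^{2-2\alpha(K_1)}=Q^{\frac12-2\delta_1}$. If you wish to repair your argument you must square $G$ (via Cauchy--Schwarz against the mean square of $F_3F$) before distributing the large sieve input over $K_1$; with $G$ to the first power the decomposition is infeasible.
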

Note that $\frac{4}{9}+2\rho=\frac{1}{2}-2(\frac{1}{36}-\rho)<0.4533505$. This may be compared
with the work of Ford \cite{Ford} who dealt with the integration over the minor arcs $\mathfrak{m}(n^{\mu})$ with $\mu=0.461039$. Although $Q_2$ is (much) smaller than $Q_1$, it is still difficult to deal with the integration over $\mathfrak{M}(Q_2)$ by using the routine technique (see Theorem 4.1 \cite{V}).  We make use of an estimate for the seventh moment of the quartic smooth Weyl sum in \cite{BW} and an estimate for the fourth moment of a cubic Weyl sum in \cite{Br} to prove the following.
\begin{lemma}\label{lemmaintm29}One has
$$\int_{\mathfrak{m}(n^{2/9})\cap \mathfrak{M}(Q_2)}|\mathcal{F}(\alpha)|d\alpha \ll \mathcal{F}(0)n^{-1-\frac{2}{9}\delta_1}.$$
\end{lemma}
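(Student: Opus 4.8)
The plan is to bound the integral
$$\int_{\mathfrak{m}(n^{2/9})\cap \mathfrak{M}(Q_2)}|\mathcal{F}(\alpha)|d\alpha$$
by combining a pointwise estimate for the factor $F_2(\alpha)F_3(\alpha)$ on the minor-arc-like region $\mathfrak{m}(n^{2/9})$ with a mean-value estimate for the remaining factor $F(\alpha)G(\alpha)$ over the major arcs $\mathfrak{M}(Q_2)$. Concretely, on $\mathfrak{m}(n^{2/9})\cap\mathfrak{M}(Q_2)$ the square $F_2(\alpha)$ and cube $F_3(\alpha)$ are amenable to Weyl-type estimates: since $\alpha$ lies in $\mathfrak{M}(Q_2)$ it has a rational approximation $a/q$ with $q\le Q_2$ and $|\alpha-a/q|\le Q_2/(qn)$, while $\alpha\notin\mathfrak{M}(n^{2/9})$ forces either $q>n^{2/9}$ or $|\alpha-a/q|>n^{2/9}/(qn)$. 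Feeding this into the classical Weyl inequality for $F_2$ (or the Poisson-summation/van der Corput treatment of the smooth square $F_2$) and into Weyl's inequality for the cubic $F_3$ yields a bound $\sup |F_2(\alpha)F_3(\alpha)|\ll n^{1/2+1/3}\cdot n^{-\tau+\varepsilon}$ for an explicit saving $\tau$ governed by $n^{2/9}$; the relevant exponent is essentially that the cube gains roughly $n^{2/9\cdot ?}$ while the square gains a smaller amount, and one optimizes so that the total saving beats $\delta_1$.

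Next I would handle the complementary factor by an $L^1$ major-arc estimate. Writing $\mathcal{F}=F_2F_3\cdot F G$ and pulling out the supremum of $|F_2F_3|$, it remains to estimate $\int_{\mathfrak{M}(Q_2)}|F(\alpha)G(\alpha)|\,d\alpha$. Here I would invoke the quoted input results: the estimate for the seventh moment of the quartic smooth Weyl sum from Br\"udern and Wooley \cite{BW}, and the estimate for the fourth moment of a cubic Weyl sum from Br\"udern \cite{Br}, together with the mean-value estimates for the smooth Weyl sums $g_k$ ($k\in K_1$) and $f_k$ ($k\in K_2$) already available. By Hölder's inequality one distributes the twelve-ish factors in $FG$ among these moment estimates, and over the major arcs each smooth Weyl sum is of size comparable to its trivial value times a factor coming from the singular series/singular integral, so that $\int_{\mathfrak{M}(Q_2)}|FG|\ll F(0)G(0)n^{-1+\varepsilon}Q_2^{c}$ for a suitable nonnegative exponent $c$; the point is that on major arcs as small as $Q_2$ this exponent $c$ is small enough that, after multiplying by the pointwise bound for $|F_2F_3|$ and comparing with $\mathcal{F}(0)n^{-1}\asymp n^{1/2+1/3-1+\cdots}F(0)G(0)F_3(0)$, one is left with a genuine power saving of size exceeding $\frac{2}{9}\delta_1$.

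The main obstacle, as the authors themselves flag ("it is still difficult to deal with the integration over $\mathfrak{M}(Q_2)$ by using the routine technique"), is that $Q_2=n^{4/9+2\rho}$ is too large for the naive major-arc bound of Theorem 4.1 in \cite{V} to suffice: the crude estimate $\int_{\mathfrak{M}(Q_2)}|FG| \ll F(0)G(0)n^{-1}\cdot(\text{something growing like }Q_2^{c})$ loses too much. The delicate part is therefore to arrange the Hölder split so that the quartic seventh moment of \cite{BW} (which breaks classical convexity) and the cubic fourth moment of \cite{Br} (via the Kloosterman refinement) are applied to exactly the factors where the extra saving is needed, and to verify that the resulting exponent, after accounting for the pointwise loss from $|F_2F_3|$ on $\mathfrak{m}(n^{2/9})$, is numerically below $-1-\frac{2}{9}\delta_1$. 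I expect this to come down to a careful bookkeeping of exponents — choosing the Hölder weights, inserting the permissible mean-value exponents for each $g_k$ and $f_k$, and checking the final inequality — rather than any new conceptual device, with the quartic and cubic moment inputs doing the essential analytic work.
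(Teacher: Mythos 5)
Your overall decomposition --- pulling $\sup|F_2(\alpha)F_3(\alpha)|$ out of the integral and then estimating $\int_{\mathfrak{M}(Q_2)}|F(\alpha)G(\alpha)|\,d\alpha$ --- fails quantitatively. On $\mathfrak{m}(n^{2/9})\cap\mathfrak{M}(Q_2)$ the pointwise bound for $F_2F_3$ is attained at $q\asymp n^{2/9}$, $\beta\approx 0$, where $|F_2F_3|\asymp F_2(0)F_3(0)q^{-1/2-1/3}=F_2(0)F_3(0)n^{-5/27}$, and no Weyl-type estimate improves on this (Weyl for $F_3$ gives only $q^{-1/4}$, worse than the Gauss-sum bound $q^{-1/3}$, which is sharp). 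Your method therefore needs $\int_{\mathfrak{M}(Q_2)}|FG|\,d\alpha\ll F(0)G(0)\,n^{-1+5/27-\frac{2}{9}\delta_1}\approx F(0)G(0)\,n^{-0.815}$. The available inputs give nothing close: Cauchy--Schwarz with $\int_0^1|G|^2\ll G(0)^2n^{-3/4-\delta_1}$ and $\int_0^1|F|^2=\mathcal{I}_1\ll F(0)^2n^{-0.48\lambda}$ yields about $F(0)G(0)n^{-0.60}$, and even the large-sieve-restricted versions over $\mathfrak{M}^\ast(Q_2)$ only reach roughly $F(0)G(0)n^{-0.65}$ (for $F$ the large sieve gains nothing, since $F^2$ has length $n^{\lambda}>Q_2^2$). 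Two further structural problems: (i) you propose to apply Br\"udern's fourth moment of the cubic sum to $\int_{\mathfrak{M}(Q_2)}|FG|$, but $FG$ contains no cubic sum --- that estimate applies to $F_3$, which you have already removed as a supremum, so your plan has no place to use it; (ii) the claim that on $\mathfrak{M}(Q_2)$ each $f_k$ is comparable to its major-arc approximation is false, since e.g.\ $f_{14}$ has length $Y_{14}=n^{\lambda/14}\approx n^{0.065}$, far shorter than $Q_2\approx n^{0.453}$; this is precisely the failure of the ``routine technique'' the authors flag. Discarding $F_2$ from the integrand also destroys the mechanism (rapid decay of $F_2^\ast$ for $|\beta|>n^{\nu-1}$) by which the paper reduces $\mathfrak{M}(Q)$ to the thin arcs $\mathfrak{M}^\ast(Q)$ where the large sieve is efficient.

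The paper instead keeps every factor inside the integral and distributes $F_2^\ast$ by H\"older. After writing $F_2=F_2^\ast+\Delta_2$ (the $\Delta_2$ part absorbed by the $L^1$ bound for $F_3FG$) and decomposing dyadically over $n^{2/9}\le Q\le Q_2$, it uses
\begin{align*}
\int_{\mathfrak{M}^\ast(Q)\setminus\mathfrak{M}(Q/2)}|F_2^\ast F_3FG|\,d\alpha\ \le\ \Theta_1(Q)^{1/4}\,\Theta_2(Q)^{1/4}\,\Upsilon(Q/2)^{1/2},
\end{align*}
with $\Theta_1=\int|F_2^{\ast 2}F_3^4|$, $\Theta_2=\int_{\mathfrak{M}^\ast(Q)}|F|^4$ and $\Upsilon=\int|F_2^\ast G^2|$. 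Br\"udern's Kloosterman-refined bound is applied to $\int_{\mathfrak{M}(Q)}|F_3|^4$ inside $\Theta_1$ together with $F_2^\ast\ll F_2(0)Q^{-1/2}$; the Br\"udern--Wooley seventh moment enters through $\int_0^1|F|^4=\mathcal{K}_2$ after a large sieve step in $\Theta_2$; and $\Upsilon$ is controlled by the large sieve for the $g_k$. The resulting margin is $\delta_2\approx 0.0017$, i.e.\ razor-thin, so this specific splitting (or one of equal strength) is essential; the sup-times-$L^1$ factorization cannot be repaired by bookkeeping.
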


Now in order to prove Theorem \ref{theorem1}, it remains to deal with the integration on the (narrow) major arcs $\mathfrak{M}(n^{2/9})$. And the proof will be routine after establishing Lemma \ref{lemmaintm29}.

\vskip3mm

\section{Mean value estimates for smooth Weyl sums}

We define
\begin{align*}f_k(\alpha;N^{1/k},R)=\sum_{\substack{x\in \mathcal{A}(N^{1/k},R)}}e(x^k\alpha).\end{align*}
We say that an exponent $\lambda_{k,s}$ is permissible if it has the property that, for each $\varepsilon>0$, there exists a positive number $\eta=\eta(\varepsilon,k,s)$ such that whenever $R\le N^{\eta}$, one has
\begin{align}\label{meansmooth}\int_0^{1}|f_k(\alpha;N^{1/k},R)|^{2s}d\alpha \ll N^{\frac{1}{k}(\lambda_{k,s}+\varepsilon)}.\end{align}
Throughout this paper, we use $\lambda_{k,s}$ to denote permissible exponents. Since we only consider finitely many pairs of $k$ and $s$, we may say that for each $\varepsilon>0$, there exists a positive number $\eta=\eta(\varepsilon)$ such that if $R\le N^{\eta}$ then \eqref{meansmooth} holds. Furthermore, if we use $\lambda_{k,s}^\ast=\lambda_{k,s}+10^{-10}$ instead of $\lambda_{k,s}$, then there exists an absolute constant $\eta>0$ such that whenever $R\le N^{\eta}$, one has
\begin{align*}\int_0^{1}|f_k(\alpha;N^{1/k},R)|^{2s}d\alpha \ll N^{\frac{1}{k}\lambda_{k,s}^\ast}.\end{align*}

It is important in the proof that $\delta_1$ in \eqref{boundK1} and $\delta_2$ in \eqref{boundThetawithdelta2} are positive, and the values of $\delta_1$ and $\delta_2$ heavily depend on numerical values of permissible exponents. However, if we use $\lambda_{k,s}^\ast$ instead of $\lambda_{k,s}$ in the proof, then \eqref{boundK1} and \eqref{boundThetawithdelta2} will hold with $\delta_1$ and $\delta_2$ replaced by $\delta_1^\ast$ and $\delta_2^\ast$ respectively, where $\delta_j^\ast>\delta_{j}-10^{-9}$. In particular, both $\delta_1^\ast$ and $\delta_2^\ast$ are positive. Therefore, we shall not distinguish $\lambda_{k,s}$ and $\lambda_{k,s}^\ast$.

For permissible exponents $\lambda_{k,s}$, we introduce
\begin{align}\label{definealphaks}\alpha_{k,s}=\frac{2s-\lambda_{k,s}}{k},\end{align}
and therefore,
\begin{align}\label{boundfks-alpha}\int_0^{1}|f_k(\alpha;N^{1/k},R)|^{2s}d\alpha \ll N^{\frac{2s}{k}-\alpha_{k,s}}.\end{align}

We define
\begin{align*}\mathcal{K}_1=\int_0^{1}\prod_{k\in K_1}|f_{k}(\alpha;N^{1/k},R)|^2d\alpha.\end{align*}
Let
$$\kappa_1=\sum_{k\in K_1}\frac{2}{k}.$$
Then we have the trivial bound
$$\mathcal{K}_1\ll N^{\kappa_1}.$$

\begin{lemma}\label{lemmaboundK1}Let $\delta_1$ be given in Lemma \ref{lemmaboundG2}. Then one has
\begin{align}\label{boundK1}\mathcal{K}_1 \ll N^{\kappa_1-\frac{3}{4}-\delta_1}.\end{align}
\end{lemma}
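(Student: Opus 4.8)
The plan is to prove \eqref{boundK1} by combining the trivial bound on some of the factors in $\prod_{k\in K_1}|f_k(\alpha;N^{1/k},R)|^2$ with nontrivial mean value estimates for smooth Weyl sums on the remaining factors, exactly as in the proof of Lemma~\ref{lemmaboundG2}. Indeed, $\mathcal{K}_1$ and $\int_0^1|G(\alpha)|^2d\alpha$ (normalised by $G(0)^2$) differ only in that $g_k$ is a sum over products of $r_k$ primes in a dyadic range whereas $f_k(\alpha;N^{1/k},R)$ runs over all $R$-smooth numbers up to $N^{1/k}=Y_k$ (with $N=n^{\lambda}$ for the relevant $N$). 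The key point is that the permissible-exponent machinery of Vaughan and Wooley \cite{VW,VW2} applies equally to both: a standard argument (splitting variables into dyadic blocks and using H\"older's inequality, cf.\ the reduction in \cite{V}) shows that any $2s$-th moment bound of the shape \eqref{meansmooth} for $f_k(\alpha;N^{1/k},R)$ transfers to the same bound for the sum over products of $r_k$ primes, at the cost of an $N^{\varepsilon}$, which is harmless here. So the content is purely numerical: select, for each $k\in K_1=\{5,6,7,8,9,10,11\}$, the best available permissible exponent $\lambda_{k,s_k}$ and verify the arithmetic.

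First I would set up the H\"older dissection. For each $k\in K_1$ I introduce nonnegative weights and write
\begin{align*}
\mathcal{K}_1\le \prod_{k\in K_1}\left(\int_0^1|f_k(\alpha;N^{1/k},R)|^{2s_k}d\alpha\right)^{\theta_k}\cdot(\text{trivial factors}),
\end{align*}
with exponents $\theta_k$ chosen so that the total $L^1$ mass is accounted for; concretely one keeps $|f_k|^2$ on a set of $k$'s where a good moment is cheap and estimates the rest trivially by $N^{2/k}$. After inserting \eqref{boundfks-alpha}, the exponent of $N$ becomes $\kappa_1-\sum_{k}(\text{contribution of }\alpha_{k,s_k})$, and the task reduces to checking that this saving exceeds $\tfrac34+\delta_1$ with $\delta_1=0.0008985$. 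Because Lemma~\ref{lemmaboundG2} asserts precisely the bound $n^{-3/4-\delta_1+\varepsilon}G(0)^2$ for $\int_0^1|G(\alpha)|^2d\alpha$, and because (as noted) $\mathcal{K}_1$ is governed by the same permissible exponents, the same numerical configuration that proves Lemma~\ref{lemmaboundG2} proves Lemma~\ref{lemmaboundK1}. In fact the cleanest route is to deduce \eqref{boundK1} directly from \eqref{boundG2}: one shows $\mathcal{K}_1\ll N^{\varepsilon}\,G(0)^{-2}\int_0^1|G(\alpha)|^2d\alpha \cdot N^{\kappa_1}\cdot(\text{normalisation})$, i.e.\ that passing from the prime-product sums $g_k$ to the full smooth sums $f_k$ and from $n$ to $N=n^{\lambda}$ only rescales the exponent, and then quote Lemma~\ref{lemmaboundG2}.

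I expect the main obstacle to be purely bookkeeping rather than conceptual: one must track the $\lambda$-dependence carefully, since $g_k(\alpha)$ has length $n^{\lambda/k}$ while $f_k(\alpha;N^{1/k},R)$ is stated at scale $N^{1/k}$, and the two are identified by taking $N=n^{\lambda}$; then $N^{\kappa_1-3/4-\delta_1}$ and $n^{\lambda(\kappa_1-3/4-\delta_1)}$ must be reconciled with the normalisations $G(0)\asymp \prod_{k\in K_1}(R/\log R)^{r_k}\asymp n^{\kappa_1\lambda/2+o(1)}$ implicit in Lemma~\ref{lemmaboundG2}. Once the scaling is matched, verifying that the chosen permissible exponents (the same ones underlying $\delta_1=0.0008985$) yield a positive saving is a finite computation; by hypothesis we need not distinguish $\lambda_{k,s}$ from $\lambda_{k,s}^\ast$, so the margin is comfortably positive and robust. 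Thus the proof is: (i) reduce prime-product sums to smooth sums via H\"older and dyadic splitting, absorbing $N^{\varepsilon}$; (ii) apply the same H\"older dissection and permissible-exponent inputs used for Lemma~\ref{lemmaboundG2}; (iii) match scales between $N$ and $n$ and conclude \eqref{boundK1}.
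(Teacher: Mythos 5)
Your opening strategy --- a H\"older dissection of $\mathcal{K}_1$ into moments $\int_0^1|f_k(\alpha;N^{1/k},R)|^{2s_k}d\alpha$ with weights summing to one, followed by insertion of permissible exponents and a numerical check --- is indeed the paper's proof. But the route you then single out as ``cleanest'', namely deducing \eqref{boundK1} from \eqref{boundG2}, is circular in this paper and fails on its own terms. The logical order is the reverse of what you assume: Lemma \ref{lemmaboundG2} is \emph{deduced from} Lemma \ref{lemmaboundK1} (applied with $N=n$), via the observation that every product $p_1\cdots p_{r_k}$ of primes in $(R/2,R]$ is an $R$-smooth number at most $R^{r_k}=n^{1/k}$, counted with multiplicity $\tau_{r_k}(x)\le r_k!$, so the diophantine count behind $\int_0^1|G(\alpha)|^2d\alpha$ is majorised by $\mathcal{K}_1$. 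The reverse reduction you propose does not exist: smooth numbers up to $N^{1/k}$ are not products of exactly $r_k$ primes from a dyadic range, so no amount of dyadic splitting and H\"older converts a moment bound for $g_k$ into one for $f_k(\cdot;N^{1/k},R)$. Relatedly, you have the scales confused: for $k\in K_1$ the sums $g_k$ have length $R^{r_k}=n^{1/k}$, so the relevant $N$ is $n$, not $n^{\lambda}$; the sums of length $n^{\lambda/k}$ are the $f_k$ with $k\in K_2$, which play no role in this lemma.

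The second gap is that the entire substance of the lemma --- the numerical configuration --- is deferred to ``the same configuration that proves Lemma \ref{lemmaboundG2}'', which has no independent existence: that configuration \emph{is} the proof of Lemma \ref{lemmaboundK1}. To close the argument you must actually specify the weights. The paper takes $s_5=4$, $s_6=6$, $s_8=8$, $s_9=9$, $s_{10}=10$, $s_{11}=11$ and fixes $s_7$ by $\sum_{k\in K_1}1/s_k=1$, so that $s_7=6.397\ldots$ is not an integer and a further interpolation $U_{7,s_7}(N)^{1/s_7}\ll U_{7,6}(N)^{7/s_7-1}U_{7,7}(N)^{1-6/s_7}$ is required before the even-moment exponents $\lambda_{7,6}$ and $\lambda_{7,7}$ can be used; inserting the Vaughan--Wooley values then yields the total saving $\alpha(K_1)=0.7508985=\tfrac34+\delta_1$. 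Note also that the paper's dissection estimates no factor trivially by $N^{2/k}$: all seven factors contribute savings $\alpha_{k,s_k}/s_k$, and the margin over $\tfrac34$ is under $10^{-3}$, so the vaguer scheme you describe (keeping $|f_k|^2$ on some indices and bounding the rest trivially) cannot be assumed to reach the stated exponent without verification.
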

\begin{proof}
We write
\begin{align*}U_{k,s}(N)=\int_0^{1}|f_k(\alpha;N^{1/k},R)|^{2s}d\alpha .\end{align*}By H\"older's inequality,
\begin{align}\label{holder1}\mathcal{K}_1\le \prod_{k\in K_1}U_{k,s_k}(N)^{\frac{1}{s_k}},\end{align}
where
\begin{align}\label{definesk1}s_5=4,\ s_6=6,\ s_8=8,\ s_9=9,\ s_{10}=10,\ s_{11}=11\end{align}
and $s_{7}$ is determined by
\begin{align}\label{definesk2}\sum_{k\in K_1}\frac{1}{s_k}=1.\end{align}
Note that $s_7=6.3974151$. We deduce by H\"older's inequality again that
\begin{align}&U_{7,s_7}(N)^{\frac{1}{s_7}}\ll U_{7,6}(N)^{\frac{7}{s_7}-1}
U_{7,7}(N)^{1-\frac{6}{s_7}}.\label{holder2}\end{align}

One has permissible exponents
\begin{align*}&\lambda_{5,4}=4.4386563,\ \ \ \ \ \lambda_{6,6}=7.2315633,\ \ \ \ \lambda_{7,6}=7.0143820,\
 \\ & \lambda_{7,7}=8.5410894,\ \ \ \ \ \lambda_{8,8}=9.8428621,\ \ \ \ \lambda_{9,9}=11.1425026,\
  \\ & \lambda_{10,10}=12.4375675,\ \lambda_{11,11}=13.7292224,\end{align*}
whence by \eqref{definealphaks} and \eqref{boundfks-alpha}, one has
\begin{align}\label{boundUks} U_{k,s}(N) \ll N^{\frac{2s}{k}-\alpha_{k,s}}\end{align}
with
\begin{align*}&\alpha_{5,4}=0.7122687,\ \ \ \ \ \alpha_{6,6}=0.7947394,\ \ \ \ \alpha_{7,6}=0.7122311,\
 \\ & \alpha_{7,7}=0.7798443,\ \ \ \ \ \alpha_{8,8}=0.7696422,\ \ \ \ \alpha_{9,9}=0.7619441,\
  \\ & \alpha_{10,10}=0.7562432,\ \ \ \alpha_{11,11}=0.7518888.\end{align*}
The values of $\lambda_{5,4}$ and $\lambda_{6,6}$ are in Appendix in \cite{VW}, and values of $\lambda_{k,s}$ for $7\le k\le 11$ can be found in Sections 9-13 in \cite{VW2}.

  We deduce from \eqref{holder1}, \eqref{holder2} and \eqref{boundUks} that
\begin{align*}\mathcal{K}_1\ll
N^{\kappa_1-\alpha(K_1)},\end{align*}
where
\begin{align}\label{definealphaK1}\alpha(K_1)=\sum_{k\in K_1\setminus \{7\}}\frac{\alpha_{k,s_k}}{s_k}+(\frac{7}{s_7}-1)\alpha_{7,6}+(1-\frac{6}{s_7})\alpha_{7,7}.\end{align}

Numerical computation yields
 \begin{align*}\alpha(K_1)= 0.7508985.\end{align*}
 This completes the proof.
\end{proof}

\noindent {\it Proof of Lemma \ref{lemmaboundG2}.}  By the definition of $g_k(\alpha)$ in \eqref{definegk}, we have
\begin{align}\label{boundg0}n^{1/k}(\log n)^{-\frac{1}{k\eta}}\ll g_k(0)\ll n^{1/k}(\log n)^{-\frac{1}{k\eta}}.\end{align}
For any $t\in \Z^{+}$, we define
\begin{align}\label{definetau}\tau_t(x)=\sum_{\substack{R/2<p_1,\cdots, p_{t}\le R \\ p_1\cdots p_t=x}}1.\end{align}
One has $\tau_t(x)\le t!$. We can express $g_k$ in the form
$$g_k(\alpha)=\sum_{x\le n^{1/k}}\tau_r(x)e(x^k\alpha).$$
On considering the solutions of the underlying diophantine equations, we can deduce from Lemma \ref{lemmaboundK1} that
\begin{align}\label{boundintG}\int_0^1|G(\alpha)|^2d\alpha\ll \int_0^{1}\prod_{k\in K_1}|f_k(\alpha;n^{1/k}, R)|^2d\alpha \ll n^{\kappa_1-\frac{3}{4}-\delta_1}.\end{align}
By \eqref{boundg0}, one has $n^{\kappa_1}\ll n^{\varepsilon}G(0)^2$. Therefore, \eqref{boundG2} follows from \eqref{boundintG}. The proof of Lemma \ref{lemmaboundG2} is complete.

\vskip3mm
\section{Davenport's iterative method to sums of unlike powers}

Let $\mathcal{T}$ denote the number of solutions of
\begin{align}\label{equationT}x_1^3-x_2^3=y_1^4-y_2^4+y_3^{12}-y_4^{12}+y_5^{13}-y_6^{13}+y_7^{14}-y_8^{14},\end{align}
where $X_3/2\le x_1,x_2 \le X_3$ and
\begin{align}\label{conditionyi}y_1,y_2\in \mathcal{A}(Y_{4},R),\ y_3,y_4\in \mathcal{A}(Y_{12},R),\ y_5,y_6\in \mathcal{A}(Y_{13},R),\ y_7,y_8\in \mathcal{A}(Y_{14},R).\end{align}
On writing
$$\kappa_0=\frac{2}{3}+2\lambda(\frac{1}{4}+\frac{1}{12}+\frac{1}{13}+\frac{1}{14}),$$
one has the trivial bound
$$\mathcal{T}\ll n^{\kappa_0}.$$

For $j\in \{1,2\}$, we define
\begin{align}\label{defineIj}\mathcal{I}_j=\int_0^1|f_4(\alpha)f_{12}(\alpha)f_{13}(\alpha)f_{14}(\alpha)^j|^2d\alpha.\end{align}
As a routine application of Davenport's iterative method (see Lemma 4 in \cite{Vaughan1986}), one may deduce that
\begin{align*}\mathcal{T}\ll X_3\,\mathcal{I}_1+(n^{\lambda-\frac{2}{3}+\varepsilon}\mathcal{I}_1)^{1/2}Y_4Y_{12}Y_{13}Y_{14}.\end{align*}
Then even subject to the best possible estimate $\mathcal{I}_1\ll Y_4Y_{12}Y_{13}Y_{14}$, one can only obtain
\begin{align}\label{T1}\mathcal{T}\ll n^{\kappa_0-\frac{7}{9}+\rho'},\end{align}
where
\begin{align*}\rho'=\frac{4}{9}-\lambda(\frac{1}{4}+\frac{1}{12}+\frac{1}{13}+\frac{1}{14}) \ \textrm{ with }\ \lambda=\frac{\frac{4}{3}}{1+\frac{1}{4}+\frac{1}{12}+\frac{1}{13}+\frac{1}{14}}.\end{align*}
Note that $\rho'=0.0109875$. However, the estimate \eqref{T1} is insufficient for our proof. The purpose of this section is to
prove $\mathcal{T}\ll n^{\kappa_0-\frac{7}{9}+0.004453}$ by developing Davenport's iterative method to sums of unlike powers.

Let
$$H=\frac{16}{3}n^{\lambda-\frac{2}{3}}.$$
We use $\mathcal{S}$ to denote the number of solutions of
$$h(3x^2+3hx+h^2)=y_1^4-y_2^4+y_3^{12}-y_4^{12}+y_5^{13}-y_6^{13}+y_7^{14}-y_8^{14},$$
where $1\le |h|\le H$, $X_3/2\le x,x+h \le X_3$ and $y_1,\ldots,y_8$ satisfy \eqref{conditionyi}.

\begin{lemma}\label{lemmaboundT1}One has
\begin{align*}\mathcal{T}\le X_3\,\mathcal{I}_1+\mathcal{S}.\end{align*}\end{lemma}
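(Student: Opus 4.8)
The plan is to rewrite $\mathcal{T}$ by splitting the count according to whether the two cubes $x_1,x_2$ are equal or not. When $x_1=x_2$, the left side of \eqref{equationT} vanishes, and the number of such solutions is at most $X_3$ times the number of solutions of
\[
y_1^4-y_2^4+y_3^{12}-y_4^{12}+y_5^{13}-y_6^{13}+y_7^{14}-y_8^{14}=0,
\]
which by orthogonality is exactly $\int_0^1|f_4(\alpha)f_{12}(\alpha)f_{13}(\alpha)f_{14}(\alpha)|^2\,d\alpha=\mathcal{I}_1$. This yields the contribution $X_3\,\mathcal{I}_1$.

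For the remaining solutions with $x_1\neq x_2$, I would set $h=x_1-x_2$ and $x=x_2$ (or the symmetric choice), so that $x_1^3-x_2^3=(x+h)^3-x^3=h(3x^2+3hx+h^2)$, with $h\neq 0$. The constraints $X_3/2\le x_1,x_2\le X_3$ force $X_3/2\le x,x+h\le X_3$, and one must check that $h$ is confined to the range $1\le |h|\le H$ with $H=\frac{16}{3}n^{\lambda-2/3}$: indeed the right side of \eqref{equationT} is bounded in absolute value by a constant multiple of $\max(Y_4^4,Y_{12}^{12},Y_{13}^{13},Y_{14}^{14})=n^{\lambda}$, while $|h(3x^2+3hx+h^2)|\gg |h|X_3^2=|h|n^{2/3}$, so $|h|\ll n^{\lambda-2/3}$, and a careful accounting of the constants gives $|h|\le H$. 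Each such solution is then counted (at least once) among the solutions defining $\mathcal{S}$, so the number of solutions with $x_1\neq x_2$ is at most $\mathcal{S}$. Adding the two contributions gives $\mathcal{T}\le X_3\,\mathcal{I}_1+\mathcal{S}$.

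The only genuinely delicate point is verifying the numerical constant in the bound $|h|\le H$: one needs the crude upper bound for the right side of \eqref{equationT} together with the lower bound $3x^2+3hx+h^2\ge \frac{3}{4}X_3^2$ valid on the range $X_3/2\le x,x+h\le X_3$ (this quadratic in $x$ is minimized near the endpoints and stays comfortably above $\frac{1}{4}\cdot 3 X_3^2$), which pins down the admissible range of $h$ and shows it is contained in $[-H,H]$. Everything else is a routine bookkeeping of the substitution $(x_1,x_2)\mapsto(x,h)$ and orthogonality, so I expect this constant-chasing to be the main—if minor—obstacle.
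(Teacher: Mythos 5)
Your proposal is correct and follows essentially the same route as the paper: split according to $x_1=x_2$ or not, use orthogonality to identify the diagonal contribution as $X_3\,\mathcal{I}_1$, and for $x_1\neq x_2$ substitute $h=x_1-x_2$, $x=x_2$, bounding $|h|\le H$ via $|{\rm RHS}|\le 4n^{\lambda}$ and $x_1^2+x_1x_2+x_2^2\ge\frac{3}{4}n^{2/3}$. The constant-chasing you flag works out exactly as in the paper, giving $|h|\le\frac{16}{3}n^{\lambda-2/3}=H$.
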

\begin{proof}Subject to the condition \eqref{conditionyi}, the left hand side of \eqref{equationT} is no more than $4n^{\lambda}$. We deduce that
$$|(x_1-x_2)(x_1^2+x_1x_2+x_2^2)|\le 4n^{\lambda},$$
and if $x_1,x_2\ge \frac{1}{2}n^{\frac{1}{3}}$ then
$$|x_1-x_2|\le \frac{16}{3}n^{\lambda-\frac{2}{3}}.$$
By changing variables $x_1-x_2=h, x_2=x$, we deduce that
$$x_1^3-x_2^3=h(3x^2+3hx+h^2),$$
and therefore,
$$\mathcal{T}\le X_3\,\mathcal{I}_1+\mathcal{S}.$$
We remark that $X_3\mathcal{I}_1$ is the contribution from solutions of \eqref{equationT} with $x_1-x_2=0$. This completes the proof.
\end{proof}

\begin{lemma}\label{lemmaboundS}One has
\begin{align}\label{boundS}\mathcal{S}\ll H\mathcal{I}_2+n^\varepsilon(H\mathcal{I}_2)^{1/2}Y_4Y_{12}Y_{13}.\end{align}
\end{lemma}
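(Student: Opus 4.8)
The plan is to estimate $\mathcal{S}$ by treating the variable $h$ as "free" and reducing to a Weyl-type differencing argument on the cubic variable $x$, after which the unlike-power variables $y_1,\dots,y_8$ are handled by the mean value $\mathcal{I}_2$. The starting point is the identity $h(3x^2+3hx+h^2)=x_1^3-x_2^3$ with $x_1=x+h$, $x_2=x$; but rather than expand this, I would think of $\mathcal{S}$ as counting solutions of
\[
h(3x^2+3hx+h^2)=\sum_{k\in\{4,12,13,14\}}\big(y_{2k-1}^{\,k}-y_{2k}^{\,k}\big)
\]
subject to $1\le|h|\le H$, $X_3/2\le x,x+h\le X_3$, and \eqref{conditionyi}. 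The right-hand side, call it $m$, is an integer with $|m|\le 4n^\lambda$. The key observation is that for each fixed $h$ and each fixed value of the quartic/duodecic/tridecic part coming from $y_3,\dots,y_8$, the number of $(x,y_1,y_2)$ with $h(3x^2+3hx+h^2)-y_1^4+y_2^4$ equal to a prescribed value is controlled: the cubic polynomial $3x^2+3hx+h^2$ in $x$ is, for fixed $h\ne 0$, a genuine quadratic in $x$, so $x\mapsto h(3x^2+3hx+h^2)$ takes each value $O(n^\varepsilon)$ times (at most twice, in fact, ignoring the divisor $h$), and we then need to count representations of the remaining value by $y_1^4-y_2^4$.

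**The concrete execution** is a Cauchy--Schwarz / orthogonality argument in the style of the classical Davenport iteration. Writing the generating function for the cubic side as
\[
K(\alpha)=\sum_{1\le|h|\le H}\ \sum_{\substack{X_3/2\le x,x+h\le X_3}}e\big(h(3x^2+3hx+h^2)\alpha\big),
\]
we have $\mathcal{S}=\int_0^1 K(\alpha)\,\overline{f_4(\alpha)^2f_{12}(\alpha)^2f_{13}(\alpha)^2}\cdot |f_{14}(\alpha)|^2\,d\alpha$ — wait, more precisely $\mathcal{S}=\int_0^1 K(\alpha)|f_4(\alpha)f_{12}(\alpha)f_{13}(\alpha)f_{14}(\alpha)|^2 e(0)\,d\alpha$ after matching the $\pm$ structure, so by Cauchy--Schwarz applied to split off $|f_{14}(\alpha)|$ appropriately, or more simply by the trivial bound $|f_k(\alpha)|\le f_k(0)=Y_k+O(1)$ applied to the "one-sided" copies, one gets
\[
\mathcal{S}\ll \Big(\int_0^1 |K(\alpha)|\,|f_4 f_{12} f_{13} f_{14}^2(\alpha)|^2\,d\alpha\Big).
\]
Estimating $|K(\alpha)|\le K(0)\asymp H X_3$ on the diagonal-dominated part and using $\int_0^1|f_4f_{12}f_{13}f_{14}^2(\alpha)|^2\,d\alpha=\mathcal{I}_2$ yields the main term $H\mathcal{I}_2$ (the factor $X_3\asymp n^{1/3}$ gets absorbed because $K$ is normalised by the $x$-sum length which is already counted in $\mathcal{I}_2$ — this bookkeeping needs care). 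The secondary term comes from the off-diagonal contribution of $K$: the number of $(h,x,h',x')$ with $h(3x^2+3hx+h^2)=h'(3x'^2+3h'x'+h'^2)$ and $h\ne h'$ is $O(n^\varepsilon HX_3)$ by a divisor argument, and pairing this with the trivial $f_k(0)$ bounds for three of the four squares gives $n^\varepsilon(H\mathcal{I}_2)^{1/2}Y_4Y_{12}Y_{13}$ after one more Cauchy--Schwarz.

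**The main obstacle** I anticipate is the correct bookkeeping in the Cauchy--Schwarz step that produces exactly the stated shape $H\mathcal{I}_2 + n^\varepsilon(H\mathcal{I}_2)^{1/2}Y_4Y_{12}Y_{13}$: one must peel off precisely the variable $y_{14}$ (which is why $\mathcal{I}_2$ carries the extra square $f_{14}^2$) while the three variables $y_4,y_{12},y_{13}$ are estimated trivially by their $L^\infty$ values $Y_4,Y_{12},Y_{13}$, and the cubic configuration contributes the weight $H$ (diagonal in $h$) plus $(HX_3\cdot \text{stuff})^{1/2}$ (off-diagonal). Concretely, I would introduce the function $\Phi(\beta)=\sum_{1\le|h|\le H}\sum_{x}e(\beta h(3x^2+3hx+h^2))$ and the representation
\[
\mathcal{S}=\sum_{1\le|h|\le H}\ \sum_{\substack{X_3/2\le x,x+h\le X_3}}\ \int_0^1 e\big(-h(3x^2+3hx+h^2)\alpha\big)\,|f_4f_{12}f_{13}f_{14}(\alpha)|^2\,d\alpha,
\]
then apply Cauchy--Schwarz in the $(h,x)$-sum against the measure $d\alpha$ to get $\mathcal{S}\le (HX_3)^{1/2}\big(\int_0^1 |\Phi(\alpha)|\,|f_4f_{12}f_{13}f_{14}(\alpha)|^4\,d\alpha\big)^{1/2}$ — but this isn't quite the target either, so the real content is choosing the Cauchy--Schwarz split so that $f_{14}$ appears squared and the diagonal/off-diagonal dichotomy of $\Phi$ comes out cleanly; once that split is pinned down, the divisor estimate for the off-diagonal of $\Phi$ and the trivial bound $\Phi(\alpha)\ll \Phi(0)\ll HX_3$ on the near-diagonal finish the proof without further difficulty.
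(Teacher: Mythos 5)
Your proposal has the right general ingredients (Cauchy--Schwarz, a diagonal/off-diagonal split, a divisor argument on the cubic configuration), but it never pins down the one idea that actually makes the lemma work, and the splits you do write down either fail or are admitted not to give the target. Concretely: bounding $|K(\alpha)|$ by $K(0)\asymp HX_3$ against $\int_0^1|f_4f_{12}f_{13}f_{14}|^2\,d\alpha$ gives $HX_3\mathcal{I}_1$, not $H\mathcal{I}_2$ --- this is exactly the ``routine application of Davenport's iterative method'' that the paper states is insufficient, and the claim that ``the factor $X_3$ gets absorbed'' is not correct; $X_3$ does not appear in $\mathcal{I}_2$ at all. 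Your alternative split $\mathcal{S}\le (HX_3)^{1/2}\bigl(\int_0^1|\Phi(\alpha)||f_4f_{12}f_{13}f_{14}(\alpha)|^4\,d\alpha\bigr)^{1/2}$ you concede is not the target either, and the off-diagonal count you propose for $\Phi$ (pairs with $h\ne h'$) is not the dichotomy that produces the second term $n^\varepsilon(H\mathcal{I}_2)^{1/2}Y_4Y_{12}Y_{13}$.

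The missing idea is the weighted-counting refinement in the style of Vaughan. One sets $r(a)=\#\{a=y_1^4+y_2^{12}+y_3^{13}\}$ and $\kappa(b)=\#\{b=y_1^4+y_2^{12}+y_3^{13}+z_1^{14}-z_2^{14}\}$, reduces by symmetry to the subsum with $r(a_2)\le r(a_1)$, and then observes that whenever $a$ contributes, one has $r(a)\le\kappa(b)$ for the corresponding $b$. Cauchy--Schwarz in $(h,b)$ then gives $\mathcal{S}_0^2\le\bigl(\sum_{h,b}\kappa(b)^2\bigr)\bigl(\sum_{h,b}\rho_h(b)^2\bigr)$ with $\sum_{h,b}\kappa(b)^2\ll H\mathcal{I}_2$ (this is where $\mathcal{I}_2$, i.e.\ the fourth power of $f_{14}$, enters --- not by ``peeling off $y_{14}$''), while in $\sum_{h,b}\rho_h(b)^2$ the diagonal $a_1=a_2$ is bounded by $\mathcal{S}$ itself \emph{precisely because of} the constraint $r(a)\le\kappa(b)$, and the off-diagonal $a_1\ne a_2$ is handled by the divisor argument applied to $3h(x_1-x_2)(x_1+x_2+h)=a_2-a_1$ with $a_1,a_2$ fixed, giving $O(Y_4^2Y_{12}^2Y_{13}^2n^\varepsilon)$. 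This yields the self-referential inequality $\mathcal{S}^2\ll H\mathcal{I}_2(\mathcal{S}+Y_4^2Y_{12}^2Y_{13}^2n^\varepsilon)$, from which \eqref{boundS} follows. Without the ordering device $r(a)\le\kappa(b)$, the diagonal of your second Cauchy factor is $\sum r(a)^2$ over solutions, which you have no means of controlling at the required strength; this is the genuine gap in your argument.
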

\begin{proof}We define $r(a)$ to be the number of representations of $a$ as
$$a=y_1^4+y_2^{12}+y_3^{13},$$
 where $y_1\in \mathcal{A}(Y_{4},R)$, $y_2\in \mathcal{A}(Y_{12},R)$ and $y_3\in \mathcal{A}(Y_{13},R)$. Then we have
$$\mathcal{S}=\sum_{\substack{h,x,a_1,a_2,z_1,z_2
\\ h(3x^2+3hx+h^2)=a_1-a_2+z_1^{14}-z_2^{14}}}r(a_1)r(a_2).$$
We define $\kappa(b)$ to be the number of representations of $b$ as
$$b=y_1^4+y_2^{12}+y_3^{13}+z_1^{14}-z_2^{14},$$
 where $y_1\in \mathcal{A}(Y_{4},R)$, $y_2\in \mathcal{A}(Y_{12},R)$, $y_3\in \mathcal{A}(Y_{13},R)$ and $z_1,z_2\in \mathcal{A}(Y_{14},R)$. Then we also have
$$\mathcal{S}=\sum_{\substack{h,x,a,b
\\ h(3x^2+3hx+h^2)=b-a}}r(a)\kappa(b).$$

By symmetry,
\begin{align}\label{fromStoS0}\mathcal{S}\le 2\mathcal{S}_0,\end{align}
where
$$\mathcal{S}_0=\sum_{\substack{h,x,a_1,a_2,z_1,z_2
\\ h(3x^2+3hx+h^2)=a_1-a_2+z_1^{14}-z_2^{14}
\\ r(a_2)\le r(a_1)}}r(a_1)r(a_2).$$

We define
$$\kappa_a(b)=\sum_{\substack{a',z_1,z_2\\ a'+z_1^{14}-z_2^{14}=b \\ r(a')\ge r(a)}}r(a').$$
Then one has
$$
\mathcal{S}_0=\sum_{\substack{h,x,a,b
\\ h(3x^2+3hx+h^2)=b-a
}}r(a)\kappa_a(b).
$$
Note that
$$\kappa(b)=\sum_{\substack{a',z_1,z_2\\ a'+z_1^{14}-z_2^{14}=b }}r(a').$$
If $\kappa_a(b)\not=0$, then
$$\kappa(b)\ge \kappa_a(b)\ge r(a).$$
Now we deduce that
$$\mathcal{S}_0=\sum_{\substack{h,x,a,b
\\ h(3x^2+3hx+h^2)=b-a \\ r(a)\le \kappa(b)
}}r(a)\kappa_a(b),$$
and therefore,
\begin{align}\label{boundS0}\mathcal{S}_0\le \sum_{\substack{h,x,a,b
\\ h(3x^2+3hx+h^2)=b-a \\ r(a)\le \kappa(b)
}}r(a)\kappa(b).\end{align}

We introduce
\begin{align}\label{definerhoh}\rho_{h}(b)=\sum_{\substack{x,a\\ h(3x^2+3hx+h^2)+a=b \\ r(a)\le \kappa(b)}}r(a),\end{align}
and deduce from \eqref{boundS0} that
$$\mathcal{S}_0\le \sum_{\substack{h,b
}}\kappa(b)\rho_{h}(b).$$
By Cauchy's inequality,
\begin{align}\label{boundS0square}\mathcal{S}_0^2\le \Big(\sum_{\substack{h,b
}}\kappa(b)^2\Big)\Big(\sum_{\substack{h,b
}}\rho_{h}(b)^2\Big).\end{align}

We observe
$$\sum_{\substack{b
}}\kappa(b)^2=\mathcal{I}_2,$$
and therefore,
\begin{align}\label{boundfirstterm}\sum_{\substack{h,b
}}\kappa(b)^2\ll H\mathcal{I}_2.\end{align}

On recalling the definition of $\rho_{h}(b)$ in \eqref{definerhoh}, we conclude that
\begin{align}\label{key}\sum_{\substack{h,b
}}\rho_{h}(b)^2=\sum_{\substack{h,x_1,x_2,a_1,a_2\\ h(3x_1^2+3hx_1+h^2)+a_1=h(3x_2^2+3hx_2+h^2)+a_2=b \\ r(a_1),r(a_2)\le \kappa(b)}}r(a_1)r(a_2).
\end{align}
In order to deal with the right hand side of \eqref{key}, we distinguish two cases $a_1=a_2$ or not. We first consider the contribution from $a_1=a_2$. Since $a_1=a_2$ implies $x_1=x_2$, we deduce that
$$\sum_{\substack{h,x_1,a_1,b\\ h(3x_1^2+3hx_1+h^2)+a_1=b \\ r(a_1)\le \kappa(b)}}r(a_1)^2\le \sum_{\substack{h,x_1,a_1,b\\ h(3x_1^2+3hx_1+h^2)+a_1=b \\ r(a_1)\le \kappa(b)}}r(a_1)\kappa(b)\le \mathcal{S}.$$
Next we consider the contribution from $a_1\not=a_2$. We deduce from
$$h(3x_1^2+3hx_1+h^2)+a_1=h(3x_2^2+3hx_2+h^2)+a_2$$
that
$$3h(x_1-x_2)(x_1+x_2+h)=a_2-a_1.$$
For fixed $a_1,a_2$ with $a_1\not=a_2$, there are at most $O(n^\varepsilon)$ possible choices of $h,x_1,x_2$, and then $b$ is determined by
$h,x_1$ and $a_1$. Therefore, the contribution from $a_1\not=a_2$ is at most $O(Y_4^2Y_{12}^2Y_{13}^2n^\varepsilon)$.
Now we conclude that
\begin{align}\label{key2}\sum_{\substack{h,b
}}\rho_{h}(b)^2\ll \mathcal{S}+Y_4^2Y_{12}^2Y_{13}^2n^\varepsilon.
\end{align}

From \eqref{boundS0square}, \eqref{boundfirstterm} and \eqref{key2}, we deduce that
$$\mathcal{S}_0^2\ll H\mathcal{I}_2(\mathcal{S}+Y_4^2Y_{12}^2Y_{13}^2n^\varepsilon),$$
and by \eqref{fromStoS0},
$$\mathcal{S}^2\ll H\mathcal{I}_2(\mathcal{S}+Y_4^2Y_{12}^2Y_{13}^2n^\varepsilon).$$
Now \eqref{boundS} follows immediately from above, and the proof of the lemma is complete.\end{proof}

\begin{lemma}\label{lemmaboundT2}One has
\begin{align*}\mathcal{T}\ll X_3\mathcal{I}_1+H\mathcal{I}_2+n^\varepsilon(H\mathcal{I}_2)^{1/2}Y_4Y_{12}Y_{13}.\end{align*}\end{lemma}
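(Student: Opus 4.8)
The plan is to obtain Lemma \ref{lemmaboundT2} as an immediate consequence of the two preceding lemmas, so essentially no new argument is required. First I would invoke Lemma \ref{lemmaboundT1} to get
$$\mathcal{T}\le X_3\,\mathcal{I}_1+\mathcal{S},$$
which isolates the diagonal contribution $x_1=x_2$ of \eqref{equationT} (accounted for by $X_3\mathcal{I}_1$) from the off-diagonal part $\mathcal{S}$, on which the diminishing range $1\le|h|\le H$ has already been extracted. Next I would feed in the bound for $\mathcal{S}$ supplied by Lemma \ref{lemmaboundS}, namely
$$\mathcal{S}\ll H\mathcal{I}_2+n^\varepsilon(H\mathcal{I}_2)^{1/2}Y_4Y_{12}Y_{13},$$
and substitute it into the preceding display. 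Collecting the three resulting terms under a single $\ll$ yields exactly
$$\mathcal{T}\ll X_3\mathcal{I}_1+H\mathcal{I}_2+n^\varepsilon(H\mathcal{I}_2)^{1/2}Y_4Y_{12}Y_{13},$$
as asserted.

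The only thing to be careful about is purely bookkeeping: the implied constant must absorb the factor $2$ coming from the symmetrization $\mathcal{S}\le 2\mathcal{S}_0$ used in the proof of Lemma \ref{lemmaboundS}, and one should note that $X_3\mathcal{I}_1$ from Lemma \ref{lemmaboundT1} is simply added to the two error terms produced by Lemma \ref{lemmaboundS} with no interaction between them. The substantive difficulty in this chain has already been dispatched inside Lemma \ref{lemmaboundS}: the Cauchy--Schwarz split in \eqref{boundS0square}, the identification $\sum_{b}\kappa(b)^2=\mathcal{I}_2$, and above all the handling of the off-diagonal terms $a_1\ne a_2$ in \eqref{key}, where the divisor-type count $O(n^\varepsilon)$ for the number of $(h,x_1,x_2)$ solving $3h(x_1-x_2)(x_1+x_2+h)=a_2-a_1$ is what keeps the secondary term $n^\varepsilon(H\mathcal{I}_2)^{1/2}Y_4Y_{12}Y_{13}$ at an acceptable size. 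Consequently, for Lemma \ref{lemmaboundT2} itself I anticipate no real obstacle: it is a one-line synthesis, and the proof amounts to transcribing the two inputs \eqref{boundS} and the bound from Lemma \ref{lemmaboundT1} and combining them.
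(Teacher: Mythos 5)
Your proposal is correct and matches the paper exactly: the paper's proof of this lemma is the one-line combination of Lemma \ref{lemmaboundT1} and Lemma \ref{lemmaboundS}, precisely as you describe. No further comment is needed.
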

\begin{proof}This follows from Lemma \ref{lemmaboundT1} and Lemma \ref{lemmaboundS}.\end{proof}
In order to deal with $\mathcal{I}_2$, we follow the approach developed by Ford \cite{Ford} (see (3.2) in \cite{Ford} and also Lemma 2.2 of Wooley \cite{Wooley1992}). Let
$$S_{k,s}=\int_0^1|f_4(\alpha)^2f_k(\alpha)^{2s}|d\alpha.$$
Note that $S_{k,s}$ is the same as $S_{k,s}^{(4)}(Y_4)$ in \cite{Ford}. For a pair of integers $(k,s)\in \{(13,4),(14,4),(14,5)\}$, we introduce
\begin{align}\label{definethetaks}\theta_{k,s}=\frac{\frac{4}{k}(\lambda_{k,2s}-2\lambda_{k,s})}{k+1+\lambda_{k,2s}-2\lambda_{k,s}}\end{align}
and
\begin{align}\label{definesigmaks}\sigma_{k,s}=\frac{1}{4}+\frac{s\theta_{k,s}}{2}+(\frac{1}{k}-\frac{\theta_{k,s}}{4})\lambda_{k,s}.\end{align}

\begin{lemma}Let $(k,s)\in \{(13,4),(14,4),(14,5)\}$. Let $\theta_{k,s}$ be given in \eqref{definethetaks} and $\sigma_{k,s}$ be given in \eqref{definesigmaks}. Then one has
\begin{align}\label{boundSks}S_{k,s}\ll Y_4^{\frac{4}{k}+\theta_{k,s}+\varepsilon}S_{k,s-1}+Y_4^{4\sigma_{k,s}+\varepsilon}R.\end{align}
\end{lemma}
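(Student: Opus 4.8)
The plan is to run the Ford--Wooley amplification argument, treating the square $f_4(\alpha)^2$ as a fixed "carrier" and inserting the diminishing-range / Hardy--Littlewood dissection on the $f_k$-factor. First I would fix $(k,s)\in\{(13,4),(14,4),(14,5)\}$ and view $S_{k,s}=\int_0^1|f_4(\alpha)^2 f_k(\alpha)^{2s}|\,d\alpha$ as counting (weighted) solutions of $y_1^4-y_2^4=\sum_{i=1}^{s}(z_i^k-w_i^k)$ with all variables $R$-smooth of the appropriate size. The standard device is to split the $2s$-tuple of $k$-th powers into the "small" subsum carried by one block and the rest: write $f_k(\alpha)^{2s}=f_k(\alpha)^2\cdot f_k(\alpha)^{2(s-1)}$ and apply Cauchy--Schwarz (or Hölder with exponents tuned to the permissible exponents $\lambda_{k,s}$ and $\lambda_{k,2s}$) so as to separate off a factor governed by $S_{k,s-1}=\int|f_4^2 f_k^{2(s-1)}|$ and a factor governed by a mean value of pure smooth Weyl sums of degree $k$. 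The exponent $\theta_{k,s}$ in \eqref{definethetaks} is exactly the weight one gets from optimizing this Hölder step against the convexity estimate interpolating $\lambda_{k,s}$ and $\lambda_{k,2s}$; the term $Y_4^{\frac{4}{k}+\theta_{k,s}+\varepsilon}S_{k,s-1}$ is the "major-arc-like" or diagonal-type contribution, while $Y_4^{4\sigma_{k,s}+\varepsilon}R$ is the "minor-arc-like" contribution with $\sigma_{k,s}$ from \eqref{definesigmaks} the resulting optimized exponent, the factor $R$ reflecting the smoothness parameter appearing after an efficient-differencing / divisor-bound step (as in $\tau_t(x)\le t!$ and the $O(n^\varepsilon)$ counts used in Lemma \ref{lemmaboundS}).

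The key steps, in order, are: (1) interpret $S_{k,s}$ as a solution count and perform a Hardy--Littlewood dissection of the circle relative to arcs adapted to the degree-$k$ variables of size $Y_k=n^{\lambda/k}$, with the height of the arcs chosen of order a small power of $Y_4$ determined by $\theta_{k,s}$; (2) on the complement of these arcs, bound $f_4(\alpha)^2$ trivially by $Y_4^2$ times a smooth-number average, and estimate the remaining $f_k(\alpha)^{2s}$ by pulling out $\sup|f_k|$ on the minor arcs via a smooth Weyl-sum estimate and using the permissible mean value $\int_0^1|f_k|^{2\cdot 2s}\ll Y_k^{(2s-\lambda_{k,2s}/\cdots)}$ — after Hölder this yields precisely the $Y_4^{4\sigma_{k,s}+\varepsilon}R$ term, the extra $R$ coming from the restriction to smooth numbers in the short-interval / differenced count; (3) on the arcs, factor the generating function near each rational $a/q$, sum the local contributions, and recognize that the resulting integral is dominated (up to $Y_4^{\frac{4}{k}+\theta_{k,s}+\varepsilon}$) by $S_{k,s-1}$, because one degree-$k$ difference $z_i^k-w_i^k$ becomes "almost diagonal" on the arcs and can be absorbed at the cost of the stated power of $Y_4$; (4) add the two contributions to obtain \eqref{boundSks}.

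The main obstacle I anticipate is step (3): making the reduction to $S_{k,s-1}$ clean, i.e. showing that restricting to the arcs genuinely costs only $Y_4^{\frac{4}{k}+\theta_{k,s}+\varepsilon}$ per removed pair of degree-$k$ variables rather than something larger. This requires a careful choice of the arc height as a function of $\lambda_{k,2s}-2\lambda_{k,s}$ (which is why $\theta_{k,s}$ has that specific rational form) and an honest bookkeeping of the interaction between the $f_4^2$ factor and the degree-$k$ arcs — one must verify that the arcs relevant to degree $k$ do not force the degree-$4$ variables into an overly restrictive configuration, and that the $\varepsilon$-losses from divisor bounds and smooth-number counting (as encountered already in the proof of Lemma \ref{lemmaboundS}) stay additive. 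A secondary technical point is verifying that $\sigma_{k,s}$ as defined in \eqref{definesigmaks} is exactly the exponent produced by optimizing the minor-arc Hölder split, which is a direct but slightly delicate computation with the permissible exponents $\lambda_{k,s}$; since the paper only needs these for the three listed pairs $(k,s)$, this reduces to a finite numerical check once the structural inequality is in place.
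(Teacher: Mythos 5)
Your proposal correctly identifies the provenance of the lemma (Ford's amplification of Wooley's method) and correctly guesses that $\theta_{k,s}$ arises from balancing two terms involving $\lambda_{k,s}$ and $\lambda_{k,2s}$, but the concrete mechanism you propose --- a Hardy--Littlewood dissection in which the term $Y_4^{\frac{4}{k}+\theta_{k,s}+\varepsilon}S_{k,s-1}$ is the major-arc contribution and $Y_4^{4\sigma_{k,s}+\varepsilon}R$ the minor-arc one --- is not the paper's argument and, as described, does not close. The sticking point is exactly the one you flag as step (3): absorbing one factor $|f_k(\alpha)|^2$ on the arcs at cost $Y_4^{\frac{4}{k}+\theta_{k,s}}$ would require a bound of strength $\sup|f_k|^2\ll Y_k^{2}Y_4^{\theta_{k,s}-4/k}$ there, whereas near the central arc $|f_k(\alpha)|^2$ is genuinely of size $Y_k^2=Y_4^{8/k}$, which exceeds $Y_4^{4/k+\theta_{k,s}}$ by a fixed power (here $\theta_{k,s}\approx 0.017\ll 4/k$). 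So the arcs are precisely where the reduction to $S_{k,s-1}$ is unavailable pointwise, and you supply no averaging device to rescue it; conversely, pulling out $\sup|f_k|$ off the arcs would reduce the minor-arc piece to $S_{k,s-1}$, not the major-arc piece, leaving the major arcs to be handled by the very mean value you are trying to prove. The proposal therefore has the two contributions attached to the wrong mechanisms, and no version of the dissection as stated yields \eqref{boundSks}.

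What the paper actually does involves no dissection at all. Each smooth variable $x\in\mathcal{A}(Y_k,R)$ admits a factorization $x=my$ with $m\in(Y_4^{\theta},Y_4^{\theta}R]$; extracting such a divisor from one degree-$k$ variable and differencing the quartic variable against $dm^k$ produces Ford's inequality (3.7) (applied with $h=4$, $P=Y_4$, $M=Y_4^{\theta}$, $Q=Y_4^{4/k-\theta}$, $a=\tfrac12$), in which the first term $Y_4^{\frac{4}{k}+\theta+\varepsilon}S_{k,s-1}$ is the diagonal/trivial contribution of the differencing step --- not a major-arc integral --- and the second term involves the differenced generating function $\mathfrak{f}(\alpha)=\sum_{m,d,z}e\bigl(\alpha\,((z+dm^k)^4-(z-dm^k)^4)/m^k\bigr)$, whose second moment is $\ll Y_4^{2-(k-1)\theta+\varepsilon}R$ by Ford's Lemma 3.1. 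The factor $R$ is simply the length of the window in which $m$ ranges, not a smooth-number short-interval count. The exponent $\theta_{k,s}$ is then chosen to equate the two resulting terms $Y_4^{1+\theta+(\frac{4}{k}-\theta)\lambda_{k,s}}R$ and $Y_4^{1-\frac{k-1}{2}\theta+\frac12(\frac{4}{k}-\theta)\lambda_{k,2s}}$ (one estimated directly via $\lambda_{k,s}$, one via Cauchy--Schwarz against $\int|\mathfrak{f}|^2$ and $\lambda_{k,2s}$), which is where the expression \eqref{definethetaks} and then $\sigma_{k,s}$ in \eqref{definesigmaks} come from. To repair your write-up you would need to replace the arc dissection of steps (1)--(3) by this divisor-extraction and efficient-differencing step, after which the remainder is indeed the finite numerical check you describe.
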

\begin{proof}Let
$$\mathfrak{f}(\alpha)=\sum_{Y_4^{\theta}<m\le Y_4^{\theta}R} \sum_{1\le d\le Y_4^{1-k\theta}}\sum_{z\le 2Y_4}e\Big(\alpha \frac{(z+dm^k)^4-(z-dm^k)^4}{m^k}\Big).
$$
Note that $\mathfrak{f}(\alpha)$ coincides with the function $F_4(\alpha)$ in \cite{Ford} (see (3.4) in \cite{Ford}). One can deduce that (by Lemma 3.1 in \cite{Ford}, for example)
$$\int_0^{1}|\mathfrak{f}(\alpha)|^2d\alpha \ll Y_4^{2-(k-1)\theta +\varepsilon}R.$$
For $0<\theta<1/k$, by (3.7) in \cite{Ford} (with $h=4$, $P=Y_4$, $M=Y_4^{\theta}$, $Q=Y_4^{4/k-\theta}$ and $a=\frac{1}{2}$),  we have
\begin{align*}S_{k,s}\ll\, & Y_4^{\frac{4}{k}+\theta+\varepsilon}S_{k,s-1}
\\ &\ \ \ +Y_4^{(2s-1)\theta+\varepsilon}\Big\{Y_4^{1+\theta+(\frac{4}{k}-\theta)\lambda_{k,s}}R+
\Big(\int_0^{1}|\mathfrak{f}(\alpha)|^2d\alpha\Big)^{\frac{1}{2}} Y_4^{\frac{1}{2}(\frac{4}{k}-\theta)\lambda_{k,2s}}\Big\}.\end{align*}
Then we conclude that
\begin{align}\label{boundSks1}S_{k,s}\ll Y_4^{\frac{4}{k}+\theta+\varepsilon}S_{k,s-1}+Y_4^{(2s-1)\theta+\varepsilon}R\big(Y_4^{1+\theta+(\frac{4}{k}-\theta)\lambda_{k,s}}+
Y_4^{1-\frac{k-1}{2}\theta+\frac{1}{2}(\frac{4}{k}-\theta)\lambda_{k,2s}}\big).\end{align}
Note that $\theta_{k,s}$ in \eqref{definethetaks} is actually determined by
$$1+\theta+(\frac{4}{k}-\theta)\lambda_{k,s}=1-\frac{k-1}{2}\theta+\frac{1}{2}(\frac{4}{k}-\theta)\lambda_{k,2s},$$
and it follows from \eqref{boundSks1} that
\begin{align*}S_{k,s}\ll Y_4^{\frac{4}{k}+\theta_{k,s}+\varepsilon}S_{k,s-1}+Y_4^{4\sigma_{k,s}+\varepsilon}R.\end{align*}

We provide some numerical values in the following. One has permissible exponents (see Tables in Sections 15-16 in \cite{VW2})
\begin{align*}&\lambda_{13,4}=4.0980713,\ \ \lambda_{14,4}=4.0856057,\ \ \lambda_{14,5}=5.2216967,
\\ &\lambda_{13,8}=9.0257224,\ \ \lambda_{14,8}=8.9350975,\ \ \lambda_{14,10}=11.6442024.\end{align*}
Then by \eqref{definethetaks} and \eqref{definesigmaks}, we have
\begin{align*}\theta_{13,4}=0.01721257,\ \ \theta_{14,4}=0.01384513,\ \ \theta_{14,5}=0.02117723\end{align*}
and
\begin{align}\label{valuesigma}\sigma_{13,4}=0.58202682,\ \ \sigma_{14,4}=0.5553779,\ \ \sigma_{14,5}=0.6482762.\end{align}
This completes the proof.
\end{proof}
We remark that in our applications, the second term on the right hand side of \eqref{boundSks} will dominate the first.

\begin{lemma}Let $k\ge 12$. One has
\begin{align}\label{boundSk3}S_{k,3}\ll Y_4^{1+\varepsilon}Y_k^{\lambda_{k,3}}.\end{align}
\end{lemma}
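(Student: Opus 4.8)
The plan is to read $S_{k,3}=\int_0^1|f_4(\alpha)|^2|f_k(\alpha)|^6\,d\alpha$ as a counting function and to separate a diagonal from an off-diagonal contribution. Expanding the integrand, $S_{k,3}$ equals the number of solutions of
$$y_1^4-y_2^4=z_1^k+z_2^k+z_3^k-z_4^k-z_5^k-z_6^k$$
with $y_1,y_2\in\mathcal{A}(Y_4,R)$ and $z_1,\dots,z_6\in\mathcal{A}(Y_k,R)$. Writing $r(m)$ for the number of pairs $(y_1,y_2)\in\mathcal{A}(Y_4,R)^2$ with $y_1^4-y_2^4=m$, and $N(m)$ for the number of six-tuples $(z_1,\dots,z_6)\in\mathcal{A}(Y_k,R)^6$ on which the right-hand side takes the value $m$, we have $S_{k,3}=\sum_m r(m)N(m)$, and I would bound the terms $m=0$ and $m\neq0$ separately.

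For $m=0$ the identity $y_1^4=y_2^4$ with $y_1,y_2>0$ forces $y_1=y_2$, so $r(0)\le Y_4$; on the other hand $N(0)=\int_0^1|f_k(\alpha)|^6\,d\alpha$, which is $\ll Y_k^{\lambda_{k,3}+\varepsilon}$ by the definition of the permissible exponent $\lambda_{k,3}$, applied with $N=Y_k^k=n^{\lambda}$ (so that $f_k(\alpha)=f_k(\alpha;N^{1/k},R)$ and $R=n^{\eta}$ is a sufficiently small power of $N$). Hence the diagonal contributes $\ll Y_4Y_k^{\lambda_{k,3}+\varepsilon}$. For $m\neq0$ I would use a divisor-bound estimate: since $1\le|m|\le3Y_k^k\ll n$, and for fixed nonzero $m$ the quantity $y_1-y_2$ divides $m$ and hence takes $\ll|m|^{\varepsilon}\ll n^{\varepsilon}$ values, after which $y_1^4-y_2^4=m$ becomes a cubic in $y_2$, one gets $r(m)\ll n^{\varepsilon}$ uniformly. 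Therefore $\sum_{m\neq0}r(m)N(m)\ll n^{\varepsilon}\sum_m N(m)\le n^{\varepsilon}Y_k^6$, the last sum being simply the total number of six-tuples. Combining the two ranges yields $S_{k,3}\ll Y_4Y_k^{\lambda_{k,3}+\varepsilon}+n^{\varepsilon}Y_k^6$.

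It then remains to absorb $n^{\varepsilon}Y_k^6$ into $Y_4^{1+\varepsilon}Y_k^{\lambda_{k,3}}$, and this is the only place where the hypothesis $k\ge12$ is used. Every permissible exponent satisfies $\lambda_{k,3}\ge3$ (the diagonal alone contributes $\gg Y_k^3$ to $\int_0^1|f_k|^6$), so $(6-\lambda_{k,3})/k\le 3/12=1/4$, whence $Y_k^{6-\lambda_{k,3}}=n^{\lambda(6-\lambda_{k,3})/k}\le n^{\lambda/4}=Y_4$ and $n^{\varepsilon}Y_k^6\le n^{\varepsilon}Y_4Y_k^{\lambda_{k,3}}$. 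Since $2/3<\lambda<1$, both $n^{\varepsilon}$ and $Y_k^{\varepsilon}$ are bounded by fixed powers of $Y_4^{\varepsilon}$, so after relabelling $\varepsilon$ we obtain $S_{k,3}\ll Y_4^{1+\varepsilon}Y_k^{\lambda_{k,3}}$. I do not expect any serious difficulty here: the argument is elementary apart from invoking the permissible-exponent bound for $\int_0^1|f_k|^6$, and the one point to check with care is the inequality $\lambda_{k,3}\ge6-k/4$, which guarantees that $Y_k^6$ does not exceed $Y_4Y_k^{\lambda_{k,3}}$.
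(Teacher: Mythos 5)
Your argument is correct and is essentially the paper's own proof: the split into $m=0$ versus $m\neq 0$ is exactly the paper's split into $x_1=x_2$ versus $x_1\neq x_2$, the divisor bound giving $r(m)\ll n^{\varepsilon}$ for $m\neq 0$ is the same device, and the final absorption of $Y_k^{6}$ via $\lambda_{k,3}\ge 3$ and $k\ge 12$ matches the paper's use of $Y_k^{6}\le Y_4Y_k^{3}$. No issues.
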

\begin{proof}Note that $S_{k,3}$ is the number of solutions of
\begin{align}\label{equation4k}x_1^4-x_2^4=\sum_{j=1}^3(y_j^k-z_j^k),\end{align}
where $x_1,x_2\in \mathcal{A}(Y_4,R)$ and $y_1,y_2,y_{3},z_1,z_2,z_3\in \mathcal{A}(Y_k,R)$.

The number of solutions of \eqref{equation4k} with $x_1\not=x_2$ is $O(Y_k^{6+\varepsilon})$, since for any fixed $y_1,y_2,y_{3},z_1,z_2,z_3$ with
$\sum_{j=1}^3(y_j^k-z_j^k)\not=0$ there are at most $O(Y_k^{\varepsilon})$ possible choices of $x_1$ and $x_2$. The number of solutions of \eqref{equation4k} with $x_1=x_2$ is $O(Y_4S')$, where $S'$ denotes the number of solutions of
\begin{align*}\sum_{j=1}^3(y_j^k-z_j^k)=0\end{align*}
with $y_1,y_2,y_{3},z_1,z_2,z_3\in \mathcal{A}(Y_k,R)$. Note that $S'\ll Y_{k}^{\lambda_{k,3}}$. We conclude that
\begin{align*}S_{k,3}\ll Y_k^{6+\varepsilon}+Y_4Y_k^{\lambda_{k,3}}.\end{align*}
For $k\ge 12$, one has $Y_k^{6}\le Y_{4}Y_k^{3}$, and by $\lambda_{k,3}\ge 3$, we finally obtain
\begin{align*}S_{k,3}\ll Y_4^{1+\varepsilon}Y_k^{\lambda_{k,3}}.\end{align*}
This completes the proof.
\end{proof}

One has from Tables in Sections 14-16 in \cite{VW2} that
\begin{align*}\lambda_{12,3}=3.0173811,\ \ \lambda_{13,3}=3.0139128,\ \ \lambda_{14,3}=3.0113494.\end{align*}

\begin{lemma}\label{lemmaboundSks}Let $(k,s)\in \{(13,4),(14,4),(14,5)\}$. Then one has
\begin{align}\label{boundSks2}S_{k,s}\ll Y_4^{4\sigma_{k,s}+\varepsilon}R.\end{align}
\end{lemma}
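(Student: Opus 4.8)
The plan is to combine the recursive inequality \eqref{boundSks} with the base case \eqref{boundSk3} and simply verify that, for each of the three pairs $(k,s)\in\{(13,4),(14,4),(14,5)\}$, the ``$R$-term'' on the right of \eqref{boundSks} already dominates the contribution coming from $S_{k,s-1}$. Concretely, for $(13,4)$ and $(14,4)$ one step of the recursion reduces $S_{k,4}$ to $S_{k,3}$, and for $(14,5)$ two steps reduce $S_{14,5}$ to $S_{14,3}$; in every case we land on $S_{k,3}$, whose size is controlled by \eqref{boundSk3}.

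First I would write out \eqref{boundSks} for the relevant pair, giving
$$S_{k,s}\ll Y_4^{\frac{4}{k}+\theta_{k,s}+\varepsilon}S_{k,s-1}+Y_4^{4\sigma_{k,s}+\varepsilon}R,$$
and (for $k=14$, $s=5$) iterate once more to bring in $S_{14,4}$ and hence, after a further application, $S_{14,3}$. Then I would substitute the bound \eqref{boundSk3}, namely $S_{k,3}\ll Y_4^{1+\varepsilon}Y_k^{\lambda_{k,3}}$, and recall that $Y_k=n^{\lambda/k}$ and $Y_4=n^{\lambda/4}$, so that $Y_k^{\lambda_{k,3}}=Y_4^{\frac{4}{k}\lambda_{k,3}}$. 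This converts the entire right-hand side into a power of $Y_4$ times $R^{O(1)}$, and the claim \eqref{boundSks2} amounts to checking the inequality between exponents
$$\Big(\sum \tfrac{4}{k}+\theta_{k,\cdot}\Big)+\tfrac{4}{k}\lambda_{k,3}+1 \;\le\; 4\sigma_{k,s},$$
where the sum ranges over the exponents picked up in the one or two recursion steps, plus accounting for the single power of $R$ versus $Y_4^{O(\varepsilon)}$. The numerical values $\theta_{k,s}$, $\sigma_{k,s}$ in \eqref{valuesigma} and $\lambda_{k,3}$ listed just above were tabulated precisely so that these inequalities hold with room to spare, so the verification is a short numerical computation for each of the three pairs.

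The only genuine subtlety — and the one point I would be careful about — is bookkeeping of the $R$ factors across iterations: each application of \eqref{boundSks} contributes one power of $R=n^\eta$, so after two steps for $(14,5)$ we pick up $R^2$ rather than $R$, and one must confirm that the exponent gap in the displayed inequality is wider than $2\eta$ (which it is, since the gaps are of fixed positive size while $\eta$ is arbitrarily small). A secondary point is that the first term $Y_4^{4/k+\theta_{k,s}+\varepsilon}S_{k,s-1}$, after unwinding, carries strictly smaller exponent than the pure $R$-term, so it can be absorbed; this is exactly the remark made after the statement of \eqref{boundSks} that ``the second term on the right hand side will dominate the first.'' I do not expect any structural obstacle here — the lemma is a clean consequence of the two preceding lemmas together with a finite numerical check — so the main work is simply to carry out that check transparently for $(13,4)$, $(14,4)$ and $(14,5)$.
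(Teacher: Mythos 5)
Your proposal is correct and matches the paper's proof: one application of \eqref{boundSks} reduces $S_{13,4}$ and $S_{14,4}$ to $S_{k,3}$, bounded by \eqref{boundSk3}, and a further application handles $S_{14,5}$, with the numerical checks $\sigma'_{k,4}<\sigma_{k,4}$ and $\sigma'_{14,5}<\sigma_{14,5}$ (in the paper's notation) confirming that the $Y_4^{4\sigma_{k,s}}R$ term dominates. One minor point: since the first term of \eqref{boundSks} carries no factor of $R$, no $R^2$ actually arises even after two iterations, though as you note this would be harmless in any case.
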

\begin{proof}For $k\in \{13,14\}$, we conclude from \eqref{boundSks} and \eqref{boundSk3} that
\begin{align*}S_{k,4}\ll Y_4^{4\sigma_{k,4}'+\varepsilon}
+Y_4^{4\sigma_{k,4}+\varepsilon}R,
\end{align*}
where
$$\sigma_{k,4}'=\frac{1}{k}+\frac{1}{4}\theta_{k,4}+\frac{1}{4}+\frac{1}{k}\lambda_{k,3}.$$
Note that $\sigma'_{13,4}=0.5630657$ and $\sigma'_{14,4}=0.5399863$. On recalling \eqref{valuesigma}, we have
$\sigma'_{13,4}<\sigma_{13,4}$ and $\sigma'_{14,4}<\sigma_{14,4}$. Therefore, one has
\begin{align*}S_{k,4}\ll Y_4^{4\sigma_{k,4}+\varepsilon}R \ \textrm{ for } \ k\in \{13,14\}.
\end{align*}

We apply \eqref{boundSks} again to deduce that
\begin{align*}S_{14,5}\ll \Big(Y_4^{4\sigma_{14,5}'+\varepsilon}
+Y_4^{4\sigma_{14,5}+\varepsilon}\Big)R,
\end{align*}
where
$$\sigma_{14,5}'=\frac{1}{14}+\frac{1}{4}\theta_{14,5}+\sigma_{14,4}=0.6321008.$$
By \eqref{valuesigma}, we have $\sigma_{14,5}'<\sigma_{14,5}$. The proof of the lemma is complete.
\end{proof}

We introduce
$$u_1=\frac{1}{4}+\sum_{k=12}^{14}\frac{1}{3k}\lambda_{k,3}.$$
\begin{lemma}One has
\begin{align}\label{boundI1}\mathcal{I}_1\ll Y_4^{4u_1+\varepsilon}.\end{align}
\end{lemma}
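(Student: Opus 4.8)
The plan is to bound $\mathcal{I}_1=\int_0^1|f_4(\alpha)f_{12}(\alpha)f_{13}(\alpha)f_{14}(\alpha)|^2d\alpha$ by interpreting it as the number of solutions to the underlying diophantine equation
$$x_1^4-x_2^4=\sum_{k\in\{12,13,14\}}(y_k^k-z_k^k),$$
with all variables $R$-smooth in their respective ranges, and then to split according to whether the quartic part vanishes. First I would observe that the contribution from $x_1=x_2$ is $Y_4$ times the number of solutions of $\sum_{k}(y_k^k-z_k^k)=0$, and this latter count is, on applying H\"older's inequality over the three ranges $Y_{12},Y_{13},Y_{14}$ (or equivalently counting with the diagonal contribution), bounded by $\prod_{k=12}^{14}\big(\int_0^1|f_k(\alpha)|^6d\alpha\big)^{1/3}\ll\prod_{k=12}^{14}Y_k^{\lambda_{k,3}/3+\varepsilon}$. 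Multiplying by the $Y_4$ from the off-diagonal choices of $x_1$ gives the shape $Y_4^{1+\varepsilon}\prod_{k}Y_k^{\lambda_{k,3}/3}$, and since $Y_4=n^{\lambda/4}$ and $Y_k=n^{\lambda/k}$, the exponent of $n$ here is $\lambda(\tfrac14+\sum_k\tfrac{1}{3k}\lambda_{k,3})=\lambda\cdot 4u_1/4$; after translating back to a power of $Y_4$ (noting $Y_4^{4u_1}=n^{\lambda u_1}$) this is exactly $Y_4^{4u_1+\varepsilon}$.

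Second, I would handle the contribution from $x_1\ne x_2$. For fixed $y_{12},z_{12},y_{13},z_{13},y_{14},z_{14}$ with $\sum_k(y_k^k-z_k^k)\ne 0$, the equation $x_1^4-x_2^4=N$ with $N\ne 0$ has at most $O(n^\varepsilon)$ solutions in $x_1,x_2$ (divisor bound on $x_1^2-x_2^2$ and $x_1^2+x_2^2$, or simply $d(N)$-type reasoning), so this contribution is $O(Y_{12}^2Y_{13}^2Y_{14}^2 n^\varepsilon)$. I would then check that this is dominated by $Y_4^{4u_1+\varepsilon}$: in exponents of $n$ this requires $2\lambda(\tfrac{1}{12}+\tfrac{1}{13}+\tfrac{1}{14})\le\lambda u_1$, i.e. $2(\tfrac{1}{12}+\tfrac{1}{13}+\tfrac{1}{14})\le \tfrac14+\sum_{k=12}^{14}\tfrac{\lambda_{k,3}}{3k}$, which holds comfortably since each $\lambda_{k,3}>3$ forces $\sum_k\tfrac{\lambda_{k,3}}{3k}>\sum_k\tfrac{1}{k}$, and $\tfrac14+\sum_k\tfrac1k$ already exceeds $2\sum_k\tfrac1k$ because $\tfrac14>\sum_{k=12}^{14}\tfrac1k\approx 0.234$. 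Combining the two cases yields $\mathcal{I}_1\ll Y_4^{4u_1+\varepsilon}$.

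I do not expect any genuine obstacle here; the lemma is a routine diagonal-versus-off-diagonal dissection, entirely parallel to the proof of \eqref{boundSk3} given just above (which is the case with $f_4^2f_k^6$). The only point requiring a little care is bookkeeping the exponents to confirm that both the diagonal term and the crude off-diagonal term $Y_{12}^2Y_{13}^2Y_{14}^2$ are absorbed into $Y_4^{4u_1+\varepsilon}$, and that the $R$ factor (present in $S_{k,3}$ but not needed here, since we use the clean diagonal bound $S'\ll Y_k^{\lambda_{k,3}}$ for the single-power equation rather than the weighted $S_{k,3}$) does not intrude. In short: expand as a counting function, separate $x_1=x_2$ from $x_1\ne x_2$, apply the divisor bound on the off-diagonal and H\"older plus the definition of $\lambda_{k,3}$ on the diagonal, and verify the exponent inequality.
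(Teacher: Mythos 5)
Your proof is correct, but it takes a slightly different route from the paper. The paper's proof of this lemma is a one-line application of H\"older's inequality, $\mathcal{I}_1\le S_{12,3}^{1/3}S_{13,3}^{1/3}S_{14,3}^{1/3}$, followed by the already-established bound \eqref{boundSk3} on each $S_{k,3}$; the diagonal-versus-off-diagonal dissection you carry out is exactly the content of the \emph{preceding} lemma (the proof of \eqref{boundSk3}), which the paper invokes rather than repeats. You instead run that dissection directly on the counting function underlying $\mathcal{I}_1$: the diagonal $x_1=x_2$ contributes $Y_4\cdot S'$ with $S'\ll\prod_{k}Y_k^{\lambda_{k,3}/3+\varepsilon}$ by H\"older on the pure twelfth-, thirteenth- and fourteenth-power sums, and the off-diagonal contributes $O(Y_{12}^2Y_{13}^2Y_{14}^2n^\varepsilon)$ by the divisor bound, which you correctly verify is absorbed since $2\sum_{k=12}^{14}k^{-1}<\tfrac14+\sum_{k=12}^{14}k^{-1}\le u_1$. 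Both arguments are sound and give the same exponent (note $Y_4^{4u_1}=Y_4\prod_kY_k^{\lambda_{k,3}/3}$); the paper's version is more economical because the $S_{k,3}$ bound is needed anyway for $\mathcal{I}_2$, whereas yours is self-contained and makes the dominance of the diagonal term explicit for $\mathcal{I}_1$ itself. One small slip of terminology only: the factor $Y_4$ in your first case comes from the \emph{diagonal} choices $x_1=x_2$, not the off-diagonal ones.
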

\begin{proof}By H\"older's inequality, one has
$$\mathcal{I}_1\le S_{12,3}^{1/3}S_{13,3}^{1/3}S_{14,3}^{1/3}.$$
Then we deduce from \eqref{boundSk3} that
\begin{align*}\mathcal{I}_1\ll Y_4^{1+\varepsilon}Y_{12}^{\frac{1}{3}\lambda_{12,3}}
Y_{13}^{\frac{1}{3}\lambda_{13,3}}Y_{14}^{\frac{1}{3}\lambda_{14,3}}.\end{align*}
This completes the proof of \eqref{boundI1} since $Y_k=Y_4^{4/k}$.
\end{proof}

We introduce
$$u_2=\frac{1}{3}(\frac{1}{4}+\frac{1}{12}\lambda_{12,3})+\frac{1}{4}\sigma_{13,4}+\frac{1}{12}\sigma_{14,4}+\frac{1}{3}\sigma_{14,5}.$$
\begin{lemma}One has
\begin{align}\label{boundI2}\mathcal{I}_2\ll Y_4^{4u_2+\varepsilon}R.\end{align}
\end{lemma}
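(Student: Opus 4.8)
The plan is to exploit H\"older's inequality on $\mathcal{I}_2=\int_0^1|f_4(\alpha)f_{12}(\alpha)f_{13}(\alpha)f_{14}(\alpha)^2|^2\,d\alpha$, splitting the square of the weight $|f_4(\alpha)|^2$ among the four factors so that each resulting integral is one of the quantities $S_{k,s}=\int_0^1|f_4(\alpha)^2f_k(\alpha)^{2s}|\,d\alpha$ already controlled by Lemmas \ref{lemmaboundSks} and the estimate \eqref{boundSk3}. Concretely, I would write
\begin{align*}\mathcal{I}_2\le \Big(\int_0^1|f_4f_{12}^3|^2\Big)^{1/3}\Big(\int_0^1|f_4f_{13}^3|^2\Big)^{1/3}\Big(\int_0^1|f_4f_{14}^3|^2\Big)^{1/6}\Big(\int_0^1|f_4f_{14}^5|^2\Big)^{1/3},\end{align*}
i.e. $\mathcal{I}_2\le S_{12,3}^{1/3}S_{13,3}^{1/3}S_{14,3}^{1/6}S_{14,5}^{1/3}$, after checking that the exponents on $f_4$, namely $\tfrac13+\tfrac13+\tfrac16+\tfrac13=\tfrac{7}{6}$ — wait, this must total $2$ on $f_4$ and the right multiplicities $2,2,4$ on $f_{12},f_{13},f_{14}$. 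One must instead take the exponent vector so that $f_{12}$ appears with total power $2$, hence weight $\tfrac13$ on $S_{12,3}$ ($=\int|f_4 f_{12}^3|^2$ contributes $f_{12}^6$, so weight $\tfrac13$ gives $f_{12}^2$); similarly $\tfrac13$ on $S_{13,3}$; and $f_{14}^4$ is produced from $S_{14,3}^{1/6}S_{14,5}^{1/3}$ since $6\cdot\tfrac16+10\cdot\tfrac13$ is too big, so rather use $S_{14,4}$: take $S_{14,4}^{a}S_{14,5}^{b}$ with $8a+10b=4$ and the $f_4$-budget constraint $2a+2b+\tfrac13+\tfrac13=2$, i.e. $a+b=\tfrac23$, giving $a=\tfrac13,b=\tfrac13$. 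Thus the correct decomposition is
\begin{align*}\mathcal{I}_2\le S_{12,3}^{1/3}\,S_{13,4}^{1/4}\,S_{14,4}^{1/12}\,S_{14,5}^{1/3},\end{align*}
where one checks the $f_4$-exponent is $2(\tfrac13\cdot\tfrac12+\tfrac14+\tfrac1{12}+\tfrac13)=2$ and the powers on $f_{12},f_{13},f_{14}$ are $6\cdot\tfrac13=2$, $8\cdot\tfrac14=2$, $8\cdot\tfrac1{12}+10\cdot\tfrac13=\tfrac23+\tfrac{10}{3}=4$, as required; here the $1/3$ weight on $S_{12,3}$ appears because $S_{12,3}$ has $f_{12}$ to the sixth power while we only need the second, but one must combine with H\"older exponents summing to $1$: $\tfrac13+\tfrac14+\tfrac1{12}+\tfrac13=1$.

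Next I would substitute the bounds already proved: from \eqref{boundSk3}, $S_{12,3}\ll Y_4^{1+\varepsilon}Y_{12}^{\lambda_{12,3}}=Y_4^{1+\varepsilon}Y_4^{(1/3)\lambda_{12,3}}$ using $Y_{12}=Y_4^{4/12}=Y_4^{1/3}$; and from Lemma \ref{lemmaboundSks}, $S_{13,4}\ll Y_4^{4\sigma_{13,4}+\varepsilon}R$, $S_{14,4}\ll Y_4^{4\sigma_{14,4}+\varepsilon}R$, $S_{14,5}\ll Y_4^{4\sigma_{14,5}+\varepsilon}R$. Raising to the stated powers and multiplying, the $R$-factors combine to $R^{1/4+1/12+1/3}=R^{2/3}\le R^{1+\varepsilon}$ — actually $R^{2/3}\ll R$, so after absorbing one keeps a single factor $R$ (the statement only claims one power of $R$, which is consistent since $R^{2/3}\le R$). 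Collecting the exponent of $Y_4$ gives exactly
$$4\Big[\tfrac13\big(\tfrac14+\tfrac1{12}\lambda_{12,3}\big)+\tfrac14\sigma_{13,4}+\tfrac1{12}\sigma_{14,4}+\tfrac13\sigma_{14,5}\Big]=4u_2,$$
after writing $Y_4^{1/3}=4\cdot\tfrac1{12}$ and $\tfrac13\cdot\tfrac13\lambda_{12,3}=4\cdot\tfrac1{36}\lambda_{12,3}$ to match the definition of $u_2$. This yields $\mathcal{I}_2\ll Y_4^{4u_2+\varepsilon}R$, as claimed.

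The only genuinely delicate point is the bookkeeping of the H\"older split: one must verify simultaneously that the four H\"older exponents are nonnegative and sum to $1$, that the total power of $f_4$ equals $2$, and that the powers of $f_{12},f_{13},f_{14}$ equal $2,2,4$ respectively — four linear constraints that (as the computation above shows) are met by the weights $\tfrac13,\tfrac14,\tfrac1{12},\tfrac13$. Everything else is a routine substitution of the exponents $\lambda_{12,3},\sigma_{13,4},\sigma_{14,4},\sigma_{14,5}$ recorded earlier, together with the trivial inequality $R^{2/3}\le R$ to reduce the power of $R$ to one. I would therefore present the proof as: (i) state the H\"older decomposition with the explicit weights; (ii) quote \eqref{boundSk3} and \eqref{boundSks2}; (iii) multiply out and simplify the exponent of $Y_4$ to $4u_2$, noting the $R$-powers sum to $\tfrac23\le 1$.
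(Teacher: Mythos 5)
Your proposal is correct and is essentially identical to the paper's proof: the same H\"older decomposition $\mathcal{I}_2\le S_{12,3}^{1/3}S_{13,4}^{1/4}S_{14,4}^{1/12}S_{14,5}^{1/3}$ followed by substituting \eqref{boundSk3} and \eqref{boundSks2}. The exponent bookkeeping you carry out (the $f_4$-budget, the powers $2,2,4$ on $f_{12},f_{13},f_{14}$, and $R^{2/3}\le R$) is exactly what the paper leaves implicit.
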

\begin{proof}By H\"older's inequality, one has
$$\mathcal{I}_2\le S_{12,3}^{1/3}S_{13,4}^{1/4}S_{14,4}^{1/12}S_{14,5}^{1/3}.$$
Then \eqref{boundI2} follows from \eqref{boundSk3} and \eqref{boundSks2}.
\end{proof}

Now we are able to establish the upper bound of $\mathcal{T}$.
\begin{lemma}\label{lemmaboundT}Let $\rho=0.004453$. Then one has
$$\mathcal{T}\ll n^{\kappa_0-7/9+\rho}.$$\end{lemma}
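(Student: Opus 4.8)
The plan is to combine Lemma~\ref{lemmaboundT2} with the mean value estimates \eqref{boundI1} and \eqref{boundI2} for $\mathcal{I}_1$ and $\mathcal{I}_2$, and then to translate the resulting bound (which is expressed in powers of $Y_4$) back into a bound in powers of $n$ using the relations $X_3=n^{1/3}$, $Y_4=n^{\lambda/4}$, $Y_k=n^{\lambda/k}=Y_4^{4/k}$, $H=\tfrac{16}{3}n^{\lambda-2/3}$ and $R=n^\eta$. Concretely, first I would substitute \eqref{boundI2} into the two terms $H\mathcal{I}_2$ and $(H\mathcal{I}_2)^{1/2}Y_4Y_{12}Y_{13}$ of Lemma~\ref{lemmaboundT2}, and \eqref{boundI1} into the term $X_3\mathcal{I}_1$. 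Since $R=n^\eta$ contributes only an arbitrarily small power of $n$ and is absorbed into $n^\varepsilon$, each of the three terms becomes an explicit power of $n$ once $\lambda$ is fixed via \eqref{definelambda}; one then checks numerically, using the listed values of $\lambda_{12,3},\lambda_{13,3},\lambda_{14,3}$ and of $\sigma_{13,4},\sigma_{14,4},\sigma_{14,5}$ together with the definitions of $u_1$, $u_2$, $\kappa_0$ and $\rho'$, that every exponent is at most $\kappa_0-\tfrac{7}{9}+\rho$ with $\rho=0.004453$.

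More precisely, the first term contributes $X_3\mathcal{I}_1\ll n^{1/3+\lambda u_1+\varepsilon}$; since $\kappa_0=\tfrac{2}{3}+2\lambda(\tfrac14+\tfrac1{12}+\tfrac1{13}+\tfrac1{14})$ and $u_1=\tfrac14+\sum_{k=12}^{14}\tfrac1{3k}\lambda_{k,3}$ with each $\lambda_{k,3}$ only slightly above $3$, this exponent sits comfortably below $\kappa_0-\tfrac79+\rho$. The dominant term is expected to be the middle one, $H\mathcal{I}_2\ll n^{\lambda-2/3}\cdot n^{\lambda u_2+\varepsilon}$: here $u_2=\tfrac13(\tfrac14+\tfrac1{12}\lambda_{12,3})+\tfrac14\sigma_{13,4}+\tfrac1{12}\sigma_{14,4}+\tfrac13\sigma_{14,5}$, and the whole point of passing from the naive diminishing-range bound \eqref{T1} (which gives only $\rho'=0.0109875$) to the refined treatment of $\mathcal{I}_2$ via $S_{k,s}\ll Y_4^{4\sigma_{k,s}+\varepsilon}R$ is that $\lambda(1+u_2)-\tfrac23$ comes out as $\kappa_0-\tfrac79+\rho$ with $\rho=0.004453$ rather than $\rho'$. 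The third term $(H\mathcal{I}_2)^{1/2}Y_4Y_{12}Y_{13}\ll n^{\frac12(\lambda-2/3+\lambda u_2)}n^{\lambda(\frac14+\frac1{12}+\frac1{13})+\varepsilon}$ must also be checked to fall below the target exponent; one verifies that the square-root damping more than compensates for the extra smooth Weyl sums, so this term is not critical.

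The main obstacle is purely the bookkeeping of exponents: one has to be careful that the common parameter $\lambda$ is exactly the value fixed in \eqref{definelambda} (so that the various terms balance as intended), that the $Y_k=Y_4^{4/k}$ substitutions are done consistently, and that the numerical inequalities $\sigma'_{k,s}<\sigma_{k,s}$ already established in Lemma~\ref{lemmaboundSks} are genuinely what makes the $S_{k,s}\ll Y_4^{4\sigma_{k,s}+\varepsilon}R$ bounds usable here. There is no further analytic input needed beyond Lemmas~\ref{lemmaboundT2} and the $\mathcal{I}_j$ estimates; the claim $\mathcal{T}\ll n^{\kappa_0-7/9+\rho}$ follows by taking the maximum of the three explicit exponents and observing that each is $\le \kappa_0-\tfrac79+0.004453$. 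I would close by remarking that, as already flagged after \eqref{boundSks}, it is the second term in that recursion — hence the $\sigma_{k,s}$-driven bound for $\mathcal{I}_2$ — that dictates the final value of $\rho$, and that Lemma~\ref{lemmaboundF2} is then immediate from $\mathcal{T}\ll n^{\kappa_0-7/9+\rho}$ on rewriting $\kappa_0$ in terms of $F_3(0)^2F(0)^2$ and $n^{7/9}$.
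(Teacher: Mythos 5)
Your proposal follows the paper's proof exactly: substitute \eqref{boundI1} and \eqref{boundI2} into the three terms of Lemma~\ref{lemmaboundT2}, convert everything to powers of $n$ via $Y_k=n^{\lambda/k}$, absorb $R=n^{\eta}$, and verify numerically that each exponent is at most $\kappa_0-\frac{7}{9}+\rho$ with $\lambda$ fixed by \eqref{definelambda}. One small correction to your narration: since $\lambda$ is chosen precisely to equate $\frac13+\lambda u_1$ with $\lambda-\frac23+\lambda u_2$, the first term $X_3\mathcal{I}_1$ is not ``comfortably below'' the target but exactly equal to it --- the paper in fact reads off $\rho$ from $\frac13+\lambda u_1-(\kappa_0-\frac79)=0.004453$, so the first and second terms are jointly critical.
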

\begin{proof}
We deduce from Lemma \ref{lemmaboundT2}, \eqref{boundI1} and \eqref{boundI2} that
$$\mathcal{T}\ll n^{\varepsilon}R(n^{\frac{1}{3}+\lambda u_1}
+n^{\lambda-\frac{2}{3}+\lambda u_2}+n^{\frac{1}{2}\lambda-\frac{1}{3}+\frac{1}{2}\lambda u_2+\frac{1}{4}\lambda+\frac{1}{12}\lambda+\frac{1}{13}\lambda}).
$$
We choose $\lambda$ by equating $\frac{1}{3}+\lambda u_1$ and $\lambda-\frac{2}{3}+\lambda u_2$, that is
\begin{align}\label{definelambda}\lambda=\frac{1}{1+u_2-u_1}=0.9155538.\end{align}
Note that $u_1=0.4827948$, $u_2=0.5750298$ and $\frac{1}{3}+\lambda u_1-(\kappa_0-7/9)=0.004453$. The proof is complete.
\end{proof}

We remark that Lemma \ref{lemmaboundF2} follows from Lemma \ref{lemmaboundT} immediately.
Now we prove Lemma \ref{lemmaintmQ1} by using Lemma \ref{lemmaboundG2} and Lemma \ref{lemmaboundF2}.

\vskip3mm

\noindent {\it Proof of Lemma \ref{lemmaintmQ1}.}  By Schwarz's inequality,
\begin{align*}\int_{0}^1|F_3(\alpha)F(\alpha)G(\alpha)|d\alpha\le
\Big(\int_0^1|F_3(\alpha)F(\alpha)|^2d\alpha\Big)^{1/2} \Big(\int_0^1|G(\alpha)|^2d\alpha\Big)^{1/2}.\end{align*}
Then it follows from Lemma \ref{lemmaboundG2} and Lemma \ref{lemmaboundF2} that
\begin{align}\label{boundF3FG}\int_{0}^1|F_3(\alpha)F(\alpha)G(\alpha)|d\alpha \ll F_3(0)F(0)G(0)n^{-\frac{3}{4}-\frac{1}{72}+\frac{1}{2}\rho-\frac{1}{2}\delta_1+\varepsilon}.\end{align}

By Weyl's inequality (Lemma 2.4 in \cite{V}) and the partial summation formula, one has
\begin{align}\label{applyWeyl}\sup_{\alpha \in \mathfrak{m}(Q)}|F_2(\alpha)|\ll F_2(0)^{1+\varepsilon}Q^{-\frac{1}{2}}.\end{align}
Now we conclude from \eqref{boundF3FG} and \eqref{applyWeyl} that
\begin{align*}\int_{\mathfrak{m}(Q)}|\mathcal{F}(\alpha)|d\alpha \ll \mathcal{F}(0)Q^{-\frac{1}{2}}n^{-\frac{3}{4}-\frac{1}{72}+\frac{1}{2}\rho-\frac{1}{2}\delta_1+\varepsilon}.\end{align*}
On choosing $Q=Q_1$, we obtain
$$\int_{\mathfrak{m}(Q_1)}|\mathcal{F}(\alpha)|d\alpha \ll \mathcal{F}(0)n^{-1-\frac{1}{2}\delta_1+\varepsilon}.$$
This completes the proof of Lemma \ref{lemmaintmQ1}.

\vskip3mm

\section{Large sieve inequality}

Let
$$\nu=10^{-100}.$$
We define
\begin{align*}\mathfrak{M}^\ast(Q)=\bigcup_{q\le Q}\bigcup_{\substack{1\le a\le q \\ (a,q)=1}}\mathfrak{M}^\ast(q,a;Q),\end{align*}
where
\begin{align*}\mathfrak{M}^\ast(q,a;Q)=\Big\{\alpha:\ |\alpha-\frac{a}{q}|\le \min(\frac{Q}{qn}, n^{\nu-1})\Big\}.\end{align*}
In this section, we consider
\begin{align*}\mathcal{J}_{k,s}(Q)=\int_{\mathfrak{M}^\ast(Q)}|g_k(\alpha)|^{2s}d\alpha.\end{align*}
For the proof of Theorem \ref{theorem1}, we only need to consider $\mathcal{J}_{k,s}(Q)$ for $5\le k\le 11$. The results in this section hold for all $k\in \Z^{+}$ providing that $r=\eta^{-1}/k$ is sufficiently large.

The following result is well-known. One may refer to Lemma 5.3 in \cite{V}.
\begin{lemma}[Large sieve inequality]\label{largesieve1}Let $\delta>0$. Suppose that $\Gamma$ is a set of $\delta$-spaced real numbers, that is
$\|\gamma_1-\gamma_2\|\ge \delta$ for all $\gamma_1,\gamma_2\in \Gamma$ with $\gamma_1\not=\gamma_2$. Let
$$S(\gamma)=\sum_{1\le m\le N}a(m)e(m\gamma),$$
where $a(m)$ are complex numbers. Then one has
$$\sum_{\gamma\in \Gamma}|S(\gamma)|^2\ll (N+\delta^{-1})\sum_{1\le m\le N}|a(m)|^2.$$
 \end{lemma}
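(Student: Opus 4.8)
The plan is to derive the inequality from Gallagher's Sobolev-type inequality, which is elementary and yields the stated bound with an absolute implied constant; the sharp constant $N+\delta^{-1}$ is available via the Montgomery--Vaughan argument with Selberg's majorant function, but it is not needed here. First I would reduce to the case $0<\delta\le 1$ with $\Gamma\subseteq[0,1)$. Since $S(\gamma)$ depends on $\gamma$ only modulo $1$, replacing each $\gamma\in\Gamma$ by its fractional part does not change the left-hand side, and the hypothesis $\|\gamma_1-\gamma_2\|\ge\delta$ says precisely that the reduced points are $\delta$-separated on the circle $\R/\Z$. If $\delta>1$ then $\Gamma$ has at most one element and Cauchy's inequality gives $|S(\gamma)|^2\le N\sum_{m}|a(m)|^2$ immediately. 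In the remaining case $\Gamma$ has at most $\delta^{-1}$ elements and the arcs $I_\gamma=(\gamma-\delta/2,\gamma+\delta/2)$, $\gamma\in\Gamma$, are pairwise disjoint in $\R/\Z$.

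Next I would establish Gallagher's lemma in the form
$$\sum_{\gamma\in\Gamma}|g(\gamma)|^2\le \delta^{-1}\int_0^1|g(t)|^2\,dt+2\Big(\int_0^1|g(t)|^2\,dt\Big)^{1/2}\Big(\int_0^1|g'(t)|^2\,dt\Big)^{1/2},$$
valid for every $1$-periodic $C^1$ function $g$ and every $\delta$-separated $\Gamma\subseteq[0,1)$. To prove it, set $h=|g|^2\ge 0$, so that $h'(u)=2\,\mathrm{Re}\big(\overline{g(u)}g'(u)\big)$ and $|h'(u)|\le 2|g(u)||g'(u)|$. For $t\in I_\gamma$ one has $h(\gamma)=h(t)+\int_t^\gamma h'(u)\,du$, hence $h(\gamma)\le h(t)+\int_{I_\gamma}|h'(u)|\,du$; averaging over $t\in I_\gamma$ and then summing over $\gamma$, using the disjointness of the arcs $I_\gamma$, gives
$$\sum_{\gamma\in\Gamma}|g(\gamma)|^2\le \delta^{-1}\int_{\R/\Z}|g|^2+\int_{\R/\Z}|h'|\le \delta^{-1}\int_0^1|g|^2+2\int_0^1|g||g'|,$$
and a final application of Cauchy--Schwarz to the last integral yields the displayed inequality.

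Finally I would apply this with $g=S$, a trigonometric polynomial supported on frequencies $1,\ldots,N$. By Parseval's identity $\int_0^1|S(t)|^2\,dt=\sum_{m}|a(m)|^2$, while $\int_0^1|S'(t)|^2\,dt=4\pi^2\sum_{m}m^2|a(m)|^2\le 4\pi^2N^2\sum_{m}|a(m)|^2$. Substituting into Gallagher's lemma gives
$$\sum_{\gamma\in\Gamma}|S(\gamma)|^2\le(\delta^{-1}+4\pi N)\sum_{1\le m\le N}|a(m)|^2\ll(N+\delta^{-1})\sum_{1\le m\le N}|a(m)|^2,$$
as claimed. The one point requiring care is that one must retain the mixed term $\int_0^1|g||g'|$ rather than bound $\big|\int_t^\gamma g'\big|$ by Cauchy--Schwarz over the whole arc at the outset: the latter produces a spurious contribution of size $\delta N^2\int_0^1|g|^2$, which is too lossy when $\delta\gg 1/N$. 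Apart from this the argument is entirely routine; the statement is classical and also follows from Lemma 5.3 of \cite{V}.
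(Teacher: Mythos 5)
Your proof is correct. The paper itself offers no argument for this lemma: it is stated as well known with a pointer to Lemma~5.3 of \cite{V}, so there is nothing internal to compare against. Your route via Gallagher's Sobolev-type inequality is the standard elementary derivation and every step checks out: the reduction to $\Gamma\subseteq[0,1)$ with disjoint arcs $I_\gamma$ is legitimate because $S$ has integer frequencies and the spacing hypothesis is stated with $\|\cdot\|$; the identity $h(\gamma)=h(t)+\int_t^\gamma h'$ with $h=|S|^2$, averaged over $t\in I_\gamma$ and summed using disjointness, gives exactly the claimed Sobolev bound; and Parseval together with $\int_0^1|S'|^2=4\pi^2\sum m^2|a(m)|^2\le 4\pi^2N^2\sum|a(m)|^2$ yields the constant $\delta^{-1}+4\pi N\ll N+\delta^{-1}$, which suffices since the lemma only asserts a $\ll$ bound (the sharp constant $N+\delta^{-1}$ of Montgomery--Vaughan is not needed anywhere in the paper). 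Your closing caveat about retaining the mixed term $\int|g||g'|$ rather than applying Cauchy--Schwarz over the whole arc prematurely is also well taken; the cruder bound would indeed be lossy in the regime $\delta\gg 1/N$, which is the regime relevant to the paper's application with $Q^2\ll N$. One cosmetic point: the set $\Gamma$ in the application, $\{a/q:\ 1\le a\le q\le Q,\ (a,q)=1\}$, is $Q^{-2}$-spaced and finite, so all the finiteness and disjointness hypotheses you invoke are satisfied there.
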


 \begin{lemma}\label{largesieve2}Let
$$h(\gamma)=\sum_{1\le m\le N^{1/k}}b(m)e(m^k\gamma),$$
where $b(m)$ are complex numbers. Let $s$ be a positive integer. Then uniformly for $\beta\in \R$, one has
$$\sum_{q\le Q}\sum_{\substack{a=1 \\ (a,q)=1}}^q|h(\frac{a}{q}+\beta)|^{2s}\ll (N+Q^2)\int_0^1|h(\alpha)|^{2s}d\alpha.$$
 \end{lemma}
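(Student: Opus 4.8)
\textit{The plan is to reduce Lemma~\ref{largesieve2} to the classical large sieve inequality of Lemma~\ref{largesieve1}, applied to the exponential sum whose frequencies are the $2s$-fold sums of $k$-th powers.} The first step is to expand the $2s$-th power: writing $h(\gamma)^s = \sum_{1\le \ell \le sN} c(\ell) e(\ell\gamma)$ where $c(\ell) = \sum b(m_1)\cdots b(m_s)$, the sum running over $m_1^k + \cdots + m_s^k = \ell$ with each $1\le m_i\le N^{1/k}$, we see that $h(\gamma)^{2s} = |h(\gamma)^s|^2$ up to conjugation, so $|h(\frac{a}{q}+\beta)|^{2s} = |H(\frac{a}{q}+\beta)|^2$ where $H(\gamma) = \sum_{1\le \ell\le sN} c(\ell) e(\ell\gamma)$. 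The phase $\beta$ can be absorbed: $|H(\frac{a}{q}+\beta)|^2 = |\sum_\ell c(\ell)e(\ell\beta) e(\ell a/q)|^2$, and replacing $c(\ell)$ by $c(\ell)e(\ell\beta)$ changes nothing since Lemma~\ref{largesieve1} allows arbitrary complex coefficients; this is why the bound is uniform in $\beta$.

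The second step is to verify that the frequencies $\{a/q : q\le Q,\ 1\le a\le q,\ (a,q)=1\}$ — the Farey fractions of order $Q$ — are $\delta$-spaced with $\delta = Q^{-2}$. Indeed, for distinct reduced fractions $a/q\neq a'/q'$ with $q,q'\le Q$ one has $\|a/q - a'/q'\| \ge 1/(qq') \ge 1/Q^2$. Applying Lemma~\ref{largesieve1} with $N$ replaced by $sN$ (the length of $H$), $\Gamma$ the set of these Farey points, and $a(\ell) = c(\ell)e(\ell\beta)$, we obtain
\begin{align*}
\sum_{q\le Q}\sum_{\substack{a=1\\(a,q)=1}}^q |h(\tfrac{a}{q}+\beta)|^{2s} \ll (sN + Q^2)\sum_{1\le \ell\le sN}|c(\ell)|^2 \ll (N+Q^2)\sum_{\ell}|c(\ell)|^2,
\end{align*}
absorbing the fixed constant $s$ into the implied constant. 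The final step is to identify $\sum_\ell |c(\ell)|^2$ with the mean value: by orthogonality, $\sum_\ell |c(\ell)|^2 = \int_0^1 |h(\alpha)^s|^2 d\alpha = \int_0^1 |h(\alpha)|^{2s} d\alpha$, since $|c(\ell)|^2$ counts (weighted) solutions of $m_1^k+\cdots+m_s^k = m_{s+1}^k + \cdots + m_{2s}^k$. Combining these gives exactly the claimed inequality.

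I do not expect a serious obstacle here — the lemma is essentially a packaging of the large sieve — but the one point requiring a little care is the bookkeeping of the convolution coefficients $c(\ell)$ and the observation that the $\beta$-twist is harmless; this is what makes the estimate genuinely uniform in $\beta$, which is the feature actually used later (the arcs $\mathfrak{M}^\ast(q,a;Q)$ are intervals centered at $a/q$, so after a change of variables $\alpha = a/q + \beta$ one integrates over $\beta$ and needs the bound to hold for every $\beta$ in the relevant range). One should also note that the length of $H$ is $sN$ rather than $N$, but since $s$ is a fixed positive integer this only affects the implied constant, and the statement's $(N+Q^2)$ is correct as written.
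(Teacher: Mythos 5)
Your proposal is correct and follows essentially the same route as the paper: expand $h^s$ into an exponential sum with convolution coefficients (the paper writes these as Fourier coefficients $a(m)=\int_0^1 h(\alpha)^s e(-m\alpha)\,d\alpha$, which are the same numbers as your $c(\ell)$), absorb the $\beta$-twist into the coefficients, apply the large sieve over the $Q^{-2}$-spaced Farey fractions, and finish with Parseval. The remarks about the length $sN$ versus $N$ and the uniformity in $\beta$ match the paper's treatment.
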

 \begin{proof}Let
 \begin{align*}a(m)=\int_0^1h(\alpha)^se(-m\alpha)d\alpha.\end{align*}
 Then we can represent $h(\frac{a}{q}+\beta)^s$ in the form
 \begin{align*}h(\frac{a}{q}+\beta)^s=\sum_{1\le m\le sN}a_{\beta}(m)e(\frac{a}{q}m),\end{align*}
 where
 \begin{align*}a_{\beta}(m)=a(m)e(m\beta).\end{align*}
 On applying Lemma \ref{largesieve1} with $\Gamma=\{a/q:\ 1\le a\le q\le Q, (a,q)=1\}$, we conclude that
\begin{align}\label{applylarge1}\sum_{q\le Q}\sum_{\substack{a=1 \\ (a,q)=1}}^q|h(\frac{a}{q}+\beta)|^{2s}\ll (N+Q^2)\sum_{1\le m\le sN}|a(m)|^2.\end{align}
Note that
\begin{align}\label{applylarge2}\sum_{1\le m\le sN}|a(m)|^2=\int_0^1|h(\alpha)|^{2s}d\alpha.\end{align}
The proof is complete by combining \eqref{applylarge1} and \eqref{applylarge2}.
 \end{proof}

 If we apply Lemma \ref{largesieve2} directly to deal with $\mathcal{J}_{k,s}(Q)$, then we may merely obtain the trivial bound (up to a very small power of $n$)
\begin{align}\label{trivialb}\mathcal{J}_{k,s}(Q)\ll \int_{0}^1|g_k(\alpha)|^{2s}d\alpha.\end{align}
In order to improve upon \eqref{trivialb}, we first prepare some lemmas.

Let $k\ge 2$ be fixed. Then each positive integer $d$ can be uniquely represented in the form
$$d=d_1d_2^2\cdots d_{k-1}^{k-1}d_k^k,$$
where the product $d_1d_2\cdots d_{k-1}$ is square-free, and we define
\begin{align}\label{definevarrho}\varrho(d):=\varrho_k(d)=d_1\cdots d_k.\end{align}
Note that $d|m^k$ is equivalent to
$$\varrho(d)|m.$$
We have the following.
\begin{lemma}One has
\begin{align}\label{boundsumoverm}\sum_{\substack{m\le M\\ d|m^k}}1\le \frac{M}{\varrho(d)}.\end{align}
\end{lemma}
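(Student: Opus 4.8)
The statement to prove is the elementary counting bound
$$\sum_{\substack{m\le M\\ d|m^k}}1\le \frac{M}{\varrho(d)},$$
where $\varrho(d)=d_1\cdots d_k$ is defined via the unique factorization $d=d_1d_2^2\cdots d_{k-1}^{k-1}d_k^k$ with $d_1\cdots d_{k-1}$ squarefree.

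The plan is as follows. First I would establish the key equivalence already flagged in the excerpt, namely that for a positive integer $m$, the divisibility $d\mid m^k$ holds if and only if $\varrho(d)\mid m$. To see the forward direction, write $d=d_1d_2^2\cdots d_k^k$ as above and argue prime by prime: fix a prime $p$ and let $p^{a}\|d$, $p^{b}\|m$, so $p^{kb}\|m^k$; the condition $d\mid m^k$ forces $a\le kb$, i.e. $b\ge a/k$, i.e. $b\ge\lceil a/k\rceil$. On the other hand, if $p^{a}\|d$ then by the shape of the factorization the exponent of $p$ in $\varrho(d)=d_1\cdots d_k$ is exactly $\lceil a/k\rceil$: writing $a=qk+j$ with $0\le j<k$ one checks $p$ contributes to $d_k$ with the full power $q$ from the $d_k^k$ block and, if $j>0$, contributes one further factor of $p$ from the $d_j^j$ block (using that $d_1\cdots d_{k-1}$ is squarefree, so each $d_i$ is squarefree and the blocks are coprime in the relevant sense). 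Hence $p^{\lceil a/k\rceil}\|\varrho(d)$, and the condition $b\ge\lceil a/k\rceil$ is precisely $\varrho(d)\mid m$ at the prime $p$. Running over all primes gives the equivalence.

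Granting the equivalence, the sum on the left counts exactly those $m\le M$ that are multiples of $\varrho(d)$; these are $\varrho(d),2\varrho(d),\ldots,\lfloor M/\varrho(d)\rfloor\,\varrho(d)$, of which there are $\lfloor M/\varrho(d)\rfloor\le M/\varrho(d)$. This yields \eqref{boundsumoverm} immediately.

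The only genuinely delicate point is the prime-by-prime computation of the exponent of $p$ in $\varrho(d)$, i.e. verifying that it equals $\lceil a/k\rceil$ where $p^a\|d$; everything else is bookkeeping. This rests on the uniqueness and the squarefree condition built into the definition \eqref{definevarrho}: for $1\le i\le k-1$ the factor $d_i$ is squarefree and the $d_i$ for distinct $i$ are pairwise coprime, so a prime $p$ with $p^a\|d$ sits in exactly one block among $d_1,\dots,d_{k-1}$ (contributing $j$ to $a$ and $1$ to $\varrho(d)$ when $0<j<k$) together with the block $d_k^k$ (contributing $qk$ to $a$ and $q$ to $\varrho(d)$), giving total exponent $q+\mathbf{1}_{j>0}=\lceil a/k\rceil$ in $\varrho(d)$, as claimed. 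I would present this computation compactly rather than belaboring each case.
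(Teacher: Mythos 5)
Your proof is correct and follows exactly the route the paper intends: the paper states the equivalence $d\mid m^k \Leftrightarrow \varrho(d)\mid m$ immediately before the lemma and leaves both it and the lemma unproved, and your prime-by-prime verification that $p^{\lceil a/k\rceil}\|\varrho(d)$ when $p^a\|d$, followed by counting multiples of $\varrho(d)$ up to $M$, is precisely the intended argument with the details filled in.
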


Let
$$\mathcal{D}=\sum_{\substack{ d\le D}}d\varrho(d)^{-j}.$$
\begin{lemma}\label{lemmacalD}Let $j\ge k+1$. Then one has
$$\mathcal{D}\ll D^{\varepsilon}.$$
\end{lemma}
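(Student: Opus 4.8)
The plan is to estimate $\mathcal{D}=\sum_{d\le D}d\,\varrho(d)^{-j}$ by exploiting the multiplicative structure of $\varrho(d)$. First I would note that the map $d\mapsto\varrho(d)$ sending $d=d_1d_2^2\cdots d_k^k$ (with $d_1\cdots d_{k-1}$ square-free) to $d_1d_2\cdots d_k$ is multiplicative in the obvious componentwise sense, and that for each prime $p$ the local factor records an exponent $e=e_1+2e_2+\cdots+(k-1)e_{k-1}+ke_k$ in $d$ versus $e_1+e_2+\cdots+e_k$ in $\varrho(d)$, where $e_1,\ldots,e_{k-1}\in\{0,1\}$ and $e_k\ge 0$. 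The key inequality is that, writing $d=\varrho(d)\cdot t$ with $t\ge 1$, one always has $d\le \varrho(d)^k$ (since every exponent in $d$ is at most $k$ times the corresponding exponent in $\varrho(d)$), so $t=d/\varrho(d)\le \varrho(d)^{k-1}$; more usefully, $\varrho(d)\ge d^{1/k}$ always, which already gives a crude handle, but the clean route is to compare $d\,\varrho(d)^{-j}$ directly.

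The main step is to drop the summation constraint $d\le D$ (at the cost of the $D^\varepsilon$) and show the full Euler product $\sum_{d\ge 1}d\,\varrho(d)^{-j}$ converges when $j\ge k+1$. I would write
\begin{align*}
\sum_{d\ge 1}\frac{d}{\varrho(d)^{j}}=\prod_{p}\Bigg(\sum_{\substack{e_1,\ldots,e_{k-1}\in\{0,1\}\\ e_k\ge 0}}\frac{p^{e_1+2e_2+\cdots+(k-1)e_{k-1}+ke_k}}{p^{j(e_1+e_2+\cdots+e_k)}}\Bigg),
\end{align*}
and bound each local factor. The term $e_k\ge 1$ contributes a geometric series in $p^{k-j}$, which converges since $k-j\le -1$, and summing over $e_k\ge 0$ gives a factor $(1-p^{k-j})^{-1}$; the finitely many terms with $e_1,\ldots,e_{k-1}$ not all zero each contribute $O(p^{i-j})$ for some $1\le i\le (k-1)(k-2)/2+\cdots$, in any case with exponent at most $-(j-k+1)\le -2$ after the worst single coordinate, so the local factor is $1+O(p^{-2})$. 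Hence the Euler product converges absolutely, and $\mathcal{D}\le\sum_{d\ge 1}d\,\varrho(d)^{-j}\ll 1\ll D^\varepsilon$ — in fact one even gets $\mathcal{D}\ll 1$, but the statement only asks for $D^\varepsilon$, which is all that is needed.

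The only subtlety — and the one place I would be careful — is making the bookkeeping of the local exponents airtight: one must check that for \emph{every} admissible tuple $(e_1,\ldots,e_{k-1},e_k)\ne(0,\ldots,0)$ the numerator exponent $e_1+2e_2+\cdots+ke_k$ is strictly less than $j$ times the denominator exponent $e_1+\cdots+e_k$, equivalently that $\frac{e_1+2e_2+\cdots+ke_k}{e_1+e_2+\cdots+e_k}\le k<j$, which is immediate since it is a weighted average of the weights $1,2,\ldots,k$ with at least one positive weight. This guarantees each nontrivial local term is $O(p^{-(j-k)})=O(p^{-1})$ at worst for a single coordinate and summable, and combined over primes gives convergence. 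Alternatively, if one prefers to avoid Euler products entirely, one can argue more elementarily: for fixed value $v=\varrho(d)$ the number of $d\le D$ with $\varrho(d)=v$ is at most the number of divisors of a bounded power of $v$, hence $O(v^\varepsilon)$, so $\mathcal{D}\ll\sum_{v\le D}v^{1+\varepsilon}v^{-j}\cdot v^{\varepsilon}\ll\sum_{v\le D}v^{k+\varepsilon-j}\ll 1$ when $j\ge k+1$, using $d\le\varrho(d)^k=v^k$. Either way the conclusion $\mathcal{D}\ll D^\varepsilon$ follows, and I expect no real obstacle beyond organizing this elementary estimate cleanly.
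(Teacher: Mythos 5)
Your final bound is correct, and your fallback ``elementary'' route is essentially the paper's argument in different clothing, but your primary route contains a genuine error, and it is not cosmetic. You assert that each local Euler factor is $1+O(p^{-2})$ and hence that the untruncated sum $\sum_{d\ge 1}d\,\varrho(d)^{-j}$ converges, so that $\mathcal{D}\ll 1$. This is false in the boundary case $j=k+1$, which the hypothesis of the lemma explicitly permits (and which is the natural edge of the condition $2s\ge k+1$ under which the lemma is applied). The local factor is
$$\prod_{i=1}^{k-1}\bigl(1+p^{\,i-j}\bigr)\cdot\bigl(1-p^{\,k-j}\bigr)^{-1},$$
and for $j=k+1$ the second factor is $(1-p^{-1})^{-1}=1+p^{-1}+O(p^{-2})$; the tuple $e_1=\cdots=e_{k-1}=0$, $e_k=1$ alone contributes $p^{-1}$. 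Hence the local factor is $1+p^{-1}+O(p^{-2})$ and the product over all primes diverges. Concretely, for $k=2$, $j=3$ the integers $d=d_2^2$ contribute $\sum_{d_2\le D^{1/2}}d_2^{-1}\gg\log D$, so in fact $\mathcal{D}\gg\log D$ and the claim $\mathcal{D}\ll 1$ is simply wrong: the $D^\varepsilon$ (or a power of $\log D$) cannot be discarded. The same overclaim reappears at the end of your elementary route, where $\sum_{v\le D}v^{k+\varepsilon-j}$ is $\ll D^{\varepsilon}$ but not $\ll 1$ when $j=k+1$.

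The repair is immediate and is exactly what the paper does: keep the truncation. Since $d\,\varrho(d)^{-j}=d_1^{1-j}d_2^{2-j}\cdots d_k^{k-j}$ and each $d_i\le D$, one has
$$\mathcal{D}\ll\sum_{d_1,\ldots,d_k\le D}d_1^{1-j}\cdots d_k^{k-j}\ll\Bigl(\prod_{i=1}^{k-1}\zeta(j-i)\Bigr)\sum_{d_k\le D}d_k^{k-j}\ll\log D\ll D^{\varepsilon},$$
using $j-i\ge 2$ for $i<k$ and $j-k\ge 1$. Equivalently, truncating your Euler product to $p\le D$ gives $\prod_{p\le D}(1+O(p^{-1}))\ll(\log D)^{O(1)}$, which also suffices. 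Your divisor-count observation (at most $O(v^{\varepsilon})$ integers $d$ with $\varrho(d)=v$, together with $d\le v^{k}$) is correct and yields the same $D^{\varepsilon}$ once the final step is stated honestly.
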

\begin{proof}We deduce from the definition of $\varrho(d)$ in \eqref{definevarrho} that
$$\mathcal{D}\ll \sum_{d_1,\ldots,d_{k}\le D}d_1^{1-j}d_{2}^{2-j}\cdots d_{k}^{k-j}.$$
Since $j\ge k+1$, one has
$$\mathcal{D}\ll D^{\varepsilon}.$$
This completes the proof.\end{proof}

Now we are ready to introduce the key lemma in this section. We choose  $t\in \Z^{+}$ such that
\begin{align}\label{introducet}(nQ^{-2})^{1/k}\le R^t< (nQ^{-2})^{1/k}R.\end{align}
By the definition of $g_k(\alpha)$ in \eqref{definegk} and the definition of $\tau_{t}(x)$ in \eqref{definetau}, we can represent $g_k$ in the form
\begin{align}\label{repgash}g_k(\alpha)= \sum_{x\le R^t}\tau_{t}(x)h(x^k\alpha),\end{align}
where
\begin{align*}h(\alpha)=\sum_{y\le R^{r-t}}\tau_{r-t}(y)
e(y^k\alpha).\end{align*}
Then we introduce
\begin{align}\label{defineXi}\Xi(\beta)=
\sum_{\substack{q\le Q}}\sum_{\substack{a=1 \\ (a,q)=1}}^q\big|h(\frac{a}{q}+\beta)\big|^{2s}.\end{align}

Let
\begin{align}\label{defineQ3}Q_3=n^{2^{-20}}.\end{align}
\begin{lemma}\label{lemmaprelargesieve}Suppose that $Q_3\le Q\le \frac{1}{4}n$. Let $s$ be a real number and $2s\ge k+1$. Then one has
\begin{align*}\mathcal{J}_{k,s}(Q)\ll n^{-1+\nu+\varepsilon}(nQ^{-2})^{\frac{2s}{k}}R^{2s}\sup_{\beta\in \R}\Xi(\beta).\end{align*}
\end{lemma}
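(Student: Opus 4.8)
The strategy is to feed the decomposition \eqref{repgash} of $g_k$ into $\mathcal{J}_{k,s}(Q)$, transfer the dependence on the ``long'' variable $x\le R^t$ onto the spacing $\delta$ of the large-sieve nodes, and then invoke $\Xi(\beta)$ for the ``short'' variable $y\le R^{r-t}$. Concretely, write
$$\mathcal{J}_{k,s}(Q)=\int_{\mathfrak{M}^\ast(Q)}\Big|\sum_{x\le R^t}\tau_t(x)h(x^k\alpha)\Big|^{2s}d\alpha,$$
and on each arc $\mathfrak{M}^\ast(q,a;Q)$ substitute $\alpha=\frac aq+\beta$ with $|\beta|\le\min(Q/(qn),n^{\nu-1})$. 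Since $\tau_t(x)\le t!\ll n^\varepsilon$, H\"older's inequality in the form $|\sum_{x}c_x|^{2s}\le (R^t)^{2s-1}\sum_x |c_x|^{2s}$ (valid because $2s\ge k+1\ge 1$) gives
$$\mathcal{J}_{k,s}(Q)\ll n^\varepsilon (R^t)^{2s-1}\sum_{x\le R^t}\int_{\mathfrak{M}^\ast(Q)}\big|h(x^k\alpha)\big|^{2s}d\alpha.$$
Here I would use that $R^t\asymp (nQ^{-2})^{1/k}R^{O(1)}$ by \eqref{introducet}, so $(R^t)^{2s-1}\ll (nQ^{-2})^{(2s-1)/k}R^{2s}n^\varepsilon$ after folding the bounded powers of $R$ into $n^\varepsilon$ (recall $R=n^\eta$). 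That already produces the factor $(nQ^{-2})^{2s/k}R^{2s}$ if one further extracts one more $(nQ^{-2})^{-1/k}$ from the $x$-sum; I make this precise below.

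**Handling the inner integral.** For fixed $x\le R^t$ and $\alpha=\frac aq+\beta$, the argument $x^k\alpha$ equals $\frac{x^ka}{q}+x^k\beta$. Reduce the fraction: with $q'=q/(q,x^k)$ and a suitable $a'$ coprime to $q'$, one has $h(x^k\alpha)=h(\frac{a'}{q'}+x^k\beta)$. Thus, integrating over the arc and then summing over $1\le a\le q$, $q\le Q$, the contribution is controlled by
$$\sum_{q\le Q}\sum_{\substack{a=1\\(a,q)=1}}^q\int_{|\beta|\le n^{\nu-1}}\big|h(\tfrac{a'}{q'}+x^k\beta)\big|^{2s}d\beta
\ll \big(2n^{\nu-1}\big)\sup_{\gamma\in\R}\Xi(\gamma)\cdot\#\{\text{collapses}\},$$
because for each value of $\gamma=x^k\beta$ the inner double sum over $(a,q)$ is exactly of the shape \eqref{defineXi} (after noting that each reduced pair $(a',q')$ with $q'\le Q$ is hit a bounded number of times — this uses $\sum_{m\le M,\ d\mid m^k}1\le M/\varrho(d)$ from \eqref{boundsumoverm} to count how many $a$ give a prescribed $a'$). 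The measure of the $\beta$-range is $\ll n^{\nu-1}$, which is exactly where the crucial $n^{-1+\nu}$ savings comes from, and the interval is short enough that $\alpha\in\mathfrak{M}^\ast(Q)$ genuinely forces $|\beta|\le n^{\nu-1}$ rather than the wider $Q/(qn)$.

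**Assembling the bound.** Combining the two displays, and using Lemma \ref{lemmacalD} with $j=k$ (legitimate since the relevant divisor sum $\sum_{d}\varrho(d)^{-1}$-type quantity converges up to $n^\varepsilon$ once $2s\ge k+1$ supplies enough room — here the exponent $2s\ge k+1$ is precisely what makes the $x$-sum over collapsing classes cost only $n^\varepsilon$ rather than a power of $n$), one arrives at
$$\mathcal{J}_{k,s}(Q)\ll n^{\varepsilon}\,(R^t)^{2s-1}\,n^{\nu-1}\,(R^t)^{\text{(correction)}}\,\sup_{\beta\in\R}\Xi(\beta)\ll n^{-1+\nu+\varepsilon}\,(nQ^{-2})^{\frac{2s}{k}}R^{2s}\sup_{\beta\in\R}\Xi(\beta),$$
where the final simplification again uses \eqref{introducet} to turn every $R^t$ into $(nQ^{-2})^{1/k}$ up to a bounded power of $R$, absorbed in $n^\varepsilon$ (this is where $Q\ge Q_3=n^{2^{-20}}$ is needed, to keep $(nQ^{-2})^{1/k}$ genuinely larger than a fixed power of $R$ so that the ratio $R^t/(nQ^{-2})^{1/k}$ stays within $[1,R)$ and hence $\ll n^\varepsilon$; the upper constraint $Q\le\frac14 n$ guarantees $nQ^{-2}\ge 16/n\cdot\ldots$ so the exponent $t$ is a well-defined positive integer).

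**Main obstacle.** The delicate point is the bookkeeping in the reduction $h(x^k\alpha)=h(\frac{a'}{q'}+x^k\beta)$: one must check that as $(a,q)$ ranges over the large-sieve nodes with $q\le Q$, the reduced pairs $(a',q')$ still have $q'\le Q$ and that the multiplicity of each $(a',q')$ is controlled — this is exactly the role of \eqref{boundsumoverm} and Lemma \ref{lemmacalD}, and the hypothesis $2s\ge k+1$ is what makes the resulting divisor sum $O(n^\varepsilon)$. Everything else — H\"older, the length of the $\beta$-interval, the conversion of $R^t$ via \eqref{introducet} — is routine once this counting is in place.
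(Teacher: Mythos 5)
Your outline follows the paper's route in broad strokes: bound $\mathcal{J}_{k,s}(Q)$ by the arc length $n^{\nu-1}$ times $\sup_{|\theta|\le n^{\nu-1}}\sum_{q\le Q}\sum_{(a,q)=1}|g_k(a/q+\theta)|^{2s}$, insert the factorisation \eqref{repgash}, apply H\"older in $x$, reduce $ax^k/q$ to lowest terms, and recognise $\Xi$; the conversion $R^{2st}\ll (nQ^{-2})^{2s/k}R^{2s}$ via \eqref{introducet} is also as in the paper. But there is a genuine gap at precisely the step you flag as the ``main obstacle''. The reduced pairs $(a',q')$ with $q'=q/d$, $d=(x^k,q)$, are \emph{not} hit a bounded number of times: for a single modulus $q$ the map $a\mapsto ax^k/d \bmod (q/d)$ sends the $\phi(q)$ reduced residues onto the $\phi(q/d)$ reduced residues modulo $q/d$ with multiplicity $\phi(q)/\phi(q/d)$, which can be as large as $d$ (this is the factor $d$ in the paper's \eqref{sumovera1}); and \eqref{boundsumoverm} counts integers $x$ with $d\mid x^k$, not this multiplicity. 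If you apply H\"older globally, obtaining $(R^t)^{2s-1}\sum_{x\le R^t}|h(x^k\alpha)|^{2s}$, and only then reduce the fractions, each $x$ contributes a factor $\sum_{d\mid x^k,\,d\le Q}d$, which in the worst case is of size $Q$ up to $n^{\varepsilon}$. Your argument therefore proves the lemma only with an extra factor of essentially $Q$, which destroys the application.

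The missing idea is the \emph{order} of operations: the paper first partitions the $x$-sum according to $d=(x^k,q)$ and applies H\"older within each class, so that $\bigl(\sum_{x:(x^k,q)=d}|h(x^k\alpha)|\bigr)^{2s}\le (R^t/\varrho(d))^{2s-1}\sum_{x:(x^k,q)=d}|h(x^k\alpha)|^{2s}$ by \eqref{boundsumoverm}. The resulting weight $\varrho(d)^{-(2s-1)}$, together with a further $\varrho(d)^{-1}$ from counting the $x$ with $d\mid x^k$, absorbs the multiplicity factor $d$: what remains is $\sum_{d\le Q}d\,\varrho(d)^{-2s}$, which is $O(Q^{\varepsilon})$ by Lemma \ref{lemmacalD} applied with $j=2s\ge k+1$ --- not with $j=k$ as you write: for $j=k$ the terms $d=d_k^k$ give $d\,\varrho(d)^{-k}=1$ each and the sum diverges. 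So your instinct that $2s\ge k+1$ and Lemma \ref{lemmacalD} control the collapsing is correct, but the weighted H\"older step (gcd decomposition \emph{before} H\"older) is exactly what produces the weights that make that lemma applicable; without it the counting fails.
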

\begin{proof}We have
\begin{align}\label{boundJks1}\mathcal{J}_{k,s}(Q)\le n^{-1+\nu}\sup_{|\theta|\le n^{\nu-1}}\Sigma(\theta),\end{align}
where
\begin{align}\label{defineSigmatheta}\Sigma(\theta)=\sum_{q\le Q}\sum_{\substack{a=1 \\ (a,q)=1}}^q|g_k(\frac{a}{q}+\theta)|^{2s}.\end{align}

 We deduce from \eqref{repgash} that
$$|g_k(\alpha)|\ll \sum_{x\le R^t}|h(x^k\alpha)|\ll \sum_{d|q}\sum_{\substack{x\le R^t\\ (x^k,q)=d}}|h(x^k\alpha)|.$$
By H\"older's inequality,
$$|g_k(\alpha)|^{2s}\ll \Big(\sum_{d|q}1\Big)^{2s-1}\sum_{d|q}\Big(\sum_{\substack{x\le R^t\\ (x^k,q)=d}}|h(x^k\alpha)|\Big)^{2s}.$$
We deduce, by the elementary inequality for the divisor function, that
$$|g_k(\alpha)|^{2s}\ll q^{\varepsilon}\sum_{d|q}\Big(\sum_{\substack{x\le R^t\\ (x^k,q)=d}}|h(x^k\alpha)|\Big)^{2s}.$$
On applying H\"older's inequality again, we have
$$|g_k(\alpha)|^{2s}\ll q^{\varepsilon}\sum_{\substack{d|q}}\Big(\sum_{\substack{x\le R^t\\ (x^k,q)=d}}1\Big)^{2s-1}
\sum_{\substack{x\le R^t\\ (x^k,q)=d}}|h(x^k\alpha)|^{2s},$$
whence by \eqref{boundsumoverm},
\begin{align}\label{boundgks}|g_k(\alpha)|^{2s}\ll q^{\varepsilon}R^{(2s-1)t}\sum_{\substack{d|q}}\varrho(d)^{-2s+1}
\sum_{\substack{x\le R^t\\ (x^k,q)=d}}|h(x^k\alpha)|^{2s}.\end{align}

We conclude from \eqref{defineSigmatheta} and \eqref{boundgks} that
\begin{align*}\Sigma(\theta)\ll n^{\varepsilon}R^{(2s-1)t}\sum_{\substack{ d\le Q}}\varrho(d)^{-2s+1}
\sum_{\substack{q\le Q
\\ d|q}}\sum_{\substack{a=1 \\ (a,q)=1}}^q\sum_{\substack{x\le R^t\\ (x^k,q)=d}}|h(\frac{ax^k}{q}+x^k\theta)|^{2s},\end{align*}
and by exchanging the order of the summations
\begin{align}\label{boundSigmatheta1}\Sigma(\theta)\ll n^{\varepsilon}R^{(2s-1)t}\sum_{\substack{ d\le Q}}\varrho(d)^{-2s+1}
\sum_{\substack{q\le Q
\\ d|q}}\sum_{\substack{x\le R^t\\ (x^k,q)=d}}\sum_{\substack{a=1 \\ (a,q)=1}}^q|h(\frac{ax^k/d}{q/d}+x^k\theta)|^{2s}.\end{align}

We deduce that
\begin{align}\label{sumovera1}\sum_{\substack{a=1 \\ (a,q)=1}}^q|h(\frac{ax^k/d}{q/d}+x^k\theta)|^{2s} \ll d \sum_{\substack{a=1 \\ (a,q/d)=1}}^{q/d}|h(\frac{ax^k/d}{q/d}+x^k\theta)|^{2s},\end{align}
and by $(x^k,q)=d$, we have $(x^k/d,q/d)=1$ and
\begin{align}\label{sumovera2} \sum_{\substack{a=1 \\ (a,q/d)=1}}^{q/d}|h(\frac{ax^k/d}{q/d}+x^k\theta)|^{2s}=
\sum_{\substack{a=1 \\ (a,q/d)=1}}^{q/d}|h(\frac{a}{q/d}+x^k\theta)|^{2s}.\end{align}
We conclude from \eqref{boundSigmatheta1}, \eqref{sumovera1} and \eqref{sumovera2} that
\begin{align}\label{boundSigmatheta2}\Sigma(\theta)\ll n^{\varepsilon}R^{(2s-1)t}\sum_{\substack{ d\le Q}}d\varrho(d)^{-2s+1}
\sum_{\substack{q\le Q
\\ d|q}}\sum_{\substack{x\le R^t\\ (x^k,q)=d}}\sum_{\substack{a=1 \\ (a,q/d)=1}}^{q/d}|h(\frac{a}{q/d}+x^k\theta)|^{2s}.\end{align}
Now we replace the condition $(x^k,q)=d$ in \eqref{boundSigmatheta2} by $d|x^k$  to deduce that
\begin{align}\label{boundSigmatheta3}\Sigma(\theta)\ll n^{\varepsilon}R^{(2s-1)t}
\sum_{\substack{ d\le Q}}d\varrho(d)^{-2s+1}\sum_{\substack{x\le R^t\\ d|x^k }}
\sum_{\substack{q\le Q
\\ d|q}}\sum_{\substack{a=1 \\ (a,q/d)=1}}^{q/d}|h(\frac{a}{q/d}+x^k\theta)|^{2s}.\end{align}

By \eqref{defineXi}, we have
\begin{align*}\sum_{\substack{q\le Q
\\ d|q}}\sum_{\substack{a=1 \\ (a,q/d)=1}}^{q/d}|h(\frac{a}{q/d}+x^k\theta)|^{2s} \le \sup_{\beta}\Xi(\beta),\end{align*}
and by \eqref{boundSigmatheta3},
\begin{align*}\Sigma(\theta)\ll n^{\varepsilon}R^{(2s-1)t}\Big(\sup_{\beta}\Xi(\beta)\Big)
\sum_{\substack{ d\le Q}}d\varrho(d)^{-2s+1}\sum_{\substack{x\le R^t\\ d|x^k }}1.\end{align*}
On applying \eqref{boundsumoverm} again, we have
\begin{align*}\Sigma(\theta)\ll n^{\varepsilon}R^{2st}\Big(\sup_{\beta}\Xi(\beta)\Big)\sum_{\substack{ d\le Q}}d\varrho(d)^{-2s},\end{align*}
and by Lemma \ref{lemmacalD} and \eqref{introducet},
\begin{align}\label{boundSigma}\Sigma(\theta)\ll n^{\varepsilon}(nQ^{-2})^{\frac{2s}{k}}R^{2s}\sup_{\beta}\Xi(\beta).\end{align}
The proof is complete by combining \eqref{boundJks1} and \eqref{boundSigma}.\end{proof}

Now it is time to apply the large sieve inequality.
\begin{lemma}\label{lemmaJks}Suppose that $Q_3\le Q\le \frac{1}{4}n$. Let $s$ be a positive integer and $2s\ge k+1$. Suppose further that
$\lambda_{k,s}$ is a permissible exponent. Then we have
\begin{align}\label{newboundJks}\mathcal{J}_{k,s}(Q)\ll n^{-1+\frac{2s}{k}+\nu+\varepsilon}R^{2s}Q^{\frac{2(k-2s+\lambda_{k,s})}{k}}.\end{align}
\end{lemma}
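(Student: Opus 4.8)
The plan is to combine Lemma \ref{lemmaprelargesieve} with the large sieve inequality of Lemma \ref{largesieve2}, applied to the auxiliary sum $h(\alpha)=\sum_{y\le R^{r-t}}\tau_{r-t}(y)e(y^k\alpha)$. By Lemma \ref{lemmaprelargesieve}, since $Q_3\le Q\le \frac14 n$ and $2s\ge k+1$, we have
\begin{align*}\mathcal{J}_{k,s}(Q)\ll n^{-1+\nu+\varepsilon}(nQ^{-2})^{\frac{2s}{k}}R^{2s}\sup_{\beta\in\R}\Xi(\beta),\end{align*}
so it suffices to bound $\Xi(\beta)=\sum_{q\le Q}\sum_{(a,q)=1}|h(\frac aq+\beta)|^{2s}$ uniformly in $\beta$. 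First I would apply Lemma \ref{largesieve2} with $N=R^{k(r-t)}$ (so that $h$ is a sum over $m\le N^{1/k}=R^{r-t}$), which gives
\begin{align*}\Xi(\beta)\ll (R^{k(r-t)}+Q^2)\int_0^1|h(\alpha)|^{2s}d\alpha.\end{align*}

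The next step is to control the two factors on the right. For the mean value, since $h(\alpha)=\sum_{y\le R^{r-t}}\tau_{r-t}(y)e(y^k\alpha)$ with $\tau_{r-t}(y)\le (r-t)!\ll n^\varepsilon$ and the $y$ ranging essentially over (a weighted version of) $\mathcal{A}(R^{r-t},R)$, expanding the $2s$-th power and counting solutions of the underlying diophantine equation shows $\int_0^1|h(\alpha)|^{2s}d\alpha\ll n^\varepsilon\int_0^1|f_k(\alpha;N^{1/k},R)|^{2s}d\alpha$; here one uses that $R^{r-t}=(R^r)(R^{-t})=n^{1/k}(nQ^{-2})^{-1/k}R^{O(1)}\cdot$ etc., more precisely $R^{k(r-t)}=n(nQ^{-2})^{-1}R^{O(1)}=Q^2 R^{O(1)}$ by the choice of $t$ in \eqref{introducet}. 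Since $R\le N^\eta$ for the relevant $N$ (as $\eta^{-1}$ is huge and $r-t$ is a bounded multiple of $\eta^{-1}$), the permissibility of $\lambda_{k,s}$ via \eqref{meansmooth} gives
\begin{align*}\int_0^1|h(\alpha)|^{2s}d\alpha\ll n^\varepsilon (R^{k(r-t)})^{\frac{\lambda_{k,s}}{k}}\ll n^\varepsilon Q^{\frac{2\lambda_{k,s}}{k}}.\end{align*}
For the first factor, $R^{k(r-t)}\ll Q^2 R^{k}\ll Q^2 n^\varepsilon$, so $R^{k(r-t)}+Q^2\ll Q^2 n^\varepsilon$, and therefore $\Xi(\beta)\ll n^\varepsilon Q^{2}Q^{\frac{2\lambda_{k,s}}{k}}=n^\varepsilon Q^{\frac{2(k+\lambda_{k,s})}{k}}$ uniformly in $\beta$.

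Finally I would substitute this bound for $\sup_\beta\Xi(\beta)$ back into the estimate from Lemma \ref{lemmaprelargesieve}:
\begin{align*}\mathcal{J}_{k,s}(Q)\ll n^{-1+\nu+\varepsilon}(nQ^{-2})^{\frac{2s}{k}}R^{2s}\cdot n^\varepsilon Q^{\frac{2(k+\lambda_{k,s})}{k}}=n^{-1+\frac{2s}{k}+\nu+\varepsilon}R^{2s}Q^{-\frac{4s}{k}+\frac{2(k+\lambda_{k,s})}{k}},\end{align*}
and the exponent of $Q$ is $\frac{2k+2\lambda_{k,s}-4s}{k}=\frac{2(k-2s+\lambda_{k,s})}{k}$, which is exactly \eqref{newboundJks}. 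The bookkeeping obstacle I anticipate is the careful tracking of the powers of $R$ (and of $n^\varepsilon$): one must verify that $R^{k(r-t)}$ and $Q^2$ agree up to $R^{O(1)}=n^{O(\eta)}$, which is absorbed into $n^\varepsilon$ provided $\eta$ is small enough, and that $r-t$ is indeed a positive integer multiple of $\eta^{-1}/k$ large enough that permissibility of $\lambda_{k,s}$ applies at scale $N=R^{k(r-t)}$; both hold by the choice $R=n^\eta$ with $\eta^{-1}$ divisible by $11!$ and the definition \eqref{introducet} of $t$.
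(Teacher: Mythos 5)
Your proposal is correct and follows essentially the same route as the paper: apply Lemma \ref{lemmaprelargesieve}, bound $\sup_\beta\Xi(\beta)$ via Lemma \ref{largesieve2} together with the permissibility of $\lambda_{k,s}$ for the smooth sum $h$, and combine. The only cosmetic difference is that the paper uses the clean inequality $R^{r-t}\le Q^{2/k}$ from \eqref{introducet} directly (so $N+Q^2\ll Q^2$ with no $n^{\varepsilon}$ loss), whereas you track the harmless $R^{O(1)}$ factors explicitly; your remark that $Q\ge Q_3$ guarantees $r-t$ is large enough for permissibility to apply at scale $R^{k(r-t)}$ is exactly the point of that hypothesis.
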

\begin{proof}
In view of \eqref{introducet}, we have
$$R^{r-t}\le Q^{2/k}.$$
On applying Lemma \ref{largesieve2}, we conclude that
\begin{align*}\sup_{\beta\in \R}\Xi(\beta) \ll Q^2\int_0^1|h(\alpha)|^{2s}d\alpha.\end{align*}
Since $\lambda_{k,s}$ is a permissible exponent, on considering the solutions of the underlying diophantine equations, we have
$$\int_0^1|h(\alpha)|^{2s}d\alpha\ll R^{(r-t)\lambda_{k,s}}$$
 and by the above
\begin{align*}\sup_{\beta\in \R}\Xi(\beta) \ll Q^{2+2\frac{\lambda_{k,s}}{k}}.\end{align*}

Now we deduce from Lemma \ref{lemmaprelargesieve} that
\begin{align*}\mathcal{J}_{k,s}(Q)\ll n^{-1+\nu+\varepsilon}(nQ^{-2})^{\frac{2s}{k}}R^{2s}Q^{2+2\frac{\lambda_{k,s}}{k}}
\ll n^{-1+\frac{2s}{k}+\nu+\varepsilon}R^{2s}Q^{\frac{2(k-2s+\lambda_{k,s})}{k}}.\end{align*}
This completes the proof.
\end{proof}

For comparison, one may conventionally deduce from \eqref{trivialb} that
\begin{align}\label{trivialb2}\mathcal{J}_{k,s}(Q)\ll n^{\frac{\lambda_{k,s}}{k}}.\end{align}
The right hand side of the inequality \eqref{newboundJks} in the case $Q=n^{1/2}$ coincides with $n^{\frac{\lambda_{k,s}}{k}}$ up to a very small power of $n$.
Since one has $2s-\lambda_{k,s}<k$ in our applications, the estimate \eqref{newboundJks} improves upon \eqref{trivialb2} as soon as $Q\le n^{1/2-\delta}$ for some small $\delta>0$.

Lemma \ref{lemmaJks} holds with $Q_3$ replaced by $n^{\delta}$ for any $\delta>0$, while we may need to assume $R\le n^{\eta}$ for some $\eta$ sufficiently small in terms of $\delta$.

\vskip3mm

\section{The proof of Lemma \ref{lemmaintmQ2}}

Let
$$S_k(q,a)=\sum_{x=1}^qe(ax^k/q).$$
For fixed $k\in \Z^{+}$, we define the multiplicative function $\omega_k(q)$, by taking
$$ \omega_k(p^{ku+v})=\begin{cases}kp^{-u-1/2}, &\mbox{when $u\ge0$ and $v=1$,} \\
p^{-u-1}, & \mbox{when $u\ge0$ and $2\le v\le k$} \end{cases} $$
for prime powers. Note that
\begin{align}\label{boundomega}q^{-\frac{1}{2}}\le \omega_k(q)\ll q^{-\frac{1}{k}+\varepsilon}.\end{align}
\begin{lemma}\label{lemmagauss1}Suppose that $(a,q)=1$. Then we have
$$S_k(q,a)\ll q\omega_k(q).$$
In particular, one has
$$S_k(q,a)\ll q^{1-\frac{1}{k}+\varepsilon}.$$
\end{lemma}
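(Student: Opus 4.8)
The plan is to establish the bound $S_k(q,a)\ll q\,\omega_k(q)$ first; the ``in particular'' statement is then immediate, since the right-hand inequality in \eqref{boundomega} gives $q\,\omega_k(q)\ll q^{1-1/k+\varepsilon}$. For the main bound I would reduce to prime powers by multiplicativity. The function $\omega_k$ is multiplicative by construction, and $S_k(q,a)$ is multiplicative via the Chinese Remainder Theorem: if $q=q_1q_2$ with $(q_1,q_2)=1$ and $\overline{q_1},\overline{q_2}$ denote inverses of $q_1,q_2$ modulo the complementary factor, then $S_k(q_1q_2,a)=S_k(q_1,a\overline{q_2})\,S_k(q_2,a\overline{q_1})$, with $(a\overline{q_2},q_1)=(a\overline{q_1},q_2)=1$. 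Hence it suffices to prove $|S_k(p^t,a)|\le p^t\omega_k(p^t)$ for every prime power $p^t$ with $p\nmid a$.

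The case $t=1$ is the classical Gauss sum estimate: expanding $S_k(p,a)$ over the multiplicative characters of order dividing $(k,p-1)$ exhibits it as a sum of at most $k-1$ classical Gauss sums modulo $p$, each of absolute value $p^{1/2}$, so that $|S_k(p,a)|\le(k-1)p^{1/2}<kp^{1/2}=p\,\omega_k(p)$. This uses only the elementary evaluation of the modulus of a Gauss sum, not any deeper input.

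For $t\ge2$ I would argue by induction on $t$ through Hensel lifting. Writing $x=y+p^{t-1}z$ with $y$ over a complete residue system modulo $p^{t-1}$ and $z$ modulo $p$, and using $x^k\equiv y^k+ky^{k-1}p^{t-1}z\pmod{p^t}$ (valid as $2(t-1)\ge t$), the sum over $z$ equals $p$ when $p\mid ky^{k-1}$ and vanishes otherwise. When $p\nmid k$ this forces $p\mid y$, and substituting $y=py'$ and folding the resulting sum back to modulus $p^{t-k}$ yields the clean recursion $S_k(p^t,a)=p^{\,k-1}S_k(p^{\,t-k},a)$ for $t\ge k$, together with the explicit evaluation $S_k(p^t,a)=p^{\,t-1}$ for $2\le t\le k$. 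A direct comparison of exponents, writing $t=ku+v$ with $1\le v\le k$, then confirms $p^{\,k-1}\cdot p^{\,t-k}\omega_k(p^{\,t-k})=p^t\omega_k(p^t)$, which closes the induction in this case.

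The step I expect to be the main obstacle is the case $p\mid k$, say $p^\theta\,\|\,k$ with $\theta\ge1$. Here the first-order linearization above does not thin the $y$-sum at all, because the $z$-sum is then identically $p$; one must instead expand $(y+p^{\,t-1-\theta}z)^k$ retaining enough higher-order terms, or equivalently establish a reduction of the same shape $S_k(p^t,a)=p^{\,k-1}S_k(p^{\,t-k},a)$ that is valid only once $t$ exceeds an explicit threshold depending on $\theta$, and then treat the remaining finitely many small exponents $t$ by a direct estimate extracting genuine cancellation (the trivial bound $|S_k(p^t,a)|\le p^t$ being too weak). Tracking the powers of $p$ through this so that they match the piecewise definition $\omega_k(p^{ku+v})$ is the delicate bookkeeping. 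Since this is precisely the content of a classical estimate for complete exponential sums, in practice I would simply cite it (see, for instance, Theorem~7.1 of \cite{V}) and record that the second assertion then follows from \eqref{boundomega}.
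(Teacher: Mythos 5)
Your proposal is correct and ends where the paper begins: the paper's entire proof of this lemma is a citation of Lemma 3 of Vaughan \cite{Vaughan1986} (equivalently (2.3) of Kawada--Wooley \cite{KW}), which is exactly the classical estimate you fall back on for the residual case $p\mid k$, and your multiplicativity-plus-Gauss-sum-plus-Hensel sketch for $p\nmid k$ is a sound reconstruction of how that result is proved. One small correction to your pointer: the refined bound $S_k(q,a)\ll q\,\omega_k(q)$ is Lemma 3 of \cite{Vaughan1986} rather than Theorem 7.1 of \cite{V} (the latter belongs to Vinogradov's mean value theorem); the weaker consequence $S_k(q,a)\ll q^{1-1/k+\varepsilon}$ is Theorem 4.2 of \cite{V}.
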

\begin{proof}This follows from Lemma 3 of Vaughan \cite{Vaughan1986}. One can also refer to (2.3) in \cite{KW}.\end{proof}

In view of Lemma \ref{lemmaintmQ1}, we only need to consider the integration over $\mathfrak{M}(Q_1)$. From now on, throughout this paper,
we assume that $\alpha\in \mathfrak{M}(Q_1)$ has the unique representation
\begin{align}\label{representalpha}\alpha=\frac{a}{q}+\beta\ \ \textrm{ with }\  1\le a\le q\le Q_1,\ (a,q)=1 \ \textrm{ and } \ |\beta|\le \frac{Q_1}{qn}.\end{align}
For $k\in \{2,3\}$, we define the function $F_k^\ast(\alpha)$ on $\mathfrak{M}(Q_1)$ by
\begin{align*}F_k^\ast(\alpha)=\frac{1}{q}S_k(q,a)v_k(\beta),\end{align*}
where
$$v_k(\beta)=\int_{X_k/2}^{X_k} w(x/X_k)e(x^k\beta)dx.$$
\begin{lemma}\label{lemmaboundvk}For any $j\in \Z^{+}$, one has
$$v_k(\beta)\ll_j X_k(1+n|\beta|)^{-j}.$$\end{lemma}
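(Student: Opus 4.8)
The statement to prove is Lemma~\ref{lemmaboundvk}: for $k\in\{2,3\}$ and any $j\in\mathbb{Z}^{+}$,
$$v_k(\beta)\ll_j X_k(1+n|\beta|)^{-j},$$
where $v_k(\beta)=\int_{X_k/2}^{X_k}w(x/X_k)e(x^k\beta)\,dx$.

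The plan is to treat this as a standard stationary-phase / repeated-integration-by-parts estimate for an exponential integral with smooth compactly supported amplitude. First I would record the trivial bound: since $w$ is bounded and the interval has length $\asymp X_k$, we have $v_k(\beta)\ll X_k$, which already gives the claim when $n|\beta|\ll 1$. So from now on assume $n|\beta|\ge 1$, equivalently $|\beta|\ge 1/n$, and it suffices to show $v_k(\beta)\ll_j X_k(n|\beta|)^{-j}$.

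For the main range I would substitute $x=X_k u$, so that $v_k(\beta)=X_k\int_{1/2}^{1}w(u)e(X_k^k\beta u^k)\,du$. Since $X_k^k=n$, the phase is $n\beta u^k=:\,n\beta\,\phi(u)$ with $\phi(u)=u^k$, and on $[1/2,1]$ one has $\phi'(u)=ku^{k-1}\asymp 1$ bounded below away from zero (here $k\ge2$ is essential; there is no stationary point in the support of $w$). Thus I would integrate by parts $j$ times, each time writing $e(n\beta\phi(u))=\frac{1}{2\pi i n\beta\phi'(u)}\frac{d}{du}e(n\beta\phi(u))$ and differentiating the remaining factor $w(u)/\phi'(u)$. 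Because $w$ is $C^\infty$ with all derivatives supported in $[1/2,1]$ (indeed vanishing to infinite order at the endpoints $1/2$ and $1$, by the explicit formula \eqref{definew}), no boundary terms arise, and after $j$ integrations by parts the integrand is bounded by $C_j|n\beta|^{-j}$ uniformly on $[1/2,1]$, where $C_j$ depends only on $j$ and $k$ through the sup norms of $w$ and finitely many derivatives of $u\mapsto w(u)/(ku^{k-1})$. Integrating over the unit-length interval gives $\int_{1/2}^{1}w(u)e(n\beta u^k)\,du\ll_j (n|\beta|)^{-j}$, hence $v_k(\beta)\ll_j X_k(n|\beta|)^{-j}$, and combining with the trivial bound yields the stated estimate with $(1+n|\beta|)^{-j}$.

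The only point requiring any care — and it is the mild obstacle here — is making sure the constant in the repeated integration by parts does not secretly depend on $\beta$ or $X_k$; this is why it matters that after rescaling the phase derivative $\phi'(u)=ku^{k-1}$ is bounded both above and below on $[1/2,1]$ by absolute constants, so that the amplitude $w(u)/\phi'(u)$ and all its derivatives are bounded independently of $\beta$. Everything else is routine. Alternatively, one can simply invoke a standard lemma on exponential integrals (for instance Lemma~2.8 of \cite{V}, or the van der Corput–type first-derivative estimate applied iteratively) with the monotone phase $x\mapsto x^k$ on $[X_k/2,X_k]$; the verification that the derivative of the phase is $\asymp nX_k^{-1}$ there, so that the $j$-th iterate saves $(n)^{-j}$ after accounting for the interval length, is immediate.
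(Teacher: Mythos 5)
Your proof is correct and follows exactly the route the paper indicates: the trivial bound for $n|\beta|\le 1$ combined with repeated integration by parts (using the smoothness and compact support of $w$ and the fact that the rescaled phase $u^k$ has derivative bounded away from zero on $[1/2,1]$) for $n|\beta|\ge 1$. The paper's own proof is a one-line remark to the same effect, so your write-up is simply a detailed version of the intended argument.
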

\begin{proof}This follows easily from the integration by parts in the case $|\beta|\ge n^{-1}$ in combination with the trivial bound in the case
$|\beta|< n^{-1}$. \end{proof}

We define the function $\Delta_2(\alpha)$ on $\mathfrak{M}(Q_1)$ by
$$\Delta_2(\alpha)=F_2(\alpha)-F_2^\ast(\alpha).$$
One can conclude from Theorem 4.1 in \cite{V} and the partial summation formula that $\Delta_2(\alpha)\ll Q_1^{1/2+\varepsilon}$ for $\alpha\in \mathfrak{M}(Q_1)$. One may deduce much stronger estimate for $\Delta_2(\alpha)$ by applying Poisson's summation formula. We arrive at the following.
\begin{lemma}\label{lemmaintMQ1}One has
\begin{align}\label{usingDelta2}\int_{\mathfrak{M}(Q_1)}|\Delta_2(\alpha)F_3(\alpha)F(\alpha)G(\alpha)|d\alpha \ll \mathcal{F}(0)n^{-1-\frac{1}{2}\delta_1}.
\end{align}
\end{lemma}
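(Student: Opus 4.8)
The plan is to estimate the left-hand side of \eqref{usingDelta2} by exploiting a strong pointwise bound for $\Delta_2(\alpha)$ on the major arcs $\mathfrak{M}(Q_1)$, and then combining this with the mean value estimate \eqref{boundF3FG} already established in the proof of Lemma \ref{lemmaintmQ1}. The first step is to prove that $\Delta_2(\alpha)$ is genuinely small on $\mathfrak{M}(Q_1)$: writing $\alpha=a/q+\beta$ as in \eqref{representalpha}, one applies Poisson's summation formula to $F_2(\alpha)=\sum_{X_2/2\le x\le X_2}w(x/X_2)e(x^2\alpha)$, splitting $x$ into residue classes modulo $q$. The main term of the Poisson expansion recovers $q^{-1}S_2(q,a)v_2(\beta)=F_2^\ast(\alpha)$, and the contribution of the nonzero frequencies, after estimating the resulting exponential integrals by stationary phase (or repeated integration by parts, using the smoothness of $w$), is seen to be $O(q^{1/2+\varepsilon}(1+n|\beta|)^{-1/2})$ or better. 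Since on $\mathfrak{M}(Q_1)$ we have $q\le Q_1$ and $|\beta|\le Q_1/(qn)$, this yields a bound of the shape $\Delta_2(\alpha)\ll q^{1/2+\varepsilon}$, which is already stated in the excerpt, but with a little more care in the $\beta$-aspect one gets the decay factor $(1+n|\beta|)^{-1/2}$ as well; this extra decay is what makes the argument work.

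Next I would insert this pointwise bound into the integral. On the arc $\mathfrak{M}(q,a;Q_1)$ one has $|\Delta_2(\alpha)|\ll q^{1/2+\varepsilon}(1+n|\beta|)^{-1/2}$, so
\begin{align*}
\int_{\mathfrak{M}(Q_1)}|\Delta_2(\alpha)F_3(\alpha)F(\alpha)G(\alpha)|\,d\alpha
\ll \Big(\sup_{\alpha\in\mathfrak{M}(Q_1)}|\Delta_2(\alpha)|\Big)\int_0^1|F_3(\alpha)F(\alpha)G(\alpha)|\,d\alpha,
\end{align*}
and then \eqref{boundF3FG} gives
\begin{align*}
\int_{\mathfrak{M}(Q_1)}|\Delta_2(\alpha)F_3(\alpha)F(\alpha)G(\alpha)|\,d\alpha
\ll Q_1^{1/2+\varepsilon}\,F_3(0)F(0)G(0)\,n^{-3/4-1/72+\rho/2-\delta_1/2+\varepsilon}.
\end{align*}
Since $F_3(0)F(0)G(0)\asymp \mathcal{F}(0)/F_2(0)$ and $F_2(0)\asymp n^{1/2}$, while $Q_1=n^{1/2-1/36+\rho}$, the power of $n$ on the right becomes $\tfrac14-\tfrac{1}{72}+\tfrac{\rho}{2}-\tfrac{3}{4}-\tfrac{1}{72}+\tfrac{\rho}{2}-\tfrac12-\tfrac12\delta_1+\varepsilon = -1-\tfrac12\delta_1+\varepsilon+(\,\cdot\,)$; a direct numerical check with $\rho=0.004453$ and $\delta_1=0.0008985$ shows this is comfortably below $-1-\delta_1$, so absorbing the $n^\varepsilon$ we obtain the claimed bound $\mathcal{F}(0)n^{-1-\delta_1/2}$, which is stronger than \eqref{usingDelta2}. (In fact the crude bound $\Delta_2\ll Q_1^{1/2+\varepsilon}$ alone, without the $(1+n|\beta|)^{-1/2}$ refinement, already suffices here because \eqref{boundF3FG} has a genuine power saving beyond $n^{-3/4}$; the Poisson refinement is only needed as insurance against the $\varepsilon$'s and the precise constants.)

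The main obstacle is the Poisson summation step: one must carefully track the exponential integral
\begin{align*}
I_h=\int_{X_2/2}^{X_2}w(x/X_2)\,e\big(x^2\beta+ (h/q)\,x\big)\,dx
\end{align*}
arising from the frequency $h\neq 0$, establish that $\sum_{h\neq 0}|I_h|$ is small, and verify that the $h=0$ term reproduces $F_2^\ast(\alpha)$ exactly up to a negligible error (this uses that $w$ is compactly supported and $C^\infty$, so the Euler–Maclaurin / Poisson tails decay faster than any power of the relevant parameters). The stationary phase analysis is standard but requires care about where the phase $x^2\beta+(h/q)x$ is stationary relative to the interval $[X_2/2,X_2]$ and about the size of $|\beta|$ versus $1/q$; once this is done, the rest is the bookkeeping indicated above.
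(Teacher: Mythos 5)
Your argument is correct and is essentially the paper's own proof: the paper likewise bounds $\Delta_2(\alpha)\ll Q_1^{1/2+\varepsilon}$ uniformly on $\mathfrak{M}(Q_1)$ (via Theorem 4.1 of \cite{V} and partial summation) and multiplies by the $L^1$ bound \eqref{boundF3FG}, obtaining $\mathcal{F}(0)n^{-1-\frac{1}{36}+\rho-\frac{1}{2}\delta_1+\varepsilon}$, which is stronger than \eqref{usingDelta2}; your numerology agrees with this. The Poisson-summation refinement you describe is, as you yourself note, not needed here (the paper only remarks that such a refinement is available), so the core of your proof coincides with the paper's.
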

\begin{proof}This follows from
$$\int_{\mathfrak{M}(Q_1)}|\Delta_2(\alpha)F_3(\alpha)F(\alpha)G(\alpha)|d\alpha \ll Q_1^{1/2+\varepsilon}
\int_0^1|F_3(\alpha)F(\alpha)G(\alpha)|d\alpha$$
in combination with \eqref{boundF3FG}.
Indeed, we obtain a better estimate than \eqref{usingDelta2}.
\end{proof}

Note that
$$\mathfrak{M}^\ast(Q)\subseteq \mathfrak{M}(Q).$$
The next lemma can be used to replace $\mathfrak{M}(Q)$ by $\mathfrak{M}^\ast(Q)$ in the integration.
\begin{lemma}\label{lemmaintMastQ1}Let $Q\le Q_1$. Let $\mathcal{M}(Q)=\mathfrak{M}(Q)\setminus \mathfrak{M}^\ast(Q)$. Let $H(\alpha)$ be a continious function of period one. For any constant $A>0$, one has
$$\int_{\mathcal{M}(Q)}|F_2^\ast(\alpha)H(\alpha)|d\alpha \ll n^{-A}\sup_{\alpha}|H(\alpha)|.$$
\end{lemma}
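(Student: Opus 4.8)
\medskip

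The plan is to unwind the definition of $\mathcal{M}(Q)$ and then exploit the rapid decay of $v_2(\beta)$ away from the rationals. If $\alpha\in\mathcal{M}(Q)$ then $\alpha$ lies in some $\mathfrak{M}(q,a;Q)$ with $q\le Q$, $(a,q)=1$, but $\alpha\notin\mathfrak{M}^\ast(q,a;Q)$; writing $\alpha=a/q+\beta$ as in \eqref{representalpha}, this forces $\min\bigl(Q/(qn),n^{\nu-1}\bigr)<|\beta|\le Q/(qn)$, hence $q<Qn^{-\nu}$ together with $n^{\nu-1}<|\beta|\le Q/(qn)$. Since $Q\le Q_1$ and $2Q_1^2<n$ for large $n$, the arcs $\mathfrak{M}(q,a;Q)$ are pairwise disjoint, so it suffices to sum the contributions of the individual pairs $(q,a)$ ranging over $q<Qn^{-\nu}$, $(a,q)=1$.

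On each such arc I would bound $F_2^\ast$ pointwise. By Lemma~\ref{lemmagauss1} one has $q^{-1}|S_2(q,a)|\ll q^{-1/2+\varepsilon}$, and by Lemma~\ref{lemmaboundvk}, for every fixed $j\in\Z^{+}$ one has $v_2(\beta)\ll_j n^{1/2}(1+n|\beta|)^{-j}$ since $X_2=n^{1/2}$. Therefore
$$|F_2^\ast(\alpha)|\ll_j q^{-1/2+\varepsilon}\,n^{1/2}(1+n|\beta|)^{-j}\qquad(\alpha\in\mathcal{M}(Q)).$$
Integrating in $\beta$ over $n^{\nu-1}<|\beta|\le Q/(qn)$ and using $(1+n|\beta|)^{-j}\le (n|\beta|)^{-j}$ there, the $\beta$-integral is $\ll_j n^{-j}\int_{n^{\nu-1}}^{\infty}\beta^{-j}\,d\beta\ll_j n^{\nu(1-j)-1}$.

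Summing over $a$ and $q$, using $\varphi(q)\le q$ together with $\sum_{q<Qn^{-\nu}}q^{1/2+\varepsilon}\ll Q^{3/2+\varepsilon}$, I obtain
$$\int_{\mathcal{M}(Q)}|F_2^\ast(\alpha)|\,d\alpha\ll_j n^{-1/2+\nu(1-j)}\,Q^{3/2+\varepsilon}\ll_j n^{1/4+\varepsilon+\nu(1-j)},$$
where the last step uses $Q\le Q_1<n^{1/2}$. Because $\nu>0$ is a fixed constant, choosing $j=j(A)$ large enough forces $1/4+\varepsilon+\nu(1-j)<-A$, so this integral is $\ll n^{-A}$; bounding $|H(\alpha)|\le\sup_\alpha|H(\alpha)|$ and pulling that factor out of the integral completes the proof. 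There is no serious obstacle here: the only point requiring care is to record exactly the power of $n$ that the truncation at $n^{\nu-1}$ saves, which is where the \emph{excessively large} part of the major arcs gets trimmed, and to check that this saving dominates the fixed powers of $n$ lost through the Gauss sum, the length $n^{1/2}$ coming from $v_2$, and the summation over $q$, before fixing $j$ in terms of $A$.
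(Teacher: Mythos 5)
Your argument is correct and rests on exactly the same observation as the paper's proof: on $\mathcal{M}(Q)$ one has $|\beta|>n^{\nu-1}$, so Lemma~\ref{lemmaboundvk} gives $v_2(\beta)\ll_j n^{1/2}n^{-\nu j}$, which beats any fixed power of $n$ once $j$ is large. The paper simply bounds $F_2^\ast(\alpha)\ll_A n^{-A}$ pointwise and uses that $\mathcal{M}(Q)$ has measure $O(1)$, whereas you carry out the arc-by-arc integration explicitly; both are fine.
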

\begin{proof}For $\alpha\in \mathcal{M}(Q)$, one has $|\beta|> n^{-1+\nu}$. Then by Lemma \ref{lemmaboundvk}, one has
$$v_2(\beta)\ll_A n^{-A}\ \ \textrm{ and }\ F_2^\ast(\alpha) \ll_A n^{-A}$$
for any constant $A>0$. This completes the proof.
\end{proof}

For $X\le Q_1$, we introduce
\begin{align}\label{defineUpsi}\Upsilon(X)=\int_{\mathfrak{M}(Q_1)\setminus \mathfrak{M}(X)}|F_2^\ast(\alpha)G(\alpha)^2|d\alpha.\end{align}

\begin{lemma}\label{lemmaUpsilon}Let $Q_3\le X\le Q_1$. Then one has
$$\Upsilon(X)\ll F_2(0)G(0)^2 n^{-1+\nu+\varepsilon}R^{22}X^{-2\delta_1}.$$
\end{lemma}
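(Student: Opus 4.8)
The plan is to decompose the region $\mathfrak{M}(Q_1)\setminus\mathfrak{M}(X)$ dyadically in the radius parameter $Q$, reduce from $F_2^\ast$ to a Gauss-sum weight times a bounded quantity, pass from $\mathfrak{M}(Q)$ to $\mathfrak{M}^\ast(Q)$ using Lemma \ref{lemmaintMastQ1}, and on each $\mathfrak{M}^\ast(Q)$ apply H\"older's inequality together with the large sieve bound of Lemma \ref{lemmaJks} for each factor $g_k$ with $k\in K_1$. First I would write $\mathfrak{M}(Q_1)\setminus\mathfrak{M}(X)\subseteq\bigcup_{j}\mathfrak{n}(2^{j}X)$ as a union over $O(\log n)$ dyadic ranges $Q=2^{j}X$ with $X\le Q\le Q_1$, so that it suffices to bound $\int_{\mathfrak{n}(Q)}|F_2^\ast(\alpha)G(\alpha)^2|\,d\alpha$ by $F_2(0)G(0)^2 n^{-1+\nu+\varepsilon}R^{22}Q^{-2\delta_1}$ for each such $Q$, after which summing the geometric series in $j$ recovers the claimed $X^{-2\delta_1}$ (the exponent $2\delta_1>0$ makes the sum converge at the bottom end $Q=X$). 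On $\mathfrak{n}(Q)=\mathfrak{M}(2Q)\setminus\mathfrak{M}(Q)$, Lemma \ref{lemmagauss1} gives $q^{-1}S_2(q,a)\ll\omega_2(q)\ll q^{-1/2+\varepsilon}\ll Q^{-1/2+\varepsilon}$, and $v_2(\beta)\ll F_2(0)$ trivially, so $|F_2^\ast(\alpha)|\ll F_2(0)Q^{-1/2+\varepsilon}$ uniformly on $\mathfrak{n}(Q)$; pulling this supremum out leaves $\int_{\mathfrak{M}(2Q)}|G(\alpha)|^2\,d\alpha$ to control.

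Next I would replace $\mathfrak{M}(2Q)$ by $\mathfrak{M}^\ast(2Q)$: on the complementary set $\mathcal{M}(2Q)$ one has $|\beta|>n^{-1+\nu}$, and while Lemma \ref{lemmaintMastQ1} is stated for $F_2^\ast$ times a continuous weight, the same elementary argument (or just applying it with $H=G^2$ after restoring the $F_2^\ast$ factor, or alternatively noting $\int_{\mathcal{M}(2Q)}|G|^2\ll$ the minor-arc bound from Lemma \ref{lemmaboundG2}) shows this contribution is negligible, $\ll F_2(0)G(0)^2 n^{-A}$ for any $A$. On $\mathfrak{M}^\ast(2Q)$ I would estimate $\int|G(\alpha)|^2\,d\alpha=\int\prod_{k\in K_1}|g_k(\alpha)|^2\,d\alpha$ by H\"older's inequality with exponents chosen so that each factor becomes a $\mathcal{J}_{k,s_k}(2Q)$ with $2s_k\ge k+1$; the natural choice mirrors \eqref{definesk1}, taking $s_k$ with $2s_k\ge k+1$ for each $k\in\{5,\ldots,11\}$ and splitting the exponents so $\sum 1/s_k=1$. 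Then Lemma \ref{lemmaJks} gives $\mathcal{J}_{k,s_k}(2Q)\ll n^{-1+2s_k/k+\nu+\varepsilon}R^{2s_k}Q^{2(k-2s_k+\lambda_{k,s_k})/k}$, and combining via H\"older produces a bound of the shape $n^{-1+\nu+\varepsilon}R^{22}\,n^{\sum 2/k}\,Q^{2\sum(k-2s_k+\lambda_{k,s_k})/(ks_k)}$ wait — more carefully, the H\"older combination of the $\mathcal{J}_{k,s_k}$ to the power $1/s_k$ yields $n^{-1+\kappa_1+\nu+\varepsilon}R^{22}Q^{2\sum_{k}(k-2s_k+\lambda_{k,s_k})/(ks_k)}$ where $\kappa_1=\sum_{k\in K_1}2/k$.

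Finally I would collect exponents. The total power of $n$ is $-1/2+\varepsilon$ (from $F_2^\ast$ sup) minus — no: $Q^{-1/2+\varepsilon}$ contributes to the $Q$-power, and the $n$-power is $-1+\kappa_1+\nu+\varepsilon$; since $n^{\kappa_1}\ll n^\varepsilon G(0)^2$ by \eqref{boundg0} this becomes $F_2(0)G(0)^2 n^{-1+\nu+\varepsilon}R^{22}$ times a power of $Q$. The power of $Q$ is $-\tfrac12+2\sum_{k\in K_1}\frac{k-2s_k+\lambda_{k,s_k}}{ks_k}$, and the whole point of the computation in Lemma \ref{lemmaboundK1} — namely $\alpha(K_1)=0.7508985$ and $\delta_1=0.0008985$ with $3/4+\delta_1=0.7508985$ — is precisely that $2\sum_{k\in K_1}\frac{2s_k-\lambda_{k,s_k}}{ks_k}=2\alpha(K_1)=3/2+2\delta_1$, so $-\tfrac12+2\sum\frac{k-2s_k+\lambda_{k,s_k}}{ks_k}=-\tfrac12+2\kappa_1-(3/2+2\delta_1)$; absorbing the $n^{2\kappa_1}$-type contribution here is where I must be careful to keep the $Q$-dependence and $n$-dependence separate, but after the dust settles the $Q$-exponent is $-2\delta_1$ (with a harmless $\varepsilon$) and the $n$-exponent is $-1+\nu+\varepsilon$, as required. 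Summing over the $O(\log n)$ dyadic blocks is absorbed in $n^\varepsilon$. \textbf{The main obstacle} I anticipate is bookkeeping the exact split between powers of $n$ and powers of $Q$: the large sieve bound \eqref{newboundJks} carries both an $n^{2s/k}$ and a $Q$-power, and one must verify that the permissible-exponent numerics of Lemma \ref{lemmaboundK1} convert cleanly into the stated $Q^{-2\delta_1}$ without losing positivity of the saving — in particular checking that each chosen $s_k$ satisfies $2s_k\ge k+1$ so that Lemma \ref{lemmaJks} actually applies, and that $Q\ge Q_3$ holds throughout the dyadic range since $X\ge Q_3$.
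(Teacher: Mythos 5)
Your proposal is correct and follows essentially the same route as the paper: a dyadic decomposition in $Q$, the pointwise bound $F_2^\ast(\alpha)\ll F_2(0)Q^{-1/2+\varepsilon}$ off $\mathfrak{M}(Q/2)$, passage from $\mathfrak{M}(Q)$ to $\mathfrak{M}^\ast(Q)$ via Lemma \ref{lemmaintMastQ1}, and then H\"older's inequality with the exponents $s_k$ of \eqref{definesk1}--\eqref{definesk2} combined with Lemma \ref{lemmaJks}, so that $Q^{-1/2}\cdot Q^{2-2\alpha(K_1)}=Q^{-2\delta_1}$ and $n^{\kappa_1}\ll n^{\varepsilon}G(0)^2$ give the stated bound. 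The points you flag as needing verification (that $2s_k\ge k+1$ for each $k\in K_1$, and that $Q\ge Q_3$ throughout the dyadic range) do indeed hold.
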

\begin{proof}On writing
$$\Upsilon_0(Q)=\int_{\mathfrak{M}(Q)\setminus \mathfrak{M}(Q/2)}|F_2^\ast(\alpha)G(\alpha)^2|d\alpha $$
for $X\le Q\le Q_1$, by the dyadic argument, we only need to prove
\begin{align}\label{boundUpsi0}\Upsilon_0(Q) \ll
F_2(0)G(0)^2 n^{-1+\nu+\varepsilon}R^{22}Q^{-2\delta_1}.\end{align}

In view of Lemma \ref{lemmaintMastQ1}, one has
\begin{align}\label{boundfromUpsitoast}\Upsilon_0(Q)=\Upsilon_0^\ast(Q)+O(n^{-A}),\end{align}
where
$$\Upsilon_0^\ast(Q)=\int_{\mathfrak{M}^\ast(Q)\setminus \mathfrak{M}(Q/2)}|F_2^\ast(\alpha)G(\alpha)^2|d\alpha.$$
By Lemma \ref{lemmagauss1} and Lemma \ref{lemmaboundvk}, for $\alpha\in \mathfrak{M}^\ast(Q)\setminus \mathfrak{M}(Q/2)$, we have
$$F_2^\ast(\alpha)\ll F_2(0)Q^{-\frac{1}{2}+\varepsilon},$$
and therefore,
\begin{align}\label{boundUpsiast}\Upsilon_0^\ast(Q)\ll F_2(0)Q^{-\frac{1}{2}+\varepsilon}\int_{\mathfrak{M}^\ast(Q)}|G(\alpha)^2|d\alpha.\end{align}

We deduce by H\"older's inequality that
$$\int_{\mathfrak{M}^\ast(Q)}|G(\alpha)|^2d\alpha\le \mathcal{J}_{7,6}(Q)^{\frac{7}{s_7}-1}\mathcal{J}_{7,7}(Q)^{1-\frac{6}{s_7}}\prod_{k\in K_1\setminus\{7\}}\mathcal{J}_{k,s_k}(Q)^{\frac{1}{s_k}},$$
where $s_k$ are given in \eqref{definesk1} and \eqref{definesk2}. On applying Lemma \ref{lemmaJks}, we deduce that
$$\int_{\mathfrak{M}^\ast(Q)}|G(\alpha)|^2d\alpha\ll G(0)^2n^{-1+\nu+\varepsilon}R^{22}Q^{2-2\alpha(K_1)},$$
where $\alpha(K_1)$ is given in \eqref{definealphaK1}. Since $\alpha(K_1)=\frac{3}{4}+\delta_1$, we have
\begin{align}\label{boundUpsiinner}\int_{\mathfrak{M}^\ast(Q)}|G(\alpha)|^2d\alpha\ll G(0)^2n^{-1+\nu+\varepsilon}R^{22}Q^{\frac{1}{2}-2\delta_1}.\end{align}
Now \eqref{boundUpsi0} follows from \eqref{boundfromUpsitoast}, \eqref{boundUpsiast} and \eqref{boundUpsiinner}. The proof of the lemma is complete.
\end{proof}

\begin{lemma}\label{lemmaintMQ1toQ2}Let $Q_2$ be given in \eqref{defineQ2}.
 Then we have
$$\int_{\mathfrak{M}(Q_1)\cap \mathfrak{m}(Q_2)}|F_2^\ast(\alpha)F_3(\alpha)F(\alpha)G(\alpha)|d\alpha \ll \mathcal{F}(0)n^{-1+\nu+\varepsilon}R^{11}Q_2^{-\delta_1}.$$
\end{lemma}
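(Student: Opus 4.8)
The plan is to bound the integral by splitting the range $\mathfrak{M}(Q_1)\cap \mathfrak{m}(Q_2)$ dyadically according to the size of $q$ (equivalently, the ``level'' $Q$ with $Q_2<Q\le Q_1$), and on each piece to separate the square contribution $F_2^\ast(\alpha)$ from the rest of the generating function by Cauchy--Schwarz in a way that exposes $\Upsilon(X)$, which has already been estimated in Lemma \ref{lemmaUpsilon}. Concretely, first I would apply Schwarz's inequality in the form
\begin{align*}
\int_{\mathfrak{M}(Q_1)\cap \mathfrak{m}(Q_2)}|F_2^\ast(\alpha)F_3(\alpha)F(\alpha)G(\alpha)|d\alpha
\le \Upsilon(Q_2)^{1/2}\Big(\int_{\mathfrak{M}(Q_1)\setminus\mathfrak{M}(Q_2)}|F_2^\ast(\alpha)F_3(\alpha)^2F(\alpha)^2|d\alpha\Big)^{1/2},
\end{align*}
so that one factor is exactly $\Upsilon(Q_2)$ from \eqref{defineUpsi}, for which Lemma \ref{lemmaUpsilon} gives $\Upsilon(Q_2)\ll F_2(0)G(0)^2 n^{-1+\nu+\varepsilon}R^{22}Q_2^{-2\delta_1}$.

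The remaining factor is a mean value of $F_2^\ast F_3^2 F^2$ over the major arcs $\mathfrak{M}(Q_1)$. Here I would use Lemma \ref{lemmaintMastQ1} to replace $\mathfrak{M}(Q_1)$ by $\mathfrak{M}^\ast(Q_1)$ up to a negligible error (legitimate since $F_2^\ast$ decays rapidly when $|\beta|>n^{-1+\nu}$), and then estimate this integral by the standard major-arc technology: on $\mathfrak{M}^\ast(q,a;Q_1)$ one has $F_2^\ast(\alpha)\ll F_2(0)q^{-1/2+\varepsilon}$ by Lemma \ref{lemmagauss1} and Lemma \ref{lemmaboundvk}, and $F_3(\alpha)$, $f_k(\alpha)$ can be bounded above by their values at the origin times $q^{\varepsilon}(1+n|\beta|)^{-\text{small}}$ factors. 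Summing the local bounds over $q\le Q_1$ and integrating in $\beta$ over $|\beta|\le n^{\nu-1}$ contributes a factor $n^{-1+\nu+\varepsilon}$, and the product of $F_3(0)^2F(0)^2$ with $F_2(0)$ reconstitutes (essentially) $\mathcal{F}(0)/G(0)^2$; thus this factor is $\ll (\mathcal{F}(0)/G(0)^2)^2 n^{-1+\nu+\varepsilon}R^{O(1)}$ after accounting for the logarithmic losses encoded in the $R$-powers. Taking square roots and multiplying gives
$\ll \mathcal{F}(0)n^{-1+\nu+\varepsilon}R^{11}Q_2^{-\delta_1}$,
with the exponent $11$ of $R$ coming from $\tfrac12(22)+\tfrac12(\cdot)$ bookkeeping of the smooth-cube factors.

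The main obstacle is obtaining the right power of $R$ and the clean $\mathcal{F}(0)$ normalization simultaneously: one must be careful that bounding $F_3$ and the $f_k$ pointwise on the major arcs by $F_3(0)$, $f_k(0)$ (times harmless factors) does not lose more than a fixed power of $R=n^\eta$, since $\eta$ is absolutely small and any such loss is absorbed into $n^{\varepsilon}$, but the statement records the honest power $R^{11}$ rather than hiding it, so the counting of these factors has to be explicit. A secondary technical point is that $F_3$ is \emph{not} smooth-restricted, so on the major arcs I would use the approximation $F_3\approx F_3^\ast$ (as set up before Lemma \ref{lemmaintMQ1}) or simply the trivial bound $|F_3(\alpha)|\le F_3(0)$ on $\mathfrak{M}^\ast(Q_1)$ after noting that on these narrow arcs $F_3$ does not oscillate much; either way this is routine given the tools already assembled. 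Everything else — the dyadic decomposition in $Q$ being unnecessary once one works directly with $\Upsilon(Q_2)$, and the final arithmetic with $\delta_1$, $\nu$ — is bookkeeping.
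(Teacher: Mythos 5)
Your opening Cauchy--Schwarz step is exactly the paper's: split $|F_2^\ast F_3FG|$ into $(|F_2^\ast|^{1/2}|F_3F|)\cdot(|F_2^\ast|^{1/2}|G|)$ so that one factor squared is $\Upsilon(Q_2)$, then invoke Lemma \ref{lemmaUpsilon}. The gap is in your treatment of the other factor, $\int|F_2^\ast(\alpha)F_3(\alpha)^2F(\alpha)^2|\,d\alpha$. You propose to bound this by ``standard major-arc technology'' on $\mathfrak{M}^\ast(Q_1)$, estimating $F_3(\alpha)$ and the smooth sums $f_k(\alpha)$ pointwise by $F_3(0)$, $f_k(0)$ times decaying factors in $q$ and $\beta$. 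No such pointwise approximations exist for the smooth Weyl sums $f_k$ on arcs with $q$ as large as $Q_1\approx n^{0.477}$, and even for the classical sum $F_3$ the error term $O(q^{1/2+\varepsilon})$ in Theorem 4.1 of \cite{V} swamps the main term $F_3(0)q^{-1/3}$ once $q$ is near $Q_1$. Quantitatively, with only the $q^{-1/2}$ decay of $F_2^\ast$ and trivial bounds on $F_3^2F^2$, the arcs with $q\sim Q_1$ already contribute about $F_2(0)F_3(0)^2F(0)^2\,Q_1^{3/2}n^{\nu-1}$, which exceeds the target $F_2(0)F_3(0)^2F(0)^2n^{-1+\varepsilon}$ by roughly $n^{0.7}$; recovering this would require pointwise decay of $F_3^2F^2$ of a strength that is simply not available. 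The paper itself warns that the routine major-arc technique fails here even at the much smaller level $Q_2$.

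The correct route, which your proposal never touches, is to use that the domain lies in $\mathfrak{m}(Q_2)$, so that $\sup_{\mathfrak{m}(Q_2)}|F_2^\ast(\alpha)|\ll F_2(0)Q_2^{-1/2+\varepsilon}$, and then to pull out this supremum and apply the \emph{global} mean value $\int_0^1|F_3(\alpha)F(\alpha)|^2\,d\alpha\ll F_3(0)^2F(0)^2n^{-7/9+\rho}$ of Lemma \ref{lemmaboundF2} (the output of the Davenport iteration of Section 4). The definition $Q_2=n^{4/9+2\rho}$ is tuned precisely so that $Q_2^{-1/2}n^{-7/9+\rho}=n^{-1}$; the fact that your argument makes no use of Lemma \ref{lemmaboundF2} and imposes no constraint tying $Q_2$ to $\rho$ is the telltale sign that it cannot close. (There is also a minor bookkeeping slip: the square root of $F_2(0)F_3(0)^2F(0)^2$ times the square root of $F_2(0)G(0)^2\cdots$ gives $\mathcal{F}(0)$ directly, not ``$(\mathcal{F}(0)/G(0)^2)^2$''.)
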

\begin{proof}
By Lemma \ref{lemmaboundF2}, we have
$$\int_0^1|F_3(\alpha)F(\alpha)|^2d\alpha\ll F_3(0)^2F(0)^2n^{-\frac{7}{9}+\rho}.$$
By Lemma \ref{lemmagauss1} and Lemma \ref{lemmaboundvk}, we have
$$\sup_{\alpha \in \mathfrak{m}(Q_2)}|F_2^\ast(\alpha)|\ll F_2(0)Q_2^{-\frac{1}{2}+\varepsilon}.$$
Then we deduce that
$$\int_{\mathfrak{m}(Q_2)}|F_2^\ast(\alpha)F_3(\alpha)^2F(\alpha)^2|d\alpha\ll
F_2(0)F_3(0)^2F(0)^2Q_2^{-\frac{1}{2}+\varepsilon}n^{-\frac{7}{9}+\rho},$$
and by \eqref{defineQ2},
\begin{align}\label{boundF23F}\int_{\mathfrak{m}(Q_2)}|F_2^\ast(\alpha)F_3(\alpha)^2F(\alpha)^2|d\alpha\ll
F_2(0)F_3(0)^2F(0)^2n^{-1+\varepsilon}.\end{align}

We write $\mathfrak{R}=\mathfrak{M}(Q_1)\cap \mathfrak{m}(Q_2)$. On applying Schwarz's inequality, we deduce that
\begin{align*}\int_{\mathfrak{R}}|F_2^\ast(\alpha)F_3(\alpha)F(\alpha)G(\alpha)|d\alpha
 \ll
\Big(\int_{\mathfrak{m}(Q_2)}|F_2^\ast(\alpha)F_3(\alpha)^2F(\alpha)^2|d\alpha\Big)^{1/2} \Upsilon(Q_2)^{1/2},\end{align*}
and by \eqref{boundF23F},
\begin{align}\label{usingSch} \int_{\mathfrak{R}}|F_2^\ast(\alpha)F_3(\alpha)F(\alpha)G(\alpha)|d\alpha
\ll
\Big(F_2(0)F_3(0)^2F(0)^2n^{-1+\varepsilon}\Big)^{1/2} \Upsilon(Q_2)^{1/2}.\end{align}
On applying Lemma \ref{lemmaUpsilon}, we have
\begin{align}\label{usingUpsilonQ2}\Upsilon(Q_2)\ll F_2(0)G(0)^2 n^{-1+\nu+\varepsilon}R^{22}Q_2^{-2\delta_1}.
\end{align}
The proof is complete by combining \eqref{usingSch} and \eqref{usingUpsilonQ2}.
\end{proof}

\noindent {\it Proof of Lemma \ref{lemmaintmQ2}.} We conclude from Lemma \ref{lemmaintMQ1toQ2} that
\begin{align}\label{boundfromQ1toQ2}\int_{\mathfrak{M}(Q_1)\cap \mathfrak{m}(Q_2)}|F_2^\ast(\alpha)F_3(\alpha)F(\alpha)G(\alpha)|d\alpha \ll \mathcal{F}(0)n^{-1-\frac{4}{9}\delta_1}.\end{align}
Note that $F_2(\alpha)=F_2^\ast(\alpha)+\Delta_2(\alpha)$. We deduce from \eqref{usingDelta2} and \eqref{boundfromQ1toQ2} that
\begin{align*}\int_{\mathfrak{M}(Q_1)\cap \mathfrak{m}(Q_2)}|F_2(\alpha)F_3(\alpha)F(\alpha)G(\alpha)|d\alpha \ll \mathcal{F}(0)n^{-1-\frac{4}{9}\delta_1}.\end{align*}
This completes the proof of Lemma \ref{lemmaintmQ2}.
\vskip3mm

\section{Breaking the classical convexity}

Let $K_2=\{4,12,13,14\}$ and
\begin{align*}\mathcal{K}_2=\int_0^{1}\prod_{k\in K_2}|f_{k}(\alpha;N^{1/k},R)|^4d\alpha.\end{align*}
We define
$$ \kappa_2=\sum_{k\in K_2}\frac{4}{k},$$
and one has the trivial inequality
$$\mathcal{K}_2\ll N^{\kappa_2}.$$

We remark that we only make use of mean value estimates of even moments to deal with $\mathcal{K}_1$. In order to handle
$\mathcal{K}_2$, we need the seventh moment of a smooth Weyl sum.

\begin{lemma}\label{lemmaBW}One has
$$\int_0^1|f_4(\alpha;N^{1/4},R)|^7d\alpha\ll N^{\frac{3.849408}{4}}.$$ \end{lemma}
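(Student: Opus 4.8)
The plan is to derive this directly from the work of Br\"udern and Wooley \cite{BW}, where a bound for the seventh moment of a smooth quartic Weyl sum is the central result. Concretely, \cite{BW} establishes that for $R \le P^{\eta}$ with $\eta$ sufficiently small, one has
\begin{align*}\int_0^1|f_4(\alpha;P,R)|^7 d\alpha \ll P^{3.849408+\varepsilon},\end{align*}
or rather with the cleaner statement that $\lambda_{4,7/2}$ (in the notation of permissible exponents extended to half-integral $s$) is admissible at the value $3.849408$. First I would recall the precise normalization: here $f_4(\alpha;N^{1/4},R)$ sums $e(x^4\alpha)$ over $x\in\mathcal{A}(N^{1/4},R)$, so with $P=N^{1/4}$ the target $N^{3.849408/4}=P^{3.849408}$ matches exactly the exponent from \cite{BW}. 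Since throughout this paper we absorb the $\varepsilon$ (and indeed the $n^{\varepsilon}$) losses by the convention on permissible exponents explained in Section 3 — replacing $\lambda_{k,s}$ by $\lambda_{k,s}^\ast=\lambda_{k,s}+10^{-10}$ kills the $P^\varepsilon$ at the cost of a negligible change — the stated clean bound $\ll N^{3.849408/4}$ follows.

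The key steps, in order, would be: (i) quote the main theorem of Br\"udern--Wooley \cite{BW} giving the seventh-moment estimate for smooth quartic Weyl sums, taking care that their smoothness parameter and the range $[1,P]$ versus our $[1,N^{1/4}]$ agree after setting $P=N^{1/4}$; (ii) observe that their estimate is stated as $P^{3.849408+\varepsilon}$ (or with a logarithmic rather than $\varepsilon$-power loss), and invoke the $\lambda^\ast$ convention from Section 3 to replace it by a clean power bound; (iii) translate back to the variable $N$ via $P^{3.849408}=N^{3.849408/4}$, yielding the claimed inequality. There is essentially no new mathematics here — the lemma is a citation dressed in the paper's notation, analogous to how $\lambda_{5,4}$ and $\lambda_{6,6}$ were quoted from the appendix of \cite{VW}.

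The only real point requiring care — and the closest thing to an ``obstacle'' — is bookkeeping: making sure the smooth-number set $\mathcal{A}(N^{1/4},R)$ with $R=n^{\eta}$ satisfies the smallness hypothesis on $\eta$ demanded by \cite{BW}. Since $\eta^{-1}$ is taken to exceed $10^{200}$ in the present paper, this is comfortably satisfied, but one should state it. A secondary subtlety is that \cite{BW} may phrase their result for Weyl sums weighted slightly differently, or for the related quantity counting solutions of the underlying Diophantine system; in that case I would pass between the exponential-sum moment and the solution count by the standard orthogonality argument (the seventh moment $\int_0^1|f_4|^7 d\alpha$ being real and equal to a count of solutions of $x_1^4+x_2^4+x_3^4=x_4^4+x_5^4+x_6^4+\text{(half a variable)}$ after the usual device of handling the odd exponent), exactly as is done implicitly when \eqref{meansmooth} is invoked elsewhere. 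No step should present genuine difficulty.
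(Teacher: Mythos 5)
Your proposal is correct and matches the paper's proof, which is simply the one-line citation ``This follows from Theorem 2 of Br\"udern and Wooley \cite{BW}''; your additional bookkeeping about the normalization $P=N^{1/4}$ and the smallness of $\eta$ is sound and consistent with the conventions of Section 3. No further comment is needed.
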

\begin{proof}This follows from Theorem 2 of Br\"udern and Wooley \cite{BW}.\end{proof}
Let
$$\alpha_{4,7/2}=\frac{7-3.849408}{4}=0.787648.$$
As a consequence Lemma \ref{lemmaBW}, for $s\ge 7$, one has
\begin{align}\label{boundbreakcon}\int_0^1|f_4(\alpha;N^{1/4},R)|^sd\alpha\ll N^{\frac{s}{4}-\alpha_{4,7/2}}.\end{align}

\begin{lemma}\label{lemmabreakcon}One has
\begin{align*}\mathcal{K}_2 \ll N^{\kappa_2-0.7834034}.\end{align*}
\end{lemma}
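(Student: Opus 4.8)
The plan is to estimate $\mathcal{K}_2$ by H\"older's inequality, distributing the fourth powers $\prod_{k\in K_2}|f_k|^4$ among the available mean value estimates: the seventh moment of the quartic smooth Weyl sum from Lemma \ref{lemmaBW} (equivalently \eqref{boundbreakcon}), and the even-moment permissible exponents $\lambda_{k,s}$ for $k\in\{12,13,14\}$ recorded earlier via \eqref{boundfks-alpha}. The key point is that $f_4$ appears to the fourth power, which alone is not enough to invoke the seventh moment; so I would first apply Cauchy--Schwarz or H\"older in a way that couples $f_4$ with the higher-degree factors, raising the effective exponent on $f_4$ past $7$. Concretely, I would write $\mathcal{K}_2 \le \prod_{k\in K_2} V_k^{1/w_k}$ for a suitable choice of conjugate weights $w_k$ with $\sum 1/w_k = 1$, where each $V_k$ is a single-variable moment $\int_0^1 |f_k(\alpha;N^{1/k},R)|^{2s_k}\,d\alpha$ with $2s_k = 4w_k$; one then wants $2s_4 \ge 7$ so that \eqref{boundbreakcon} applies to the $k=4$ factor, and $s_k$ an integer (or half-integer interpolated as in \eqref{holder2}) matching a tabulated $\lambda_{k,s_k}$ for $k\in\{12,13,14\}$.

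First I would fix the weights. Trivially $2s_4 = 4w_4$, so $w_4 \ge 7/4$ forces a fairly large share of the H\"older mass onto the quartic; the remaining weights $w_{12},w_{13},w_{14}$ must sum (in reciprocal) to $1 - 1/w_4 \le 3/7$, which is comfortable since high-degree smooth Weyl sums have essentially square-root-cancellation-quality moments. A natural concrete choice is to take $2s_{12}, 2s_{13}, 2s_{14}$ to be small integers (say in the range $6$ to $10$) for which $\lambda_{k,s}$ is available from the Vaughan--Wooley tables (several such values, e.g. $\lambda_{13,8}, \lambda_{14,8}, \lambda_{14,10}, \lambda_{k,3}$, are already quoted in the excerpt), and then let $w_4$ be whatever conjugate exponent is forced by $\sum 1/w_k = 1$; one checks $w_4 \ge 7/4$, i.e. $2s_4 \ge 7$, so that \eqref{boundbreakcon} with the exponent $\alpha_{4,7/2} = 0.787648$ governs the quartic factor. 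If the forced $s_4$ is not an integer, this is harmless because \eqref{boundbreakcon} is stated for all real $s \ge 7$; if a high-degree $s_k$ needs to be non-integral I would interpolate by a further H\"older step between two adjacent tabulated moments exactly as in \eqref{holder2}.

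Second, I would assemble the exponent. Using $Y_k = Y_4^{4/k}$ — no wait, here we are in the $\mathcal{K}_2$ setting with common variable $N$, so each factor contributes $N^{\frac{1}{k}(2s_k\cdot\frac{1}{2s_k}\cdot(\dots))}$; more precisely $V_k \ll N^{\frac{1}{k}(\lambda_{k,s_k}+\varepsilon)}$ for $k\in\{12,13,14\}$ and $V_4 \ll N^{2s_4/4 - \alpha_{4,7/2}}$, so that
\begin{align*}
\mathcal{K}_2 \ll N^{\varepsilon}\, N^{\frac{1}{w_4}\left(\frac{2s_4}{4}-\alpha_{4,7/2}\right) + \sum_{k\in\{12,13,14\}}\frac{1}{w_k}\cdot\frac{\lambda_{k,s_k}}{k}} = N^{\varepsilon}\, N^{\kappa_2/4 \cdot 4 - \Delta},
\end{align*}
and the bookkeeping reduces to verifying that the total deficit $\Delta$ equals (at least) $0.7834034$. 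The deficit decomposes as $\frac{1}{w_4}\alpha_{4,7/2}$ from the quartic plus $\sum_{k}\frac{1}{w_k}\cdot\frac{2s_k - \lambda_{k,s_k}}{k} = \sum_k \frac{1}{w_k}\,\alpha_{k,s_k}$ from the higher powers, using $\frac{2s_k}{4w_k}=\frac{1}{k}$ hence $2s_k = 4w_k/k$... — the precise arithmetic is what one optimizes over the admissible $(s_k)$, and I would just report the numerical optimum.

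\textbf{Main obstacle.} The genuinely delicate point is not any single estimate but the \emph{joint optimization}: one must choose the H\"older weights so that (i) the quartic factor gets exponent $\ge 7$, enabling the Br\"udern--Wooley input, (ii) each high-degree factor lands on an exponent for which a sufficiently strong $\lambda_{k,s}$ is tabulated, and (iii) the resulting total saving clears the target $0.7834034$ with room to spare for the later $\delta_2 > 0$ requirement. Because $\alpha_{4,7/2} = 0.787648$ is noticeably larger than the $\alpha_{k,s}\approx 0.75$ available for $k\le 11$ but the quartic must carry weight $\ge 7/4 > 1$, there is real tension between loading the quartic (good per-unit saving, but the unit share $1/w_4$ shrinks) and spreading onto the high degrees (where $\alpha_{k,s}$ is smaller). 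I expect the optimal choice to push $w_4$ as close to $7/4$ as the integrality/interpolation of the other exponents allows, and the verification that the numbers work out to exactly $0.7834034$ will come down to a short but careful numerical computation with the quoted values of $\lambda_{k,s}$.
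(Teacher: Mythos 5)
Your proposal follows essentially the same route as the paper: H\"older's inequality splitting $\mathcal{K}_2$ into single-variable moments, with the Br\"udern--Wooley seventh-moment bound \eqref{boundbreakcon} handling the quartic factor (whose forced exponent $4s_4=7.0179948$ indeed just exceeds $7$) and tabulated permissible exponents handling $k=12,13,14$. The paper's concrete choices are $s_{12}=13/2$, $s_{13}=7$, $s_{14}=15/2$ with $\lambda_{12,13}$, $\lambda_{13,14}$, $\lambda_{14,15}$ from the Vaughan--Wooley tables, after which $\alpha(K_2)=\alpha_{4,7/2}/s_4+\sum_{k\in K_2\setminus\{4\}}\alpha_{k,2s_k}/s_k=0.7834034$; this numerical step, which you defer, is the entire remaining content of the lemma.
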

\begin{proof}
We deduce by H\"older's inequality that
\begin{align}\label{boundcalK2}\mathcal{K}_2\le \prod_{k\in K_2}\Big(\int_{0}^1|f_{k}(\alpha;N^{1/k},R)|^{4s_k}d\alpha\Big)^{\frac{1}{s_k}},\end{align}
where $s_{12}=13/2,s_{13}=7,s_{14}=15/2$ and $s_{4}$ is determined by
$$\sum_{k\in K_2}\frac{1}{s_k}=1.$$
Note that $4s_4=7.0179948$, and in particular $4s_4>7$.

One has permissible exponents (see Tables in Sections 14-16 in \cite{VW2})
$$\lambda_{12,13}=16.6110110,\ \lambda_{13,14}=17.8953488,\ \lambda_{14,15}=19.1785686,$$
whence, by \eqref{definealphaks},
$$\alpha_{12,13}=0.7824157,\ \alpha_{13,14}=0.7772808,\ \alpha_{14,15}=0.7729593.$$

Therefore, by \eqref{boundfks-alpha}, \eqref{boundbreakcon} and \eqref{boundcalK2}, we have
\begin{align*}\mathcal{K}_2\ll
N^{\kappa_2-\alpha(K_2)},\end{align*}
where
\begin{align*}\alpha(K_2)=\sum_{k\in K_2\setminus\{4\}}\frac{\alpha_{k,2s_k}}{s_k}+\frac{\alpha_{4,7/2}}{s_4}.\end{align*}
Note that
$$\alpha(K_2)=0.7834034.$$
This completes the proof.
\end{proof}

\vskip3mm

\section{The fourth moment of a cubic Weyl sum}

In this section, we apply an estimate on the fourth moment of a cubic exponential sum to deal with
$$\int_{\mathfrak{M}(Q_2)\setminus \mathfrak{M}(n^{2/9})}|F_2^\ast(\alpha)F_2(\alpha)F(\alpha)G(\alpha)|d\alpha.$$
We need the following interesting result proved by Br\"udern \cite{Br}.
\begin{lemma}\label{lemmaB}One has
\begin{align*}\int_{\mathfrak{M}(Q)}|F_3(\alpha)|^4d\alpha \ll n^{\varepsilon}(n^{1/3}+Q^{7/2}n^{-1}+Q^2n^{-\frac{1}{3}}).\end{align*}\end{lemma}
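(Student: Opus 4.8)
The plan is to establish Lemma \ref{lemmaB} via the standard Kloosterman refinement of the circle method applied to the equation counted by the fourth moment. Writing $\mathcal{R}(q,a;\beta)$ for the contribution of the arc $\mathfrak{M}(q,a;Q)$, the integral $\int_{\mathfrak{M}(Q)}|F_3(\alpha)|^4 d\alpha$ counts (with the smooth weight $w$) the number of solutions of $x_1^3+x_2^3=x_3^3+x_4^3$ with each $x_i\asymp n^{1/3}$, but restricted to the major arcs; so the first step is to observe that this quantity is bounded by the full count $\int_0^1|F_3(\alpha)|^4d\alpha$, which is $O(n^{1/3+\varepsilon})$ by the classical result on sums of two cubes. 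This already gives the first term $n^{1/3}$ in the bound, and it is the dominant term when $Q$ is small; the real content of the lemma is the gain when $Q$ is large, where one needs a bound that grows only like a controlled power of $Q$.

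Next I would split the major arcs dyadically, $\mathfrak{M}(Q)=\mathfrak{M}(Q_0)\cup\bigcup \bigl(\mathfrak{M}(2^jQ_0)\setminus\mathfrak{M}(2^jQ_0/2)\bigr)$, and on each annulus $q\asymp P$ use the Farey dissection together with the classical approximation $F_3(\alpha)=q^{-1}S_3(q,a)v_3(\beta)+O(\text{error})$ from Theorem 4.1 of \cite{V}. On $\mathfrak{M}(q,a;Q)$ with $q\asymp P$ one has $F_3(\alpha)\ll q^{1-1/3+\varepsilon}\min\bigl(n^{1/3},|\beta|^{-1/3}\bigr)+$(error), using Lemma \ref{lemmagauss1} for the Gauss sum $S_3(q,a)$ and Lemma \ref{lemmaboundvk} for $v_3(\beta)$. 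Raising to the fourth power, integrating over $|\beta|\le Q/(qn)$, and summing over $1\le a\le q$, $q\asymp P$ produces contributions of the shape $P\cdot P^{4(1-1/3)+\varepsilon}\cdot(Q/(Pn))\cdot n^{2/3+\varepsilon}$ of ordinary type together with the extra savings coming from cancellation in the complete exponential sum over $a$ modulo $q$ — this is where the Kloosterman refinement enters, replacing a naive sum over $a$ by a Kloosterman-type sum whose Weil bound contributes the non-trivial exponents. Collecting the resulting powers of $P$ and summing the geometric series over the dyadic ranges $P\le Q$ yields the two $Q$-dependent terms $Q^{7/2}n^{-1}$ and $Q^2 n^{-1/3}$.

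The hard part will be the precise bookkeeping in the Kloosterman step: one must open the fourth power as a sum over $x_1,x_2,x_3,x_4$, detect the congruence $x_1^3+x_2^3\equiv x_3^3+x_4^3 \pmod q$ via additive characters, and carry out the $a$-sum and $q$-sum so that the Weil bound for the resulting Kloosterman (or Salié) sums is available and the error terms from the Farey dissection are genuinely negligible; keeping track of the smooth weight $w(x/X_3)$ throughout and confirming that it only improves matters (it makes the Fourier side rapidly decaying, via Lemma \ref{lemmaboundvk}) is routine but must be done carefully. Since this is exactly the estimate proved by Br\"udern in \cite{Br}, I would in practice cite that paper for the technical heart and merely indicate how the smooth weight and the specific normalisation $X_3\asymp n^{1/3}$ translate his bound into the stated form; the one genuinely new point to verify is that his argument, which is typically stated for the sharp minor/major arc threshold, extends without loss to all $Q\le \tfrac14 n^{1/2}$ with the uniform shape $n^\varepsilon(n^{1/3}+Q^{7/2}n^{-1}+Q^2n^{-1/3})$, which follows by the dyadic summation described above because each of the three terms is individually monotone in $Q$.
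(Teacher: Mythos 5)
Your proposal is correct and lands exactly where the paper does: the paper's entire proof of this lemma is the citation of Theorem 2 of Br\"udern \cite{Br}, together with the remark that the smooth weight $w$ in \eqref{definew} differs only harmlessly from the cutoff used there. Your surrounding sketch of the Kloosterman refinement merely describes what lies inside that citation, and the uniformity in $Q$ that you flag as the "one genuinely new point" is already built into Br\"udern's statement, so no extra dyadic argument is needed.
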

\begin{proof}This is essentially Theorem 2 of Br\"udern \cite{Br}. Note that the definition of $w$ in \eqref{definew} is slightly different from the function $\Gamma$ used in \cite{Br}, while the proof is the same.\end{proof}

Now we prove the following.
\begin{lemma}\label{lemmaintMQ2to29}One has
$$\int_{\mathfrak{M}(Q_2)\cap \mathfrak{m}(n^{2/9})}|F_2^\ast(\alpha)F_3(\alpha)F(\alpha)G(\alpha)|d\alpha \ll \mathcal{F}(0)n^{-1-\frac{2}{9}\delta_1}.$$
\end{lemma}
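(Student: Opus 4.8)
The plan is to run a Schwarz-type splitting that isolates the cubic Weyl sum $F_3$ together with one smooth factor, and to use Lemma \ref{lemmaB} on the quartic-size region $\mathfrak{M}(Q_2)\setminus\mathfrak{M}(n^{2/9})$. First I would replace $F_2(\alpha)$ by $F_2^\ast(\alpha)+\Delta_2(\alpha)$ inside the integrand, so that the product becomes $|F_2^\ast(\alpha)|^2F_3(\alpha)F(\alpha)G(\alpha)$ plus a $\Delta_2$-term; the $\Delta_2$-term is absorbed exactly as in Lemma \ref{lemmaintMQ1} (via the crude bound $\Delta_2\ll Q_1^{1/2+\varepsilon}$ and \eqref{boundF3FG}), so the real work is the main term. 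On the major arcs in question one has the pointwise bound $F_2^\ast(\alpha)\ll F_2(0)Q^{-1/2+\varepsilon}$ on each dyadic piece $\mathfrak{M}(Q)\setminus\mathfrak{M}(Q/2)$ with $n^{2/9}\le Q\le Q_2$ by Lemma \ref{lemmagauss1} and Lemma \ref{lemmaboundvk}, so I can pull out $|F_2^\ast|^2\ll F_2(0)^2Q^{-1+\varepsilon}$ (one copy of $F_2^\ast$ being handled pointwise, the other squared). What remains is to bound $\int_{\mathfrak{M}(Q)}|F_3(\alpha)F(\alpha)G(\alpha)|\,d\alpha$, and here I would apply Schwarz's inequality in the form
\begin{align*}
\int_{\mathfrak{M}(Q)}|F_3(\alpha)F(\alpha)G(\alpha)|\,d\alpha
\ll \Big(\int_{\mathfrak{M}(Q)}|F_3(\alpha)|^4\,d\alpha\Big)^{1/2}
\Big(\int_0^1|F(\alpha)G(\alpha)|^{?}\,d\alpha\Big)^{\text{suitable}},
\end{align*}
though in practice a cleaner route is to split the six smooth factors in $F(\alpha)G(\alpha)$ so that one Schwarz application pairs $F_3^2$ against the relevant mean values of the $f_k$ and $g_k$, and then invoke Lemma \ref{lemmaBW}/Lemma \ref{lemmabreakcon} and the permissible exponent estimates together with the large-sieve bound Lemma \ref{lemmaJks} on $\mathfrak{M}^\ast(Q)$ (after passing from $\mathfrak{M}(Q)$ to $\mathfrak{M}^\ast(Q)$ via Lemma \ref{lemmaintMastQ1}).

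More concretely, I expect the decomposition to be: use Lemma \ref{lemmaB} for $\int_{\mathfrak{M}(Q)}|F_3|^4$, which on the range $Q\le Q_2=n^{4/9+2\rho}$ gives $\ll n^{\varepsilon}(n^{1/3}+Q^{7/2}n^{-1}+Q^2n^{-1/3})$; note $Q_2^{7/2}n^{-1}=n^{7/9+7\rho-1}=n^{-2/9+7\rho}$ and $Q_2^2 n^{-1/3}=n^{8/9+4\rho-1/3}=n^{5/9+4\rho}$ — so the dominant term across the relevant range is $Q^2n^{-1/3}$ (with $n^{1/3}$ comparable only at the small end). Then I would use Schwarz to write the remaining integral of $|F_3 F G|$ as $(\int|F_3|^4)^{1/2}$ times the square root of a mean value $\int|FG|^2$ — but $\int_0^1|F(\alpha)G(\alpha)|^2 d\alpha$ is not small enough on its own, so instead I would keep one or two of the smooth factors (namely $f_4$, which appears in $F$ with the seventh-moment saving, and possibly a $g_k$) together with $F_3$ in a mean value of the shape $\int|F_3^2 f_4^2 \cdots|$ — essentially re-using the structure already present in $\mathcal{I}_2$, $\mathcal{K}_2$ and the quantities $S_{k,s}$ from Sections 4 and 7 — and Schwarz the rest against $\int_0^1|\prod_{\text{rest}}|^2$, which by Lemmas \ref{lemmabreakcon} and \ref{lemmaboundK1} (with the permissible exponents) is $\ll n^{\kappa-\delta}$ for a $\delta$ comfortably exceeding the loss. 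Summing the dyadic pieces over $n^{2/9}\le Q\le Q_2$ (a geometric sum dominated by the top endpoint $Q=Q_2$, up to $n^\varepsilon$) and collecting the powers of $R=n^\eta$ (which only contribute $n^{O(\eta)}$, negligible since $\eta$ is tiny) should leave a bound of the form $\mathcal{F}(0)n^{-1-c}$ with $c>\tfrac{2}{9}\delta_1$, using that $\delta_1=0.0008985$ is small and the genuine exponent saving from breaking convexity is of comparable or larger order; here the precise numerology — verifying that the combination of $\sigma_{13,4},\sigma_{14,4},\sigma_{14,5}$, $\alpha(K_2)$, $\alpha(K_1)$, $\rho$ and $\delta_1$ yields a strictly negative exponent beyond $-1-\tfrac{2}{9}\delta_1$ — is the crux of the numerical check.

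The main obstacle I anticipate is not any single estimate but getting the Schwarz split right: one must decide exactly which smooth factors to attach to $F_3^2$ so that the resulting mean value is one of the already-controlled quantities (an $S_{k,s}$-type integral, or $\mathcal{I}_2$, or a quartic seventh-moment combination), while the complementary factor has a mean square estimate small enough that, after multiplying by $(\int_{\mathfrak{M}(Q)}|F_3|^4)^{1/2}\ll n^{\varepsilon}(n^{1/6}+Qn^{-1/6})$ and by the $F_2^\ast$ factor $F_2(0)Q^{-1/2+\varepsilon}$, and after summing over dyadic $Q\le Q_2$, the total is $o(\mathcal{F}(0)n^{-1-\frac29\delta_1})$. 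Because $Q_2$ is already below $n^{1/2}$, the $F_3$-fourth-moment term behaves essentially like its minor-arc value $n^{1/3}$ on most of the range, which is exactly why Br\"udern's estimate (hence Weil's bounds via the Kloosterman refinement) is indispensable here; with the conventional Theorem 4.1 of \cite{V} one would only get $Q^{1/2+\varepsilon}$ for $F_3$ pointwise and lose. So the delicate part is the bookkeeping of exponents to confirm the final saving; everything else is a routine assembly of the lemmas already proved.
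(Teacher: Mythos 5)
Your proposal assembles the right ingredients (dyadic decomposition over $n^{2/9}\le Q\le Q_2$, passage to $\mathfrak{M}^\ast(Q)$ via Lemma \ref{lemmaintMastQ1}, Br\"udern's fourth-moment bound of Lemma \ref{lemmaB}, the breaking-convexity bound of Lemma \ref{lemmabreakcon} for $F$, and the large-sieve estimate behind $\Upsilon$), but it has a genuine gap: you never specify the decomposition that makes the exponents close, and you explicitly defer ``the crux of the numerical check.'' That decomposition is the whole point of the lemma. The paper's proof applies H\"older in the precise form
$\Theta(Q)\le \Theta_1(Q)^{1/4}\Theta_2(Q)^{1/4}\Upsilon(Q/2)^{1/2}$,
where $\Theta_1(Q)=\int_{\mathfrak{M}^\ast(Q)\setminus\mathfrak{M}(Q/2)}|F_2^\ast(\alpha)^2F_3(\alpha)^4|\,d\alpha$, $\Theta_2(Q)=\int_{\mathfrak{M}^\ast(Q)}|F(\alpha)|^4d\alpha$ and $\Upsilon$ is \eqref{defineUpsi}: the \emph{single} factor $F_2^\ast$ in the integrand is split as $|F_2^\ast|^{1/2}\cdot|F_2^\ast|^{1/2}$, one half going with $F_3^4$ (so that Lemma \ref{lemmaB} can be used after a pointwise bound $F_2^\ast\ll F_2(0)Q^{-1/2+\varepsilon}$), the other half going with $G^2$ into $\Upsilon(Q/2)$ (Lemma \ref{lemmaUpsilon}); $\Theta_2$ is then handled by the large sieve applied to $F^2$ directly, followed by Lemma \ref{lemmabreakcon}. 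Your alternative of pulling the whole of $F_2^\ast$ out pointwise and Schwarzing $\int_{\mathfrak{M}(Q)}|F_3FG|$ against $\int_0^1|F_3F|^2$ and $\int_{\mathfrak{M}^\ast(Q)}|G|^2$ reproduces the proof of Lemma \ref{lemmaintMQ1toQ2} and yields only $\mathcal{F}(0)n^{-17/18+\rho/2}Q^{-1/4-\delta_1}$ at $Q=n^{2/9}$, which is far from $n^{-1}$; this is exactly why the range below $Q_2$ needs the fourth moments of both $F_3$ and $F$ rather than their second moments. The final saving rests on verifying $\delta_2=\tfrac29-5\rho+\lambda\alpha_2-\lambda>0$ with $\alpha_2=0.7834034$, which your sketch does not carry out.

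Two smaller but real errors: the integrand of the lemma already contains $F_2^\ast$, not $F_2$, so there is no $\Delta_2$-term to remove here (that step belongs to the deduction of Lemma \ref{lemmaintm29} from this lemma), and your claim that the product ``becomes $|F_2^\ast(\alpha)|^2F_3(\alpha)F(\alpha)G(\alpha)$'' is false --- there is only one factor of $F_2^\ast$. Also $Q_2^{7/2}n^{-1}=n^{14/9+7\rho-1}=n^{5/9+7\rho}$, not $n^{-2/9+7\rho}$, so your identification of the dominant term in Lemma \ref{lemmaB} on this range is incorrect; after multiplying by $Q^{-1}$ from $|F_2^\ast|^2$ the dominant contribution is $Q_2^{5/2}n^{-1}$.
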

\begin{proof}
Suppose that $n^{2/9}\le Q\le Q_2$. By Lemma \ref{lemmaintMastQ1}, we have
\begin{align}\label{boundfromMtoMast}\int_{\mathfrak{M}(Q)\setminus \mathfrak{M}(Q/2)}|F_2^\ast(\alpha)F_3(\alpha)F(\alpha)G(\alpha)|d
\alpha=\Theta(Q)+O(n^{-A}),\end{align}
where
\begin{align*}\Theta(Q)=\int_{\mathfrak{M}^\ast(Q)\setminus \mathfrak{M}(Q/2)}|F_2^\ast(\alpha)F_3(\alpha)F(\alpha)G(\alpha)|d
\alpha.\end{align*}
By H\"older's inequality,
\begin{align}\label{thetato12ups}\Theta(Q)\le \Theta_1(Q)^{1/4} \Theta_2(Q)^{1/4} \Upsilon(Q/2)^{1/2},\end{align}
where $\Upsilon(Q)$ is defined in \eqref{defineUpsi},
\begin{align*}\Theta_1(Q)=
\int_{\mathfrak{M}^\ast(Q)\setminus \mathfrak{M}(Q/2)}|F_2^\ast(\alpha)^2F_3(\alpha)^4|d\alpha\end{align*}
and
\begin{align*}\Theta_2(Q)=
\int_{\mathfrak{M}^\ast(Q)}|F(\alpha)|^4d\alpha.\end{align*}

One has
$$F_2^\ast(\alpha)\ll F_2(0)Q^{-\frac{1}{2}+\varepsilon}$$
for $\alpha\in \mathfrak{M}^\ast(Q)\setminus \mathfrak{M}(Q/2)$. Then we deduce that
\begin{align*}\Theta_1(Q)\ll F_2(0)^2Q^{-1+\varepsilon}
\int_{\mathfrak{M}(Q)}|F_3(\alpha)^4|d\alpha.\end{align*}
By Lemma \ref{lemmaB},
\begin{align*}
\int_{\mathfrak{M}(Q)}|F_3(\alpha)^4|d\alpha \ll n^{\varepsilon}(n^{1/3}+Q^{7/2}n^{-1}+Q^2n^{-\frac{1}{3}}).\end{align*}
We conclude that
\begin{align*}\Theta_1(Q)\ll n^{\varepsilon}F_2(0)^2(Q^{-1}n^{1/3}+Q^{5/2}n^{-1}+Qn^{-\frac{1}{3}}).\end{align*}
Since $n^{2/9}\le Q\le Q_2$, we deduce that
\begin{align*}\Theta_1(Q)\ll n^{\varepsilon}F_2(0)^2(n^{1/9}+Q_2^{5/2}n^{-1}+Q_2n^{-\frac{1}{3}})\ll n^{\varepsilon}F_2(0)^2Q_2^{5/2}n^{-1},\end{align*}
and in particular,
\begin{align}\label{boundTheta1}\Theta_1(Q)\ll F_2(0)^2F_3(0)^4n^{-1-\frac{2}{9}+5\rho+\varepsilon}.\end{align}

Note that $Y_k^k= n^{\lambda}$. We can represent $F^2=f_{4}^2f_{12}^2f_{13}^2f_{14}^2$ in the form
\begin{align*}F(\frac{a}{q}+\beta)^2=\sum_{1\le m\le 8n^{\lambda}}a_{\beta}(m)e(m\frac{a}{q}),\end{align*}
where $a_{\beta}(m)=a(m)e(m\beta)$ and
\begin{align*}a(m)=\int_{0}^1F(\alpha)^2e(-m\alpha)d\alpha.\end{align*}
We deduce that
\begin{align*}\Theta_2(Q)\ll  n^{\nu-1}\sup_{\beta} \sum_{q\le Q}\sum_{\substack{a=1 \\ (a,q)=1}}^q \Big|\sum_{1\le m\le 8n^{\lambda}}a_{\beta}(m)e(m\frac{a}{q})\Big|^2,\end{align*}
and on applying Lemma \ref{largesieve1} with $\Gamma=\{a/q:\ 1\le a\le q\le Q, (a,q)=1\}$, we further deduce that
\begin{align*}\Theta_2(Q)\ll n^{\nu-1}(n^{\lambda}+Q^2) \sum_{1\le m\le 8n^{\lambda}}|a(m)|^2
\ll  n^{\nu-1}(Q^2+n^{\lambda})
\int_{0}^1|F(\alpha)|^4d\alpha.\end{align*}
Since $Q^2\le Q_2^2\le n^{\lambda}$, we conclude that
\begin{align*}\Theta_2(Q)\ll n^{\nu-1+\lambda}
\int_{0}^1|F(\alpha)|^4d\alpha.\end{align*}
On applying Lemma \ref{lemmabreakcon} with $N=n^{\lambda}$, we deduce that
\begin{align*}
\int_{0}^1|F(\alpha)|^4d\alpha \ll F(0)^4n^{-\lambda \alpha_2},\end{align*}
where
$$\alpha_2=0.7834034.$$
Therefore,
\begin{align}\label{boundTheta2}\Theta_2(Q)\ll F(0)^4n^{\nu-1+\lambda-\lambda \alpha_2}.\end{align}

On applying Lemma \ref{lemmaUpsilon}, we have
\begin{align}\label{boundUinTheta}\Upsilon(Q/2)\ll G(0)^2n^{-1+\nu+\varepsilon}R^{22}Q^{-2\delta_1}.\end{align}
It follows from \eqref{thetato12ups}, \eqref{boundTheta1}, \eqref{boundTheta2} and \eqref{boundUinTheta} that
\begin{align}\label{boundThetawithdelta2}\Theta(Q)\ll \mathcal{F}(0)n^{\nu+\varepsilon-1-\frac{1}{4}\delta_2}R^{11}Q^{-\delta_1},\end{align}
where
\begin{align*}\delta_2=\frac{2}{9}-5\rho+\lambda \alpha_2-\lambda.\end{align*}
Note that $\delta_2=0.001651382$, and in particular,
\begin{align}\label{boundTheta}\Theta(Q)\ll \mathcal{F}(0)n^{-1}Q^{-\delta_1}.\end{align}

We deduce from \eqref{boundfromMtoMast} and \eqref{boundTheta} that
\begin{align*}\int_{\mathfrak{M}(Q)\setminus \mathfrak{M}(Q/2)}|F_2^\ast(\alpha)F_3(\alpha)F(\alpha)G(\alpha)|d
\alpha \ll \mathcal{F}(0)n^{-1}Q^{-\delta_1}.\end{align*}
The proof is now complete by the standard dyadic argument.
\end{proof}

We remark that Lemma \ref{lemmaintm29} follows immediately from Lemma \ref{usingDelta2} and Lemma \ref{lemmaintMQ2to29} since $F_2(\alpha)=F_2^\ast(\alpha)+\Delta_2(\alpha)$.
Let
$$\mathcal{N}_1(n)=\int_{\mathfrak{M}(n^{2/9})}F_2^\ast(\alpha)F_3(\alpha)F(\alpha)G(\alpha)e(-n\alpha)d\alpha.$$
\begin{lemma}\label{lemmaN1}One has
$$\mathcal{N}(n)-\mathcal{N}_1(n) \ll \mathcal{F}(0)n^{-1-\frac{2}{9}\delta_1}.$$
\end{lemma}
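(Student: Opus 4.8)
\textbf{Proof proposal for Lemma \ref{lemmaN1}.}
The plan is to compare $\mathcal{N}(n)$ and $\mathcal{N}_1(n)$ by decomposing the interval of integration and estimating the difference on each piece using the lemmas already proved. Recall that $\mathcal{N}(n)=\int_0^1\mathcal{F}(\alpha)e(-n\alpha)d\alpha$ with $\mathcal{F}(\alpha)=F_2(\alpha)F_3(\alpha)F(\alpha)G(\alpha)$, while $\mathcal{N}_1(n)$ integrates $F_2^\ast(\alpha)F_3(\alpha)F(\alpha)G(\alpha)e(-n\alpha)$ over the narrow major arcs $\mathfrak{M}(n^{2/9})$. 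The difference splits naturally as
\begin{align*}
\mathcal{N}(n)-\mathcal{N}_1(n)=\int_{\mathfrak{m}(n^{2/9})}\mathcal{F}(\alpha)e(-n\alpha)d\alpha+\int_{\mathfrak{M}(n^{2/9})}\big(F_2(\alpha)-F_2^\ast(\alpha)\big)F_3(\alpha)F(\alpha)G(\alpha)e(-n\alpha)d\alpha,
\end{align*}
where $\mathfrak{m}(n^{2/9})=[n^{-1/2},1+n^{-1/2}]\setminus\mathfrak{M}(n^{2/9})$ and we have used $F_2=F_2^\ast+\Delta_2$ on the major arcs.

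First I would bound the minor-arc contribution. Here $\mathfrak{m}(n^{2/9})$ is contained in the union $\mathfrak{m}(Q_1)\cup(\mathfrak{m}(Q_2)\cap\mathfrak{M}(Q_1))\cup(\mathfrak{m}(n^{2/9})\cap\mathfrak{M}(Q_2))$, so by Lemmas \ref{lemmaintmQ1}, \ref{lemmaintmQ2} and \ref{lemmaintm29} the integral of $|\mathcal{F}(\alpha)|$ over $\mathfrak{m}(n^{2/9})$ is $\ll\mathcal{F}(0)n^{-1-\frac{2}{9}\delta_1}$ (the last of the three bounds being the weakest and hence dominant, after absorbing the $n^\varepsilon$ in Lemma \ref{lemmaintmQ1} into the slightly larger saving $\frac12\delta_1$). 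Triangle inequality then gives the desired bound for the first term.

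For the second term, note that $\mathfrak{M}(n^{2/9})\subseteq\mathfrak{M}(Q_1)$, so
\begin{align*}
\Big|\int_{\mathfrak{M}(n^{2/9})}\Delta_2(\alpha)F_3(\alpha)F(\alpha)G(\alpha)e(-n\alpha)d\alpha\Big|\le\int_{\mathfrak{M}(Q_1)}|\Delta_2(\alpha)F_3(\alpha)F(\alpha)G(\alpha)|d\alpha,
\end{align*}
and this is $\ll\mathcal{F}(0)n^{-1-\frac12\delta_1}$ by Lemma \ref{lemmaintMQ1}. Since $\frac12\delta_1>\frac29\delta_1$, this term is also absorbed into $\mathcal{F}(0)n^{-1-\frac{2}{9}\delta_1}$, and combining the two estimates completes the proof.

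I do not expect a genuine obstacle here: every ingredient has been assembled in the preceding sections, and the argument is simply a bookkeeping step that collects the minor-arc estimates (Lemmas \ref{lemmaintmQ1}--\ref{lemmaintm29}) together with the major-arc replacement $F_2\mapsto F_2^\ast$ (Lemma \ref{lemmaintMQ1}). The only point requiring a little care is checking that all the exponent savings $\frac12\delta_1$, $\frac49\delta_1$, $\frac29\delta_1$, $\frac12\delta_1-\varepsilon$ are each at least $\frac29\delta_1$, which is immediate since $\delta_1>0$ is fixed and $\varepsilon$ may be taken as small as we like.
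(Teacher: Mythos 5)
Your proposal is correct and is exactly the paper's argument: the authors prove this lemma in one line by citing the $\Delta_2$ estimate of Lemma \ref{lemmaintMQ1} together with the three minor-arc bounds of Lemmas \ref{lemmaintmQ1}--\ref{lemmaintm29}, which is precisely the decomposition and bookkeeping you carry out. The exponent comparisons you check at the end are the only point of care, and you handle them correctly.
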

\begin{proof}This follows from Lemma \ref{usingDelta2} and Lemmas \ref{lemmaintmQ1}-\ref{lemmaintm29}.\end{proof}
It remains to establish the asymptotic formula of $\mathcal{N}_1(n)$. In fact, it is not difficult to obtain the asymptotic formula of
$\int_{\mathfrak{M}(n^{2/9})}F_2^\ast(\alpha)F_3(\alpha)G(\alpha)e(-n\alpha)d\alpha$.
The arguments in the next two sections are routine.

\vskip3mm

\section{The pruning argument}

The following lemma is due to Br\"udern (see also Lemma 4.5 in Ford \cite{Ford}).
\begin{lemma}[Br\"udern] \label{lemmabrudern}Let $Q\le N$. For $1\le a\le q\le Q$, $(a,q)=1$, let $\mathcal{M}(q,a)$ denote an interval contained in $[a/q-1/2,a/q+1/2]$ and assume that $\mathcal{M}(q,a)$ are pairwise disjoint. Write $\mathcal{M}$ for the union of all $\mathcal{M}(q,a)$. Let $\mathcal{H}:\mathcal{M}\rightarrow\C$ be a function satisfying
$$\mathcal{H}(\alpha)\ll \frac{N}{q(1+N|\alpha-a/q|)}$$
for $\alpha\in \mathcal{M}(q,a)$. Let $\widetilde{\Psi}:\R\rightarrow [0,+\infty)$ be a function with a Fourier expansion
$$\widetilde{\Psi}(\alpha)=\sum_{|h|\le H}\psi_he(\alpha h)$$
and $\log H\ll \log N$. Then
$$\int_{\mathcal{M}}\mathcal{H}(\alpha)\widetilde{\Psi}(\alpha)\ll Q\psi_0 \log N+\sum_{0<|h|\le H}|\psi_h|d(|h|),$$
where $d(\cdot)$ denotes the divisor function. \end{lemma}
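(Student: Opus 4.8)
The plan is to split the domain $\mathcal{M}$ according to the size of $q$ and of $N|\alpha-a/q|$, and then exploit the Fourier expansion of $\widetilde\Psi$ to convert the integral into an exponential sum that can be estimated term by term. The key point is that $\mathcal{H}(\alpha)$ decays like $q^{-1}(1+N|\alpha-a/q|)^{-1}$, so that on each arc $\mathcal{M}(q,a)$ the integral $\int_{\mathcal{M}(q,a)}|\mathcal{H}(\alpha)|\,d\alpha$ is $O(q^{-1}\log N)$ after integrating the decaying factor over an interval of length at most $1$. Summing this crude bound over all $q\le Q$ and all $a$ with $(a,q)=1$ already contributes $O(Q\log N)$, but we must be more careful because $\widetilde\Psi$ need not be bounded; it is only nonnegative with a known constant term $\psi_0$.

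\emph{First} I would write $\widetilde\Psi(\alpha)=\sum_{|h|\le H}\psi_h e(\alpha h)$ and interchange the (finite) sum with the integral, so that
$$\int_{\mathcal{M}}\mathcal{H}(\alpha)\widetilde\Psi(\alpha)\,d\alpha=\sum_{|h|\le H}\psi_h\int_{\mathcal{M}}\mathcal{H}(\alpha)e(\alpha h)\,d\alpha.$$
For the term $h=0$ the inner integral is $\int_{\mathcal{M}}\mathcal{H}(\alpha)\,d\alpha\ll Q\log N$ by the crude arc-by-arc bound above, giving the contribution $Q\psi_0\log N$. For $h\ne0$ I would, on each arc $\mathcal{M}(q,a)$, substitute $\alpha=a/q+\beta$, so that $e(\alpha h)=e(ah/q)e(\beta h)$, and bound $|\int_{\mathcal{M}(q,a)}\mathcal{H}(a/q+\beta)e(\beta h)\,d\beta|$ trivially by $\int|\mathcal{H}(a/q+\beta)|\,d\beta\ll q^{-1}\log N$; summing over $a\bmod q$ with $(a,q)=1$ the phases $e(ah/q)$ produce a Ramanujan-type sum $c_q(h)=\sum_{(a,q)=1}e(ah/q)$, which is bounded by $(q,h)$. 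Then
$$\Big|\sum_{a}\int_{\mathcal{M}(q,a)}\mathcal{H}(\alpha)e(\alpha h)\,d\alpha\Big|\ll \frac{(q,h)}{q}\log N,$$
and summing over $q\le Q$ gives $\sum_{q\le Q}(q,h)q^{-1}\ll d(|h|)\log N$ since $\sum_{q\le Q}(q,h)/q\ll \sum_{e\mid h}1\cdot(\text{harmonic sum})\ll d(|h|)\log N$. Multiplying by $|\psi_h|$ and summing over $0<|h|\le H$ yields $\sum_{0<|h|\le H}|\psi_h|d(|h|)\log N$; absorbing the extra $\log N$ into the stated bound (or noting that the lemma as quoted already tolerates a logarithmic factor, which here may be slipped into $d(|h|)$ via $\log N\ll N^\varepsilon$, or more carefully handled) finishes the estimate.

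\textbf{The main obstacle} is the bookkeeping in the $h\ne0$ sum: one must verify that the oscillation $e(\beta h)$ is genuinely not needed — i.e.\ that the trivial bound on the $\beta$-integral, combined with the cancellation in $a$ coming from the Ramanujan sum $c_q(h)$ and the average of $(q,h)/q$ over $q$, suffices to produce the divisor function $d(|h|)$ rather than something larger. One should also be slightly careful that the arcs $\mathcal{M}(q,a)$ lie in $[a/q-1/2,a/q+1/2]$ and are pairwise disjoint, so that extending each $\beta$-integral to all of $[-1/2,1/2]$ only inflates the bound, and that $\log H\ll\log N$ ensures the number of terms is polynomially controlled so the interchange of sum and integral and the accumulation of $\log N$ factors is harmless. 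Since the paper states the lemma with an explicit $\log N$ and a divisor weight, the argument above—crude arc integral, Ramanujan sum in $a$, divisor bound in $q$—is exactly the level of precision required, and none of the steps presents a real difficulty beyond routine care.
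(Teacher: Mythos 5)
The paper itself offers no proof of this lemma --- it is quoted from Br\"udern (see also Lemma 4.5 of Ford \cite{Ford}) --- so your proposal must stand on its own, and it does not: there is a structural gap in your treatment of the terms with $h\neq 0$. You first bound $\bigl|\int_{\mathcal{M}(q,a)}\mathcal{H}(a/q+\beta)e(\beta h)\,d\beta\bigr|$ trivially by $\int|\mathcal{H}|\ll q^{-1}\log N$, and \emph{then} assert that summing over $a$ the phases $e(ah/q)$ produce a Ramanujan sum $c_q(h)$. These two steps are incompatible: once you take absolute values arc by arc, the factor $e(ah/q)$ has been discarded and no cancellation over $a$ remains. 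Nor can the Ramanujan sum be factored out before taking absolute values, because the inner integrals $\int_{\mathcal{M}(q,a)}\mathcal{H}(a/q+\beta)e(\beta h)\,d\beta$ genuinely depend on $a$: the arcs $\mathcal{M}(q,a)$ are arbitrary subintervals of varying lengths, and $\mathcal{H}$ is an arbitrary function subject only to an upper bound. Worse, the termwise estimate your decomposition requires, namely $\bigl|\int_{\mathcal{M}}\mathcal{H}(\alpha)e(\alpha h)\,d\alpha\bigr|\ll d(|h|)$ (even up to powers of $\log N$), is simply false: the admissible choice $\mathcal{H}(\alpha)=\frac{N}{q(1+N|\alpha-a/q|)}e(-\alpha h)$ on $\mathcal{M}(q,a)$ makes this integral of size $Q\log N$ for that particular $h$. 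So the strategy of expanding $\widetilde{\Psi}$ first and estimating each Fourier mode of $\mathcal{H}$ separately cannot succeed.

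The telltale sign is that your argument never uses the hypothesis $\widetilde{\Psi}\ge 0$, which is essential. The standard proof uses positivity at the outset to replace $\mathcal{H}$ by the explicit majorant $N/(q(1+N|\alpha-a/q|))$, which after the substitution $\alpha=a/q+\beta$ no longer depends on $a$; positivity is used again to enlarge the arcs and to complete the sum over $a$ to all residue classes modulo $q$, so that $\sum_{a=1}^{q}e(ah/q)$ vanishes unless $q\mid h$. One then decomposes the majorant dyadically in $|\beta|$ and uses $\bigl|\int_{-\delta}^{\delta}e(h\beta)\,d\beta\bigr|\le\min(2\delta,|h|^{-1})$: the $h=0$ terms give $Q\psi_0\log N$, and the divisor function arises as $\#\{q\le Q:\ q\mid h\}\le d(|h|)$, not from the bound $|c_q(h)|\le(q,h)$ averaged over $q$ (which, as you yourself note, would leave an extra $\log$ that cannot simply be ``absorbed'' into the stated conclusion). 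You would need to rebuild the $h\neq0$ analysis along these lines for the proof to go through.
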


We define the function $\mathcal{G}:\mathfrak{M}(n^{2/9})\rightarrow\R^{+}$ by
$$\mathcal{G}(\alpha)= \frac{n}{q(1+n|\alpha-a/q|)}$$
for $\alpha\in \mathfrak{M}(q,a;n^{2/9})$ with $1\le a\le q\le n^{2/9}$ and $(a,q)=1$.

\begin{lemma}One has
\begin{align}\label{ineqB1}\int_{\mathfrak{M}(n^{2/9})}\mathcal{G}(\alpha)|F(\alpha)|^2d\alpha \ll
n^{\varepsilon}F(0)^2\end{align}
and
\begin{align}\label{ineqB2}\int_{\mathfrak{M}(n^{2/9})}|F(\alpha)|^2d\alpha \ll
F(0)^2n^{-7/9+\varepsilon}.\end{align}
\end{lemma}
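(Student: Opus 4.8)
The plan is to estimate the two integrals $\int_{\mathfrak{M}(n^{2/9})}\mathcal{G}(\alpha)|F(\alpha)|^2d\alpha$ and $\int_{\mathfrak{M}(n^{2/9})}|F(\alpha)|^2d\alpha$ by combining the pruning device of Lemma \ref{lemmabrudern} with the fourth-moment input of Lemma \ref{lemmabreakcon}. First I would observe that $|F(\alpha)|^2$ is a nonnegative trigonometric polynomial: since $F(\alpha)=\prod_{k\in K_2}f_k(\alpha)$ and each $f_k$ has degree at most $Y_k^k\ll n^{\lambda}$ in the exponential variable, $|F(\alpha)|^2$ has a Fourier expansion $\sum_{|h|\le H}\psi_h e(\alpha h)$ with $H\ll n^{\lambda}$, $\psi_h\ge$ (not necessarily, but) with $\psi_0=\int_0^1|F(\alpha)|^2d\alpha$ and $\sum_h|\psi_h|\ll\sum_h\psi_h'$ controlled by the divisor-type bounds; more usefully, $\widetilde{\Psi}(\alpha)=|F(\alpha)|^2\ge 0$, so Lemma \ref{lemmabrudern} applies directly with $N=n$, $Q=n^{2/9}$, $\mathcal{M}=\mathfrak{M}(n^{2/9})$, $\mathcal{M}(q,a)=\mathfrak{M}(q,a;n^{2/9})$, and $\mathcal{H}=\mathcal{G}$, which satisfies exactly the required decay $\mathcal{G}(\alpha)\ll n/(q(1+n|\alpha-a/q|))$ by its very definition.

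Applying Lemma \ref{lemmabrudern} gives
$$\int_{\mathfrak{M}(n^{2/9})}\mathcal{G}(\alpha)|F(\alpha)|^2d\alpha\ll n^{2/9}\psi_0\log n+\sum_{0<|h|\le H}|\psi_h|d(|h|),$$
so the task reduces to two estimates: a bound for $\psi_0=\int_0^1|F(\alpha)|^2d\alpha$, and a bound for $\sum_{0<|h|\le H}|\psi_h|d(|h|)$. For the first, I would not use Lemma \ref{lemmabreakcon} directly (that is a fourth-moment statement) but rather the trivial/elementary bound: $\int_0^1|F(\alpha)|^2d\alpha$ counts solutions of $x_1^4+x_2^{12}+x_3^{13}+x_4^{14}=y_1^4+y_2^{12}+y_3^{13}+y_4^{14}$ with all variables smooth, and a standard divisor argument (fix the $14$th-power difference, then the $13$th, etc., or just fix all but two variables and use $d(\cdot)\ll n^\varepsilon$) yields $\psi_0\ll n^{\varepsilon}Y_4Y_{12}Y_{13}Y_{14}\ll n^{\varepsilon}F(0)^2n^{-7/9}$; this is where the exponent $-7/9$ in \eqref{ineqB2} will come from, since $\frac14+\frac1{12}+\frac1{13}+\frac1{14}<1$ and $\lambda<1$ make $Y_4Y_{12}Y_{13}Y_{14}=n^{\lambda(\frac14+\cdots)}$ comfortably smaller than $F(0)^2n^{-7/9}\asymp n^{\lambda(\frac14+\cdots)}$... in fact equality up to $n^\varepsilon$, so \eqref{ineqB2} is essentially the trivial diagonal bound. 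For the second sum, $\sum_{0<|h|\le H}|\psi_h|d(|h|)\le n^{\varepsilon}\sum_{|h|\le H}|\psi_h|$, and $\sum_h|\psi_h|$ is bounded by the number of solutions of the same equation but with a free choice of one side, i.e. $\sum_h|\psi_h|\ll (\text{number of quadruples})\ll n^{\varepsilon}F(0)^2n^{-7/9}$ as well — or, even more crudely, $\sum_h|\psi_h|\le \big(\sum_h 1\big)^{1/2}\big(\sum_h|\psi_h|^2\big)^{1/2}$; I'd pick whichever makes the bookkeeping cleanest, the point being that both terms are $\ll n^{\varepsilon}F(0)^2$ after multiplying the $\psi_0$ term by $n^{2/9}$ and using $n^{2/9}\cdot n^{-7/9}=n^{-5/9}\ll 1$. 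This proves \eqref{ineqB1}.

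For \eqref{ineqB2} one can either read it off directly from the diagonal bound above, or deduce it from \eqref{ineqB1}: on $\mathfrak{M}(n^{2/9})$ one has $\mathcal{G}(\alpha)\ge n/(q(1+n\cdot n^{2/9}/(qn)))=n/(q+q^{-1}\cdot q n^{2/9})\gg n^{7/9}$ uniformly (since $q\le n^{2/9}$ and $|\alpha-a/q|\le n^{2/9}/(qn)$ force $1+n|\alpha-a/q|\le 2$ and $q\le n^{2/9}$), hence $\int_{\mathfrak{M}(n^{2/9})}|F(\alpha)|^2d\alpha\le (\inf\mathcal{G})^{-1}\int_{\mathfrak{M}(n^{2/9})}\mathcal{G}(\alpha)|F(\alpha)|^2d\alpha\ll n^{-7/9}\cdot n^{\varepsilon}F(0)^2$, which is \eqref{ineqB2}. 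I expect the main obstacle to be purely organizational: verifying that the error sum $\sum_{0<|h|\le H}|\psi_h|d(|h|)$ is genuinely of size $n^{\varepsilon}F(0)^2$ and not larger, which requires the diagonal-dominates heuristic for the shifted equation; this is routine for sums of unlike powers with exponents $4,12,13,14$ because for any fixed nonzero shift $h$ the number of representations $h=x_1^4+x_2^{12}+x_3^{13}+x_4^{14}-y_1^4-\cdots$ is, after fixing three of the eight variables freely and solving for a fourth via $d(\cdot)\ll n^\varepsilon$ (possible since $h\ne 0$), bounded by $n^{\varepsilon}(Y_4Y_{12}Y_{13}Y_{14})$ times a convergent factor — but one must be a little careful to sum over $h$ correctly so the total matches $\big(\int_0^1|F|^2\big)\cdot n^{\varepsilon}$ rather than its square root squared, and I would handle this by the clean identity $\sum_{|h|\le H}|\psi_h|\le \int_0^1|F(\alpha)|^2\cdot\big|\sum_{|h|\le H}\mathrm{sgn}(\psi_h)e(-\alpha h)\big|d\alpha$ is awkward, so instead I'd just note $|\psi_h|\le\psi_0$-type bounds fail and fall back on the direct count, which is standard.
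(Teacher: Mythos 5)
Your overall strategy coincides with the paper's: expand $|F(\alpha)|^2$ as a nonnegative trigonometric polynomial $\sum_{|h|\le H}\psi_he(h\alpha)$ with $H\ll n^{\lambda}$, apply Lemma \ref{lemmabrudern} with $\mathcal{H}=\mathcal{G}$, $N=n$, $Q=n^{2/9}$, bound the off-diagonal sum by $n^{\varepsilon}\sum_h\psi_h=n^{\varepsilon}F(0)^2$ (all $\psi_h\ge0$ since they count solutions), and deduce \eqref{ineqB2} from \eqref{ineqB1} using $\mathcal{G}(\alpha)\gg n^{7/9}$ on $\mathfrak{M}(n^{2/9})$. However, two of your intermediate claims are genuinely wrong. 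First, the bound $\psi_0=\int_0^1|F(\alpha)|^2d\alpha\ll n^{\varepsilon}Y_4Y_{12}Y_{13}Y_{14}$ does \emph{not} follow from "a standard divisor argument". Fixing six variables and solving the remaining binary equation by divisor estimates leaves a zero-difference term which recursively produces contributions such as $Y_4(Y_{13}Y_{14})^2$, and $(Y_{13}Y_{14})^2\gg Y_{12}Y_{13}Y_{14}$, so the recursion does not close at the diagonal. An essentially-diagonal estimate for $\psi_0=\mathcal{I}_1$ is precisely the nontrivial content of \eqref{boundI1}, which requires the Vaughan--Wooley permissible exponents $\lambda_{k,3}$ and even then only yields $Y_4^{4u_1+\varepsilon}$, slightly weaker than the diagonal. (Your route is numerically salvageable: the purely elementary recursion gives $\mathcal{I}_1\ll n^{\varepsilon}\bigl((Y_{12}Y_{13}Y_{14})^2+Y_4(Y_{13}Y_{14})^2+Y_4Y_{12}Y_{13}Y_{14}^2\bigr)$, which is about $n^{0.51}$ and still satisfies $n^{2/9}\psi_0\ll F(0)^2\asymp n^{0.88}$ --- but then you must say that, or simply quote \eqref{boundI1} as the paper does.)

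Second, the asserted inequality $Y_4Y_{12}Y_{13}Y_{14}\ll n^{\varepsilon}F(0)^2n^{-7/9}$ is false: the left-hand side is $\asymp F(0)=n^{\lambda(1/4+1/12+1/13+1/14)}\approx n^{0.44}$, whereas $F(0)^2n^{-7/9}\approx n^{0.10}$. Consequently \eqref{ineqB2} is very far from "the trivial diagonal bound" over all of $[0,1]$; indeed $\int_0^1|F(\alpha)|^2d\alpha\ge F(0)\gg F(0)^2n^{-7/9}$, so the restriction to $\mathfrak{M}(n^{2/9})$ is essential and your first suggested route to \eqref{ineqB2} cannot work. The second route you sketch is the correct one and is what the paper does: for $\alpha\in\mathfrak{M}(q,a;n^{2/9})$ one has $q\bigl(1+n|\alpha-a/q|\bigr)\le q+n^{2/9}\le 2n^{2/9}$ (note that $1+n|\alpha-a/q|$ itself need not be $O(1)$, only the product is small), whence $\mathcal{G}(\alpha)\ge \tfrac12 n^{7/9}$ and \eqref{ineqB2} follows from \eqref{ineqB1}.
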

\begin{proof}We represent $|F(\alpha)|^2$ in the form
$$|F(\alpha)|^2=\sum_{|h|\le 4n^{\lambda}}\psi_h e(h\alpha),$$
where
$$\psi_h=\int_0^1|F(\alpha)|^2e(-h\alpha)d\alpha.$$

On applying Lemma \ref{lemmabrudern}, we deduce that
\begin{align*}\int_{\mathfrak{M}(n^{1/3})}\mathcal{G}(\alpha)|F(\alpha)|^2d\alpha
\ll &n^{2/9}\psi_0 \log n+\sum_{0<|h|\le H}|\psi_h|d(|h|)
\\ \ll &n^{2/9+\varepsilon}\psi_0 + n^{\varepsilon}F(0)^2.\end{align*}
On recalling \eqref{defineIj}, we have $\psi_0=\mathcal{I}_1$. By \eqref{boundI1}, one has
\begin{align*}n^{2/9}\psi_0\ll n^{2/9+u_1+\varepsilon} \ll  F(0)^2.\end{align*}
This completes the proof of \eqref{ineqB1}. Then \eqref{ineqB2} follows from \eqref{ineqB1} by noting that
$\mathcal{G}(\alpha)\ge n^{7/9}$ for $\alpha\in \mathfrak{M}(n^{2/9})$.
\end{proof}

We also use the following result due to McDonagh \cite{McD}.
\begin{lemma}\label{lemmaMcD}For any $k\in \Z^{+}$, there exists a constant $C_k>0$ such that
$$\sum_{1\le x<N^{1/k}}d(N-x^k)\ll N^{1/k}(\log N)^{C_k}.$$
\end{lemma}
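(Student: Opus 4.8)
The plan is to use the Dirichlet hyperbola method to reduce the problem to counting roots of the congruence $x^k\equiv N\pmod q$ in short intervals. Writing $d(m)=\sum_{e\mid m}1$ and pairing each divisor $e\le\sqrt m$ with its complement $m/e$, one has $d(m)\le2\sum_{e\mid m,\ e\le\sqrt m}1$, whence
$$\sum_{1\le x<N^{1/k}}d(N-x^k)\le2\sum_{e\le N^{1/2}}\#\bigl\{1\le x<N^{1/k}:\ e\mid x^k-N,\ e^2\le N-x^k\bigr\}.$$
The constraint $e^2\le N-x^k$ is just $x\le(N-e^2)^{1/k}$. Let $\mathcal{R}(q)$ be the number of residues $x\bmod q$ with $x^k\equiv N\pmod q$; it is multiplicative, the number of integers $x$ in an interval of length $L$ satisfying $x^k\equiv N\pmod q$ is at most $\mathcal{R}(q)(L/q+1)$, and it is at most $\min(\mathcal{R}(q),\lfloor L\rfloor)$ once $L<q$.

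First I would dispose of the moduli $e\le N^{1/k}$: here $N^{1/k}/e\ge1$, so the inner count is at most $2\mathcal{R}(e)N^{1/k}/e$ and these moduli contribute $\ll N^{1/k}\sum_{e\le N^{1/k}}\mathcal{R}(e)/e$. One then analyses the Euler product $\sum_q\mathcal{R}(q)q^{-s}=\prod_p\sum_{j\ge0}\mathcal{R}(p^j)p^{-js}$. At a prime $p\nmid N$ one has $\mathcal{R}(p^j)\le\gcd(k,\varphi(p^j))\le k$, so the local factor is $1+O_k(p^{-s})$; at a prime $p\mid N$ a short computation with $p$-adic valuations shows $\mathcal{R}(p^j)=p^{\,j-\lceil j/k\rceil}$ when $j\le v_p(N)$, and $\mathcal{R}(p^j)\ll_k p^{\,v_p(N)(1-1/k)}$ when $j>v_p(N)$ (indeed $\mathcal{R}(p^j)=0$ there unless $k\mid v_p(N)$), so that $\sum_{j\ge0}\mathcal{R}(p^j)p^{-j}\le1+4k/p$. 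Factoring $e=e_1e_2$ with $e_1$ composed of primes dividing $N$ and $(e_2,N)=1$, Mertens' theorem gives $\prod_{p\mid N}(1+4k/p)\ll_k(\log N)^{O_k(1)}$ while the standard estimate $\sum_{e_2\le Y}k^{\,\#\{p\mid e_2\}}/e_2\ll_k(\log Y)^{O_k(1)}$ handles the coprime part; hence $\sum_{e\le N^{1/k}}\mathcal{R}(e)/e\ll_k(\log N)^{O_k(1)}$ and the moduli $e\le N^{1/k}$ contribute the expected main term $\ll_k N^{1/k}(\log N)^{O_k(1)}$. When $k=2$ this range exhausts $e\le N^{1/2}$ and we are done.

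The remaining moduli $N^{1/k}<e\le N^{1/2}$, which occur only for $k\ge3$, are the main obstacle. For these the interval $[1,(N-e^2)^{1/k}]$ is shorter than $e$, yet neither half of the bound $\min\bigl(\mathcal{R}(e),(N-e^2)^{1/k}+1\bigr)$ may be used crudely, since $\sum_{e\le N^{1/2}}\mathcal{R}(e)\asymp N^{1/2}$ while $\sum_{N^{1/k}<e\le N^{1/2}}(N-e^2)^{1/k}\asymp_k N^{1/2+1/k}$, both far larger than $N^{1/k}$. The resolution — which is precisely the delicate part of McDonagh's argument in \cite{McD}, and for which we refer the reader there — is to sort this contribution by the integer $j=\lfloor(N-e^2)^{1/k}\rfloor$: for a fixed $j$ only $x\in\{1,\dots,j\}$ can occur, the admissible $e$ fill a window of $(N^{1/k},N^{1/2}]$ of length $\asymp_k j^{k-1}N^{-1/2}$, and bounding, over such $e$, the number of $x\le j$ with $e\mid N-x^k$ — with an input on the divisor function in short intervals — shows that this range too contributes only $\ll_k N^{1/k}(\log N)^{O_k(1)}$. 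Combining the two ranges yields the asserted bound, with $C_k$ an explicit constant depending only on $k$.
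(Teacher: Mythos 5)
The paper gives no proof of this lemma: it is stated as a quoted result of McDonagh, with the reference \cite{McD} as the entire justification, so there is no internal argument to measure yours against. Your write-up is a partial reconstruction, and the part you actually prove is correct: the hyperbola decomposition, the bound $\#\{x\le L:\ e\mid x^k-N\}\le\mathcal{R}(e)(L/e+1)$, and the Euler-product estimate $\sum_{e\le Y}\mathcal{R}(e)/e\ll_k(\log Y)^{O_k(1)}$ (including the local computation at $p\mid N$) are all sound, and they do dispose of the moduli $e\le N^{1/k}$ and of the case $k=2$ in full.

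For $k\ge3$, however, what you have written does not prove the lemma. The range $N^{1/k}<e\le N^{1/2}$ is where the whole content of McDonagh's theorem lives, and there you fall back on the citation. The ``sort by $j=\lfloor(N-e^2)^{1/k}\rfloor$'' sketch is not a proof as it stands: after fixing $j$ one must still bound, summed over $x\le j$ and over $j$, the number of divisors of $N-x^k$ lying in a prescribed window of length $\asymp_k j^{k-1}N^{-1/2}$, and ``an input on the divisor function in short intervals'' is exactly the delicate point --- localizing divisors of $N-x^k$ in short intervals is a Hooley-$\Delta$-type problem and cannot be waved through. So, read as a self-contained argument your proposal has a genuine gap; read as a citation-backed statement it does precisely what the paper does. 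If you want to close the gap inside the framework you have already built, the cleanest route is a Landreau-type divisor inequality, which bounds $d(m)$ by $\ll_k\sum_{e\mid m,\ e\le m^{1/k}}d(e)^{O_k(1)}$; with this, every modulus is confined to $e\le N^{1/k}$ at the cost of a weight $d(e)^{O_k(1)}$, and your Euler-product computation absorbs that weight with only additional powers of $\log N$, so the hard range never arises.
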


\begin{lemma}\label{lemmaapplyB2} Let $Q_3$ be given in \eqref{defineQ3}. Let $k\in \{12,13\}$. Then there exists a constant $C>0$ such that
\begin{align}\label{ineqB3}\int_{\mathfrak{M}(Q_3)}\mathcal{G}(\alpha)|f_{k}(\alpha)|^2d\alpha \ll
f_k(0)^2(\log n)^{C}.\end{align}
\end{lemma}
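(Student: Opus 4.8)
The plan is to expand $|f_k(\alpha)|^2$ as a Fourier series $\sum_{|h|\le 2n^{\lambda/k}\cdot k}\psi_h^{(k)}e(h\alpha)$ with Fourier coefficients
$$\psi_h^{(k)}=\int_0^1|f_k(\alpha)|^2e(-h\alpha)\,d\alpha,$$
and then apply Lemma~\ref{lemmabrudern} with $N=n$, $Q=Q_3$, $\mathcal{H}=\mathcal{G}$, and $\widetilde{\Psi}=|f_k|^2$. The hypotheses of Lemma~\ref{lemmabrudern} are met: the arcs $\mathfrak{M}(q,a;Q_3)$ are pairwise disjoint and lie in $[a/q-1/2,a/q+1/2]$, the bound $\mathcal{G}(\alpha)\ll n/(q(1+n|\alpha-a/q|))$ holds by definition, the Fourier expansion of $|f_k|^2$ is finite with $H\ll n^{\lambda/k}$ so $\log H\ll\log n$, and $Q_3\le n$. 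This yields
$$\int_{\mathfrak{M}(Q_3)}\mathcal{G}(\alpha)|f_k(\alpha)|^2d\alpha\ll Q_3\psi_0^{(k)}\log n+\sum_{0<|h|\le H}|\psi_h^{(k)}|\,d(|h|).$$

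Next I would handle the two terms on the right. For the main (zero-frequency) term, $\psi_0^{(k)}=\int_0^1|f_k(\alpha)|^2d\alpha$ counts solutions of $x_1^k=x_2^k$ with $x_1,x_2\in\mathcal{A}(Y_k,R)$, so $\psi_0^{(k)}\ll f_k(0)$; hence $Q_3\psi_0^{(k)}\log n\ll Q_3 f_k(0)\log n$. Since $f_k(0)\gg Y_k=n^{\lambda/k}$ while $Q_3=n^{2^{-20}}$ is a far smaller power of $n$ (as $\lambda/k\ge \lambda/13 > 2^{-20}$), we get $Q_3 f_k(0)\ll f_k(0)^2 n^{-\delta}$ for some $\delta>0$, which is absorbed comfortably into $f_k(0)^2(\log n)^C$.

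For the sum over $0<|h|\le H$, the point is that $\psi_h^{(k)}$ equals the number of pairs $(x_1,x_2)\in\mathcal{A}(Y_k,R)^2$ with $x_1^k-x_2^k=h$. Summing over $h\ne 0$ with the weight $d(|h|)$, each such pair contributes $d(|x_1^k-x_2^k|)$; bounding $d(|x_1^k-x_2^k|)\ll d(x_1^k-x_2^k)$ and first summing over $x_1>x_2$ with $x_2$ fixed, write $m=x_1$ running up to $Y_k\le n^{1/k}$ — but this is not quite the shape of Lemma~\ref{lemmaMcD}. The cleaner route, and the one I would take, is: for fixed $x_2$, set $N=x_2^k+$ (a dummy) — actually simplest is to note $\sum_{x_2<x_1\le Y_k}d(x_1^k-x_2^k)$; reindexing, for each fixed value $v=x_2^k$ this is $\sum_{x_2<x_1}d(x_1^k-v)\ll Y_k^{\,?}$. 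Here is where I would invoke McDonagh's estimate (Lemma~\ref{lemmaMcD}) in the form $\sum_{1\le y<M^{1/k}}d(M-y^k)\ll M^{1/k}(\log M)^{C_k}$ applied with $M=x_1^k$ and summation variable $y=x_2$: fixing $x_1$ and summing over $x_2<x_1$ gives $\ll x_1(\log n)^{C_k}\ll Y_k(\log n)^{C_k}$; then summing over the $\le Y_k$ choices of $x_1$ yields $\sum_{0<|h|\le H}|\psi_h^{(k)}|d(|h|)\ll Y_k^2(\log n)^{C_k}\ll f_k(0)^2(\log n)^{C_k}$, using $Y_k\ll f_k(0)n^\varepsilon$ (indeed $f_k(0)\asymp Y_k$ up to logarithmic factors from smooth-number density). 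Combining the two contributions proves \eqref{ineqB3} with $C=C_k$ (or the larger of $C_k$ and any logarithmic loss from the smooth-number count).

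The main obstacle is the correct bookkeeping in the $h\ne 0$ sum: one must be careful that the Fourier coefficient $\psi_h^{(k)}$ is genuinely a representation count (so that the weighted sum $\sum_h |\psi_h^{(k)}|d(|h|)$ reorganizes into $\sum_{x_1,x_2} d(|x_1^k-x_2^k|)$ with each pair counted once), and that McDonagh's lemma is applied with the right choice of which variable is summed and which is the modulus — the modulus must be of the form $M$ with $M^{1/k}$ bounded by the range of the summed variable, which forces fixing the larger of $x_1,x_2$. Once that is set up, everything else — the disjointness of the arcs, the trivial bound $\psi_0^{(k)}\ll f_k(0)$, and the comparison $Q_3\ll Y_k n^{-\delta}$ — is routine. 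I should also note that $f_k(0)\gg Y_k(\log Y_k)^{-c}$ for a suitable $c$ depending on $\eta$ (standard for $R$-smooth numbers with $R=n^\eta$), so the right-hand side $f_k(0)^2(\log n)^C$ does dominate $Y_k^2(\log n)^{C_k}$ for $C$ large enough.
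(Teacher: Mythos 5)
Your proposal is correct and is essentially the paper's own (omitted) argument: apply Br\"udern's pruning lemma to $\widetilde{\Psi}=|f_k|^2$ and control the weighted divisor sum $\sum_{x_2<x_1}d(x_1^k-x_2^k)$ by fixing the larger variable and invoking McDonagh's estimate with $N=x_1^k$. The only slip is the stated range of the Fourier expansion, which should be $|h|\le Y_k^k=n^{\lambda}$ rather than $|h|\le 2kn^{\lambda/k}$, but this is immaterial since only $\log H\ll\log n$ is needed.
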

\begin{proof}The proof is as same as \eqref{ineqB1} except that we use Lemma \ref{lemmaMcD} to show that the divisor function behaves like a power of $\log n$ in average. We omit the details.\end{proof}

\begin{lemma}\label{lemmaMn29}Let $Q_3$ be given in \eqref{defineQ3}. Then one has
$$\int_{\mathfrak{M}(n^{2/9})\setminus\mathfrak{M}(Q_3)}|F_2^\ast(\alpha)F_3(\alpha)F(\alpha)G(\alpha)|d\alpha \ll \mathcal{F}(0)n^{-1+\varepsilon}Q_3^{-\frac{1}{12}}.$$
\end{lemma}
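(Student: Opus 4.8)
The plan is to estimate the integral on $\mathfrak{M}(n^{2/9})\setminus\mathfrak{M}(Q_3)$ by the same dyadic decomposition used in the previous sections, combined with H\"older's inequality and the pruning results just established. First I would write the region as a disjoint union of the sets $\mathfrak{M}(Q)\setminus\mathfrak{M}(Q/2)$ with $Q$ running over powers of $2$ in the range $Q_3\le Q\le n^{2/9}$, so that it suffices to prove a bound of the shape $\mathcal{F}(0)n^{-1+\varepsilon}Q^{-1/12}$ uniformly for each such $Q$, and then sum the geometric series. On such a dyadic piece one has $F_2^\ast(\alpha)\ll F_2(0)Q^{-1/2+\varepsilon}$ by Lemma \ref{lemmagauss1} and Lemma \ref{lemmaboundvk}; pulling this out leaves an integral of $|F_3(\alpha)F(\alpha)G(\alpha)|$ over $\mathfrak{M}(Q)\setminus\mathfrak{M}(Q/2)$, or more conveniently over $\mathfrak{M}^\ast(Q)\setminus\mathfrak{M}(Q/2)$ after invoking Lemma \ref{lemmaintMastQ1} to discard the part where $|\beta|>n^{\nu-1}$.

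The core of the argument is then a H\"older split of this remaining integral. Following the pattern of Lemma \ref{lemmaintMQ2to29}, I would bound it by a product of the form
$$\Big(\int_{\mathfrak{M}(Q)}|F_3(\alpha)|^4d\alpha\Big)^{1/4}\Big(\int_{\mathfrak{M}^\ast(Q)}|F(\alpha)|^4d\alpha\Big)^{1/4}\Upsilon(Q/2)^{1/2},$$
and control the three factors separately. For the cubic factor, Lemma \ref{lemmaB} gives $\int_{\mathfrak{M}(Q)}|F_3|^4\ll n^\varepsilon(n^{1/3}+Q^{7/2}n^{-1}+Q^2n^{-1/3})$, which for $Q\le n^{2/9}$ is dominated by $n^{1/3+\varepsilon}$. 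For the $F$-factor I would use the large-sieve device exactly as in the proof of Lemma \ref{lemmaintMQ2to29}: expanding $F^2$ as an exponential polynomial in $m\le 8n^\lambda$ and applying Lemma \ref{largesieve1} with $\Gamma=\{a/q:(a,q)=1,q\le Q\}$ over the arcs of radius $n^{\nu-1}$ yields $\int_{\mathfrak{M}^\ast(Q)}|F|^4\ll n^{\nu-1+\lambda}\int_0^1|F|^4\ll F(0)^4 n^{\nu-1+\lambda-\lambda\alpha_2}$ via Lemma \ref{lemmabreakcon} (here $Q^2\le n^{4/9}\le n^\lambda$). And for $\Upsilon(Q/2)$ I would apply Lemma \ref{lemmaUpsilon}, valid since $Q/2\ge Q_3/2$, giving $\Upsilon(Q/2)\ll F_2(0)G(0)^2 n^{-1+\nu+\varepsilon}R^{22}Q^{-2\delta_1}$. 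Alternatively — and this may actually be the cleaner route since we are on very small arcs and only need a power of $\log n$ worth of cancellation — one can replace the large-sieve treatment of $F$ by the pruning estimates \eqref{ineqB1} and \eqref{ineqB3}: write $\mathcal{G}(\alpha)\gg n/q\gg nQ^{-1}$ on $\mathfrak{M}(Q)\setminus\mathfrak{M}(Q/2)$ and use $\int_{\mathfrak{M}(Q)}\mathcal{G}(\alpha)|f_k(\alpha)|^2d\alpha\ll f_k(0)^2(\log n)^C$ for $k\in\{12,13\}$ together with trivial bounds on $f_4$ and $f_{14}$ over these narrow arcs.

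Assembling the three factors, the power of $Q$ that emerges is $Q^{-1/2}$ (from $F_2^\ast$) times $Q^{-\delta_1}$ (from $\Upsilon^{1/2}$), i.e. at least $Q^{-1/2}$, which comfortably beats the required $Q^{-1/12}$; the powers of $n$ collapse to $n^{-1+\varepsilon}$ because, as recorded after \eqref{boundThetawithdelta2}, the exponent $\delta_2=\frac29-5\rho+\lambda\alpha_2-\lambda$ is positive and the contribution $n^{1/3}$ from the cubic moment combines with $F_2(0)\asymp n^{1/2}$ and $F(0),G(0)$ to reconstruct $\mathcal{F}(0)n^{-1}$ up to $n^\varepsilon R^{O(1)}$, which is absorbed into $n^\varepsilon$ by the choice $R=n^\eta$ with $\eta$ tiny. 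Summing over the $O(\log n)$ dyadic values of $Q$ costs only another $\log n$, absorbed into $n^\varepsilon$, and one may replace $n^\varepsilon Q_3^{-1/12}$ at the top of the range by $Q_3^{-1/12}$ after noting $n^\varepsilon\ll Q_3^{1/12}\cdot n^{-\varepsilon'}$ is false — so one keeps the $n^\varepsilon$ but absorbs it by taking the dyadic bound slightly stronger, say $Q^{-1/6}$, which the argument certainly supplies. The main obstacle I anticipate is purely bookkeeping: making sure that on the very narrow arcs $\mathfrak{M}(n^{2/9})\setminus\mathfrak{M}(Q_3)$ the function $\Delta_2$ has been correctly removed (it has, via Lemma \ref{lemmaintMastQ1} applied to $F_2^\ast$ only, since the statement already works with $F_2^\ast$) and that the large-sieve step for $F$ is legitimate for $Q$ as small as $Q_3$ — which it is, since Lemma \ref{largesieve1} holds for any $\delta$-spaced set. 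No genuinely new idea is needed beyond what appears in the proof of Lemma \ref{lemmaintMQ2to29}; one is simply running the same machine on a smaller range where the estimates are only easier.
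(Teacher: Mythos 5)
There is a genuine gap in your primary route: the machine of Lemma \ref{lemmaintMQ2to29} does \emph{not} extend below $n^{2/9}$, and the claim that on the smaller range ``the estimates are only easier'' is exactly backwards. The bottleneck is the factor $\Theta_2(Q)^{1/4}=\bigl(\int_{\mathfrak{M}^\ast(Q)}|F|^4\,d\alpha\bigr)^{1/4}$. For small $Q$ the large sieve gives nothing beyond $\Theta_2(Q)\ll n^{\nu-1+\lambda}\int_0^1|F|^4\,d\alpha\ll F(0)^4n^{\nu-1+\lambda(1-\alpha_2)}$, and the fixed excess $\lambda(1-\alpha_2)\approx 0.198$ contributes $n^{0.0496}$ after the fourth root. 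In Lemma \ref{lemmaintMQ2to29} this excess is cancelled only because $Q\ge n^{2/9}$ forces $\Theta_1(Q)\ll F_2(0)^2F_3(0)^4 n^{-1-\frac{2}{9}+5\rho}$, whose fourth root supplies the compensating $n^{-(\frac{2}{9}-5\rho)/4}\approx n^{-0.0499}$; the margin is only $\delta_2/4\approx 4\cdot 10^{-4}$. For $Q_3\le Q\le n^{2/9}$ one instead has $\Theta_1(Q)\ll F_2(0)^2F_3(0)^4\,Q^{-1}n^{-1+\varepsilon}$ (the term $Q^{-1}n^{1/3}$ of Lemma \ref{lemmaB} now dominates), and assembling your three factors yields only $\mathcal{F}(0)\,Q^{-1/4-\delta_1}R^{11}n^{-1+0.0496+\varepsilon}$. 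At $Q=Q_3=n^{2^{-20}}$ the factor $Q^{-1/4-\delta_1}$ is essentially $1$, so the bound misses the target by a fixed positive power of $n$. No dyadic bookkeeping can repair this.

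Your parenthetical ``alternative'' via the pruning estimates is the right direction --- it is what the paper actually does --- but as sketched it is not a proof. The correct argument applies Schwarz once, bounding the integral by $\Theta_3^{1/2}\Upsilon(Q_3)^{1/2}$ with $\Theta_3=\int_{\mathfrak{M}(n^{2/9})\setminus\mathfrak{M}(Q_3)}|F_2^\ast F_3^2F^2|\,d\alpha$, and then the essential steps are: (i) split $F_3=F_3^\ast+\Delta_3$, handling $\Delta_3$ by $\Delta_3\ll Q^{1/2+\varepsilon}$ together with \eqref{ineqB2}; (ii) dominate $F_2^\ast(\alpha)F_3^\ast(\alpha)^2\ll F_2(0)F_3(0)^2Q_3^{-1/6+\varepsilon}n^{-1}\mathcal{G}(\alpha)$ pointwise on $\mathfrak{M}(n^{2/9})\setminus\mathfrak{M}(Q_3)$, the saving $Q_3^{-1/6}$ coming from the decay of the complete Gauss sums beyond the first power of $(q(1+n|\beta|))^{-1}$; and (iii) invoke \eqref{ineqB1}, i.e.\ Br\"udern's Lemma \ref{lemmabrudern} applied to the \emph{second} moment of the full product $F$ weighted by $\mathcal{G}$, namely $\int_{\mathfrak{M}(n^{2/9})}\mathcal{G}(\alpha)|F(\alpha)|^2\,d\alpha\ll n^\varepsilon F(0)^2$. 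Your sketch omits the decomposition of $F_3$ entirely, reverses the inequality for $\mathcal{G}$ (on $\mathfrak{M}(Q)\setminus\mathfrak{M}(Q/2)$ one has $\mathcal{G}(\alpha)\ll nQ^{-1}$, not $\gg$), and proposes to treat $f_4,f_{14}$ trivially and $f_{12},f_{13}$ via \eqref{ineqB3}, which loses the coupling that makes \eqref{ineqB1} strong enough; so the key mechanism that replaces the fourth moment of $F$ by its $\mathcal{G}$-weighted second moment is missing.
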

\begin{proof}
By Schwarz's inequality,
\begin{align}\label{SchwithTheta3}\int_{\mathfrak{M}(n^{2/9})\setminus\mathfrak{M}(Q_3)}|F_2^\ast(\alpha)F_3(\alpha)F(\alpha)G(\alpha)|d\alpha \le  \Theta_3^{1/2}\Upsilon(Q_3)^{1/2},\end{align}
where
$$\Theta_3=\int_{\mathfrak{M}(n^{2/9})\setminus\mathfrak{M}(Q_3)}|F_2^\ast(\alpha)F_3(\alpha)^2F(\alpha)^2|d\alpha$$
and  $\Upsilon(Q_3)$ is defined in \eqref{defineUpsi}.

We define the function $\Delta_3(\alpha)$ on $\mathfrak{M}(n^{2/9})$ by
$$\Delta_3(\alpha)=F_3(\alpha)-F_3^\ast(\alpha).$$
It follows from Theorem 4.1 in \cite{V} and the partial summation formula that
\begin{align}\label{boundD3}\Delta_3(\alpha)\ll Q^{\frac{1}{2}+\varepsilon}\ \textrm{ for }\ \alpha\in \mathfrak{M}(Q).\end{align}
Then we deduce from \eqref{ineqB2} and \eqref{boundD3} that
\begin{align*}\int_{\mathfrak{M}(n^{2/9})}|\Delta_3(\alpha)F(\alpha)|^2d\alpha \ll
F_3(0)^2F(0)^2n^{-1-\frac{2}{9}+\varepsilon},\end{align*}
and by the trivial bound $F_2^\ast(\alpha) \ll F_2(0)$,
\begin{align}\label{boundTheta3withD3}\int_{\mathfrak{M}(n^{2/9})}|F_2^\ast(\alpha)\Delta_3(\alpha)^2F(\alpha)^2|d\alpha \ll
F_2(0)F_3(0)^2F(0)^2n^{-1-\frac{2}{9}+\varepsilon}.\end{align}

 For $\alpha\in \mathfrak{M}(n^{2/9})\setminus\mathfrak{M}(Q_3)$, we have
 $$F_2^\ast(\alpha)F_3^\ast(\alpha)^2\ll F_2(0)F_3(0)^2Q_3^{-\frac{1}{6}+\varepsilon}n^{-1}\mathcal{G}(\alpha).$$
It follows from \eqref{ineqB1} that
\begin{align}\label{boundTheta3withF3ast}\int_{\mathfrak{M}(n^{2/9})\setminus\mathfrak{M}(Q_3)}|F_2^\ast(\alpha)
F_3^\ast(\alpha)^2F(\alpha)^2|d\alpha \ll
 F_2(0)F_3(0)^2F(0)^2n^{-1+\varepsilon}Q_3^{-\frac{1}{6}}.\end{align}
Then we conclude from \eqref{boundTheta3withD3} and \eqref{boundTheta3withF3ast} that
 \begin{align}\label{boundTheta3}\Theta_3\ll
 F_2(0)F_3(0)^2F(0)^2n^{-1+\varepsilon}Q_3^{-\frac{1}{6}}.\end{align}

 On applying Lemma \ref{lemmaUpsilon}, we deduce from \eqref{SchwithTheta3} and \eqref{boundTheta3} that
$$\int_{\mathfrak{M}(n^{2/9})\setminus\mathfrak{M}(Q_3)}|F_2^\ast(\alpha)F_3(\alpha)F(\alpha)G(\alpha)|d\alpha \ll \mathcal{F}(0)n^{-1+\varepsilon}Q_3^{-\frac{1}{12}}.$$
This completes the proof.
\end{proof}

Let
$$\mathcal{N}_2(n)=\int_{\mathfrak{M}(Q_3)}F_2^\ast(\alpha)F_3^\ast(\alpha)F(\alpha)G(\alpha)e(-n\alpha)d\alpha.$$
\begin{lemma}\label{lemmaN2}One has
$$\mathcal{N}(n)-\mathcal{N}_2(n) \ll \mathcal{F}(0)n^{-1+\varepsilon}Q_3^{-\frac{1}{12}}.$$
\end{lemma}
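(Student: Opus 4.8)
The plan is to combine the earlier estimates along the nested major arc decomposition and then switch $F_3$ for $F_3^\ast$ on the narrowest arcs. First I would recall that by Lemma \ref{lemmaN1} we have $\mathcal{N}(n)-\mathcal{N}_1(n)\ll \mathcal{F}(0)n^{-1-\frac{2}{9}\delta_1}$, where $\mathcal{N}_1(n)$ is the integral of $F_2^\ast F_3FG\,e(-n\alpha)$ over $\mathfrak{M}(n^{2/9})$. Thus it suffices to estimate $\mathcal{N}_1(n)-\mathcal{N}_2(n)$, and this difference splits naturally as the sum of two contributions: the part of $\mathcal{N}_1(n)$ coming from $\mathfrak{M}(n^{2/9})\setminus\mathfrak{M}(Q_3)$, and the part of $\mathfrak{M}(Q_3)$ on which $F_3$ has been replaced by $F_3^\ast=F_3-\Delta_3$.

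For the first contribution, I would invoke Lemma \ref{lemmaMn29} directly, which gives
$$\int_{\mathfrak{M}(n^{2/9})\setminus\mathfrak{M}(Q_3)}|F_2^\ast(\alpha)F_3(\alpha)F(\alpha)G(\alpha)|\,d\alpha \ll \mathcal{F}(0)n^{-1+\varepsilon}Q_3^{-\frac{1}{12}},$$
which is acceptable since $Q_3=n^{2^{-20}}$ makes $n^{\varepsilon}Q_3^{-1/12}$ smaller than any fixed negative power of $n$ if $\varepsilon$ is chosen small enough relative to $2^{-20}/12$; in particular it is $O(n^{-1+\varepsilon'}Q_3^{-1/12})$ after relabelling $\varepsilon$, absorbing the loss into the statement's exponent. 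For the second contribution, on $\mathfrak{M}(Q_3)$ I would write $F_3=F_3^\ast+\Delta_3$, so that the difference between integrating $F_2^\ast F_3FG$ and $F_2^\ast F_3^\ast FG$ over $\mathfrak{M}(Q_3)$ is bounded by $\int_{\mathfrak{M}(Q_3)}|F_2^\ast(\alpha)\Delta_3(\alpha)F(\alpha)G(\alpha)|\,d\alpha$. By Schwarz this is at most $\big(\int_{\mathfrak{M}(Q_3)}|F_2^\ast\Delta_3F|^2\big)^{1/2}\big(\int_{\mathfrak{M}(Q_3)}|G|^2\big)^{1/2}$; using the trivial bound $F_2^\ast\ll F_2(0)$ together with \eqref{boundTheta3withD3} (which already bounds $\int_{\mathfrak{M}(n^{2/9})}|F_2^\ast\Delta_3^2F^2|\,d\alpha$ by $F_2(0)F_3(0)^2F(0)^2n^{-1-2/9+\varepsilon}$ and hence a fortiori over the smaller arc $\mathfrak{M}(Q_3)$), and using $\Upsilon(Q_3)$ from Lemma \ref{lemmaUpsilon} — noting $\Upsilon(Q_3)$ controls $\int_{\mathfrak{M}(Q_1)\setminus\mathfrak{M}(Q_3)}$, so for the full $\mathfrak{M}(Q_3)$ one instead just uses the trivial $\int_0^1|G|^2\ll G(0)^2n^{-3/4-\delta_1+\varepsilon}$ from Lemma \ref{lemmaboundG2} — we obtain a bound that is comfortably $O(\mathcal{F}(0)n^{-1+\varepsilon}Q_3^{-1/12})$ or better.

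Collecting the two pieces gives $\mathcal{N}_1(n)-\mathcal{N}_2(n)\ll \mathcal{F}(0)n^{-1+\varepsilon}Q_3^{-1/12}$, and combining with Lemma \ref{lemmaN1} yields the claimed bound on $\mathcal{N}(n)-\mathcal{N}_2(n)$, since $n^{-1-\frac{2}{9}\delta_1}$ is itself $O(n^{-1+\varepsilon}Q_3^{-1/12})$. The only genuinely delicate point is bookkeeping the exponents: one must check that the $n^{\varepsilon}$ losses incurred in Lemma \ref{lemmaMn29} and in the $\Delta_3$ estimate do not overwhelm the saving $Q_3^{-1/12}=n^{-2^{-20}/12}$, which is fine because $\varepsilon$ is an arbitrarily small fixed quantity while $2^{-20}/12$ is a fixed positive constant, so after shrinking $\varepsilon$ the power of $n$ is strictly negative; there is no circularity since all inputs (Lemmas \ref{lemmaN1}, \ref{lemmaMn29}, \ref{lemmaUpsilon}, and \eqref{boundTheta3withD3}) are established earlier. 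The main obstacle is therefore not analytic but purely organizational: making sure the correct major-arc set is attached to each auxiliary integral ($\mathfrak{M}(Q_3)$ versus $\mathfrak{M}(n^{2/9})$ versus $\mathfrak{M}(Q_1)$) when quoting the earlier lemmas, and that the Schwarz split is applied to the difference $F_2^\ast(F_3-F_3^\ast)FG$ rather than to the full integrand.
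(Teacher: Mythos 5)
Your overall decomposition is exactly the paper's: reduce to $\mathcal{N}_1(n)-\mathcal{N}_2(n)$ via Lemma \ref{lemmaN1}, dispose of $\mathfrak{M}(n^{2/9})\setminus\mathfrak{M}(Q_3)$ via Lemma \ref{lemmaMn29}, and then bound $\int_{\mathfrak{M}(Q_3)}|F_2^\ast\Delta_3FG|\,d\alpha$. The first two steps are fine. The gap is in the third step, and it is quantitative, not organizational: the Schwarz bound you propose does not close. Following your citations, $\bigl(\int_{\mathfrak{M}(Q_3)}|F_2^\ast\Delta_3F|^2\bigr)^{1/2}\le F_2(0)^{1/2}\bigl(\int_{\mathfrak{M}(n^{2/9})}|F_2^\ast\Delta_3^2F^2|\bigr)^{1/2}\ll F_2(0)F_3(0)F(0)\,n^{-\frac{11}{18}+\varepsilon}$ by \eqref{boundTheta3withD3}, while Lemma \ref{lemmaboundG2} gives $\bigl(\int_0^1|G|^2\bigr)^{1/2}\ll G(0)\,n^{-\frac{3}{8}-\frac{\delta_1}{2}+\varepsilon}$. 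The product is $\mathcal{F}(0)\,n^{-\frac{11}{18}-\frac{3}{8}-\frac{\delta_1}{2}+\varepsilon}=\mathcal{F}(0)\,n^{-0.9866\ldots+\varepsilon}$, which is \emph{larger} than $\mathcal{F}(0)n^{-1}$ by a positive power of $n$; it is certainly not $O(\mathcal{F}(0)n^{-1+\varepsilon}Q_3^{-1/12})$. The loss comes from importing \eqref{boundTheta3withD3}, which only uses $\Delta_3\ll n^{1/9+\varepsilon}$ (the bound valid on all of $\mathfrak{M}(n^{2/9})$), whereas on $\mathfrak{M}(Q_3)$ one has the far stronger $\Delta_3(\alpha)\ll Q_3^{1/2+\varepsilon}=n^{2^{-21}+\varepsilon}$ from \eqref{boundD3}. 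Your claim that the bound is ``comfortably'' of the required size is precisely the point that fails.

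The paper's proof of this step is much cruder and works: use $\Delta_3(\alpha)\ll Q_3^{1/2+\varepsilon}$ together with the trivial bound $F_2^\ast(\alpha)F(\alpha)G(\alpha)\ll F_2(0)F(0)G(0)$ and the fact that $\mathfrak{M}(Q_3)$ has measure $O(Q_3^2n^{-1})$, giving
\begin{align*}
\int_{\mathfrak{M}(Q_3)}|F_2^\ast\Delta_3FG|\,d\alpha\ll F_2(0)F(0)G(0)\,Q_3^{5/2+\varepsilon}n^{-1}\ll \mathcal{F}(0)\,n^{-\frac{4}{3}}Q_3^{5/2+\varepsilon},
\end{align*}
which is acceptable since $F_3(0)\asymp n^{1/3}$ and $Q_3^{5/2}=n^{5\cdot2^{-21}}$ is negligible. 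Your Schwarz framework could also be repaired by replacing the appeal to \eqref{boundTheta3withD3} with the pointwise bound $\Delta_3\ll Q_3^{1/2+\varepsilon}$ valid on $\mathfrak{M}(Q_3)$ (together with \eqref{ineqB2} for $\int|F|^2$), but as written the exponents do not add up.
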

\begin{proof}In view of Lemma \ref{lemmaN1} and Lemma \ref{lemmaMn29}, we only need to prove
\begin{align}\label{boundMQ3D3}\int_{\mathfrak{M}(Q_3)}|F_2^\ast(\alpha)\Delta_3(\alpha)F(\alpha)G(\alpha)|d\alpha \ll \mathcal{F}(0)n^{-1+\varepsilon}Q_3^{-\frac{1}{12}}.\end{align}
By \eqref{boundD3}, one has $\Delta_3(\alpha)\ll Q_3^{1/2+\varepsilon}$, and \eqref{boundMQ3D3} now follows from the trivial bound
 $F_2^\ast(\alpha)F(\alpha)G(\alpha)\ll F_2(0)F(0)G(0)$.\end{proof}

In order to deal with $\mathcal{N}_2(n)$, we need upper bounds of $g_k(\alpha)$, which will be deduced from the following lemma.
\begin{lemma}\label{lemmaKW}Let $k$ be a positive integer with $k\ge 4$. Let $P,M,M',U,U'$ be real numbers satisfying
$$P^{1/2}\le M\le M'\le \frac{4}{3}M, \ P/M\le U\le U'\le \frac{4}{3}P/M.$$
Suppose that $(a_x)$ and $(b_y)$ are complex numbers satisfying $|a_x|\le 1$ and $|b_y|\le 1$.
Suppose further that $\alpha$ is a real number, and that there exists $a\in \Z$ and $q\in \Z^{+}$ with
$$(a,q)=1, 1\le q\le P^{k/2} \textrm{ and } |q\alpha-a|\le P^{-k/2}.$$
Then one has
\begin{align*}&\sum_{M<x\le M'}a_x\sum_{U<y\le U'}b_ye\big((x^ky^k)\alpha\big)
\\ & \ \ \ \ \ \ \ll PM^{\varepsilon-2^{-k}}+(PM)^{1/2}+\frac{q^\varepsilon \omega_k(q)^{1/2}P(\log P)^{4}}{(1+P^k|\alpha-a/q|)^{1/2}}.\end{align*}
\end{lemma}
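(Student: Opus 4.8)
The plan is to collapse the bilinear structure with a single Cauchy--Schwarz step, reducing matters to a classical $k$-th power Weyl sum of length $M$ with argument $m\alpha$, where $m=y_1^k-y_2^k$, and then to treat that sum by the usual dichotomy between Weyl's inequality (for the minor-arc values of $m$) and Vaughan's major-arc estimate together with Lemma~\ref{lemmagauss1} (for the major-arc values of $m$), all the while keeping track of how the approximation $a/q$ to $\alpha$ induces an approximation to each $m\alpha$.

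First I would apply Cauchy--Schwarz in the longer variable $x$; since $|a_x|\le1$ and the range of $x$ has length at most $M'\le\frac43M$, this gives
$$\Big|\sum_{M<x\le M'}a_x\sum_{U<y\le U'}b_ye(x^ky^k\alpha)\Big|^2\le M\sum_{U<y_1,y_2\le U'}b_{y_1}\overline{b_{y_2}}\sum_{M<x\le M'}e\big((y_1^k-y_2^k)x^k\alpha\big).$$
The diagonal $y_1=y_2$ contributes at most $M\cdot U'\cdot M'\ll MP$ (using $MU\asymp P$), which on taking square roots produces the term $(PM)^{1/2}$. For the off-diagonal, write $m=y_1^k-y_2^k$ and $\rho(m)=\#\{(y_1,y_2)\in(U,U']^2:\ y_1^k-y_2^k=m\}$; factoring $y_1^k-y_2^k=(y_1-y_2)(y_1^{k-1}+\cdots+y_2^{k-1})$ and invoking the divisor bound yields $\rho(m)\ll_k|m|^\varepsilon\ll P^\varepsilon$, while trivially $\sum_m\rho(m)\ll U^2$. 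Thus the off-diagonal is bounded by $M\sum_{1\le|m|\le(4/3)^kU^k}\rho(m)\,|T(m)|$, where $T(m)=\sum_{M<x\le M'}e(m\alpha x^k)$ and $|m|\le(4/3)^kU^k\le CP^{k/2}$ by the hypothesis $M\ge P^{1/2}$.

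Writing $\alpha=a/q+\beta$ with $|\beta|\le q^{-1}P^{-k/2}$, the rational $ma/q$ reduces to $a_m/q_m$ with $q_m=q/(q,m)$, so that $m\alpha=a_m/q_m+m\beta$. If $q_m$ lies in the minor-arc range for a Weyl sum of length $M$ (so that $q_m^{-1}+M^{-1}+q_mM^{-k}\ll M^{-1+\varepsilon}$), then Weyl's inequality (Lemma~2.4 of \cite{V}) gives $T(m)\ll M^{1-2^{1-k}+\varepsilon}$; inserting this and using $\sum_m\rho(m)\ll U^2$ together with $M^2U^2\asymp P^2$ produces $P^2M^{-2^{1-k}+\varepsilon}$ for this part of the off-diagonal, hence $PM^{\varepsilon-2^{-k}}$ after the square root. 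If instead $q_m$ is small and $|m\beta|$ is small, one is on a major arc for the sum of length $M$, and Vaughan's estimate (Theorem~4.1 of \cite{V}), combined with the bound $q_m^{-1}S_k(q_m,a_m)\ll\omega_k(q_m)$ from Lemma~\ref{lemmagauss1}, gives $T(m)\ll q^\varepsilon M\,\omega_k(q_m)\,(1+M^k|m\beta|)^{-1/k}+q^{1/2+\varepsilon}$. Summing over $m$, the key point is to show that $\sum_{1\le|m|\le CU^k}\rho(m)\,\omega_k(q_m)(1+M^k|m\beta|)^{-1/k}\ll U^2\,\omega_k(q)\,q^\varepsilon(\log P)^{c}$; one proves this by setting $(q,m)=d$, summing $d$ over the divisors of $q$, estimating $\omega_k(q/d)$ in terms of $d$ and $\omega_k(q)$, and dissecting dyadically in the size of $|m\beta|$. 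This contributes $P^2\omega_k(q)q^\varepsilon(\log P)^{c}$, and the square root yields the third term, with $\omega_k(q)^{1/2}$ and the power $(\log P)^4$.

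The hard part is exactly this off-diagonal bookkeeping. One must simultaneously control: (i) the size of $q_m=q/(q,m)$ relative to $q$ and to $M$, which decides whether $T(m)$ is handled by Weyl's inequality or on a major arc, and in particular dispose of the intermediate and large ranges of $q_m$ (where, if $|m\beta|$ is too large, $a_m/q_m$ need not be the best approximation to $m\alpha$ and one must re-approximate); (ii) the average over $m=y_1^k-y_2^k$ of $\omega_k(q_m)$ and of the factor $(1+M^k|m\beta|)^{-1/k}$; and (iii) the accounting that turns the single power $\omega_k(q)$ arising in the bound for $|S|^2$ into the $\omega_k(q)^{1/2}$ required for $|S|$. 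These steps are routine but delicate; the cleanest implementation follows the dissection employed in \cite{KW}.
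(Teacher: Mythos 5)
The paper does not prove this lemma at all: it is quoted verbatim as Lemma 3.1 of Kawada and Wooley \cite{KW}, and the ``proof'' is a one-line citation. You are therefore attempting something more ambitious, namely to reprove the cited result. Your overall architecture is in fact the right one (and is essentially that of \cite{KW}): Cauchy--Schwarz in the long variable $x$, the diagonal giving $(PM)^{1/2}$, Weyl's inequality on the values of $m=y_1^k-y_2^k$ for which $m\alpha$ is minor-arc giving $PM^{\varepsilon-2^{-k}}$; those two terms are derived correctly.

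The third term is where there is a genuine gap. Your ``key point'' estimate
$\sum_{m}\rho(m)\,\omega_k(q_m)(1+M^k|m\beta|)^{-1/k}\ll U^2\omega_k(q)q^\varepsilon(\log P)^{c}$
contains no decay in $\beta$, so after multiplying by $M^2$ and taking square roots it can only produce $q^\varepsilon\omega_k(q)^{1/2}P(\log P)^4$ \emph{without} the factor $(1+P^k|\alpha-a/q|)^{-1/2}$. That factor is the entire content of the third term and is indispensable downstream: Lemma \ref{lemmagk} transfers it to $g_k(\alpha)$, and the pruning in Lemma \ref{lemmaintM3to4} relies on integrating exactly this decay against $\mathcal{G}(\alpha)$. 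To recover it one must use the structure $m=y_1^k-y_2^k$, i.e. $|m|\asymp U^{k-1}|y_1-y_2|$, and sum over $j=|y_1-y_2|$ so that $\sum_{y_1\neq y_2}(1+M^kU^{k-1}j|\beta|)^{-1}\ll U^2(1+P^k|\beta|)^{-1}\log P$; the crude inputs $\rho(m)\ll P^\varepsilon$ and $\sum_m\rho(m)\ll U^2$ that you invoke cannot see this. Two further points in the same step are off: the oscillatory integral over $[M,\tfrac43M]$ satisfies the first-derivative bound $v(\gamma)\ll M(1+M^k|\gamma|)^{-1}$ (exponent $-1$, not $-1/k$, which you actually need), and the error term in Theorem 4.1 of \cite{V} is $q_m^{1/2+\varepsilon}(1+M^k|m\beta|)^{1/2}$, which here can be as large as $P^{k/4}\gg M$, so the major-arc expansion cannot be applied uniformly over all ``major-arc'' $m$ as you write it --- this is precisely the re-approximation difficulty you name in point (i) and then defer. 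Since that deferral is ultimately ``follow the dissection in \cite{KW}'', the proposal in the end leans on the very lemma being proved; as a self-contained argument it does not yet close.
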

\begin{proof}This follows from Lemma 3.1 of Kawada and Wooley \cite{KW}.\end{proof}

\begin{lemma}\label{lemmagk}Let $\alpha\in \mathfrak{M}(q,a;Q_3)$ with $1\le a\le q\le Q_3$ and $(a,q)=1$. Let $5\le k\le 11$. Then one has
\begin{align*}g_k(\alpha)\ll g_k(0)\frac{q^\varepsilon \omega_k(q)^{1/2}(\log n)^{4+(k\eta)^{-1}}}{(1+n|\alpha-a/q|)^{1/2}}.\end{align*}
\end{lemma}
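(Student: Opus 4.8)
The plan is to estimate $g_k(\alpha)$ for $\alpha\in\mathfrak{M}(q,a;Q_3)$ by exploiting the multiplicative structure of the summation variable $p_1\cdots p_r$. First I would split the product of $r$ primes into two blocks: write $p_1\cdots p_r = xy$, where $x=p_1\cdots p_m$ ranges over products of roughly half the primes and $y=p_{m+1}\cdots p_r$ over the rest, so that $x\asymp R^m=P/M$-type size and $y\asymp M$-type size in the notation of Lemma \ref{lemmaKW}; more precisely, I would group the primes so that $x$ and $y$ each lie in dyadic-type ranges $(M,M']$ and $(U,U']$ with $P=n^{1/k}$, $M\asymp P^{1/2}$, matching the hypotheses $P^{1/2}\le M\le M'\le \frac43 M$ and $P/M\le U\le U'\le\frac43 P/M$. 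Because the primes are confined to $(R/2,R]$, a product of a fixed number of them automatically lands in such a short interval, so after introducing a bounded number of such decompositions (one for each way of choosing which primes go into which block, all giving the same count up to symmetry) one reduces $g_k(\alpha)$ to a bounded sum of bilinear forms $\sum_{M<x\le M'}a_x\sum_{U<y\le U'}b_y e(x^ky^k\alpha)$ with $|a_x|,|b_y|\le 1$.

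Next I would verify the Diophantine hypothesis of Lemma \ref{lemmaKW}. Since $\alpha\in\mathfrak{M}(q,a;Q_3)$ with $q\le Q_3=n^{2^{-20}}$ and $|\alpha-a/q|\le Q_3/(qn)$, we have $|q\alpha-a|\le Q_3/n$, which is far smaller than $P^{-k/2}=n^{-1/2}$, and also $q\le Q_3\le P^{k/2}=n^{1/2}$; hence the hypotheses $(a,q)=1$, $1\le q\le P^{k/2}$, $|q\alpha-a|\le P^{-k/2}$ all hold. Applying Lemma \ref{lemmaKW} then gives
\begin{align*}
g_k(\alpha)\ll PM^{\varepsilon-2^{-k}}+(PM)^{1/2}+\frac{q^\varepsilon\omega_k(q)^{1/2}P(\log P)^4}{(1+P^k|\alpha-a/q|)^{1/2}}
\end{align*}
up to the bounded factor from the number of decompositions. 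With $P=n^{1/k}$ and $M\asymp n^{1/(2k)}$, the first term is $\ll n^{1/k-2^{-k-1}/k+\varepsilon}$ and the second is $\ll n^{3/(4k)}$, both of which are smaller than $g_k(0)\asymp n^{1/k}(\log n)^{-1/(k\eta)}$ by a power of $n$; so only the third term survives. Recalling $P^k=n$ and the lower bound $g_k(0)\gg n^{1/k}(\log n)^{-1/(k\eta)}$ from \eqref{boundg0}, the third term is $\ll g_k(0)\, q^\varepsilon\omega_k(q)^{1/2}(\log n)^{4+(k\eta)^{-1}}(1+n|\alpha-a/q|)^{-1/2}$, which is exactly the claimed bound.

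The only genuine subtlety — and the step I would be most careful about — is the bookkeeping in the decomposition: one must check that the primes can always be partitioned into two blocks whose products fall into intervals satisfying the rigid constraints $M\le M'\le\frac43 M$ and $P/M\le U\le U'\le\frac43 P/M$ simultaneously with $M\asymp P^{1/2}$. Since each prime lies in $(R/2,R]$ with $R=n^\eta$ and $\eta$ tiny, a product of $j$ primes lies in $(R^j/2^j,R^j]$, an interval of multiplicative length $2^j$, which is \emph{not} within a factor $\frac43$ unless $j$ is adjusted; the standard fix is to sort $p_1\ge\cdots\ge p_r$ and build the block $x$ greedily, absorbing primes one at a time until $x$ first exceeds $P^{1/2}$, and then note that the last prime absorbed is $\le R\le n^\eta$, so the overshoot is at most a factor $n^\eta$, which is harmless after replacing the constant $\frac43$ by $n^{O(\eta)}$ in Lemma \ref{lemmaKW} (the lemma tolerates this since its conclusion only loses $n^\varepsilon$). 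One then covers the residual ranges by $O(n^\varepsilon)$ dyadic boxes and applies Lemma \ref{lemmaKW} to each, summing trivially. Everything else is routine insertion of the numerical sizes and the bound $\tau_r(x)\le r!$ to pass between $g_k$ and the bilinear form.
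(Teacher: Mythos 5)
Your approach is the same as the paper's: decompose $g_k(\alpha)$ into a bilinear form over two blocks of primes, apply Lemma \ref{lemmaKW}, verify its Diophantine hypotheses (which you do correctly), and argue that the third term dominates. Two steps, however, are not tight enough to yield the stated bound.

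First, the claimed estimate has no room for an extra factor $n^{\varepsilon}$, nor even for an extra power of $\log n$: the exponent $4+(k\eta)^{-1}$ is exactly the $(\log P)^4$ from Lemma \ref{lemmaKW} multiplied by the factor $(\log n)^{(k\eta)^{-1}}$ needed to convert $X_k$ into $g_k(0)$ via \eqref{boundg0}. Your greedy construction places $x$ in an interval $(P^{1/2},P^{1/2}R]$ of multiplicative length $R=n^{\eta}$; covering this by intervals of multiplicative length $\tfrac43$ costs $\asymp\eta\log n$ applications of Lemma \ref{lemmaKW} and hence an extra factor $\log n$, and your alternative suggestion that the lemma ``tolerates'' replacing $\tfrac43$ by $n^{O(\eta)}$ because the conclusion ``only loses $n^{\varepsilon}$'' is unavailable for the same reason. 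The correct fix, which is what the paper does, is to fix the \emph{number} $t$ of primes in the first block, with $t$ chosen so that $R^{t}\asymp X_k^{1/2}R$; then $x$ always lies in $((R/2)^{t},R^{t}]$, of multiplicative length $2^{t}=O_{\eta}(1)$, so only $O(1)$ dyadic boxes are needed and the coefficients $\tau_{t}(x)\le t!$, $\tau_{r-t}(y)\le (r-t)!$ are bounded constants, as you note.

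Second, ``the first two terms are smaller than $g_k(0)$ by a power of $n$, so only the third term survives'' is a non sequitur: the third term can itself be much smaller than $g_k(0)$, by roughly a factor $Q_3^{3/4}$ in the worst case (since $\omega_k(q)^{1/2}\ge q^{-1/4}$ and $1+n|\alpha-a/q|\le 2Q_3$ on $\mathfrak{M}(q,a;Q_3)$). What must actually be checked is that $X_k^{1-2^{-k-2}}$ is dominated by $X_kQ_3^{-3/4}$, i.e.\ that $Q_3^{3/4}\le X_k^{2^{-k-2}}$ for all $5\le k\le 11$; this is precisely the comparison the paper makes (in the form $Q_3\le X_k^{2^{-k-1}}$), and it holds comfortably because $Q_3=n^{2^{-20}}$, but it has to be stated rather than inferred from the comparison with $g_k(0)$ alone.
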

\begin{proof}We choose $t\in \Z^{+}$ such that
\begin{align*}X_k^{1/2}R\le R^t< X_k^{1/2}R^2.\end{align*}
We can represent $g_k$ in the form
\begin{align*}g_k(\alpha)= \sum_{(R/2)^t<x\le R^t}\tau_{t}(x)\sum_{(R/2)^{r-t}<y\le R^{r-t}}\tau_{r-t}(y)e\big((xy)^k\alpha\big),\end{align*}
where $\tau_t(x)$ is defined in \eqref{definetau} and $r=r_k=(k\eta)^{-1}$.
On applying Lemma \ref{lemmaKW}, we conclude that
\begin{align*}g_k(\alpha) \ll & X_k^{1+\varepsilon-2^{-k-1}}+X_k^{3/4}R+\frac{q^\varepsilon \omega_k(q)^{1/2}X_k(\log n)^{4}}{(1+n|\alpha-a/q|)^{1/2}}
\\ \ll & X_k^{1-2^{-k-2}}+\frac{q^\varepsilon \omega_k(q)^{1/2}X_k(\log n)^{4}}{(1+n|\alpha-a/q|)^{1/2}}.\end{align*}
We remark that the length of the interval $[(R/2)^t,R^t]$ is larger than $R^t/4$, and one may need to use the dyadic argument before applying Lemma \ref{lemmaKW}. Of course, the proof of Lemma 3.1 in \cite{KW} works well to deal with intervals longer than $[M,\frac{4}{3}M]$.

Note that
\begin{align*}\frac{\omega_k(q)^{1/2}X_k}{(1+n|\alpha-a/q|)^{1/2}}\ge X_kQ_3^{-1/2},\end{align*}
and $Q_3\le X_k^{2^{-k-1}}$ for $k\le 11$.
We finally conclude that
\begin{align*}g_k(\alpha) \ll \frac{q^\varepsilon \omega_k(q)^{1/2}X_k(\log n)^{4}}{(1+n|\alpha-a/q|)^{1/2}}.\end{align*}
This completes the proof on recalling \eqref{boundg0}.
\end{proof}

Let \begin{align}\label{defineQ4}Q_4=(\log n)^{A},\end{align}
where $A$ is a sufficiently large constant (depending on $\eta$). For example, we may choose
$$A= 10^{100}\eta^{-100}(1+C)^{10},$$
where $C$ is the constant in \eqref{ineqB3}.
\begin{lemma}\label{lemmaintM3to4}One has
$$\int_{\mathfrak{M}(Q_3)\setminus\mathfrak{M}(Q_4)}
|F_2^\ast(\alpha)F_3^\ast(\alpha)F(\alpha)G(\alpha)|d\alpha \ll \mathcal{F}(0)n^{-1}Q_4^{-\frac{1}{5}}.$$
\end{lemma}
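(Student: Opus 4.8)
The plan is to estimate the integral over the annulus $\mathfrak{M}(Q_3)\setminus\mathfrak{M}(Q_4)$ by a dyadic decomposition together with the pointwise bounds for $g_k$ from Lemma \ref{lemmagk} and the pruning lemma of Br\"udern (Lemma \ref{lemmabrudern}), exactly in the spirit of the earlier pruning arguments. First I would split the annulus dyadically, writing the integral as a sum over $O(\log n)$ pieces of the form $\int_{\mathfrak{M}(Q)\setminus\mathfrak{M}(Q/2)}|F_2^\ast F_3^\ast F G|\,d\alpha$ with $Q_4\le Q\le Q_3$, so that it suffices to show each such piece is $\ll \mathcal{F}(0)n^{-1}Q^{-1/4}$ (say), which then sums to the claimed bound with room to spare. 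On this dyadic range one has $q\asymp Q$ is not forced, but from Lemma \ref{lemmagauss1} and Lemma \ref{lemmaboundvk} one gets $F_2^\ast(\alpha)F_3^\ast(\alpha)\ll F_2(0)F_3(0)\,q^{-1/2-1/3+\varepsilon}(1+n|\alpha-a/q|)^{-1}$, and from Lemma \ref{lemmagk} one has $g_k(\alpha)\ll g_k(0)q^{\varepsilon}\omega_k(q)^{1/2}(\log n)^{O(1)}(1+n|\alpha-a/q|)^{-1/2}$ for each $k\in K_1$, so that $G(\alpha)\ll G(0)q^{\varepsilon}(\log n)^{O(1)}\big(\prod_{k\in K_1}\omega_k(q)^{1/2}\big)(1+n|\alpha-a/q|)^{-7/2}$.

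Next I would combine these: after extracting the smallest available power of $(1+n|\alpha-a/q|)$, namely $(1+n|\alpha-a/q|)^{-1}\ll n Q^{-1}\mathcal{G}(\alpha)^{-1}$ is the wrong direction, so instead I keep one full factor $\mathcal{G}(\alpha)=n/(q(1+n|\alpha-a/q|))$ attached to the $|F(\alpha)|^2$ part and bound everything else pointwise. Concretely, writing $|F|^2=F F$ one uses $|F_2^\ast F_3^\ast G|\ll \mathcal{F}(0)F(0)^{-2}\cdot q^{-1/2-1/3-\sum_{k\in K_1}(1/k)+\varepsilon}(\log n)^{O(1)}n^{-1}q\,\mathcal{G}(\alpha)$ on the range in question (using $\omega_k(q)\ll q^{-1/k+\varepsilon}$ and absorbing the surplus powers of $(1+n|\alpha-a/q|)$), so that the integral is bounded by $\mathcal{F}(0)F(0)^{-2}n^{-1}(\log n)^{O(1)}\sup_{q}q^{\theta}\int_{\mathfrak{M}(Q_3)}\mathcal{G}(\alpha)|F(\alpha)|^2\,d\alpha$ for a suitable exponent $\theta$; since $\theta<0$ (the total weight $\tfrac12+\tfrac13+\sum_{k\in K_1}\tfrac1k$ comfortably exceeds $1$) and $q\ge Q_4$ on the annulus, $q^{\theta}\ll Q_4^{\theta}$ gains a negative power of $Q_4$, i.e.\ a sufficiently large negative power of $\log n$ by the choice \eqref{defineQ4}. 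Then \eqref{ineqB1} gives $\int_{\mathfrak{M}(Q_3)}\mathcal{G}(\alpha)|F(\alpha)|^2\,d\alpha\ll n^{\varepsilon}F(0)^2$; one must be slightly careful that the $n^\varepsilon$ here is harmless since all of $Q_3,Q_4$ are $\le n^{2^{-20}}$, so $q^\varepsilon\le n^\varepsilon$ and the final bound is $\ll \mathcal{F}(0)n^{-1+\varepsilon}Q_4^{\theta}$ which, since $Q_4$ is a power of $\log n$ and $\theta$ is a fixed negative constant, is $\ll \mathcal{F}(0)n^{-1}Q_4^{-1/5}$ after choosing $A$ large enough to beat the $n^\varepsilon$-free logarithmic losses — wait, an $n^\varepsilon$ cannot be beaten by a power of $\log n$, so instead one must use \eqref{ineqB3} rather than \eqref{ineqB1} to keep the divisor losses logarithmic: split $F=f_4f_{12}f_{13}f_{14}$, apply H\"older so that the two factors $f_{12},f_{13}$ carry the $\mathcal{G}$-weight via Lemma \ref{lemmaapplyB2}, and bound the $f_4,f_{14}$ factors by their $L^2$ or $L^4$ norms restricted to $\mathfrak{M}(Q_3)$, which are themselves $\ll$ a power of $\log n$ times the trivial size.

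The main obstacle I anticipate is precisely this bookkeeping of logarithmic versus $n^\varepsilon$ losses: a crude application of the divisor bound $\tau_t(x)\le t!$ and of Lemma \ref{lemmagk} produces factors $q^\varepsilon\ll n^\varepsilon$, and no power of $Q_4=(\log n)^A$ can absorb an $n^\varepsilon$. So the delicate point is to route every divisor-type sum through the McDonagh-type estimate (Lemma \ref{lemmaMcD}, via Lemma \ref{lemmaapplyB2}) so that each such loss is only $(\log n)^{O(1)}$, and then to check that the total negative power of $q$ coming from $\tfrac12+\tfrac13+\sum_{k\in K_1}\tfrac1k-1>0$ (a fixed positive number) dominates the accumulated constant power of $\log n$ once $A$ is taken large enough; this is why $A$ is allowed to depend on $\eta$ and on the constant $C$ in \eqref{ineqB3}. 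Granting that the exponent arithmetic works out — which it does, since the weight on the diminishing-range variables alone already exceeds $1$ — the lemma follows by summing the $O(\log\log n)$ dyadic contributions.
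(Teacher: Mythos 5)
Your overall strategy is the paper's: bound $F_2^\ast(\alpha) F_3^\ast(\alpha) G(\alpha)$ pointwise by a multiple of $\mathcal{G}(\alpha)$ with a power saving, and evaluate $\int_{\mathfrak{M}(Q_3)}\mathcal{G}(\alpha)|F(\alpha)|\,d\alpha$ through Lemma \ref{lemmaapplyB2} (i.e.\ through McDonagh's divisor estimate) precisely so that the losses stay logarithmic; your diagnosis that an $n^{\varepsilon}$ loss would be fatal against $Q_4=(\log n)^A$ is exactly right. But one step as written fails: you assert that $q\ge Q_4$ on the annulus and you explicitly ``absorb'' the surplus powers of $(1+n|\alpha-a/q|)$. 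The set $\mathfrak{M}(Q_3)\setminus\mathfrak{M}(Q_4)$ contains points with $q=1$ and $Q_4/n<|\beta|\le Q_3/n$, where $q^{\theta}=1$ gives no saving at all. What is true is that $q(1+n|\beta|)\gg Q_4$ throughout the annulus, so the saving must be extracted from the surplus decay in \emph{both} variables: Lemmas \ref{lemmagauss1}, \ref{lemmaboundvk} and \ref{lemmagk} give $F_2^\ast F_3^\ast G\ll F_2(0)F_3(0)G(0)(\log n)^{O(1)}q^{-1.301+\varepsilon}(1+n|\beta|)^{-J}$ for any fixed $J$, which beats the weight $q^{-1}(1+n|\beta|)^{-1}$ defining $n^{-1}\mathcal{G}(\alpha)$ by at least $\big(q(1+n|\beta|)\big)^{-0.3}\le Q_4^{-0.3}$. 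This is how the paper obtains its factor $Q_4^{-3/10}$, and no dyadic decomposition is then needed.

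Two smaller points. Your first route through $\int\mathcal{G}|F|^2$ is dimensionally inconsistent (the integrand contains one factor of $F$, not two) as well as being blocked by the $n^{\varepsilon}$ in \eqref{ineqB1}; and your replacement H\"older split is off: once $\int\mathcal{G}|f_{12}|^2$ and $\int\mathcal{G}|f_{13}|^2$ enter with exponents $\tfrac12+\tfrac12=1$, the factors $f_4,f_{14}$ can only be removed in sup norm, $|f_4f_{14}|\le f_4(0)f_{14}(0)$ — which suffices and is what the paper does; the claim that their $L^2$ or $L^4$ norms on $\mathfrak{M}(Q_3)$ are of trivial size up to logarithms is neither established in the paper nor needed. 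Finally, the exponent you should be summing is $\tfrac12+\tfrac13+\sum_{k\in K_1}\tfrac1{2k}=1.301$, since Lemma \ref{lemmagk} yields $\omega_k(q)^{1/2}$ rather than $\omega_k(q)$; your figure $\tfrac12+\tfrac13+\sum_{k\in K_1}\tfrac1k$ is too large, though the conclusion that the total exceeds $1$ survives.
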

\begin{proof}One deduces  by \eqref{boundomega} that
$$\omega_2(q)\omega_3(q)\prod_{k=5}^{11}\omega_k(q)^{1/2}\ll q^{-1.301}.$$
Then for $\alpha\in \mathfrak{M}(Q_3)\setminus\mathfrak{M}(Q_4)$, by Lemma \ref{lemmagauss1}, Lemma \ref{lemmaboundvk} and Lemma \ref{lemmagk}, one has
\begin{align}\label{boundF2F3G}F_2^\ast(\alpha)F_3^\ast(\alpha)G(\alpha)\ll F_2(0)F_3(0)G(0)Q_4^{-\frac{3}{10}}n^{-1}(\log n)^{28+7\eta^{-1}}\mathcal{G}(\alpha).\end{align}

On applying Lemma \ref{lemmaapplyB2} and Schwarz's inequality, we deduce that
\begin{align}\label{ineqB4}\int_{\mathfrak{M}(Q_3)}\mathcal{G}(\alpha)|F(\alpha)|d\alpha \ll
F(0)(\log n)^{C}.\end{align}
It follows from \eqref{boundF2F3G} and \eqref{ineqB4} that
\begin{align*}\int_{\mathfrak{M}(Q_3)\setminus\mathfrak{M}(Q_4)}
|F_2^\ast(\alpha)F_3^\ast(\alpha)F(\alpha)G(\alpha)|d\alpha
\ll  \mathcal{F}(0)Q_4^{-\frac{3}{10}} n^{-1}
(\log n)^{28+7\eta^{-1}+C}.\end{align*}
This completes the proof since $A$ in \eqref{defineQ4} is sufficiently large.
\end{proof}

Let
$$S_k^\ast(q,a)=\sum_{\substack{x=1 \\ (x,q)=1}}^qe(ax^k/q).$$
\begin{lemma}\label{lemmagauss2}Suppose that $(a,q)=1$. Then we have
$$S_k^\ast(q,a)\ll q^{\frac{1}{2}+\varepsilon}.$$
\end{lemma}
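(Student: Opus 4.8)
The plan is to reduce to prime power moduli by the Chinese Remainder Theorem and then to evaluate the prime power sums explicitly by Hensel lifting, exactly as in the classical treatment of complete exponential sums (compare the references quoted after Lemma~\ref{lemmagauss1}; indeed the estimate may simply be cited from that theory).

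First I would record the multiplicativity of $S_k^\ast$ in the modulus. If $q=q_1q_2$ with $(q_1,q_2)=1$, then sorting the residues modulo $q$ that are coprime to $q$ according to their reductions modulo $q_1$ and modulo $q_2$ yields
\[S_k^\ast(q,a)=S_k^\ast(q_1,a\overline{q_2})\,S_k^\ast(q_2,a\overline{q_1}),\]
where $\overline{q_2}q_2\equiv 1\pmod{q_1}$ and $\overline{q_1}q_1\equiv 1\pmod{q_2}$. Hence it suffices to prove the uniform bound $|S_k^\ast(p^\ell,b)|\ll_k p^{\ell/2}$ for all prime powers $p^\ell$ and all $b$ with $p\nmid b$: multiplying over the prime powers exactly dividing $q$ then gives $|S_k^\ast(q,a)|\ll_k C(k)^{\omega(q)}q^{1/2}$, where $\omega(q)$ denotes the number of distinct prime factors of $q$, and since $C(k)^{\omega(q)}\ll_k d(q)^{O(1)}\ll q^{\varepsilon}$ the lemma follows.

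For $\ell=1$ the bound is Weil's estimate for complete sums over $\F_p$: one has $\big|\sum_{x=1}^{p}e(bx^k/p)\big|\le(k-1)p^{1/2}$ whenever $p\nmid b$, and removing the single term $x=p$ gives $|S_k^\ast(p,b)|\ll_k p^{1/2}$. For $\ell\ge 2$ I would put $h=\lceil\ell/2\rceil$ and write each residue $x$ modulo $p^\ell$ uniquely as $x=y+p^hz$ with $0\le y<p^h$ and $0\le z<p^{\ell-h}$; then $p\nmid x$ is equivalent to $p\nmid y$, and since $2h\ge\ell$ the binomial expansion collapses modulo $p^\ell$ to $x^k\equiv y^k+ky^{k-1}p^hz$. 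Inserting this and carrying out the sum over $z$ first gives
\[S_k^\ast(p^\ell,b)=\sum_{\substack{0\le y<p^h\\ p\nmid y}}e\!\left(\frac{by^k}{p^\ell}\right)\sum_{0\le z<p^{\ell-h}}e\!\left(\frac{bky^{k-1}z}{p^{\ell-h}}\right),\]
and the inner geometric sum vanishes unless $p^{\ell-h}\mid bky^{k-1}$, that is, unless $p^{\ell-h}\mid k$ (using $(by,p)=1$). Since $\ell-h\ge 1$ this forces $p\mid k$, and writing $p^{\nu}\parallel k$ it forces $\ell-h\le\nu$; thus $S_k^\ast(p^\ell,b)=0$ as soon as $\ell$ exceeds a bound depending only on $k$. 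For the finitely many remaining pairs $(p,\ell)$ one has $p\mid k$ (hence $p\le k$) and $\ell\le C(k)$, so the trivial bound $|S_k^\ast(p^\ell,b)|\le p^\ell\ll_k p^{\ell/2}$ already suffices. Assembling the cases proves $|S_k^\ast(p^\ell,b)|\ll_k p^{\ell/2}$, and with it the lemma.

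The only mildly delicate point is the bookkeeping for the higher prime powers with $p\mid k$; since here $k\le 14$, however, this reduces to a short finite verification, which the vanishing phenomenon above shortens further. I therefore expect no real difficulty.
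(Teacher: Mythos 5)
Your proof is correct. The paper in fact gives no proof of this lemma at all -- it is stated as a classical fact (it is the standard bound for complete exponential sums over reduced residues, found e.g.\ in Hua's work on the Waring--Goldbach problem and implicit in the references cited for Lemma \ref{lemmagauss1}) -- and the argument you supply (multiplicativity via the Chinese Remainder Theorem, Weil's bound for prime moduli, and the Hensel-type splitting $x=y+p^hz$ forcing vanishing for $\ell\ge 2$ unless $p\mid k$) is exactly the standard proof. The only cosmetic point is that the twisting factor in your CRT identity is not literally $a\overline{q_2}$ but rather something of the shape $aq_2^{k-1}$ or $a\overline{q_2}^{\,k}$ depending on the parametrisation; since you only use a bound uniform over all numerators coprime to the modulus, this does not affect the argument.
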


For $5\le k\le 11$, on recalling the assumption \eqref{representalpha}, we define the function $\widetilde{g}_k(\alpha)$ on $\mathfrak{M}(Q_4)$ by
\begin{align*}\widetilde{g}_k(\alpha)=\frac{1}{\phi(q)}S^\ast_k(q,a)\widetilde{v}_k(\beta),\end{align*}
where $\phi(\cdot)$ is Euler's totient function and
$$\widetilde{v}_k(\beta)=\int_{[R/2,R]^{r_k}}\frac{e\big((x_1\cdots x_{r_k})^k\beta\big)}{\prod_{j=1}^{r_k}\log x_j}dx_1\cdots dx_{r_k}.$$
\begin{lemma}\label{lemmabounddeltag}Let $5\le k\le 11$. Let $\alpha\in \mathfrak{M}(Q_4)$. Then one has
$$g_k(\alpha)-\widetilde{g}_k(\alpha)\ll g_k(0)Q_4^{-50}.$$\end{lemma}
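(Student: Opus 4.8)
The plan is to establish $\widetilde g_k$ as the principal approximation to $g_k$ on the narrow major arcs by replacing, one prime factor at a time, each summation over primes in \eqref{definegk} by the corresponding integral occurring in $\widetilde v_k(\beta)$, controlling each replacement by the prime number theorem in arithmetic progressions in its Siegel--Walfisz form. First I would reduce modulo $q$: for $\alpha\in\mathfrak{M}(q,a;Q_4)$ write $\alpha=a/q+\beta$ with $1\le a\le q\le Q_4=(\log n)^{A}$, $(a,q)=1$ and $|\beta|\le Q_4/(qn)$, and observe that every prime $p_i$ in \eqref{definegk} satisfies $p_i>R/2>q$, hence $(p_i,q)=1$. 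Writing $p_i\equiv c_i\pmod q$ and using $e\big((p_1\cdots p_r)^k\alpha\big)=e\big(a(c_1\cdots c_r)^k/q\big)\,e\big((p_1\cdots p_r)^k\beta\big)$ gives
\begin{align*}g_k(\alpha)=\sum_{\substack{c_1,\dots,c_r\bmod q\\(c_i,q)=1}}e\Big(\frac{a(c_1\cdots c_r)^k}{q}\Big)\prod_{i=1}^{r}\ \sum_{\substack{R/2<p_i\le R\\p_i\equiv c_i\pmod q}}e\big((p_1\cdots p_r)^k\beta\big),\end{align*}
where $r=r_k$.

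For $0\le j\le r$ let $G_j$ denote the same expression in which, for each $1\le i\le j$, the $i$-th prime sum is replaced by $\phi(q)^{-1}\int_{R/2}^{R}(\,\cdot\,)\,dt_i/\log t_i$. Then $G_0=g_k(\alpha)$, since $p_i\mapsto c_i$ partitions the primes. In $G_r$ the iterated integral no longer depends on $c_1,\dots,c_r$ and equals $\phi(q)^{-r}\widetilde v_k(\beta)$, while the remaining sum over the $c_i$ collapses: for every $x$ coprime to $q$ there are exactly $\phi(q)^{r-1}$ unit tuples $(c_1,\dots,c_r)$ with $c_1\cdots c_r\equiv x\pmod q$, so $\sum_{c_i}e\big(a(c_1\cdots c_r)^k/q\big)=\phi(q)^{r-1}S_k^\ast(q,a)$, whence $G_r=\phi(q)^{-1}S_k^\ast(q,a)\widetilde v_k(\beta)=\widetilde g_k(\alpha)$. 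Consequently $g_k(\alpha)-\widetilde g_k(\alpha)=\sum_{j=0}^{r-1}(G_j-G_{j+1})$, and it remains to bound each term.

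Next I would estimate a single step. Putting absolute values outside and pulling the remaining nonnegative operators past the difference, $|G_j-G_{j+1}|$ is bounded by the $\phi(q)^{r}$ choices of $(c_1,\dots,c_r)$, times the product of the $r-1$ surviving factors, each of which — whether an integration $\phi(q)^{-1}\int_{R/2}^{R}dt/\log t$ or a prime count $\#\{R/2<p\le R:\ p\equiv c\pmod q\}$ — is $\ll R/(\phi(q)\log R)$ by the trivial bound and by Siegel--Walfisz respectively, times the single replacement error for variable $j+1$, namely
\begin{align*}\Delta:=\Big|\sum_{\substack{R/2<p\le R\\p\equiv c\pmod q}}e(\gamma p^k)-\frac{1}{\phi(q)}\int_{R/2}^{R}\frac{e(\gamma t^k)}{\log t}\,dt\Big|,\end{align*}
where $\gamma$ is $\beta$ times the $k$-th power of a product of $r-1$ of the integration variables and of the remaining primes, so $|\gamma|\le R^{k(r-1)}|\beta|\le R^{-k}Q_4/q$ (using $R^{kr}=n$). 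Hence $f(t):=e(\gamma t^k)$ satisfies $|f|\le 1$ and $|f'(t)|=2\pi k|\gamma|t^{k-1}\ll Q_4/(qR)$ on $[R/2,R]$; writing $\pi(t;q,c)=\mathrm{Li}(t)/\phi(q)+E(t)$ with $E(t)\ll t\exp(-c_1\sqrt{\log t})$, which is admissible since $q\le Q_4\ll(\log R)^{A+1}$ with $A$ fixed, partial summation yields
\begin{align*}\Delta\ll|E(R)|+|E(R/2)|+\int_{R/2}^{R}|E(t)|\,|f'(t)|\,dt\ll RQ_4\exp(-c_1\sqrt{\log R}).\end{align*}
Collecting the factors and using $R=n^{\eta}$, $R^{r}=n^{1/k}$ and $\log R\asymp\log n$ gives $G_j-G_{j+1}\ll n^{1/k}(\log n)^{-(r-1)}Q_4^{2}\exp\big(-c_1\sqrt{\eta\log n}\big)$.

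Finally, summing over the $r$ values of $j$ and invoking $g_k(0)\gg n^{1/k}(\log n)^{-r}$ from \eqref{boundg0}, one obtains $g_k(\alpha)-\widetilde g_k(\alpha)\ll g_k(0)(\log n)^{1+2A}\exp\big(-c_1\sqrt{\eta\log n}\big)$, which for $n$ large is $\ll g_k(0)(\log n)^{-50A}=g_k(0)Q_4^{-50}$, as required. The one delicate point is that the frequency $\gamma$ ranges over a very wide interval, so Siegel--Walfisz must be applied uniformly in $\gamma$; this is exactly what the crude derivative bound $|f'(t)|\ll Q_4/(qR)$ supplies, showing that the oscillatory weight $e(\gamma t^{k})$ costs at most a harmless power of $\log n$ and never a power of $n$. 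The largeness of $r$ and the ineffectivity of the Siegel--Walfisz constant are irrelevant, since $\eta$, and hence $r$ and $A$, is a fixed constant.
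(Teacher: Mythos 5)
Your proof is correct and follows essentially the same route as the paper: both replace the prime sums by logarithmic integrals via the Siegel--Walfisz theorem and partial summation, the oscillatory weight being harmless because $|\beta|\le Q_4/(qn)$ forces the phase derivative to be tiny on $[R/2,R]$. The only (cosmetic) difference is organizational: the paper peels off one prime variable first, using $S^\ast(q,ay^k)=S^\ast(q,a)$ to extract the complete sum and then needing only the prime number theorem for the remaining $r_k-1$ variables, whereas you reduce all variables modulo $q$ at once and telescope through the $r_k$ hybrids, recovering $S_k^\ast(q,a)$ at the end by counting unit tuples with prescribed product.
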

\begin{proof}Suppose that $|y^k|R^k\le n$. We can deduce from the Siegel-Walfisz theorem and the partial summation formula that
$$\sum_{R/2<p\le R}e\big((\frac{a}{q}+\beta)y^kp^k\big)=\frac{1}{\phi(q)}S^\ast(q,ay^k)\int_{R/2}^{R}\frac{e(t^ky^k\beta)}{\log t}dt+O(RQ_4^{-100}).$$
If $(y,q)=1$, then we have $S^\ast(q,ay^k)=S^\ast(q,a)$ and further deduce that
$$\sum_{R/2<p\le R}e\big((\frac{a}{q}+\beta)y^kp^k\big)=\frac{1}{\phi(q)}S^\ast(q,a)\int_{R/2}^{R}\frac{e(t^ky^k\beta)}{\log t}dt+O(RQ_4^{-100}).$$
Then on writing $s=r_k-1$, we conclude that
$$g_k(\alpha)=\frac{1}{\phi(q)}S^\ast(q,a)\sum_{R/2<p_1,\ldots,p_{s}\le R}\int_{R/2}^{R}\frac{e(t^k
p_1^k\cdots p_{s}^k\beta)}{\log t}dt+O(g_k(0)Q_4^{-99}).$$
On applying the partial summation formula $s$ times in combination with the prime number theorem, we deduce that
$$\sum_{R/2<p_1,\ldots,p_{s}\le R}\int_{R/2}^{R}\frac{e(t^kp_1^k\cdots p_{s}^k\beta)}{\log t}dt=\widetilde{v}_k(\beta)+O(g(0)Q_4^{-100}).$$
We conclude from above that
$$g_k(\alpha)=\frac{1}{\phi(q)}S^\ast(q,a)\widetilde{v}_k(\beta)+O(g_k(0)Q_4^{-99}).$$
This completes the proof.\end{proof}

For $5\le k\le 11$, we define the function $g_k^\ast(\alpha)$ on $\mathfrak{M}(Q_4)$ by
\begin{align*}g_k^\ast(\alpha)=\frac{1}{\phi(q)}S^\ast_k(q,a)v_k^\ast(\beta),\end{align*}
where
$$v_k^\ast(\beta)=\frac{1}{(\log R)^{r_k}}\int_{[R/2,R]^{r_k}}e\big((x_1\cdots x_{r_k})^k\beta\big)dx_1\cdots dx_{r_k}.$$
Define further
$$G^{\ast}(\alpha)=\prod_{k=5}^{11}g_k^\ast(\alpha)$$
and
$$\Delta_{G}(\alpha)=G(\alpha)-G^\ast(\alpha).$$
\begin{lemma}\label{lemmaDeltaG}One has
\begin{align}\label{boundDeltaG}\int_{\mathfrak{M}(Q_4)}
|F_2^\ast(\alpha)F_3^\ast(\alpha)F(\alpha)\Delta_{G}(\alpha)|d\alpha \ll \mathcal{F}(0)n^{-1}(\log n)^{-1}.\end{align}
\end{lemma}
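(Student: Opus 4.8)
The plan is to bound $\Delta_G$ pointwise on each arc $\mathfrak{M}(q,a;Q_4)$ and then integrate against the crude bound $|F(\alpha)|\le F(0)$. The sole genuine difficulty is to extract enough of a power of $q$ from $\Delta_G$: comparing $G$ with $G^\ast$ directly is not good enough, since one of the seven telescoping terms would retain the genuine Weyl sum $g_{11}$ with no usable saving in $q$, and while Lemma~\ref{lemmagk} would supply one, it does so only at the cost of a power of $\log n$ that is far too large. The remedy is to route the comparison through the auxiliary product $\widetilde G(\alpha):=\prod_{k=5}^{11}\widetilde g_k(\alpha)$, for then every factor that occurs is of the shape $\phi(q)^{-1}S_k^\ast(q,a)(\cdots)$ and hence carries a factor $q^{-1/2+\varepsilon}$ by Lemma~\ref{lemmagauss2} together with $\phi(q)\gg q^{1-\varepsilon}$.

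Concretely, I would write $\Delta_G=(G-\widetilde G)+(\widetilde G-G^\ast)$ and treat the two pieces separately. For the first, a telescoping identity together with Lemma~\ref{lemmabounddeltag} and the trivial bounds $g_j(\alpha),\widetilde g_j(\alpha)\ll g_j(0)$ yields $|G(\alpha)-\widetilde G(\alpha)|\ll G(0)Q_4^{-50}$ on $\mathfrak{M}(Q_4)$. For the second, the key new estimate is
\[
\widetilde v_k(\beta)-v_k^\ast(\beta)\ll g_k(0)(\log n)^{-1}\qquad(\text{uniformly in }\beta).
\]
To prove it, note that $R/2<x_j\le R$ forces $\log x_j=\log R+O(1)$, so since $r_k$ is fixed one has $\big|\prod_{j=1}^{r_k}(\log x_j)^{-1}-(\log R)^{-r_k}\big|\ll(\log R)^{-r_k-1}$ throughout $[R/2,R]^{r_k}$; integrating this against $e((x_1\cdots x_{r_k})^k\beta)$ over that box and recalling that $v_k^\ast(0)\asymp g_k(0)$ by \eqref{boundg0} and $\log R\asymp\log n$ gives the claim. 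Combining this with $\phi(q)^{-1}|S_k^\ast(q,a)|\ll q^{-1/2+\varepsilon}$ and with $|\widetilde v_k(\beta)|,|v_k^\ast(\beta)|\le\widetilde v_k(0),v_k^\ast(0)\ll g_k(0)$ gives $\widetilde g_k(\alpha)-g_k^\ast(\alpha)\ll g_k(0)q^{-1/2+\varepsilon}(\log n)^{-1}$ and $\widetilde g_j(\alpha),g_j^\ast(\alpha)\ll g_j(0)q^{-1/2+\varepsilon}$ for $\alpha\in\mathfrak{M}(q,a;Q_4)$. Telescoping $\widetilde G-G^\ast=\sum_{k}(\widetilde g_k-g_k^\ast)\prod_{j<k}\widetilde g_j\prod_{j>k}g_j^\ast$, each of the seven summands is $\ll g_k(0)q^{-1/2+\varepsilon}(\log n)^{-1}\prod_{j\neq k}g_j(0)q^{-1/2+\varepsilon}=G(0)q^{-7/2+\varepsilon}(\log n)^{-1}$, so in total
\[
|\Delta_G(\alpha)|\ll G(0)\big(q^{-7/2+\varepsilon}(\log n)^{-1}+Q_4^{-50}\big)\qquad\text{for }\alpha\in\mathfrak{M}(q,a;Q_4).
\]

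It remains to integrate. Writing $\alpha=a/q+\beta$ on each arc and using $F_k^\ast(\alpha)\ll q^{-1}|S_k(q,a)|X_k(1+n|\beta|)^{-N}$ (from the definition of $F_k^\ast$ and Lemma~\ref{lemmaboundvk}), $|F(\alpha)|\le F(0)$, and the bound just displayed, I would first integrate over $\beta\in\R$, which costs only $\ll n^{-1}$ because of the rapid decay of $v_2v_3$, then sum over $a$ by means of
\[
\sum_{(a,q)=1}|S_2(q,a)S_3(q,a)|\le\Big(\sum_{a\bmod q}|S_2(q,a)|^2\Big)^{1/2}\Big(\sum_{a\bmod q}|S_3(q,a)|^2\Big)^{1/2}\ll q^{2+\varepsilon},
\]
which follows from $\sum_{a\bmod q}|S_k(q,a)|^2=q\,\#\{(x,y)\bmod q:\ x^k\equiv y^k\}\ll q^{2+\varepsilon}$, and finally sum over $q\le Q_4$. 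The main term then contributes $\ll F(0)G(0)X_2X_3n^{-1}(\log n)^{-1}\sum_{q\ge1}q^{-7/2+\varepsilon}\ll F(0)G(0)X_2X_3n^{-1}(\log n)^{-1}$, and the $Q_4^{-50}$ term contributes $\ll F(0)G(0)X_2X_3n^{-1}Q_4^{-49+\varepsilon}$; since $F_2(0)\asymp X_2$, $F_3(0)\asymp X_3$, and $Q_4=(\log n)^A$ with $A$ large, both are $\ll\mathcal{F}(0)n^{-1}(\log n)^{-1}$, which is \eqref{boundDeltaG}. I expect the estimate $\widetilde v_k-v_k^\ast\ll g_k(0)(\log n)^{-1}$ and the decision to compare through $\widetilde G$ to be the crux; the remaining steps are routine major-arc bookkeeping.
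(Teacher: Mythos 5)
Your proposal is correct and follows essentially the same route as the paper: both split $\Delta_G$ through the intermediate product $\prod_k\widetilde g_k$, dispose of the first piece via Lemma \ref{lemmabounddeltag}, and extract the decisive factor $q^{-7/2+\varepsilon}(\log n)^{-1}$ from the second piece using $\widetilde v_k-v_k^\ast\ll g_k(0)(\log R)^{-1}$ together with Lemma \ref{lemmagauss2}, before integrating with the decay of $F_2^\ast$. The only differences are cosmetic — you supply the short justification of $\widetilde v_k-v_k^\ast\ll g_k(0)(\log n)^{-1}$ that the paper merely asserts, and you sum over $a$ via Cauchy--Schwarz on the Gauss sums rather than the paper's pointwise bound $F_2^\ast(\alpha)\ll F_2(0)q^{-1/2+\varepsilon}(1+n|\beta|)^{-2}$; both are routine and valid.
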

\begin{proof}On writing
$$\widetilde{\Delta}_{G}(\alpha)=G(\alpha)-\prod_{k=5}^{11}\widetilde{g}_k(\alpha),$$
we deduce from Lemma \ref{lemmabounddeltag} that
\begin{align}\label{boundDeltaG1}\int_{\mathfrak{M}(Q_4)}
|F_2^\ast(\alpha)F_3^\ast(\alpha)F(\alpha)\widetilde{\Delta}_{G}(\alpha)|d\alpha \ll \mathcal{F}(0)n^{-1}Q_4^{-40}.\end{align}

One has
$$\widetilde{v}_k(\alpha)-v_k^\ast(\alpha)\ll g(0)(\log R)^{-1},$$
and by Lemma \ref{lemmagauss2},
$$\widetilde{g}_k(\alpha)-g_k^\ast(\alpha)\ll g(0)(\log R)^{-1}q^{-\frac{1}{2}+\varepsilon}.$$
We also have
$$g_k^\ast(\alpha)\ll g(0)q^{-\frac{1}{2}+\varepsilon}.$$
Therefore, on writing
$$\Delta'_G(\alpha)=\prod_{k=5}^{11}\widetilde{g}_k(\alpha)-G^\ast(\alpha),$$
one has
\begin{align}\label{boundDeltaG2}\Delta'_G(\alpha)\ll G(0)(\log R)^{-1}q^{-\frac{7}{2}+\varepsilon}.\end{align}

On applying Lemma \ref{lemmagauss1} and Lemma \ref{lemmaboundvk}, we have
\begin{align}\label{boundF2ast}F_2^\ast(\alpha)\ll F_2(0)q^{-\frac{1}{2}+\varepsilon}(1+n|\beta|)^{-2}.\end{align}
Then it follows from \eqref{boundDeltaG2} and \eqref{boundF2ast} that
\begin{align}\label{boundDeltaG3}\int_{\mathfrak{M}(Q_4)}
|F_2^\ast(\alpha)F_3^\ast(\alpha)F(\alpha)\Delta_{G}'(\alpha)|d\alpha \ll \mathcal{F}(0)n^{-1}(\log n)^{-1}.\end{align}
Note that $\Delta_{G}(\alpha)=\widetilde{\Delta}_{G}(\alpha)+\Delta'_{G}(\alpha)$. Now \eqref{boundDeltaG} follows from \eqref{boundDeltaG1} and \eqref{boundDeltaG3}. The proof is complete.
\end{proof}

\vskip3mm

\section{Proof of Theorem \ref{theorem1}}

Let \begin{align*}\mathfrak{M}=\mathfrak{M}(\log n).\end{align*}
For a measurable set $\mathfrak{B}\subseteq \mathfrak{M}(Q_4)$, we introduce
$$\mathcal{N}_3(n;\mathfrak{B})=\int_{\mathfrak{B}}F_2^\ast(\alpha)F_3^\ast(\alpha)F(\alpha)G^\ast(\alpha)e(-n\alpha)d\alpha.$$

\begin{lemma}\label{lemmaN3}One has
$$\mathcal{N}(n)-\mathcal{N}_3(n;\mathfrak{M}) \ll \mathcal{F}(0)n^{-1}(\log n)^{-1}.$$
\end{lemma}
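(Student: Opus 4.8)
The plan is to obtain Lemma \ref{lemmaN3} by stacking three pruning steps: from the unit interval down to $\mathfrak{M}(Q_3)$, from $\mathfrak{M}(Q_3)$ down to $\mathfrak{M}(Q_4)$ with $G$ replaced by $G^\ast$, and finally from $\mathfrak{M}(Q_4)$ down to $\mathfrak{M}=\mathfrak{M}(\log n)$. The first two steps are immediate from the lemmas already established; only the last requires a (routine) computation with the explicit major-arc approximations.

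For the first step, Lemma \ref{lemmaN2} gives $\mathcal{N}(n)-\mathcal{N}_2(n)\ll\mathcal{F}(0)n^{-1+\varepsilon}Q_3^{-1/12}$, which since $Q_3=n^{2^{-20}}$ is $\ll\mathcal{F}(0)n^{-1}(\log n)^{-1}$ with a fixed power of $n$ to spare. For the second, I write $\mathcal{N}_2(n)-\mathcal{N}_3(n;\mathfrak{M}(Q_4))$ as the sum of the integral of $F_2^\ast F_3^\ast FG\,e(-n\alpha)$ over $\mathfrak{M}(Q_3)\setminus\mathfrak{M}(Q_4)$ and the integral of $F_2^\ast F_3^\ast F(G-G^\ast)e(-n\alpha)$ over $\mathfrak{M}(Q_4)$. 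The former is bounded in absolute value by Lemma \ref{lemmaintM3to4} by $\mathcal{F}(0)n^{-1}Q_4^{-1/5}=\mathcal{F}(0)n^{-1}(\log n)^{-A/5}\ll\mathcal{F}(0)n^{-1}(\log n)^{-1}$, as $A$ is large; the latter is precisely the quantity estimated by $\mathcal{F}(0)n^{-1}(\log n)^{-1}$ in Lemma \ref{lemmaDeltaG}. Hence $\mathcal{N}(n)-\mathcal{N}_3(n;\mathfrak{M}(Q_4))\ll\mathcal{F}(0)n^{-1}(\log n)^{-1}$.

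It remains to prove
\[
\int_{\mathfrak{M}(Q_4)\setminus\mathfrak{M}(\log n)}|F_2^\ast(\alpha)F_3^\ast(\alpha)F(\alpha)G^\ast(\alpha)|\,d\alpha\ll\mathcal{F}(0)n^{-1}(\log n)^{-1}.
\]
If $\alpha\in\mathfrak{M}(Q_4)$ has the representation $\alpha=a/q+\beta$ as in \eqref{representalpha} with $q\le Q_4$, then $\alpha\notin\mathfrak{M}(\log n)$ forces $q(1+n|\beta|)>\log n$. On this set I bound the approximations pointwise. By Lemma \ref{lemmagauss1} and Lemma \ref{lemmaboundvk}, for any fixed integer $j$ one has $F_2^\ast(\alpha)\ll F_2(0)\omega_2(q)(1+n|\beta|)^{-j}$ and $F_3^\ast(\alpha)\ll F_3(0)\omega_3(q)$; by Lemma \ref{lemmagauss2}, $\phi(q)\gg q^{1-\varepsilon}$ and the trivial bound $v_k^\ast(\beta)\ll g_k(0)$, one has $g_k^\ast(\alpha)\ll g_k(0)q^{-1/2+\varepsilon}$, so $G^\ast(\alpha)\ll G(0)q^{-7/2+\varepsilon}$. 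Since $\omega_2(q)\ll q^{-1/2+\varepsilon}$ and $\omega_3(q)\ll q^{-1/3+\varepsilon}$ by \eqref{boundomega}, this gives $|F_2^\ast F_3^\ast G^\ast|\ll \mathcal{F}(0)F(0)^{-1}q^{-13/3+\varepsilon}(1+n|\beta|)^{-j}$, and with the trivial bound $|F(\alpha)|\le F(0)$ the displayed integral is
\[
\ll\mathcal{F}(0)\sum_{q\le Q_4}\,\sum_{\substack{1\le a\le q\\(a,q)=1}}q^{-13/3+\varepsilon}\int_{\substack{|\beta|\le Q_4/(qn)\\ q(1+n|\beta|)>\log n}}(1+n|\beta|)^{-j}\,d\beta.
\]

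Evaluating the inner integral — it is $\ll n^{-1}$ in general, and $\ll n^{-1}(q/\log n)^{j-1}$ when $q\le\log n$ thanks to the constraint $1+n|\beta|>(\log n)/q$ — then summing over $a$ (a factor $q$) and over $q$, split according to $q>\log n$ or $q\le\log n$, yields $\ll\mathcal{F}(0)n^{-1}(\log n)^{-7/3+\varepsilon}$ for any fixed $j\ge4$, which is far stronger than required. Adding the three contributions gives the lemma. The only real work is this final pruning, and the point is simply that the pointwise saving $q^{-13/3+\varepsilon}$ coming from the Gauss sums and the singular-integral factors, even after one power of $q$ is consumed by the sum over $a$ and one factor $n^{-1}$ by the $\beta$-integral, still leaves $q^{-10/3+\varepsilon}$; the convergent tail of this beyond $q=\log n$ (or, for small $q$, the forced smallness of $1+n|\beta|$) supplies the saving $(\log n)^{-7/3+\varepsilon}$, so that the trivial bound on $F$ suffices and no divisor-function average is needed.
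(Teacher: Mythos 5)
Your proposal is correct and follows essentially the same route as the paper: Lemma \ref{lemmaN2}, then Lemma \ref{lemmaintM3to4} together with Lemma \ref{lemmaDeltaG} to reach $\mathcal{N}_3(n;\mathfrak{M}(Q_4))$, and a final pointwise pruning of $\mathfrak{M}(Q_4)\setminus\mathfrak{M}$ using the Gauss-sum and singular-integral decay of $F_2^\ast$, $F_3^\ast$, $G^\ast$ with the trivial bound on $F$. The paper's last step is phrased more compactly via $F_2^\ast(\alpha)G^\ast(\alpha)\ll F_2(0)G(0)(q+qn|\beta|)^{-4+\varepsilon}$ and the observation that $q+qn|\beta|>\log n$ off $\mathfrak{M}$, but this is the same estimate you carry out in slightly greater detail.
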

\begin{proof}On applying Lemma \ref{lemmaN2}, Lemma \ref{lemmaintM3to4} and Lemma \ref{lemmaDeltaG}, we conclude that
\begin{align}\label{boundN31}\mathcal{N}(n)-\mathcal{N}_3(n;\mathfrak{M}(Q_4)) \ll \mathcal{F}(0)n^{-1}(\log n)^{-1}.\end{align}
For $\alpha\in \mathfrak{M}(Q_4)$, one has by Lemma \ref{lemmaboundvk} and Lemma \ref{lemmagauss2} that
$$F_2^\ast(\alpha)G^\ast(\alpha)\ll F_2(0)G(0)(q+qn|\beta|)^{-4+\varepsilon}.$$
Then for $\alpha\in \mathfrak{M}(Q_4)\setminus \mathfrak{M}$, one has
$$F_2^\ast(\alpha)G^\ast(\alpha)\ll F_2(0)G(0)(\log n)^{-1}(q+qn|\beta|)^{-3+\varepsilon},$$
and therefore, by the trivial bound $F_3^\ast(\alpha)F(\alpha) \ll F_3(0)F(0)$, we deduce that
\begin{align}\label{boundintM4toM5}\int_{\mathfrak{M}(Q_4)\setminus\mathfrak{M}}
|F_2^\ast(\alpha)F_3^\ast(\alpha)F(\alpha)G^\ast(\alpha)|d\alpha \ll \mathcal{F}(0)n^{-1}(\log n)^{-1}.\end{align}
The proof is complete by combining \eqref{boundN31} and \eqref{boundintM4toM5}.
\end{proof}

\noindent {\it Proof of Theorem \ref{theorem1}.}
For $\alpha=a/q+\beta\in \mathfrak{M}$ with $1\le a\le q\le \log n$, $(a,q)=1$ and $|\beta|\le (\log n)(qn)^{-1}$, one has
$$f_k(\alpha)=f_k(a/q)+O(Y_k(\log n) n^{\lambda-1}),$$
and by Lemma 5.4 in \cite{Vaughan1989}, there exists  $Y_k'$ satisfying $Y_k\ll Y_k'\ll Y_k$, such that
$$f_k(a/q)-\frac{1}{q}S(q,a)Y_k' \ll Y_k (\log n)^{-1}q.$$
Therefore, we have
\begin{align}\label{boundfk}f_k(\alpha)-\frac{1}{q}S(q,a)Y_k' \ll Y_k (\log n)^{-1}q.\end{align}

On writing
$$f_k^\ast(\alpha)=\frac{1}{q}S(q,a)Y_k'
\ \textrm{ and }\
F^\ast(\alpha)=\prod_{k\in K_2}f_k^\ast(\alpha),$$
one concludes from \eqref{boundfk} that
$$F(\alpha)-F^\ast(\alpha)\ll F(0)(\log n)^{-1}q.$$
Then we deduce that
\begin{align}\label{boundN3toNast}\mathcal{N}_3(n;\mathfrak{M})-\mathcal{N}^\ast(n;\mathfrak{M})\ll \mathcal{F}(0)n^{-1}(\log n)^{-1},\end{align}
where
$$\mathcal{N}^\ast(n;\mathfrak{M})=\int_{\mathfrak{M}}F_2^\ast(\alpha)F_3^\ast(\alpha)F^\ast(\alpha)G^\ast(\alpha)e(-n\alpha)d\alpha.$$

We define
$$A(q)=\frac{1}{q^{6}\phi(q)^7}\sum_{\substack{a=1\\ (a,q)=1}}^{q}\Big(\prod_{k\in K_2\cup\{2,3\}}S_k(q,a)\Big)\Big(\prod_{k\in K_1}S_k^\ast(q,a)\Big)e(-an/q).$$
On applying Lemma \ref{lemmagauss1} and Lemma \ref{lemmagauss2}, one has
\begin{align}\label{boundAq}A(q)\ll q^{-\frac{7}{2}}.\end{align}
Then we introduce
$$\mathfrak{S}(n;X)=\sum_{q=1}^{X}A(q)$$
and write
$\mathfrak{S}(n)=\mathfrak{S}(n;\infty).$
Let
$$v(\beta)=v_2(\beta)v_3(\beta)\prod_{k=5}^{11}v_k^\ast(\beta).$$
Then we define
$$\mathfrak{I}(n;X)=\frac{1}{(\log R)^{r'}}(\prod_{k\in K_1}Y_k')\int_{-X}^{+X}v(\beta)e(-n\beta)d\beta,$$
where $X>0$ and
$$r'=\sum_{k=5}^{11}\frac{\eta^{-1}}{k}.$$
On applying Lemma \ref{lemmaboundvk}, one has
\begin{align}\label{boundIX}\mathfrak{I}(n;X)-\mathfrak{I}(n)\ll \mathcal{F}(0)n^{-1}(1+nX)^{-1},\end{align}
where $\mathfrak{I}(n)=\mathfrak{I}(n;\infty).$
We have
$$\mathcal{N}^\ast(n;\mathfrak{M})=\sum_{q\le \log n}A(q)\mathfrak{I}(n;\frac{\log n}{qn}).$$
Then by \eqref{boundAq} and \eqref{boundIX}, we deduce that
\begin{align}\label{boundNasttoSI}\mathcal{N}^\ast(n;\mathfrak{M})=\mathfrak{S}(n)\mathfrak{I}(n)+O(\mathcal{F}(0)n^{-1}(\log n)^{-1}).\end{align}
We finally conclude from Lemma \ref{lemmaN3}, \eqref{boundN3toNast} and \eqref{boundNasttoSI} that
\begin{align*}\mathcal{N}(n)=\mathfrak{S}(n)\mathfrak{I}(n)+O(\mathcal{F}(0)n^{-1}(\log n)^{-1}).\end{align*}

One can deduce by using the standard argument in the application of the circle method that
$\mathfrak{S}(n)\gg 1$ and $\mathfrak{I}(n)\gg \mathcal{F}(0)n^{-1}$.  In particular, $\mathcal{N}(n)\gg \mathcal{F}(0)n^{-1}$. The proof of Theorem \ref{theorem1} is now complete.

\vskip9mm

\vskip4mm
\end{document}